\documentclass[leqno,11pt,a4paper]{amsart}
\usepackage{bw_jc}
\usepackage{tikz-cd}
\usepackage{comment}
\usepackage{soul}
\usepackage{todonotes}
\usepackage{amsmath}
\usepackage{mathtools,amsfonts,stmaryrd}
\usepackage{color}
\usepackage{enumitem}


\newcommand{\s}{\mathfrak{s}}

\newcommand{\rsftgap}{\text{gap}}
\newcommand{\gap}{\text{gap}}

\newcommand{\btikz}[2]{\begin{figure}[h]\caption{#1}
\vspace{3pt}
\label{#2}\begin{center}\begin{tikzpicture}}
\newcommand{\etikz}{\end{tikzpicture}\end{center}\end{figure}}

\newcommand{\cM}{\mathcal{M}}

\newcommand{\D}{{D}}

\newcommand{\indCZ}{\operatorname{CZ}}
\newcommand{\indRS}{\operatorname{RS}}
\newcommand{\ind}{\operatorname{ind}}

\newcommand{\lcm}{\operatorname{lcm}}
\newcommand{\moduli}{\overline{\cM}(Y, J_f; \gamma_+,\gamma_-;z)}
\newcommand{\Umoduli}{\cM_{X \setminus E}}
\newcommand{\Emoduli}{\cM_E}

\newcommand{\id}{\on{Id}}
\newcommand{\T}{\mathbb{T}}
\graphicspath{{images/}}
\begin{document}
\title[Contact Homology and Higher Dimensional Closing Lemmas]{Contact Homology and Higher Dimensional Closing Lemmas}

\author{J.~Chaidez}
\address{Department of Mathematics\\Princeton University\\Princeton, NJ\\08544\\USA}
\email{jchaidez@princeton.edu}

\author{I. Datta}
\address{School of Mathematics\\Institute for Advanced Study\\Princeton, NJ\\08540\\USA}
\email{ipsi@ias.edu}

\author{R. Prasad}
\address{Department of Mathematics\\Princeton University\\Princeton, NJ\\08544\\USA}
\email{rrprasad@princeton.edu}

\author{S. Tanny}
\address{School of Mathematics\\Institute for Advanced Study\\Princeton, NJ\\08540\\USA}
\email{tanny.shira@gmail.com}
\maketitle
\begin{abstract} We develop methods for studying  the smooth closing lemma for Reeb flows in any dimension using contact homology. As an application, we prove a conjecture of Irie, stating that the strong closing lemma holds for Reeb flows on ellipsoids. Our methods also apply to other Reeb flows, and we illustrate this for a class of examples introduced by Albers-Geiges-Zehmisch. 
\end{abstract}

 
 \tableofcontents
\section{Introduction} \label{sec:introduction} In \cite{p1967}, Pugh proved a fundamental property of the periodic orbits of $C^1$ dynamical systems, called the \emph{$C^1$ closing lemma}. Morally speaking, Pugh's closing lemma states that nearly periodic points become periodic after a slight $C^1$-perturbation of the dynamical system. Precisely, it is stated as follows.

\begin{thm*} \label{thm:pugh_closing_lemma} \cite{p1967} Let $V$ be a $C^1$ vector-field on a closed manifold $X$ and let $x$ be a non-wandering point of $V$. Then there is a $C^1$ vector-field $V'$ that is $C^1$-close to $V$ such that $x$ is on a closed orbit of $V'$.
\end{thm*}

\noindent Note that a point $x$ is non-wandering if, for any open neighborhood $U$ of $x$ and every time $S$, there exists a $T \ge S$ such that $\phi^T(U) \cap U$ is non-empty, where $\phi$ is the flow of $V$. If $\phi$ preserves a volume form, then every point is non-wandering. 

\vspace{3pt}

A key question in smooth dynamical systems (posed, for instance, by Smale \cite{s1998}) is whether or not Theorem \ref{thm:pugh_closing_lemma} extends to the $C^\infty$ setting. However, in \cite{g1987} Gutierrez proved that any compact manifold $X$ containing an embedded punctured torus $\Sigma \subset X$ possesses a vector-field $V$ and a non-wandering point $p$ that does \emph{not} become periodic under any small $C^\infty$-perturbation of $V$. This follows the work of Herman \cite{h1979} in the Hamiltonian setting, where he proved that all sufficiently small smooth Hamiltonian perturbations of a Diophantine rotation $\phi$ of the two-torus $\mathbb{T}^2$ have no periodic orbits. These negative results suggested that, for general smooth diffeomorphisms and flows, there is no analogue of the closing lemma.  

\vspace{3pt}

Recently, dramatic progress has been made for area preserving diffeomorphisms of surfaces and Reeb flows of contact $3$-manifolds . In \cite{i2015}, Irie used spectral invariants coming from embedded contact homology (ECH) \cite{h2010} to prove the $C^\infty$ closing lemma for Reeb flows on closed contact $3$-manifolds. In fact, Irie's proof implied a \emph{strong closing property}.

\begin{definition}[\cite{irie2022strong}] \label{def:strong_closing_property} A manifold $Y$ with contact form $\alpha$ satisfies the \emph{strong closing property} if, for any non-zero smooth function
\[f:Y \to [0,\infty)\]
there is a $t \in [0,1]$ such that $(1 + tf)\alpha$ has a closed Reeb orbit passing through the support of $f$. \end{definition}

\begin{thm*} \cite{i2015} \label{thm:strong_closing_property} Every closed $3$-manifold with contact form $(Y,\alpha)$ has the strong closing property.
\end{thm*}

\noindent Strong versions of the closing lemma were later proven for Hamiltonian surface maps \cite{ai2016} and more generally, area preserving surface maps \cite{cpz2021,eh2021}.

\vspace{3pt}

ECH is a fundamentally low-dimensional theory, and so the methods in \cite{i2015} are not directly applicable to studying the dynamics of higher-dimensional symplectomorphisms or Reeb flows. On the otherhand, ECH is part of a family of Floer theories collectively called symplectic field theory (or SFT) \cite{egh2000}, and other flavors of SFT (e.g. contact homology) generalize naturally to any dimension. In a recent work \cite{irie2022strong}, Irie described an abstract framework for proving strong closing properties using invariants satisfying formal properties in the spirit of SFT. 

\vspace{3pt}

In this paper, we use contact homology to prove that the Reeb flow of any ellipsoid satisfies the strong closing property, as conjectured by Irie in \cite{irie2022strong}. This is a first step towards applying the machinery of SFT to prove closing properties for more general classes of Reeb flows in higher dimensions.

\begin{figure}[h]
\centering
\includegraphics[width=.8\textwidth]{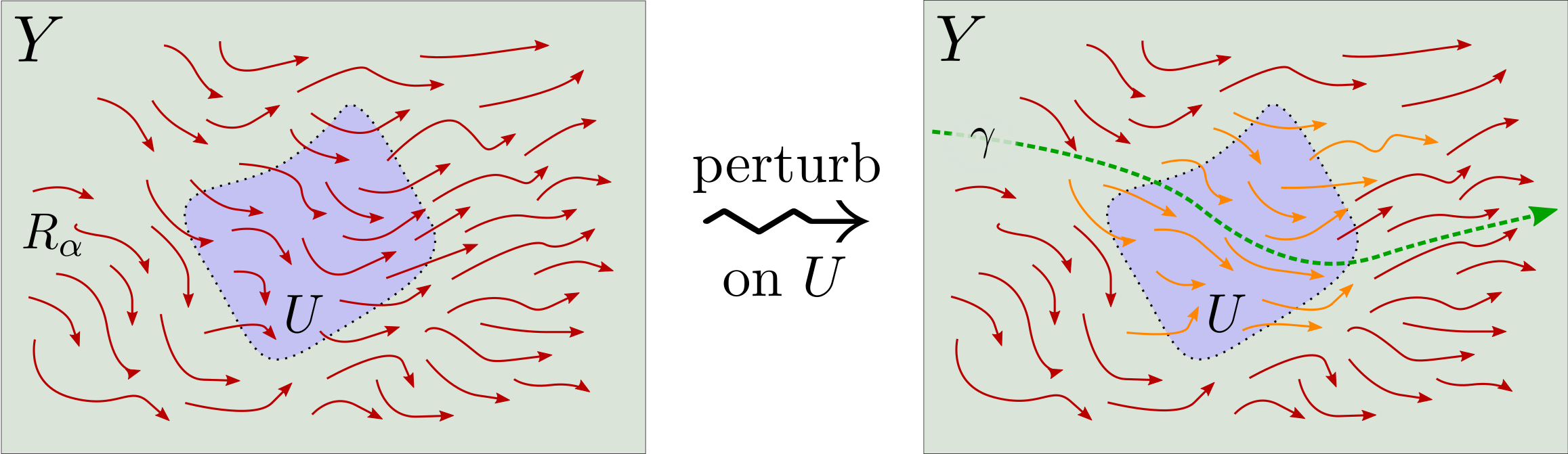}
\caption{Morally speaking, the strong closing property states that any positive perturbation of a Reeb flow supported in $U$ must produce a closed orbit $\gamma$ through $U$.}
\label{fig:closing_lemma}
\end{figure}

\subsection{Spectral Gaps} \label{subsec:spectral_gaps} The strong closing property for Reeb flows is a consequence of an abstract criterion on contact homology. In order to explain this criterion, let us briefly review the structure of contact homology (for a detailed discussion, see \S \ref{sec:contact_homology}). 

\vspace{3pt}

The contact homology of a closed contact manifold $(Y,\xi)$ with contact form $\alpha$ is a $\Z/2$-graded vector-space over $\Q$, denoted by
\[CH(Y,\xi) \qquad\text{with a filtration}\qquad CH^L(Y,\xi) \subset CH(Y,\xi) \qquad \text{determined by }\alpha.\]
If $\alpha$ is non-degenerate (i.e. if the linearized Poincar\'e return map of every closed Reeb orbit of $\alpha$ does not have $1$ as eigenvalue) then $CH(Y,\xi)$ can be computed as the homology of a dg-algebra freely generated by good Reeb orbits. The differential counts genus $0$ holomorphic curves in $\R \times Y$ with one puncture near $+\infty \times Y$ and any number of punctures near $-\infty \times Y$.

\vspace{3pt}

The contact homology algebra $CH(Y)$ comes with the additional structure of $U$-maps, which can be constructed as follows. An \emph{abstract constraint} $P$ of codimension $\on{codim}(P)$ is a graded map
\[P:CH(S^{2n-1},\xi_{\on{std}}) \to \Q[\on{codim}(P)].\]
Here $(S^{2n-1},\xi_{\on{std}})$ is the standard tight contact sphere. There is a filtered, graded map  associated to any abstract constraint $P$, denoted by
\[
U_P:CH(Y,\xi) \to CH(Y,\xi)[\on{codim}(P)].
\]
Intuitively, the \emph{$U$-map} $U_P$ counts holomorphic curves $C$ in $\R \times Y$ passing through a point $p \in \Sigma$ in a small codimension $2$ symplectic sub-manifold $\Sigma$ of $\R \times Y$, where the number of branches of $C$ through $p$ and order of tangency of $C$ at $\Sigma$ is determined by $P$. Rigorously, $U_P$ can be most easily constructed using the maps on contact homology induced by exact symplectic cobordisms (and this approach is related to the point constraint approach by Siegel \cite{sie_hig_19}).

\begin{figure}[h]
\centering
\includegraphics[width=.8\textwidth]{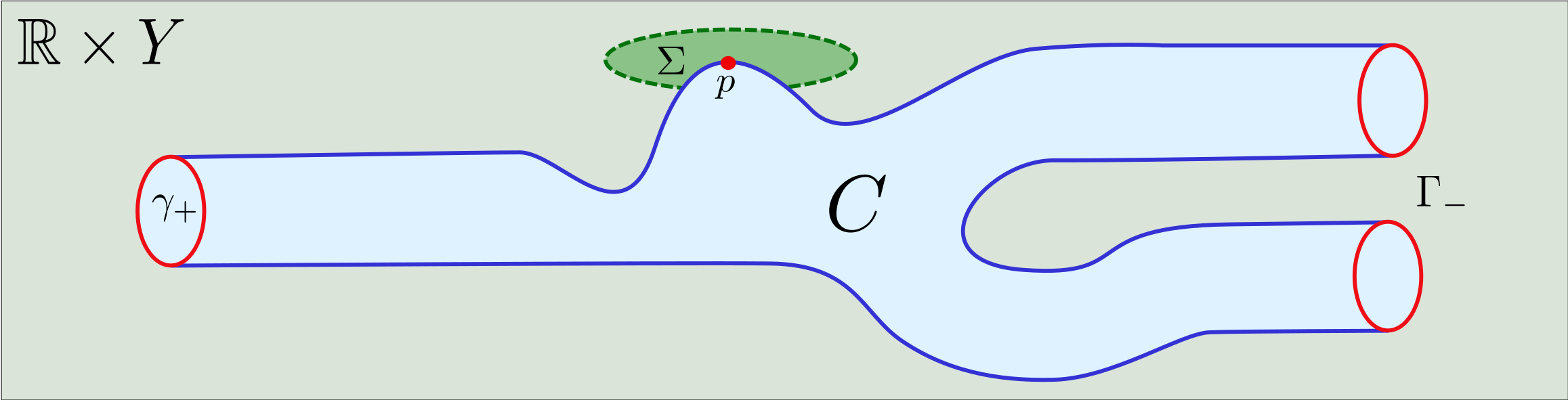}
\caption{A cartoon of a U-map curve in contact homology. In general, the tangency of the curve $C$ at the sub-manifold $\Sigma$ can be arbitrarily complex.}
\label{fig:u_map_curves}
\end{figure}

Floer homologies typically have associated \emph{spectral invariants} that track the minimal filtration at which a particular homology class appears. In symplectic geometry, these invariants have become pivotal tools in the study of quantitative and dynamical questions (cf. \cite{gh_sym_18,h2010,sie_hig_19,chs_hig_21}). There are spectral invariants associated to contact homology, denoted by
\[\mathfrak{s}_\sigma(Y,\alpha) \qquad\text{for each}\qquad \sigma \in CH(Y,\xi).\]
The $U$-map $U_P$ decreases this spectral invariant, in the sense that
\[\mathfrak{s}_{U_P\sigma}(Y,\alpha) \le \mathfrak{s}_\sigma(Y,\alpha) \qquad\text{ for any $\sigma$ and $P$}.\]
In particular, we can formulate an invariant that measures the minimal gap between the spectral invariants of a class $\sigma$ and $U_P\sigma$.

\begin{definition} \label{def:intro_ch_spectral_gap} The \emph{spectral gap} of a contact homology class $\sigma \in CH(Y,\xi)$ is given by
\[\gap_\sigma(Y,\alpha) := \inf_{P}\big\{\frac{\mathfrak{s}_\sigma(Y,\alpha) - \mathfrak{s}_{U_P\sigma}(Y,\alpha)}{\mathfrak{c}_P(B^{2n})}\big\} \in [0,\infty).\]
The normalizing constants $\mathfrak{c}_P(B^{2n})$ are certain symplectic capacities of the ball derived from the contact homology of its boundary. The \emph{contact homology spectral gap} of a closed contact manifold $Y$ with contact form $\alpha$ is given by
\[\gap(Y,\alpha) := \inf_\sigma\big\{\gap_\sigma(Y,\alpha)\big\}.\]\end{definition}

\noindent The criterion for the strong closing property using the spectral gap can now be stated as follows.

\begin{thm*} \label{thm:spectral_gap_to_closing_lemma_intro} Let $(Y,\xi)$ be a closed contact manifold with contact form $\alpha$, and suppose that
\[\gap(Y,\alpha) = 0.\]
Then $(Y,\alpha)$ satisfies the strong closing property.
\end{thm*}

\noindent This criterion is formulated in abstract terms in \cite{irie2022strong}. We will provide our own concrete discussion of this condition, along with properties of the spectral gap, in \S \ref{sec:contact_homology}.

\subsection{Spectral Gap Of Ellipsoids} \label{subsec:spectral_gap_of_ellipsoids} Given the spectral gap framework discussed above, we are naturally lead to the following question.

\begin{question*} \label{qu:spectral_gap} Let $(Y,\xi)$ be a closed contact manifold with non-trivial contact homology. Does the contact homology spectral gap vanish for any contact form $\alpha$?
\end{question*}

\noindent Even in simple cases, computing the $U$-map involves a difficult analysis of $J$-holomorphic curves, and so Question \ref{qu:spectral_gap} is extremely difficult. As a first step, Irie conjectured an affirmative answer to Question \ref{qu:spectral_gap} in the following family of examples of contact manifolds.

\vspace{3pt}

\begin{example}[Ellipsoids] An \emph{ellipsoid boundary} $(\partial E,\lambda|_{\partial E})$ is a contact manifold $\partial E$ given as the boundary of an ellipsoid  $E$ in $\C^n$ of the form
\[E = \{z \in \C^n \; : \; \langle z,Az\rangle \le 1\} \qquad\text{where} \qquad\text{$A$ is symmetric and positive definite.}\]
The contact form $\lambda|_{\partial E}$ is the restriction of the standard Liouville form $\lambda$ on $\C^n$.
\[\lambda = \frac{1}{2} \cdot \sum_{j=1}^n x_j dy_j - y_j dx_j.\]
The Reeb vector-field $R$ is given by $R(z) = 2JAz$ where $J:\R^{2n} \to \R^{2n}$ is multiplication by $i$. \end{example}

\begin{figure}[h]
\centering
\includegraphics[width=.9\textwidth]{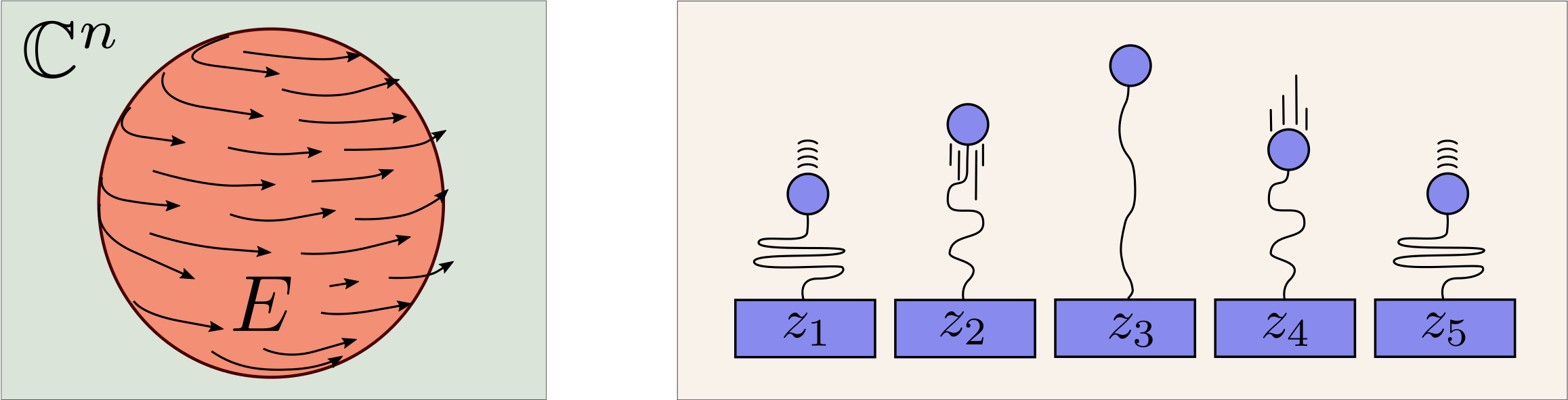}
\caption{Two different visualizations of the Reeb flow of an ellipsoid $E \subset \C^n$. On the left, a flow on the boundary of the higher-dimensional domain $E$. On the right, as the dynamics of $n$ independent harmonic oscillators (e.g. springs).}
\label{fig:harmonic_oscillator}
\end{figure}

\begin{remark}[Harmonic Oscillator] From the perspective of classical mechanics, the Reeb flow of an ellipsoid is the Hamiltonian dynamics of a harmonic oscillator on a fixed energy surface. 

\vspace{3pt}

Indeed, up to a linear symplectomorphism, every ellipsoid is equivalent to one of the form
\[
E(a) = E(a_1,\dots,a_n) := \{(z_1,\dots,z_n) \; : \; \pi \cdot \sum_i \frac{|x_i|^2 + |y_i|^2}{a_i} \le 1\} \qquad \text{where}\qquad 0 < a_1 \le \dots \le a_n.
\]
The Reeb flow on the boundary of $E(a)$ is simply the Hamiltonian flow of the Hamiltonian
\[
H:\R^{2n} \to \R \qquad\text{given by}\qquad H(x,y) = \pi \cdot \sum_i \frac{|x_i|^2 + |y_i|^2}{a_i}.
\]
This is precisely the Hamiltonian for $n$ independent harmonic oscillators with periods $a_1,\dots,a_n$. \end{remark}

Ellipsoid boundaries are some of the most well-studied contact manifolds (cf. \cite{chs_hig_21,sie_hig_19,gh_sym_18}) and have provided a useful testing ground for many conjectures. Irie's conjecture from \cite{irie2022strong} can be stated as follows.

\begin{conjecture*}[{\cite[{Conjecture 5.1}]{irie2022strong}}] \label{con:irie_ellipsoid_1} The boundary $(\partial E,\lambda|_{\partial E})$ of an ellipsoid $E \subset \C^n$ has the strong closing property.
\end{conjecture*} 

\subsection{Strong Closing Property For Ellipsoids} \label{subsec:main_results} The purpose of this paper is to prove Irie's conjecture via the following spectral gap result.

\begin{thm*} \label{thm:spectral_gap_intro} The boundary $(\partial E, \lambda|_{\partial E})$ of an ellipsoid $E \subset \C^n$ has vanishing spectral gap, and thus satisfies the strong closing property.
\end{thm*}

\begin{remark} After the posting of this paper, a different proof of the strong closing property for integrable systems was given by Xue \cite{x2022} using KAM theory. Our method of proof is different and applies to more general, non-integrable systems. See \S \ref{subsec:strong_closing_for_non_toric}. 
\end{remark}

\noindent Our approach to Theorem \ref{thm:spectral_gap_intro} has two parts: the periodic (or integer) case and the general case.

\subsubsection{Periodic Case} An \emph{integer ellipsoid} $E$ is an ellipsoid that is linearly symplectomorphic to a standard ellipsoid $E(a_1,\dots,a_n)$ where $a_1,\dots,a_n$ are all integers,
\[(a_1,\dots,a_n) \in \Z^n.\]
The Reeb flow of an ellipsoid with integer $a_i$ is periodic, i.e., the flow $\varphi_T$ is the identity for some $T$. The period is given by the least common multiple of $a_1,\dots,a_n$,
\[T = \lcm(a_1,\dots,a_n).\]

The strong closing property is automatically satisfied for these flows, since every point goes through a periodic orbit of period bounded by $T$. We first prove that this property is reflected in contact homology as follows.

\begin{thm*} \label{thm:periodic_spectral_gap_intro} Let $E = E(a) \subset \C^n$ be an ellipsoid with $(a_1,\dots,a_n) \in \Z^n$. Then, there is a contact homology class $\sigma \in CH(\partial E)$ and a $U$-map $U = U_P$ such that
\[
\mathfrak{s}_{U\sigma}(\partial E,\lambda|_{\partial E}) = \mathfrak{s}_{\sigma}(\partial E,\lambda|_{\partial E}) = \on{lcm}(a_1,\dots,a_n).
\]
\end{thm*}

\noindent
Theorem \ref{thm:periodic_spectral_gap_intro} is proven by a direct holomorphic curve calculation in contact homology. Let us briefly sketch the proof, as carried out in \S \ref{sec:moduli_space}.

\vspace{3pt}

\emph{Proof Sketch.} Note that $\partial E$ does not have a non-degenerate contact form. Instead, the contact form is Morse-Bott, and the set of closed Reeb orbits of a given period $T$ forms a sub-manifold
\[N_T \subset \partial E \qquad\text{with quotient}\qquad S_T := N_T/\R \qquad \text{by the Reeb flow}.\]
Any Morse-Bott contact form admits an arbitrarily small non-degenerate perturbation so that every closed Reeb orbit $\gamma$ of period less than some fixed $L > 0$ corresponds to a pair
\[(T,p) \qquad\text{where $p$ is a critical point of a Morse function }f:S_T \to \R.\]
The period of $\gamma$ is also approximately $T$. Note that $S_T$ is an orbifold in general, and so morally we must work with orbifold Morse functions (in the appropriate sense). Moreover, gradient flow lines $\eta$ between critical points on a fixed Morse-Bott family $S_T$ lift to holomorphic cylinders $u_\eta$ in the symplectization of $\partial E$ between the correspoinding orbits. 

\vspace{3pt}

When $T$ is the period of the Reeb flow, i.e., the least common multiple of $a_1,\dots,a_n$, the sub-manifold $N_T \subset \partial E(a)$ is simply the ellipsoid boundary $\partial E(a)$ itself and $S_T$ is a closed orbifold of dimension $2n-2$. If $p_{+}$ and $p_{-}$ are the unique maximum and minimum of a Morse function $f$ on $S_T$, then for any point $z \in S_T$, there is a unique gradient flow line
\[
\eta:\R \to S_T \qquad\text{from $p_{+}$ to $p_{-}$ passing through $z$.}
\]
This flow line lifts to a cylinder $u_\eta$ from the orbit $\gamma_{+}$ of $(T,p_{+})$ to the orbit $\gamma_{-}$ of $(T,p_{-})$ passing through a point whose projection to $S_T$ is $z$. 

\vspace{3pt}

On the other hand, there is a $U$-map $U_{P_0}$ that counts holomorphic curves satisfying a point constraint. Using intersection theory from \cite{siefring3d,moreno2019holomorphic} and Wendl's automatic transversality \cite{w2010}, we prove that the cylinder $u_\eta$ is unique and transversely cut out. Therefore,
\[U_{\on{P_0}}(\gamma_{+}) \qquad\text{has a non-zero $\gamma_{-}$ coefficient}.\]
The orbits $\gamma_{+}$ and $\gamma_{-}$ are both closed and non-exact in contact homology, and thus, the spectral invariant of $\sigma = [\gamma_{+}]$ and $U_{P_0}(\sigma)$ have the same action $T$, proving Theorem \ref{thm:periodic_spectral_gap_intro}. \qed

\begin{remark} In principle, one could use Morse-Bott formulations of contact homology (cf. \cite{b2002}) to compute the $U$-map in Theorem \ref{thm:periodic_spectral_gap_intro} directly using gradient flow lines (and holomorphic cascades more generally).  For the sake of completeness we provide here a direct analysis of the relevant moduli space and do not rely on the Morse-Bott formulation of contact homology.
\end{remark}

\subsubsection{General Case} The second step of our proof transfers the vanishing spectral gap of integer ellipsoids to general ellipsoids via the following approximation property. 

\begin{prop*}\label{prop:limit_of_gap0_intro} Let $(Y,\xi)$ be a closed contact manifold with a sequence of
\[\text{contact forms }\alpha_i, \qquad\text{homology classes }\sigma_i \in CH(Y,\xi), \qquad \text{and}\qquad \text{real numbers }\epsilon_i > 0. \]
Suppose that, as $i \to \infty$, these sequences satisfy
\[\alpha_i \le \alpha \le (1 + \epsilon_i) \cdot \alpha_i \qquad\text{and}\qquad \epsilon_i \cdot \s_{\sigma_i}(Y,\alpha_i) \to 0.\]
Then, the contact homology spectral gaps satisfy
\[\gap(Y,\alpha)\leq \liminf_{i\rightarrow\infty} \gap_{\sigma_i}(Y,\alpha_i).\] 
\end{prop*}

Proposition \ref{prop:limit_of_gap0_intro} can be proven in an entirely formal way from Definition \ref{def:intro_ch_spectral_gap}. On the other hand, elementary results in Diophantine approximation can be used to prove the following result.

\begin{prop*} \label{prop:approximating_ellipsoids_intro} Let $E$ be any ellipsoid. Then there exists a linear symplectomorphism $\phi:\C^n \to \C^n$ and a sequence of rational ellipsoids $E_i$ of period $T_i$ such that
\[
(1 - \epsilon_i) \cdot E_i \subset \phi(E) \subset (1 + \epsilon_i) \cdot E_i \qquad\text{and}\qquad \epsilon_i \cdot T_i \to 0.
\]
\end{prop*}
\noindent Rational ellipsoids are rescalings of integer ellipsoids and so by Theorem \ref{thm:periodic_spectral_gap_intro}, there exist classes 
\[\sigma_i \in CH(\partial E_i,\lambda|_{\partial E_i}) \qquad \text{with}\qquad \mathfrak{s}_{\sigma_i}(\partial E_i,\lambda|_{\partial E_i}) = T_i.\]
Theorem \ref{thm:spectral_gap_intro} is therefore an immediate consequence of Propositions \ref{prop:limit_of_gap0_intro} and \ref{prop:approximating_ellipsoids_intro}.

\subsection{Strong Closing Property For A Non-Integrable Flow}\label{subsec:strong_closing_for_non_toric} The methods of this paper are quite general, and can be applied to prove the strong closing lemma for flows that are very different from the ellipsoids. To illustrate this, in \S \ref{sec:non_toric_example} we will prove the strong closing property for a family of non-periodic contact forms on a non-integrable contact $5$-manifold $Y$. This family arises as a particular case of a construction by Albers-Geiges-Zehmisch \cite{agz2018}.

\begin{definition}\label{def:integrable_intro}
     We say that a Reeb flow on a contact manifold $Y$ of dimension $2n-1$ is \emph{integrable} if it generates an $S^1$ action that extends to a Hamiltonian $\T^{2n-1}$ action on $Y$.
\end{definition}
\vspace{3pt}

Consider the symplectic manifold $X = \C P^1 \times \C P^1$ equipped with the product symplectic form $\omega$. Let $A_1,A_2$ denote the cohomology classes Poincare dual to $[\C P^1 \times \on{pt}]$ and $[\on{pt} \times \C P^1]$ respectively. Consider the prequantization bundle
\[S \to X \quad\text{with}\quad c_1(S) = A_1 + A_2\]
As a prequantization space, $S$ admits a contact form $\alpha$ uniquely determined by $d\alpha = \pi^*\omega$. Moreover, $S$ admits a Hamiltonian circle action generated by the vector-field $X_H$ of the Hamiltonian
\[
H:X \to \R \quad\text{given by}\quad H(x,y) = \frac{\pi}{2}\Big(\frac{|x|^2}{(1 + |x|^2)} + \frac{|y|^2}{(1 + |y|^2)}\Big) + 1
\]
This action lifts to a circle action on $S$ generated a Reeb vector-field $R_2$ (for a contact form $\alpha_2$) that commutes with the Reeb vector-field $R_1$ of $\alpha$. This yields a contact $\T^2$-action
\[
\T^2 \curvearrowright (S,\xi)
\]
Moreover, any vector $a = (a_1,a_2) \in \mathfrak{t}^2 \simeq \R^2$ with $a_i > 0$ corresponds to a contact form $\alpha_a$ and a vector-field $R_a$ given by $a_1 \cdot R_1 + a_2 \cdot R_2$.

\vspace{3pt}

The space $S$ described above is toric (since the $S^1$-action generated by $H$ extends to a Hamiltonian $\T^2$-action). However, there is free $\Z_4$-action of $S$ lifting the $\Z_4$-action on $X$ generated by the following $4$-periodic map.
\[
\C P^1 \times \C P^1 \to \C P^1 \times \C P^1 \quad\text{with}\quad f(x,y) = (-y,x)
\]
The action on $S$ commutes with the $\T^2$-action and we can show that
\begin{lemma} \label{lem:non_toricity_intro} The quotient $Y = S/\Z_4$ does not admit a Hamiltonian $\T^3$-action extending the $\T^2$-action. 
\end{lemma}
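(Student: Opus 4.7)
The plan is to argue by contradiction: suppose $Y$ admits a Hamiltonian $\T^3$-action extending the given $\T^2$-action. Since $\T^2 \subset \T^3$ splits off a complementary factor, it suffices to show that no additional circle action $S^1 \curvearrowright Y$ commuting with $\T^2$ can exist. I first pass to the cover $S$. The Gysin sequence for the circle bundle $S \to X$, whose Euler class $A_1+A_2 \in H^2(X;\Z)$ is primitive over the simply-connected base $X = \C P^1 \times \C P^1$, gives $\pi_1(S) = 0$. So $S \to Y$ is the universal cover and $\pi_1(Y) = \Z_4$. The hypothetical circle action lifts to an $\R$-action on $S$ whose time-$2\pi$ map is a deck transformation, hence of order dividing $4$; the lifted flow is therefore periodic and defines a genuine $S^1$-action on $S$ whose infinitesimal generator $\tilde V$ commutes with $R_1$, $R_2$, and the $\Z_4$-deck action.

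Next I descend to $X$. Since $\tilde V$ commutes with $R_1$, which generates the fibers of $\pi\colon S \to X$, it pushes forward to a circle-generating vector field $V^X$ on $X$ commuting with both $X_H$ and the involution $f(x,y) = (-y,x)$. As $X$ is simply connected, $V^X = X_K$ is Hamiltonian; commutation of $V^X$ with $X_H$ gives that $\{K,H\}$ is a Casimir, hence constant, and since $X_H$ generates a circle action on the compact manifold $X$ this constant vanishes. So $K$ Poisson-commutes with $H$, which for the toric structure on $X$ forces $K = F(H_1,H_2)$ and $V^X = F_{H_1} X_{H_1} + F_{H_2} X_{H_2}$. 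On any regular $\T^2_{\text{std}}$-orbit the coefficients are constants, and the condition that $V^X$ generates an $S^1$-action forces these coefficients to be integers (after suitable normalization) on each orbit. Continuity in $(H_1,H_2)$ then forces $F_{H_1} = a$ and $F_{H_2} = b$ to be global constants, so $V^X = a X_{H_1} + b X_{H_2} \in \mathfrak{t}^2_{\text{std}}$.

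The $\Z_4$-equivariance now delivers the contradiction. A direct computation using $f(x,y) = (-y,x)$ shows that $f$ conjugates rotation of the first $\C P^1$-factor to rotation of the second, so $f_* X_{H_1} = X_{H_2}$ and $f_* X_{H_2} = X_{H_1}$. Requiring $f_* V^X = V^X$ therefore forces $a = b$, so $V^X = a(X_{H_1}+X_{H_2}) = aX_H$, which is already the descent of $aR_2$. Lifting back to $S$, the difference $\tilde V - aR_2$ is vertical and equals $g\cdot R_1$ for some function $g\colon S \to \R$; the relations $[R_1,\tilde V] = [R_2,\tilde V] = 0$ force $g$ to descend to an $X_H$-invariant function on $X$, and the uniform-period condition on the $S^1$-flow of $\tilde V$ forces $g$ to be constant. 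Thus $\tilde V \in \R\langle R_1, R_2\rangle$, and descending to $Y$ shows that the generator of the hypothetical extra circle already lies in the Lie algebra of the given $\T^2$, a contradiction.

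I expect the main technical obstacle to be the rigidity step of paragraph two: verifying that a Hamiltonian circle action on $X = \C P^1 \times \C P^1$ commuting with the diagonal subcircle of $\T^2_{\text{std}}$ is necessarily a subgroup of $\T^2_{\text{std}}$. This amounts to a careful integrality-and-continuity argument for the coefficients of $V^X$ across the foliation by regular $\T^2_{\text{std}}$-orbits, together with a small amount of bookkeeping at the four $\T^2_{\text{std}}$-fixed points, where two of these orbits degenerate. Once this rigidity is in hand, the $f$-equivariance contradiction reduces to elementary linear algebra on the integer weight lattice.
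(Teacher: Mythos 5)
Your proof has a genuine gap at the rigidity step in the second paragraph, and you have in fact put your finger on it only obliquely. The claim that $\{K,H\}=0$ forces $K=F(H_1,H_2)$ is false: Poisson-commuting with $H$ only says that $K$ is invariant under the \emph{diagonal} circle generated by $X_H$, not under the whole torus $\T^2_{\mathrm{std}}$. The invariants of the diagonal $S^1$ on $\C P^1\times\C P^1$ form a much larger algebra than the functions of $(H_1,H_2)$ — on a regular level $\{H=c\}\cong S^3$, an $S^1$-invariant function is just an arbitrary function on the reduced space $S^3/S^1\cong\C P^1$. Concretely, if $\phi$ is any Hamiltonian diffeomorphism commuting with the diagonal circle (e.g.\ the time-1 flow of a diagonally invariant $G$ not of the form $F(H_1,H_2)$), then $K=H_1\circ\phi$ generates a circle action, Poisson-commutes with $H$, and is not a function of $(H_1,H_2)$. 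So the ``integrality-and-continuity'' sub-argument you flag as the main obstacle is addressing the wrong worry: it kicks in only after the false reduction $K=F(H_1,H_2)$, and that reduction is where the proof breaks. The $\Z_4$-equivariance you invoke in paragraph three might ultimately be strong enough to rule out the exotic $K$'s, but nothing in your argument shows that, and making it work would require a rather different (and substantially harder) classification of $\Z_4$-equivariant Hamiltonian circle actions commuting with $X_H$.

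The paper's proof is much shorter and routes entirely through the quotient. It first establishes a lifting correspondence (Corollary~\ref{cor:toric_Y_vs_YmodS1}, built on Lemmas~\ref{lem:lifting_Hamiltonians} and~\ref{lem:torus_action_lift}): the Reeb circle action on a periodic contact $(2n+1)$-manifold $Y$ extends to an integrable torus action if and only if the symplectic orbifold $Y/S^1$ is integrable. Since $Y/S^1=(\C P^1\times\C P^1)/\Z_4$, the lemma then reduces to the statement that this symplectic orbifold is not toric, which is exactly the content of Singer's paper \cite{singer1999nontoric} (a cohomological argument on the orbifold). You might consider whether you can recover Singer's non-toricity statement on the base by elementary means; if you can, your first paragraph's lifting argument plus Corollary~\ref{cor:toric_Y_vs_YmodS1} would give a clean proof, whereas trying to constrain the extra circle directly on $\C P^1\times\C P^1$ (before quotienting) forces you to contend with the large centralizer of the diagonal $S^1$.
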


The contact form $\alpha_a$ descends to $Y$, and is periodic if and only if $a$ is (proportional to) a rational vector. By an analogous argument to \S \ref{subsec:main_results}, we prove that

\begin{thm*} \label{thm:non_toric_ex_intro} The contact manifolds $(Y,\alpha_a)$ satisfy the strong closing property for each $a \in (0,\infty)^2$.
\end{thm*}

\noindent In light of Lemma \ref{lem:non_toricity_intro}, we suspect that these Reeb flows cannot be approximated by integrable flows. If this is the case,  methods from integrable systems (e.g. the KAM theory methods of \cite{x2022}) cannot be applied to these flows.

\subsection{Generalizing Irie's Conjecture} We conclude this introduction by discussing several conjectures that are motivated by the results of this paper. These conjectures will be the subject of future work.

\vspace{3pt}

A contact form $\alpha$ on a contact manifold $(Y,\xi)$ is \emph{periodic} if the Reeb flow $\varphi:\R \times Y \to Y$ satisfies
\[\varphi_T = \id \qquad\text{for some time}\qquad T > 0.\]
The closed Reeb orbits of a periodic contact form on a closed manifold $Y$ form closed orbifolds
\[S_T := N_T/\R \qquad\text{where}\qquad N_T := \{y \in Y \; : \; \varphi_T(y) = y\}.\]
Every connected components $S$ of $S_T$  has an associated grading shift $\on{gr}_S$, given by the formula
\[\on{gr}_S := \indRS(S) - \frac{1}{2} \cdot \on{dim}(S) \mod 2.\]
Here $\on{RS}$ is the Robbin-Salamon index (see \S \ref{subsubsec:linearized_flow}) of the linearized flow along any Reeb orbit in $S$. To each component $S$, we also associate a filtered, graded vector-space 
\[
V(S) := H_{\bullet + \on{gr}_S}(S;\Q).
\]
We equip $V(S)$ with the homology grading shifted by $\on{gr}_S$ and the trivial filtration where $V^L(S)$ is $0$ if $L$ is less than the period of the orbits in $S$ and $V(S)$ otherwise.

\vspace{3pt}

Our first conjecture provides a simple formula for contact homology in the periodic setting.

\begin{conjecture*} \label{con:CH_of_periodic} Let $(Y,\xi)$ be a closed contact manifold with a periodic contact form $\alpha$. Then
\[
CH(Y,\xi) \simeq \on{Sym}\Big(\bigoplus_S V(S) \Big) \qquad\text{as filtered, graded vector-spaces}.
\]
\end{conjecture*}

\noindent We expect that Conjecture \ref{con:CH_of_periodic} can be proven without Morse-Bott theory using Pardon's formulation of contact homology \cite{p2015} using an $S^1$-localization argument, analogous to the proof of the Arnold conjecture given in \cite{p2016}. 

\vspace{3pt}

Our next conjecture generalizes Theorem \ref{thm:periodic_spectral_gap_intro} to general periodic contact forms.

\begin{conjecture*} \label{con:spectral_gap_periodic} Let $(Y,\xi)$ be a closed contact manifold with a periodic contact form $\alpha$ of period $T$. Then, there exists a class $\sigma \in CH(Y,\xi)$ and a $U$-map $U_P$ such that
\[
\mathfrak{s}_{U_P\sigma}(Y,\alpha) = \mathfrak{s}_{\sigma}(Y,\alpha) = T.
\]
\end{conjecture*} 

\noindent We expect that Conjecture \ref{con:spectral_gap_periodic} admits a similar proof to Theorem \ref{thm:periodic_spectral_gap_intro}. Namely, one should lift a gradient flow line on the orbifold of closed orbits $S = S_T$ between the maximum and minimum of an (orbifold) Morse function on $S$ to a $J$-holomorphic cylinder counted by the $U$-map and then verify uniqueness and transversality using intersection theory and automatic transversality.

\begin{remark} A key technical difficulty is our use of a flag of Reeb invariant contact sub-manifolds of $\partial E$ in Theorem \ref{thm:periodic_spectral_gap_intro}. A possible  analogue of this flag can be constructed using the orbifold Donaldon divisor construction \cite{gmz2022} on the orbifold quotient $Y/\R$. \end{remark}

We conclude by noting that the spectral gap provides a criterion for periodicity; a similar statement for ECH spectral invariants was proved by Cristofaro-Gardiner--Mazzuchelli \cite{cm2020}. The proof of this criterion appears in \S \ref{sec:contact_homology}. It follows from a simple modification of the formal arguments used to prove Theorem \ref{thm:spectral_gap_intro}.

\begin{thm*}[Periodicity Criterion] \label{thm:periodicity_criterion} Let $(Y,\xi)$ be a closed contact manifold with contact form $\alpha$ where
\begin{equation}\label{eq:zero_sigma_gap}
    \on{gap}_\sigma(Y,\alpha) = 0 \qquad\text{for some}\qquad \sigma \in CH(Y,\xi).
\end{equation}
Then the Reeb flow of $\alpha$ is periodic. \end{thm*}

\noindent Note that Conjecture \ref{con:spectral_gap_periodic} states that (\ref{eq:zero_sigma_gap}) is also a necessary condition for periodicity. Therefore, a slightly weaker version of Conjecture \ref{con:spectral_gap_periodic} can be reformulated as follows.

\newtheorem*{conj_reformulated}{Conjecture \ref{con:spectral_gap_periodic}'}
\begin{conj_reformulated} If $(Y,\xi)$ is a closed contact manifold with contact form $\alpha$, then the following are equivalent.
\begin{itemize}
    \item[(a)] The Reeb flow of the contact form $\alpha$ is periodic.
    \vspace{2pt}
    \item[(b)] There is a class $\sigma \in CH(Y,\xi)$ such that $\on{gap}_\sigma(Y,\alpha) = 0$.
\end{itemize}
\end{conj_reformulated}

We say that a contact form $\alpha$ on $(Y,\xi)$ is \emph{near periodic} if there is a sequence of periodic contact forms $\alpha_i$ of period $T_i$ and $\epsilon_i > 0$ such that 
\[\alpha_i \le \alpha \le (1 + \epsilon_i) \cdot \alpha_i \qquad\text{and}\qquad \epsilon_i \cdot T_i \to 0\]
The vanishing of the spectral gap of periodic contact forms can be transferred to near-periodic contact contact forms via the approximation result in Proposition \ref{prop:limit_of_gap0_intro}, as in the ellipsoid case. We expect many new examples of non-periodic Reeb flows that satisfy Irie's strong closing property to arise in this way.

\subsection*{Outline}
The paper is organized as follows. In Section~\ref{sec:contact_homology} we review necessary preliminaries from contact homology. Section~\ref{sec:spectral_gaps} contains the exposition of abstract constraints in contact homology and the spectral gap. It also contains the proofs of Theorem~\ref{thm:spectral_gap_to_closing_lemma_intro}, Proposition~\ref{prop:limit_of_gap0_intro} and Theorem~\ref{thm:periodicity_criterion}. Section~\ref{sec:moduli_space} contains an analysis of the moduli space counted by the $U_{P_1}$-map. In Section~\ref{sec:proof_of_main_thm} we use this analysis to prove Theorem~\ref{thm:periodic_spectral_gap_intro}. We also provide a proof of Proposition~\ref{prop:approximating_ellipsoids_intro} and thus conclude the proof of the closing property for all ellipsoids, as stated in Theorem~\ref{thm:spectral_gap_intro}. Section~\ref{sec:non_toric_example} contains a proof of Theorem~\ref{thm:non_toric_ex_intro}, the strong closing property for the non-integrable flows constructed in Section~\ref{subsec:strong_closing_for_non_toric}. 

\subsection*{Acknowledgements} We would like to thank Helmut Hofer for discussions that initiated this project. We also thank Lior Alon, Oliver Edtmair, Michael Hutchings, Agustin Moreno and Kyler Siegel for helpful conversations. JC was supported by the National Science Foundation under Award No. 2103165. ID was supported by the National Science Foundation under Grant No. DMS-1926686. RP was supported by the National Science Foundation under Award No. DGE-1656466. ST was supported by the AMIAS Membership at the Institute for Advanced Study and the Zuckerman Israeli Postdoctoral Scholarship.

\section{Contact Homology} \label{sec:contact_homology} In this section, we review the formalism of contact homology, which is a simple variant of  symplectic field theory originally introduced by Eliashberg-Givental-Hofer \cite{egh2000}.

\begin{remark} We will work with the transversality framework developed by Pardon \cite{p2015}, although all of the results discussed here should be independent of the specific transversality scheme. \end{remark}

\subsection{Reeb Orbits} We start by discussing some preliminaries about Reeb dynamics. Throughout this section, we fix a contact manifold $(Y,\xi)$ with a contact form $\alpha$.
We let $R$ denote the Reeb vector-field of $\alpha$ and $\varphi:\R \times Y \to Y$ denote the Reeb flow.

\subsubsection{Reeb Orbits} A closed or periodic \emph{Reeb orbit} $\gamma$ is a closed trajectory of the Reeb vector-field $R$, that is,
\[\gamma:\R/T\Z \to Y \qquad \text{satisfying}\qquad \frac{d\gamma}{dt} = R \circ \gamma.\]
Here $T$ is called the the \emph{period} or \emph{action} of $\gamma$, and for any Reeb trajectory one has
\[T = \mathcal{A}(\gamma) \qquad\text{where}\qquad \mathcal{A}(\gamma) := \int_\gamma \alpha. \]
Two Reeb orbits $\gamma$ and $\eta$ are \emph{equivalent} if they are related by translation in $t$, that is, if
\[\gamma(t + t_0) = \eta(t) \qquad\text{for some $t_0 \in \R$ and all $t \in \R$.}\]
Any closed trajectory $\gamma$ factors into a covering map $\phi$ and a \emph{simple} (i.e., injective) closed orbit $\eta$.
\[
\R/T\Z \xrightarrow{\phi} \R/T'\Z \xrightarrow{\eta} Y.
\]
The \emph{covering multiplicity} $\kappa_\gamma$ of $\gamma$ is the degree of the covering map $\phi$, namely, 
\begin{equation}
    \kappa_\gamma := \on{deg}(\phi).
\end{equation}

We will consider tuples of Reeb orbits, possibly with repetition, of the form
\[\Gamma = (\gamma_1,\dots,\gamma_N).\]
If $\Gamma$ consists of $m$ distinct orbits $\eta_1,\dots,\eta_m$ occurring with multiplicity $\mu_i$ in the sequence and having covering multiplicity $\kappa_i$, respectively, then we let 
\[
\mathcal{A}(\Gamma) = \sum_i \mathcal{A}(\gamma_i), \qquad \mu_\Gamma = \mu_1 \mu_2 \cdots \mu_m, \qquad\text{and}\qquad \kappa_\Gamma = \kappa_1 \kappa_2 \cdots \kappa_m.
\]

\subsubsection{Non-Degeneracy} Let $N_T$ denote the set of fixed points of the time $T$ Reeb flow.
\[
N_T := \{y \in Y \; : \; \varphi_T(y) = y\} \subset Y.
\]
We say that $N_T$ is $\emph{Morse-Bott}$ if it is a closed sub-manifold of $Y$ with tangent bundle given by
\[
TN_T = \on{ker}(d\varphi_T - \id)|_{N_T}.
\]
A \emph{Morse-Bott family} $S \subset N_T/S^1$ of Reeb orbits is a connected component of the quotient
$N_T/S^1$ for some period $T$.
Here $S^1 \simeq \R/T\Z$ acts on $N_T$ by the Reeb flow. Note that any Morse-Bott family is automatically an effective orbifold. As a special case, a Reeb orbit $\gamma$ is \emph{non-degenerate} if
\[\on{ker}(d\varphi_T - \id)|_\gamma = T\gamma = \on{span}(R).\]
That is, if $S = \gamma/S^1$ is a $0$-dimensional Morse-Bott family.

\vspace{3pt}

A contact form $\alpha$ is said to be \emph{non-degenerate below action $L$} if every closed Reeb orbit $\gamma$ with $\mathcal{A}(\gamma) \le L$ is non-degenerate. The form $\alpha$ is called \emph{non-degenerate} if every closed Reeb orbit is non-degenerate. Finally, $\alpha$ is \emph{Morse-Bott} if every closed Reeb orbit is in a Morse-Bott family.

\subsubsection{Linearized Flow And Indices} \label{subsubsec:linearized_flow} A \emph{trivialization} $\tau:\gamma^*\xi \simeq \C^{n-1}$ of $\xi$ along a Reeb orbit $\gamma$ is a trivialization of $\gamma^*\xi$ as a symplectic vector-bundle, that is, $\tau$ is a 1-parameter family of symplectic diffeomorphism $\tau_{\gamma(t)}: \xi_{\gamma(t)}\rightarrow \C^{n-1}$. The \emph{linearized flow} $\Phi_{\tau,\gamma}$ associated to $\gamma$ and $\tau$ is the path of symplectic matrices
\[
\Phi_{\tau,\gamma}:[0,T] \to \on{Sp}(2n-2) \quad\text{given by}\quad \Phi_{\tau,\gamma}(t) = \tau_{\gamma(t)} \circ d\varphi^R_t|_\xi \circ \tau_{\gamma(0)}^{-1}.
\]
This path depends on the trivialization, but if $\sigma$ and $\tau$ are isotopic trivializations, then the paths $\Phi_{\sigma,\gamma}$ and $\Phi_{\tau,\gamma}$ are isotopic via paths $\Phi^s:[0,T] \to \on{Sp}(2n-2)$ for $t \in [0,1]$ such that
\[
\on{rank}(\ker(\Phi^s(T) - \id)) \qquad\text{ is constant in } s.
\]

The path $\Phi_{\tau,\gamma}$ allows us to associate a Robbin-Salamon index (introduced by Robbin-Salamon in \cite{robbin1993maslov}) to any orbit with a trivialization. 

\begin{prop} \cite{gutt2014generalized} \label{prop:CZ} For each $n\geq 1$, there exists an integer valued \emph{Robbin-Salamon index} of paths of symplectic matrices
\[\on{RS}:C^0([0,1],\on{Sp}(2n)) \to \Z,\]
that is characterized by the following axioms.

\begin{itemize}
    \item[(a)] (Homotopy) If $\Phi^s:[0,1] \to \on{Sp}(2n)$ for $s \in [0,1]$ is a family of paths such that $\Phi^s(0)$ and $\Phi^s(1)$ are independent of $s$, then
    \[\indRS(\Phi^0) = \indRS(\Phi^1).\]
    \item[(b)] (Additive) $\indRS$ is additive under concatenation and direct sum, that is,
    \[\indRS(\Phi * \Psi) = \indRS(\Phi) + \indRS(\Psi) \qquad \text{and}\qquad \indRS(\Phi \oplus \Psi) = \indRS(\Phi) + \indRS(\Psi).\]
    \item[(c)] (Vanishing) If $\Phi:[0,1] \to \on{Sp}(2n)$ is such that $\on{rank}(\Phi(t) - \id)$ is constant in $t$, then $\indRS(\Phi) = 0$.
\end{itemize}
\end{prop}

\noindent The Robbin-Salamon index generalizes the Conley-Zehnder index, in the sense that $\indRS(\Phi) = \CZ(\Phi)$ when $\on{ker}(\Phi(1) - \id)$ is $0$-dimensional and $\Phi(0)$ is the identity. 

\vspace{3pt}

For sufficiently nice paths, the Robbin-Salamon index can be explicitly computed using crossings of the Maslov cycle. To be precise, for a given path $\Phi$ and any $t \in [0,1]$, let $\Gamma_t$ be the symmetric bilinear form on $\on{ker}(\Phi(t) - \id)$ given by
\[
\Gamma_t(v,w) = \omega_0 \left(\frac{d\Phi}{dt}(t)\Phi^{-1}(t)v,w \right).
\]
Let $\on{sign}(\Gamma_t)$ be $-1$ raised to the power of the signature of $\Gamma_t$. A \emph{crossing} is a time $t \in [0,T]$ such that $\on{det}(\Phi(t) - \id) = 0$. A crossing $t$ is \emph{non-degenerate} if $\Gamma_t$ is non-degenerate. When every crossing time of $\Phi$ is non-degenerate, the Robbin-Salamon index is given by
\begin{equation} \label{eq:RS_def}
\indRS(\Phi) = \frac{1}{2}\on{sign}(\Gamma_0) + \sum_{0 < t < T} \on{sign}(\Gamma_t) + \frac{1}{2}\on{sign}(\Gamma_T)\end{equation}
where the sum is over all crossings of $\Phi$. 

\vspace{3pt}

Let $\gamma$ be an orbit lying in a Morse-Bott family of Reeb orbits $S$, and let $\tau$ be a trivialization of the contact structure $\xi$ on a neighborhood of $\gamma$. The \emph{Robbin-Salamon index} of a $\gamma$ with respect to  $\tau$ is given by
\[\indRS_\tau(\gamma) = \indRS(\Phi_{\tau,\gamma})\]
where $\Phi_{\tau,\gamma}$ is the linearized flow  along $\gamma$, restricted to $\xi$ and trivialized by $\tau$. We will sometimes denote $\indRS_\tau(\gamma)$ by $\indRS_\tau(S)$ in order to distinguish the index of a degenerate orbit from the index of a  non-degenerate perturbation of it. We remark that the pairity of the RS index is independent of the choice of trivialization\footnote{This follows from the loop property of the CZ index (see e.g. \cite{gutt2014generalized}) and the concatenation property of the RS index}. If $\gamma$ is non-degenerate, then we define the \emph{Conley-Zehnder index} of $\gamma$ with respect to $\tau$ as
\[
\CZ_\tau(\gamma) := \indRS(\Phi_{\tau,\gamma}).
\]
Finally, the \emph{SFT grading} $|\gamma|_\tau$ is given by
\[
|\gamma|_\tau = (n-3) + \on{CZ}_\tau(\gamma) \mod 2
\]
where $\on{dim}(Y) = 2n - 1$. 

\subsection{Holomorphic Buildings} We next establish basic notation for $J$-holomorphic curves and,  more generally, $J$-holomorphic buildings.

\subsubsection{Symplectic Cobordisms And Liouville Domains} \label{subsec:symplectic_cobordisms} A \emph{symplectic cobordism} $X:Y_+ \to Y_-$ between closed contact manifolds $(Y_+,\alpha_+)$ and $(Y_-,\alpha_-)$ is a compact symplectic manifold $(X,\omega)$ with boundary such that
\[\partial X = Y_+ \cup (-Y_-) \qquad\text{and}\qquad \omega|_{TY_\pm} = d\alpha_\pm.\]
The symplectic cobordism $X$ is \emph{exact} if $\omega = d\lambda$ where $\lambda|_{TY_\pm} = \alpha_\pm$. A \emph{deformation} of exact cobordisms is simply a smooth $1$-parameter family of exact cobordisms $(X,\lambda_t):Y_+ \to Y_-$ parametrized by $t \in [0,1]$. Two exact cobordisms are \emph{deformation equivalent} if they are (up to isomorphism) connected by a deformation.

\vspace{3pt}

Given symplectic cobordisms $X:Y_1 \to Y_2$ and $X':Y_2 \to Y_3$, there is a well-defined \emph{composition} given by
\[X \circ X' := X \cup_{Y_1} X'.\]
This cobordism inherits a symplectic cobordism structure (induced by a standard collar neighborhood of the boundary) and is exact if $X$ and $X'$ are exact.

\vspace{3pt}

Any symplectic cobordism $X$ can be completed to a non-compact, cylindrical manifold called the \emph{completion} of $X$, denoted by
\[\hat{X} = (-\infty,0] \times Y_- \cup_{Y_-} X \cup_{ Y_+} [0,\infty) \times Y_+.\]
As a special case, if $(Y,\alpha)$ is a contact manifold with contact form, we have a trivial exact cobordism
\[
[a,b]_r \times Y:(Y,e^b\alpha) \to (Y,e^a\alpha) \qquad\text{with Liouville form}\qquad \lambda = e^r\alpha.
\]
The completion of the trivial cobordism is called the \emph{symplectization} and is denoted by $\hat{Y}$. 

\vspace{3pt}

A \emph{Liouville domain} $(W,\lambda)$ is an exact symplectic cobordism from $\partial W$ to the emptyset. If $X$ is an exact symplectic manifold with Liouville form $\theta$, then a symplectic embedding $\iota:W \to X$ from a Lioiville domain is called \emph{exact} if
\[\iota^*\theta = \lambda\]
More generally, $\iota$ is \emph{weakly exact} if
\[[\lambda|_{\partial W} - \iota^*\theta|_{\partial W}] = 0 \in H^1(\partial W;\R).\]
For any weakly exact embedding $\iota:W \to X$, the Liouville form $\theta$ of $X$ is homotopic through Liouville forms in a neighborhood of $W$ so that $\iota$ is exact. After such a deformation, $X \setminus W$ is a symplectic cobordism
\[X:Y_+ \to Y_- \cup \partial W\]
and we may write $X = (X \setminus W) \circ (W \cup [0,1] \times Y_-)$ (up to deformation).

\subsubsection{Homology Classes} Given a symplectic cobordism $X:Y_+ \to Y_-$ and sequences of Reeb orbits $\Gamma_\pm$ in $Y_\pm$, let
\[\Xi_\pm \subset Y_\pm\]
denote the $1$-manifold in $Y_\pm$ given as the union of the underlying simple orbits of $\Gamma_\pm$. We may identify $\Xi_+ \cup \Xi_-$ as a sub-manifold of $\partial X$. We denote the following subset of the relative homology by
\[
S(X;\Gamma_+,\Gamma_-) := \{A \in H_2(X,\Xi_+ \cup \Xi_-) \; : \; \partial A = [\Gamma_+] - [\Gamma_-] \in H_1(\partial X)\}.
\]
Given a homology class $A \in S(X;\Gamma_+,\Gamma_-)$ and a trivialization $\tau:\xi|_{\Gamma_+ \cup \Gamma_-} \simeq \R^{2n-2}$ of $\xi$ over the collections of Reeb orbits $\Gamma_+$ and $\Gamma_-$, there is a well-defined \emph{relative Chern number} (Definition 5.1 in \cite{wendlsymplectic})
\[c_1(A,\tau) \in \Z.\]
Moreover, given a choice of genus $g$, there is a well-defined \emph{Fredholm index} given by
\[\ind(A,g) := (n-3) \cdot (2 - 2g - |\Gamma_+| - |\Gamma_-|) + c_1(A,\tau) + \sum_{\gamma_+ \in \Gamma_+} \CZ_\tau(\gamma_+) - \sum_{\gamma_- \in \Gamma_-} \CZ_\tau(\gamma_-).\]

Note that if $X:Y_0 \to Y_1$ and $X':Y_1 \to Y_2$ are symplectic cobordisms, then there is a map of homology classes in $X$ and $X'$ to homology classes in the composition
\[
S(X;\Gamma_0,\Gamma_1) \times S(X';\Gamma_1,\Gamma_2) \to S(X \circ X';\Gamma_0,\Gamma_2) \qquad \text{denoted by}\qquad (A,A') \mapsto A + A'.
\]
This map is associative. Moreover, the Chern class and Fredholm index are both additive with respect to this operation.

\subsubsection{Complex Structures} A \emph{compatible almost complex structure} on a contact manifold $(Y,\xi)$ is a bundle endomorphism $J:\xi \to \xi$ such that
\[J^2 = -\id \qquad\text{and}\qquad d\alpha(-, J-)|_\xi \text{ is a metric for any contact form }\alpha.\]
Any such almost complex structure extends to an almost complex structure $\hat{J}$ on $\hat{Y}$ by
\[\hat{J}|_\xi = J \qquad\text{and}\qquad \hat{J}(\partial_r) = R \qquad\text{where}\qquad \hat{Y} = \R_r \times Y. \]
Likewise, for $J_\pm$ compatible almost complex structures on $Y_\pm$, a \emph{compatible almost complex structure} $\hat{J}$ on the completion $\hat{X}$ of a symplectic cobordism $X:Y_+ \to Y_-$ is a bundle endomorphism $\hat{J}:T\hat{X} \to T\hat{X}$ such that
\[\hat{J}^2 = -\id,\qquad \omega(-, \hat{J}-)|_X \text{ is a metric on }X, \qquad \hat{J}|_{(-\infty,0] \times Y_-} = \hat{J}_-, \qquad\text{and }  \hat{J}|_{[0,\infty) \times Y_+} = \hat{J}_+.\]

\subsubsection{Holomorphic Maps} Fix a symplectic cobordism $X:Y_+ \to Y_-$ and a compatible almost complex structure $\hat{J}$ on $\hat{X}$. Consider a closed Riemann surface
\[(\bar{\Sigma},j) \qquad \text{with a finite set of punctures}\qquad P \subset \bar{\Sigma}.\]
A \emph{$J$-holomorphic map} from the punctured surface $\Sigma := \bar{\Sigma} \setminus P$ to the symplectization $(\hat{X},\hat{J})$ is a map
\[u:\Sigma \to \hat{X} \qquad\text{satisfying}\qquad J \circ du = du \circ j.\]
The \emph{$\lambda$-energy} $\mathcal{E}_\lambda(u)$ and $\emph{area}$ $\mathcal{A}(u)$ of a $\hat{J}$-holomorphic map are defined as follows.
\begin{align*}
\mathcal{E}_\lambda(u) & = \sup_{\phi_-} \int_{u^{-1}((-\infty,0) \times Y)} u^*(\phi_- ds \wedge \alpha_-)) + \sup_{\phi_+} \int_{u^{-1}((0,\infty) \times Y)} u^*(\phi_+ ds \wedge \alpha_+)),\\
\mathcal{A}(u) & = \int_{u^{-1}((-\infty,0) \times Y)} u^*(d\alpha_-) + \int_{u^{-1}(X)} u^*\omega + \int_{u^{-1}((0,\infty) \times Y)} u^*(d\alpha_+).
\end{align*}
Here the supremums are over compactly supported functions $\phi_-:(-\infty,0] \to [0,\infty)$  and $\phi_+:[0,\infty) \to [0,\infty)$ with integral $1$. The \emph{energy} $\mathcal{E}(u)$ is simply the sum
\[\mathcal{E}(u) = \mathcal{E}_\lambda(u) + \mathcal{A}(u).\]
A holomorphic map is said to have \emph{finite energy} if $\mathcal{E}(u)$ is well-defined. Any finite energy, proper holomorphic map $u$ is asymptotic to sequences of closed Reeb orbits $\Gamma_\pm$ as $r\to \pm\infty$,
$u \to \Gamma_\pm$ as $\pm \infty$.
To be precise, for each puncture $p \in P$, there is a neighborhood $U$ of $p$ and a holomorphic chart $\phi:[0,\infty)_s \times (S^1)_t \simeq U \setminus p \subset \Sigma$ such that
\[u \circ \phi([0,\infty) \times S^1) \subset [0,\infty) \times Y_+ \qquad\text{or}\qquad u \circ \phi([0,\infty) \times S^1)) \subset (-\infty,0] \times Y_-,\]
and
\[\pi_\R \circ u\circ\phi(s,-) \xrightarrow[s\rightarrow\pm\infty]{} \pm \infty 
,\qquad \pi_Y \circ u \circ \phi(s,-) \xrightarrow[s\rightarrow\pm\infty]{C^0} \gamma(\pm T-) \qquad\text{as a map}\quad S^1 \to Y_\pm.\]
Here $\gamma$ is a closed Reeb orbit of $Y_\pm$, $\pi_\R:\hat{X} \to \R$ is the projection to the $\R$-factor, and $\pi_Y:\hat{X} \to Y_\pm$ is the projection to the $Y_\pm$-factor where both projections are defined on the cylindrical ends of $\hat{X}$. Hence, Stokes theorem implies that
\[
\mathcal{A}(u) = \mathcal{A}(\Gamma_+) - \mathcal{A}(\Gamma_-).
\]
Since the area of holomorphic maps is always non-negative, this implies that $\mathcal{A}(\Gamma_+) \ge \mathcal{A}(\Gamma_-)$. Finally, any holomorphic curve $u$ from $\Gamma_+$ to $\Gamma_-$ represents a class
\[[u] \in S(X;\Gamma_+,\Gamma_-)\]
acquired as the fundamental class of the composition $\pi \circ u$ where $\pi:\hat{X} \to X$ is the continuous map sending $X$ to itself and the ends $(-\infty,0) \times Y_-$ and $(0,\infty) \times Y_+$ to $Y_-$ and $Y_+$, respectively.

\vspace{3pt}

\subsubsection{Asymptotic Markers and Matchings} Consider a cobordism $X:Y_+ \to Y_-$ and equip each simple Reeb orbit $\eta$ in $Y_+$ and $Y_-$ with a basepoint $b_\eta \in \eta \subset Y_\pm$. 

\vspace{3pt}

Given a finite energy holomorphic map $u:\Sigma \to \hat{X}$ asymptotic to a Reeb orbit $\gamma$ at a puncture $p$, let $S_p$ denote the unit circle bundle at the puncture $p$
\[S_p := (T_p\bar{\Sigma} \setminus 0)/\R_+.\]
The complex structure on $\bar{\Sigma}$ induces an $S^1$-action on $S_p$ and there is a natural map of the form
\[\pi_{u,p}:S_p \to \eta ,\qquad\text{where $\eta$ is the simple orbit of $\gamma$}.\]
An \emph{asymptotic marker} $m_\gamma$ at a puncture $p \in P$ is a choice of element
\[m_\gamma \in S_p, \qquad\text{such that}\qquad \pi_{u,p}(m_\gamma) = b_\eta.\]
Given two holomorphic curves $u$ and $v$ asymptotic to $\gamma$ at punctures $p$ and $q$, respectively, a \emph{matching isomorphism} $\mu$ between their ends is a map
\[\mu:S_p \to S_q, \qquad\text{such that}\qquad \pi_{v, q}\circ\mu(\pi_{u,p}^{-1}(b_\eta)) = b_\eta.\]
A holomorphic map $u$ is said to be \emph{equipped with asymptotic markers} if each of its punctures is. Note that a biholomorphism $\phi: \Sigma \to \Sigma'$ induces a map on the set of asymptotic markers and matching isomorphisms along the punctures. We denote these maps by $\phi_*$. 

\vspace{3pt}

\subsubsection{Holomorphic Curves} Given an integer $g$, a homology class $A \in S(X;\Gamma_+,\Gamma_-)$, and tuples of Reeb orbits $\Gamma_\pm$ in $Y_\pm$, we have an associated \emph{moduli space} of finite energy $J$-holomorphic curves
\[
\mathcal{M}_{g,A}(X,J;\Gamma_+,\Gamma_-) \qquad\text{or}\qquad \mathcal{M}_{g,A}(\hat{X},\hat{J};\Gamma_+,\Gamma_-).
\]
The points in this moduli space are $J$-holomorphic maps $u:\Sigma \to \hat{X}$ equipped with asymptotic markers that satisfy
\[g(\bar{\Sigma}) = g, \qquad [u] = A \quad\text{and}\quad u \to \Gamma_\pm \text{ as } r\to \pm \infty,\]
modulo the relation that $u$ is equivalent to $u'$ if there is a biholomorphism $\phi:\Sigma' \simeq \Sigma$ such that
\[u' = u \circ \phi\]
and the induced $\phi^*$'s respect the asymptotic markers. We refer to such an equivalence class $u$ as a \emph{$J$-holomorphic curve} (with asymptotic markers). 

\vspace{3pt}

In the case where $\hat{X} = \hat{Y}$ is the symplectization of a contact manifold $Y$ and $\hat{J}$ is the symplectization of a compatible almost complex structure on $\xi$, the moduli space admits a natural $\R$-action given by $\R$-translation in $\R \times Y$. Then we adopt the notation
\[
\mathcal{M}_{g,A}(Y,J;\Gamma_+,\Gamma_-) := \mathcal{M}_{g,A}(\R \times Y,\hat{J};\Gamma_+,\Gamma_-)/\R.
\]
\begin{example}A \emph{trivial cylinder} in a symplectization $(\hat{Y},\hat{J})$ is any map $u:\R \times S^1 \to \hat{Y}$ of the form $u(s,t) = (s,\gamma(T \cdot t))$ where $\gamma$ is a closed Reeb orbit of period $T$. 
\end{example}

\subsubsection{Holomorphic Buildings} A \emph{$J$-holomorphic building} $\bar{u}$ in a contact manifold $Y$ from $\Gamma_+$ to $\Gamma_-$ is a finite sequence of orbit tuples
\[\Gamma_1,\dots,\Gamma_m \qquad\text{with}\qquad \Gamma_+ = \Gamma_1 \quad\text{and}\quad\Gamma_- = \Gamma_m,\]
and a sequence of finite energy $J$-holomorphic maps in $\hat{Y}$, called the \emph{levels} of $\bar{u}$, denoted by
\[u_i:\Sigma_i \to \hat{Y} \qquad\text{with}\qquad u_i \to \Gamma_{i+1} \text{ at }-\infty \quad\text{and}\quad u_i \to \Gamma_{i} \text{ at }+\infty,\]
where each level $u_i$ is non-trivial, i.e., not a union of trivial cylinders. Moreover, the levels asymptotic to $\Gamma_i$ are equipped with asymptotic markers for all $i\in\{1,\dots,m\}$, and the (pairs of) punctures of the levels asymptotic to orbits in $\Gamma_i$ for $i = 2,\dots,m-1$ are equipped with matching isomorphisms.

\vspace{3pt}

Two buildings $\bar{u}$ and $\bar{v}$ are equivalent if, up to $\R$-translations, there is a biholomorphism of domains on each level respecting the holomorphic map, asymptotic markers and matching isomorphisms. 

\vspace{3pt}

Any building $\bar{u}$ has a well-defined genus $g = g(\bar{u})$ and homology class $[\bar{u}] = A$ determined by gluing the curves along the matching punctures. In particular, the homology class is given by
\[A = \sum_i A_i \in S(Y;\Gamma_+,\Gamma_-).\]
The moduli space of equivalence classes of $J$-holomorphic buildings in $Y$ from $\Gamma_+$ to $\Gamma_-$ of genus $g$ and homology class $A$ is denoted by
\[\overline{\mathcal{M}}_{g,A}(Y,J;\Gamma_+,\Gamma_-).\]

More generally, a $J$-holomorphic building $\bar{u}$ in a symplectic cobordism $X:Y_+ \to Y_-$ from $\Gamma_+$ to $\Gamma_-$ is a sequence of Reeb orbit tuples
\[\Gamma^+_1,\dots,\Gamma^+_a,\Gamma^-_1,\dots,\Gamma^-_b \qquad\text{with}\qquad \Gamma_+ = \Gamma^+_1\text{ and }\Gamma_- = \Gamma^-_b\]
and a sequence of finite energy $J$-holomorphic maps of the form
\begin{align*}
u^\pm_i&:\Sigma^\pm_i \to \hat{Y}_\pm, \qquad\text{with}\qquad u^\pm_i\to \Gamma^\pm_i \text{ at }+ \infty \quad\text{and}\quad u^\pm_i \to \Gamma^\pm_{i+1} \text{ at }- \infty,\\
u_X&:\Sigma_X \to \hat{X}, \qquad\text{with}\qquad u_X\to \Gamma^+_a \text{ at }+\infty \quad\text{and}\quad u_X \to \Gamma^-_1 \text{ at }-\infty,
\end{align*}
equipped with the same asymptotic markers and matching isomorphisms as in the symplectization case. Equivalence of a pair of buildings is defined as in the symplectization case, but we only quotient by the $\R$-direction in the symplectization levels. The moduli space of $J$-holomorphic buildings in $X$ from $\Gamma_+$ to $\Gamma_-$ of genus $g$ and homology class $A$ is denoted by
\[\overline{\mathcal{M}}_{g,A}(X,J;\Gamma_+,\Gamma_-).\]
The moduli spaces $\overline{\mathcal{M}}_{g,A}(Y,J;\Gamma_+,\Gamma_-)$ and $\overline{\mathcal{M}}_{g,A}(X,J;\Gamma_+,\Gamma_-)$ admit a Gromov topology described by \cite[\S 9.1]{sftCompactness}. Moreover, both spaces are compact \cite[\S 10.1]{sftCompactness}.

\begin{figure}
\centering
\includegraphics[width=\textwidth]{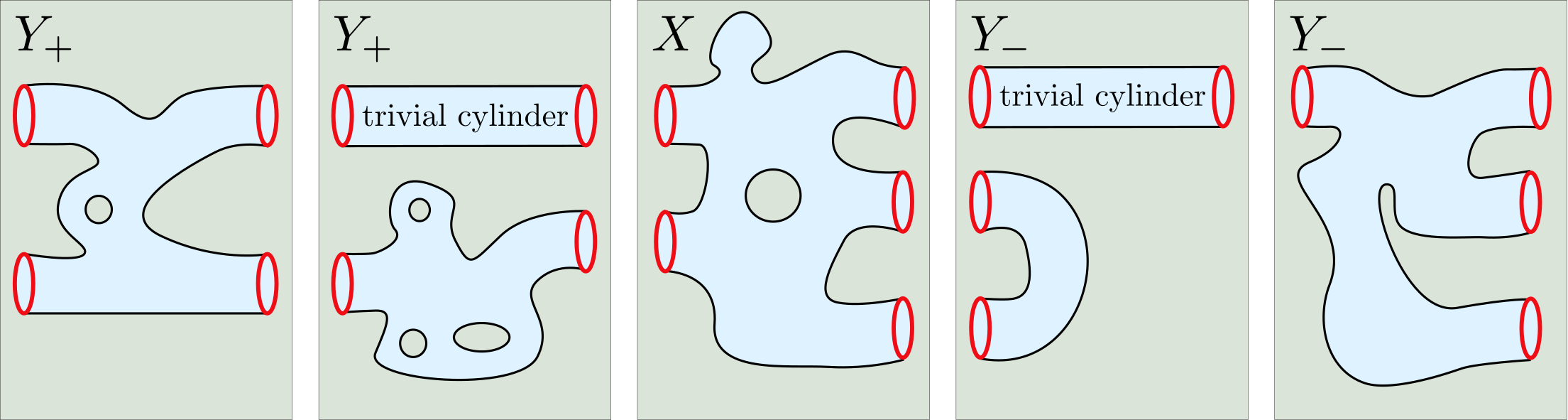}
\caption{A simplified picture of a possible holomorphic building. Note that all of the holomorphic buildings and curves of interest in this paper will be genus $0$.}
\label{fig:sft_building}
\end{figure}

\subsubsection{Marked Moduli Spaces} $J$-holomorphic curves and buildings can be decorated to include marked points. More precisely, we can formulate a moduli space
\[\mathcal{M}_{g,A,m}(X,J;\Gamma_+,\Gamma_-).\]
The points in this moduli space are genus $g$ $J$-holomorphic maps $u:\Sigma \to \hat{X}$ in homology class $A$ (with asymptotic markers) and an ordered tuple of $m$ \emph{marked points} $s_1,\dots,s_m$ in $\Sigma$, modulo reparametrizations that respect the marked points. There is an \emph{evaluation map}
\[\on{ev}:\mathcal{M}_{g,A,m}(X,J;\Gamma_+,\Gamma_-) \to \hat{X}^m\]
that takes an equivalence class $[u,s_1,\dots,s_m]$ to the point $(u(s_1),\dots,u(s_m)) \in \hat{X}^m$. One may also form a moduli space of buildings
\[\overline{\mathcal{M}}_{g,A,m}(X,J;\Gamma_+,\Gamma_-)\]
of J-holomorphic buildings where each level is a $J$-holomorphic curve with marked points and the total number of marked points over all the levels is $m$. This moduli space is compact with respect to the topology in \cite{sftCompactness}.

\subsubsection{Parametric Moduli Spaces} \label{subsubsec:parametric_moduli_spaces} Let $P$ be a compact manifold with boundary and let $J_P$ be a $P$-parameter family of compactible complex structures on $X$, consisting of a compatible complex structure $J_p$ for each $p \in P$. 
\vspace{3pt}

There is a \emph{parametric moduli space} of $J_p$-holomorphic curves ranging over all $p$, namely,
\[\mathcal{M}_{g,A,m}(X,J_P;\Gamma_+,\Gamma_-) := \bigcup_{p \in P} \{p\} \times  \mathcal{M}_{g,A,m}(X,J_P;\Gamma_+,\Gamma_-)\]
That is, a point in this moduli space is a pair consisting of a point $p \in P$ and a $J_p$-holomorphic curve (with marked points). Likewise, there is a compactified moduli space of buildings
\[\overline{\mathcal{M}}_{g,A,m}(X,J_P;\Gamma_+,\Gamma_-) := \bigcup_{p \in P} \{p\} \times  \overline{\mathcal{M}}_{g,A,m}(X,J_P;\Gamma_+,\Gamma_-).\]
These parametric moduli spaces inherit evaluation maps $\on{ev}$ and an additional continuous projection map
\[\pi:\overline{\mathcal{M}}_{g,A,m}(X,J_P;\Gamma_+,\Gamma_-) \to P, \qquad (p,u) \mapsto p\]
given by projection to the $P$-factor. We will consider parametric moduli spaces of buildings with marked points in \S \ref{subsec:Umap_moduli_space_vs_point_moduli_space}. 

\subsubsection{Generic Tranversality} \label{subsubsec:generic_transversality} Let $P$ be a compact manifold with boundary $\partial P$ and let $J_P:=\{J_p\}_{p\in P}$ be a $P$-family of compatible almost complex structures on $X$. We now briefly review some generic transversality results that are standard in the literature on SFT. 

\vspace{3pt}

Recall that a point $(p,[u,s_1,\dots,s_m])$ in the moduli space
\[\mathcal{M}_{g,A,m}(X,J_P;\Gamma_+,\Gamma_-)\]
is called \emph{parametrically regular} or \emph{parametrically transverse} if the parametric linearized operator $D_{u,p}$, incorporating both variations in the map $u$ and variations in the parameter space $P$, is surjective. 

\begin{prop} (cf. \cite[Thm. 7.1 and Rmk. 7.4]{wendlsymplectic}) \label{prop:moduli_regularity} The set of parametrically regular points
\[
\mathcal{M}_{\on{reg}} \subset \mathcal{M}_{g,A,m}(X,J_P;\Gamma_+,\Gamma_-)
\]
is an open set and a smooth orbifold of dimension
\[\on{dim}(\mathcal{M}_{\on{reg}}) = \on{vdim}(\mathcal{M}_{g,A,m}(X,J_P;\Gamma_+,\Gamma_-)) := \on{ind}(A,g) + \on{dim}(P) + 2m.\]
The local isotropy group at an orbifold point $(s,[u,s_1,\dots,s_m]) \in \mathcal{M}_{\on{reg}}$ is given by
\[\on{Aut}(p,u,s_1,\dots,_m) := \{\phi:\Sigma \to \Sigma \; : \; j \circ d\phi = d\phi \circ j,  u \circ \phi = u, u(s_i) = s_i\}.\]
Finally, the evaluation map $\on{ev}$ and projection map $\pi$ are both smooth on $\mathcal{M}_{\on{reg}}$. 
\end{prop}

\noindent As a special case, an unparametrized $J$-holomorphic curve $[u,s_1,\dots,s_m])$ is simply called \emph{regular} if it is parametrically regular with respect to the $0$-parameter family $J$.

\vspace{3pt}

Given a compact, closed submanifold $Z \subset \hat{X}^m$, a parametrically regular $(p,[u_1,\dots,u_m])$ is \emph{parametrically $Z$-regular} if the evaluation map
\[\on{ev}: \mathcal{M}_{g,A,m}(X,J_P;\Gamma_+,\Gamma_-) \to \hat{X}^m\]
from the parametric moduli space is transverse to $Z$ at $u$.

\begin{prop} \label{prop:generic_transversality} There is a comeager set $\mathcal{J}_{\on{reg}}(X,P)$ of $P$-families of compatible almost complex structures $J_P$ such that the space of somewhere injective curves
\[\mathcal{M}^{\on{i}}_{g,A,m}(X,J_P;\Gamma_+,\Gamma_-) := \big\{u \in \mathcal{M}_{g,A,m}(X,J_P;\Gamma_+,\Gamma) \; : \; u \text{ is somewhere injective}\big\}\]
consists of (parametrically) $Z$-regular curves.
\end{prop}

The parametric regularity part of this result (without accounting for the evaluation map) is proven in \cite[Thm. 7.1-7.2, Rmk. 7.4]{wendlsymplectic}. The transversality of the evaluation map is proven in \cite[\S 4.6]{wendlsymplectic} for the case of closed curves in a closed symplectic manifold $X$. The approach used in \cite[\S 4.6]{wendlsymplectic} is a standard one, using the Sard-Smale theorem, and can be adapted to the symplectization case with minimal modifications. Mainly, we need to work in the appropriate analytic set up, for example, by working with weighted Sobolev spaces instead of Sobolev spaces.

\subsubsection{Buildings in Contact Homology} \label{subsubsec:buildings_in_contact_homology} We primarily consider holomorphic buildings arising in contact homology. These are genus $0$ buildings with a single positive end in an exact cobordism $(X,\lambda)$ or in (the symplectization of) a contact manifold $Y$. These are curves in the moduli spaces
\begin{equation} \label{eqn:exact_genus_0_cobordism}
\overline{\mathcal{M}}_{0,A}(X,J;\gamma,\Gamma) \qquad\text{and}\qquad \overline{\mathcal{M}}_{0,A}(Y,J;\gamma,\Gamma).
\end{equation}

\begin{lemma} Let $\bar{u}$ be a $J$-holomorphic building in one of the moduli spaces (\ref{eqn:exact_genus_0_cobordism}). Then, each level $u_i$ is a disjoint union of curves
\[v:\Sigma \to \hat{Y}_\pm \qquad \text{or}\qquad v:\Sigma \to \hat{X}\]
where $\Sigma$ is connected and of genus $0$, and $v$ has exactly one positive puncture.
\end{lemma}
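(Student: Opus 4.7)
The plan is to analyze the dual graph of the building and combine the genus constraint with a global count of positive punctures. Let $\bar{u}$ be a building in the moduli space $\overline{\cM}_{0,A}(X,J;\gamma,\Gamma)$; the symplectization case $\overline{\cM}_{0,A}(Y,J;\gamma,\Gamma)$ is analogous. To $\bar u$ I associate a dual graph $T$ whose vertices $V(T)$ are the connected components of the domains of all levels, and whose edges $E(T)$ record the matching isomorphism pairings of punctures between adjacent levels. At each vertex $v$, let $p_v$ and $n_v$ denote the number of positive and negative punctures of the corresponding connected component; the matched punctures at $v$ contribute to edges of $T$, and the unmatched ones become ``external'' legs, asymptotic either to the top positive orbit $\gamma$ or to an orbit of $\Gamma$.

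The first step is to establish that $T$ is a tree and that every vertex has genus zero. The arithmetic genus of the building, obtained by formally gluing along all matched punctures, equals the genus label of the moduli space, which is zero. Since the building is connected in the sense that $T$ is connected (standard in SFT compactness, as buildings arise as Gromov limits of connected curves), the identity
\[
0 \;=\; g_{\bar u} \;=\; \sum_{v \in V(T)} g_v \;+\; b_1(T)
\]
forces both $g_v = 0$ for every $v$ and $b_1(T) = 0$, so $T$ is a tree with $|E(T)| = |V(T)| - 1$.

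The second step is a counting argument. Each internal edge of $T$ is the identification of a negative puncture on an upper level with exactly one positive puncture on the corresponding lower-level component, so
\[
\sum_{v \in V(T)} p_v \;=\; |E(T)| \;+\; 1 \;=\; |V(T)|,
\]
where the ``$+1$'' accounts for the unique external positive puncture at $\gamma$. On the other hand, each non-constant holomorphic curve in the exact cobordism $\hat X$ or the symplectization $\hat Y$ has strictly positive $d\lambda$-area, and by Stokes' theorem this area equals $\sum_+ \mathcal{A}(\gamma_+) - \sum_- \mathcal{A}(\gamma_-)$; since all Reeb actions are positive, this forces $p_v \geq 1$ at every vertex. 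Comparing with the equality above yields $p_v = 1$ for every $v$, which combined with $g_v = 0$ and the connectedness of each domain component is precisely the conclusion.

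The mild technical points are the connectedness of $T$ and the exclusion of ghost (closed constant) components. The former is part of the standard definition of a holomorphic building; the latter is automatic in our exact setting, since a closed holomorphic curve in an exact symplectic manifold has vanishing $d\lambda$-area and is therefore constant, while constant spheres without punctures are ruled out by the stability conditions built into the definition of a building. I do not anticipate a serious obstacle beyond carefully setting up the dual graph and invoking these standard facts.
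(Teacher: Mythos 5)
Your proof is correct, and it takes a genuinely different route from the paper's. The paper argues by induction on the levels of the building, working from the top down: the top level $u_1$ is connected with one positive puncture (since $\Gamma_+$ is a single orbit), and at each inductive step it observes that a component of the next level with two or more positive punctures would, upon attachment to the already-connected partial building above it, create a cycle and hence positive genus. Your argument, by contrast, is a single global combinatorial count on the dual graph $T$: genus zero forces $T$ to be a tree with $|E(T)| = |V(T)| - 1$, the external positive puncture at $\gamma$ gives $\sum_v p_v = |E(T)| + 1 = |V(T)|$, and exactness gives $p_v \geq 1$ for each $v$, so equality holds everywhere. Both proofs rest on the same two ingredients (total genus zero and the Stokes-theorem positivity forcing at least one positive puncture per component), but your global count avoids the level-by-level bookkeeping and is arguably cleaner; the paper's induction is closer in spirit to how such buildings are often assembled in gluing arguments and may generalize more transparently if one wanted to track actions or indices along the way. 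One small point worth flagging: you correctly note that the connectedness of $T$ is part of the BEHWZ definition of a holomorphic building, but it is worth being explicit that this is where the term $b_1(T)$ in the genus formula (rather than $b_1(T) - b_0(T) + 1$) is justified, and hence where the conclusion $|E(T)| = |V(T)| - 1$ comes from; the paper uses the same fact implicitly when it speaks of ``the connected building $\bar{u}_k$.''
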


\begin{proof} Let $\bar{u} = (u_1,\dots,u_m)$ be a holomorphic building in (the symplectization of) $Y$. The case of buildings in a cobordism is similar. Let $\bar{u}_k$ denote the building
\[\bar{u}_k = (u_1,\dots,u_k) \qquad\text{for}\qquad 1 \le k \le m.\]
We prove by induction on $k$ that each building $\bar{u}_k$ is connected and that each component curve $v$ satisfies the conclusion of the lemma. 

\vspace{3pt}

For the base case, note that every component $v$ of every level $u_i$ is genus $0$, since otherwise the entire building would have positive genus. Moreover, since $\hat{X}$ and $\hat{Y}$ are both exact, every non-constant finite energy holomorphic curve $v$ must have at least one positive puncture. Thus, the top level $\bar{u}_1 = u_1$ is a connected genus $0$ curve with one positive puncture.

\vspace{3pt}

For the induction case, assume that $\bar{u}_k$ satisfies the induction hypothesis. By the above reasoning, each component $v$ of the level $u_{k+1}$ is genus $0$ with at least one positive puncture. This positive puncture must connect to a negative puncture of $\bar{u}_k$ so that $\bar{u}_{k+1}$ is connected. If a component $v$ has more than one positive puncture, then attaching $v$ to the connected building $\bar{u}_k$ contributes genus to $\bar{u}_{k+1}$. Therefore, $v$ has exactly one puncture. \end{proof}

\begin{remark} In \cite{p2015}, a slightly different compactification of $\mathcal{M} = \mathcal{M}_{g,A}(X,J;\Gamma_+,\Gamma_-)$ is used (and similarly for $\mathcal{M}_{g,A}(Y,J;\Gamma_+,\Gamma_-)$). In Pardon's compactification, if $u_i:\Sigma_i \to \hat{Y}_\pm$ is a symplectization level of a building $\bar{u}$ and $\Sigma_i$ breaks into disconnected components $\Sigma_{i,1},\dots,\Sigma_{i,k}$, then each component $u_i|_{\Sigma_{i,j}}$ is separately regarded as a holomorphic curve modulo translation and any trivial cylinder components are eliminated.

\vspace{3pt}

The differences between the BEHWZ compactification of \cite{sftCompactness} and the Pardon compactification \cite{p2015} will not be important for this paper. In particular, we will treat them as equivalent in Constructions \ref{con:contact_dga_algebra} and \ref{con:contact_dga_map} below.

\vspace{3pt}

However, we do note that any BEHWZ building corresponds to a unique Pardon building by only remembering the constituent maps of the building on each connected component of the levels (and eliminating the trivial components of every level). Conversely, any Pardon building can be lifted to a BEHWZ building by adding trivial levels and grouping the connected components of the building appropriately. In particular
\[\mathcal{M} = \overline{\mathcal{M}} \text{ in the BEHWZ topology} \qquad\iff\qquad \mathcal{M} = \overline{\mathcal{M}} \text{ in the Pardon topology}.\]
\end{remark}

\subsection{Basic Formalism} We can now discuss the basic construction of the contact dg-algebra of a contact manifold and the cobordism map of an exact cobordism. This construction was introduced in \cite{egh2000}. Here we discuss the specific foundational setup of Pardon \cite{p2015}.

\begin{construction} \label{con:contact_dga_algebra} The \emph{contact dga} of a closed contact manifold $Y$ with a non-degenerate contact form $\alpha$, denoted by
\[(A(Y,\alpha),\partial_{J,\theta}) \qquad \text{or more simply}\qquad A(Y)\] 
is the filtered dg-algebra formulated as follows. Associate a generator $x_\gamma$ to each good Reeb orbit $\gamma$ (see \cite[Definition $2.49$]{p2015} for a definition). Each generator $x_\gamma$ is given a standard SFT grading and action filtration
\begin{equation}|x_\gamma| = -\on{CZ}(\gamma) - n + 3 \mod 2\quad\text{and}\quad\mathcal{A}(x_\gamma) = \int_\gamma \alpha, \qquad  \text{respectively.} \end{equation}
The algebra $A(Y,\alpha)$ is the graded-symmetric algebra freely generated by these generators,
\begin{equation}
A(Y,\alpha) := \on{Sym}_\bullet\big[x_\gamma \; : \; \gamma\text{ is a good orbit}\big].
\end{equation}
The SFT-grading $|\cdot|$ and the action filtration $\mathcal{A}$ on $x_{\Gamma} = x_{\gamma_1}\dots x_{\gamma_k}$  are given by
\[
|x_\Gamma| := \sum_i |\gamma_i| \qquad \text{and}\qquad \mathcal{A}(x_\Gamma) := \sum_i \mathcal{A}(\gamma_i).
\]
We let $A^L(Y,\alpha) \subset A(Y,\alpha)$ denote the graded subspace given by
\[A^L(Y,\alpha) := \Q\langle x_\Gamma \; : \; \mathcal{A}(x_\Gamma) \le L\rangle\]
The differential on $A(Y,\alpha)$ is the unique derivation such that, for any good orbit $\gamma$, we have
\[\partial_{J,\theta}(x_\gamma) := \sum_{A,\Gamma} \frac{\#_\theta \overline{\mathcal{M}}_{0,A}(Y,J;\gamma,\Gamma)}{\mu_\Gamma \cdot \kappa_\Gamma} \cdot x_\Gamma.\]
Here, $\#_\theta\overline{\mathcal{M}}_{0,A}(Y,J;\gamma,\Gamma)$ is a (virtual) point count of index $1$ holomorphic buildings in the symplectization of $Y$ with one positive puncture at $\gamma$ and negative punctures at $\Gamma$ (see \cite{p2015}). This count depends on a choice of the VFC data $\theta$. The sum is over all ordered lists $\Gamma$ of good orbits and all homology classes $A \in S(Y;\gamma,\Gamma)$ such that $\on{ind}(A,0) = 1$.
\end{construction} 

\begin{construction} \label{con:contact_dga_map} The \emph{cobordism map} of an exact cobordism $X:(Y_+,\alpha_+) \to (Y_-,\alpha_-)$, denoted
\[\Phi_{X,\lambda,J,\theta}:A(Y_+,\alpha_+) \to A(Y_-,\alpha_-), \qquad\text{or more simply}\quad \Phi_X,\]
is the unique filtered dg-algebra map such that, for any good closed orbit $\gamma$ of $Y_+$, we have
\[\Phi_{X,\lambda,J,\theta}(x_\gamma) := \sum_{A,\Gamma} \frac{\#_\theta \overline{\mathcal{M}}_{0,A}(X,J;\gamma,\Gamma)}{\mu_\Gamma \cdot \kappa_\Gamma} \cdot x_\Gamma.\]
Here $\#\overline{\mathcal{M}}_{0,A}(X,J;\gamma,\Gamma)$ is a (virtual) point count of index $0$ holomorphic buildings in the completion of $X$ with 1 positive puncture at $\gamma$ and negative punctures at $\Gamma$ (see \cite{p2015}), and the sum is over all ordered lists $\Gamma$ of good orbits in $Y_-$ and all homology classes $A \in S(X;\gamma,\Gamma)$ such that $\on{ind}(A,0) = 0$.
\end{construction}

\begin{remark} \label{rmk:CH_moduli_spaces_transverse} In both Constructions \ref{con:contact_dga_algebra} and \ref{con:contact_dga_map}, the virtual point count is equal to an actual (oriented) point count when the relevant moduli space $\mathcal{M}$ is compact (i.e. $\overline{\mathcal{M}} = \mathcal{M}$) and each holomorphic curve is transversely cut out (i.e. the relevant linearized $\overline{\partial}$ operator is surjective). See \cite[Thm. 1.1(iv)]{p2015}.
\end{remark}

The main results of Pardon's construction \cite{p2015} can now be summarized as follows.

\begin{thm} \cite{p2015} \label{thm:contact_homology_well_defined} Let $(Y,\alpha)$ and $X:(Y_+,\alpha_+) \to (Y_-,\alpha_-)$ be as in Construction \ref{con:contact_dga_algebra}-\ref{con:contact_dga_map}. Then the following hold.
\begin{itemize}
\item[(a)] The map $\partial_{J,\theta}:A(Y) \to A(Y)$ is a filtered differential. That is,
\[\partial_{J,\theta}^2 = 0 \qquad\text{and}\qquad \mathcal{A}(\partial_{J,\theta}(x)) \le \mathcal{A}(x).\]
\item[(b)] The map $\Phi_{X,\lambda,J,\theta}:A(Y_+) \to A(Y_-)$ is a filtered chain map. That is,
\[\Phi_{X,\lambda,J,\theta} \circ \partial_{J_+,\theta_+} = \partial_{J_-,\theta_-} \circ \Phi_{X,\lambda,J,\theta} \qquad \text{and}\qquad \mathcal{A}(\Phi_{X,\lambda,J,\theta}(x)) \le \mathcal{A}(x).\]
 Furthermore, $\Phi_{X,\lambda,J,\theta}$ is independent of $J$ and $\theta$ up to filtered chain homotopy.
 \vspace{3pt}
\item[(c)] The composition of cobordism maps is filtered homotopic to the cobordism map of the composition,
\[
\Phi_{X \circ X'} \simeq \Phi_X \circ \Phi_{X'}. 
\]
\item[(d)] If $(X,\lambda)$ is the trivial cobordism $X = [a,b] \times Y, \lambda = e^t\alpha$, $J$ is a translation invariant almost complex structure induced by a compatible complex structure on $\xi$, and $\theta$ is any VFC data, then
\[\Phi_{X,\lambda,J,\theta}:H(A(Y),\partial_{J,\theta}) \to H(A(Y),\partial_{J,\theta})\]
is the identity map on the level of unfiltered graded dg-algebras.
\end{itemize}
\end{thm}

By Theorem \ref{thm:contact_homology_well_defined}, we can now define contact homology as an invariant of contact manifolds.

\begin{definition} \label{def:contact_homology} The \emph{contact homology} $CH(Y)$ of a closed contact manifold $(Y,\xi)$ is given by
\[CH(Y) := H(A(Y),\partial_{J,\theta}), \qquad\text{for any choice of }\alpha,J,\theta.\]
The map $\Phi_X:CH(Y_+) \to CH(Y_-)$ induced by an exact cobordism $X:Y_+ \to Y_-$ is similarly defined with respect to any choice of $J,\theta$. Any choice of contact form $\alpha$ induces a filtration of $CH(Y)$ by sub-spaces
\[CH^L(Y) \quad\text{or}\quad CH^L(Y,\alpha) \qquad\text{for any}\qquad L \in [0,\infty)\]
Here $CH^L(Y,\alpha)$ is the image of the map
\[H(A^L(Y)) \to H(A(Y)) = CH(Y)\]
if the contact form $\alpha$ is non-degenerate. In general, the $CH^L(Y)$ is defined as the colimit
\[
CH^L(Y,\alpha) = \on{colim}_{\beta} CH^L(Y,\beta)
\]
where the colimit is taken over all non-degenerate contact forms $\beta = f\alpha$ with $f > 1$ pointwise. The cobordism maps $\Phi_X$ are filtered with respect to this filtration. 
\end{definition}

\begin{remark} \label{rmk:CH_for_L_nondegenerate} More generally, given a contact form $\alpha$ on $Y$ that is non-degenerate below action $T$ and any $L < T$, we can still define the graded algebra
\[
A^L(Y) := \Q \langle x_\Gamma \; : \; \mathcal{A}(\Gamma) \le L\rangle.
\]
We may equip this algebra with a differential $\partial_{J,\theta}$ given a choice of compatible complex structure and VFC data. Likewise, if $X:(Y_+,\alpha_+) \to (Y_-,\alpha_-)$ are non-degenerate below action $T$, then we have a cobordism dg-algebra map
\[\Phi_X:A^L(Y_+) \to A^L(Y_-)\]
for any $L \le T$, well-defined up to filtered chain homotopy. In particular, $CH^L(Y,\alpha)$ is the image of the map
\[H(A^L(Y,\alpha)) \to CH(Y)\]
Moreover, for any contact forms $\alpha_+$ and $\alpha_-$ that are non-degenerate up to action $T$, the following diagram commutes.
\begin{equation} \label{eqn:CHL_diagram}
\begin{array}{ccc}
        H(A^L(Y_+)) & \xrightarrow{\Phi_X} &  H(A^L(Y_-))\\
        & & \\
         \Phi\Big\downarrow\hspace{0.4cm} & & \Big\downarrow \Phi\\
         & & \\
         CH^L(Y_+) & \xrightarrow[\Phi_X]{} &  CH^L(Y_-)
    \end{array}.\end{equation}
\end{remark}

\subsection{The Tight Sphere} We now calculate the contact homology algebra of the standard sphere, which is a key example in later constructions. 

\vspace{3pt}

Consider $\C^n$ equipped with the standard Liouville form and associated Liouville vector-field
\[
\lambda_{\on{std}} = \frac{1}{2} \sum_i x_i dy_i - y_i dx_i, \text{and} \qquad Z_{\on{std}} = \frac{1}{2} \sum_i x_i \partial_{x_i} + y_i \partial_{y_i}.
\]

\begin{definition} A \emph{star-shaped domain} $W \subset \C^n$ is an embedded Liouville sub-domain of $\C^n$. 
Equivalently, $W$ is a codimension zero submanifold with smooth boundary that is transverse to the Liouville vector field $Z$, $Z\pitchfork\partial W$. \end{definition}

\noindent Every pair of star-shaped domains $W$ and $W'$ are equivalent through a canonical deformation, i.e., by deforming the boundary along the radial direction. In particular, the contact boundaries are all contactomorphic to the standard tight sphere.

\begin{definition} The \emph{standard tight sphere} $(S^{2n-1},\xi_{\on{std}})$ is the unit sphere $S^{2n-1} \subset \C^n$ equipped with the contact structure $\xi_{\on{std}} = \on{ker}(\lambda_{\on{std}}|_{S^{2n-1}})$. \end{definition} 

\noindent If $W \subset W'$ is an inclusion of star-shaped domains, then the exact symplectic cobordism
\[
X:(\partial W',\lambda|_{\partial W'}) \to (\partial W,\lambda|_{\partial W}) \qquad\text{given by}\qquad X := W' \setminus W
\]
is isomorphic (as an exact cobordism) to a cylindrical cobordism $X:(S^{2n-1},\alpha) \to (S^{2n-1},\beta)$ for a pair of contact forms $\alpha$ and $\beta$ on $(S^{2n-1},\xi)$.

\begin{example} \label{ex:irrational_ellipsoid_orbits} Choose a sequence of rationally independent, positive real numbers
\[0 < a_1 \le a_2 \le \dots \le a_n.\]
The \emph{standard ellipsoid} $E = E(a_1,\dots,a_n) \subset \C^n$ is the star-shaped domain given by
\[E(a_1,\dots,a_n) = \{z \in \C^n \; : \; \sum_i \frac{\pi}{a_i} \cdot |z_i|^2 \le 1\}.\]

The Reeb dynamics on $\partial E$ is very explicit and easy to determine (cf. \cite[\S 2.1]{gh_sym_18}). Specifically, there are exactly $n$ non-degenerate, simple, closed Reeb orbits given by
\[
\gamma_i = \partial E \cap \C_i \qquad\text{for}\qquad i = 1,\dots,n.
\]
Here $\C_i \subset \C^n$ is the $i$th complex axis in $\C^n$. Every Reeb orbit $\gamma$ is an iterate of one of these orbits. The action and Conley-Zehnder index of an orbit $\gamma = \gamma^j_i$ is given by
\[
\mathcal{A}(\gamma) = j \cdot a_i \qquad\text{and}\qquad \CZ(\gamma) = n - 1 + 2 \cdot \#\{\text{orbits $\eta$ of }\partial E \; : \; \mathcal{A}(\eta) \le \mathcal{A}(\gamma)\}.
\]
Note that the Conley-Zehnder index is well-defined without reference to a trivialization, since $c_1(S^3,\xi_{\on{std}}) = 0$ and $\pi_1(S^3) = 0$.
\end{example}

\begin{lemma} \label{lem:CH_of_sphere} The contact homology algebra $CH(S^{2n-1},\xi_{\on{std}})$ is isomorphic to a graded-symmetric algebra freely generated by generators $x_k$ of grading $-2n + 2 - 2(k-1)$ for each $k \ge 1$, namely,
\[CH(S^{2n-1},\xi_{\on{std}}) \simeq \on{Sym}_\bullet\big[x_k \; ; \; k \ge 1\big].\]\end{lemma}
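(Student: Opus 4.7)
The plan is to fix a convenient non-degenerate contact form on $S^{2n-1}$, namely the restriction of the standard Liouville form $\lambda_{\on{std}}$ to the boundary of an irrational ellipsoid $E = E(a_1, \ldots, a_n)$ with rationally independent positive real numbers $a_i$, and then show that the contact dga of $(\partial E, \lambda_{\on{std}}|_{\partial E})$ already computes the claimed answer without any serious computation of differentials.

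By Example~\ref{ex:irrational_ellipsoid_orbits}, the Reeb orbits of this contact form are precisely the iterates $\gamma_i^j$ of the $n$ simple orbits $\gamma_i = \partial E \cap \C_i$; they are all non-degenerate, and the actions $j \cdot a_i$ are pairwise distinct. I would enumerate them as $\eta_1, \eta_2, \eta_3, \ldots$ by increasing action, so that there is exactly one orbit $\eta_k$ for each $k \geq 1$, and use the formula in that example to obtain
\[
\on{CZ}(\eta_k) = n - 1 + 2k.
\]
Because $c_1(\xi_{\on{std}}) = 0$ and $S^{2n-1}$ is simply connected (for $n \geq 2$), the Conley--Zehnder index is well defined as an integer and the $\Z/2$-valued SFT grading from Construction~\ref{con:contact_dga_algebra} lifts canonically to
\[
|x_{\eta_k}| = -\on{CZ}(\eta_k) - n + 3 = -2n + 4 - 2k = -2n + 2 - 2(k-1),
\]
exactly matching the grading asserted in the statement.

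Next I would check that each $\eta_k$ is a good Reeb orbit, so that the $x_{\eta_k}$ really do appear as generators. Since $\on{CZ}(\gamma_i^j) \equiv n - 1 \mod 2$ for every $i$ and $j$, the Conley--Zehnder parity is constant along each tower of iterates of a simple orbit, which is the standard criterion guaranteeing that no orbit is bad. Consequently $A(\partial E, \lambda_{\on{std}}|_{\partial E})$ is freely generated, as a graded-symmetric algebra, by $\{x_{\eta_k}\}_{k \geq 1}$, all of which sit in even $\Z/2$-degree.

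The key concluding observation is that the differential $\partial_{J, \theta}$ vanishes for purely parity reasons. By Theorem~\ref{thm:contact_homology_well_defined}(a), $\partial_{J, \theta}$ is a derivation of $A(\partial E)$ of odd $\Z/2$-degree. Since every generator, and therefore every monomial, lies in even degree, a map of odd degree must annihilate the entire algebra. Therefore $\partial_{J, \theta} = 0$ and
\[
CH(S^{2n-1}, \xi_{\on{std}}) = H(A(\partial E), 0) = A(\partial E, \lambda_{\on{std}}|_{\partial E}) = \on{Sym}_\bullet\bigl[x_k : k \geq 1\bigr]
\]
with the desired gradings. The main simplification is that the parity argument sidesteps any analysis of the moduli spaces $\overline{\cM}_{0, A}(\partial E, J; \gamma, \Gamma)$; the only substantive inputs are the Reeb spectrum from Example~\ref{ex:irrational_ellipsoid_orbits} and the goodness verification, neither of which presents a serious obstacle.
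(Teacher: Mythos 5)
Your proposal is correct and follows essentially the same route as the paper's proof: choose an irrational ellipsoid to get a convenient non-degenerate form, observe via the Conley--Zehnder formula from Example~\ref{ex:irrational_ellipsoid_orbits} that every generator sits in even degree, and conclude that the differential vanishes for parity reasons. The paper's version is more terse (it does not spell out the goodness check or the lift of the $\Z/2$-grading to $\Z$, both of which you handle correctly), but the mathematical content is the same.
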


\begin{proof} Consider $S^{2n-1}$ equipped with the contact form induced as the boundary of an irrational ellipsoid $E(a_1,\dots,a_n)$. The (cohomological) SFT grading of a closed Reeb orbit $\gamma$ is given by 
\[|\gamma| = -2n + 2 - 2 \cdot \#\{\text{orbits $\eta$ of }\partial E \; : \; \mathcal{A}(\eta) \le \mathcal{A}(\gamma)\}.\]
In particular, the SFT grading is even for all generators of $A(\partial E)$ and the differential is trivial.
\end{proof}
\begin{remark}
Note that the isomorphism in Lemma~\ref{lem:CH_of_sphere} is not claimed to be canonical.
\end{remark}

In \S \ref{sec:proof_of_main_thm}, we will require the following property of cobordism maps given by inclusion.

\begin{lemma}\label{lem:Phi_inv_wl_decreasing}
    Let $E(a)\subset E(b)$ be two ellipsoids and consider the map $\Phi:A(\partial E(a))\rightarrow A(\partial E(b))$ corresponding to the cobordism $E(b)\setminus E(a)$. Then, $\Phi$ is an isomorphism on the chain level and its inverse is word-length non-decreasing.
\end{lemma}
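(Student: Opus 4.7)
\emph{Plan.} I would reduce both claims to a grading-preservation argument based on the fact that for irrational ellipsoids, the contact dg-algebras are concentrated in strictly negative even integer grading. The general case reduces to the irrational one by perturbation, so assume $a$ and $b$ have rationally independent entries. By Example~\ref{ex:irrational_ellipsoid_orbits}, every Reeb orbit has Conley--Zehnder index $n-1+2j$ with $j\geq 1$, so every generator of $A(\partial E(a))$ and $A(\partial E(b))$ lies in even SFT degree. Hence both differentials vanish, each dg-algebra coincides with its own contact homology as a chain complex, and $\Phi$ is simply a grading-preserving algebra homomorphism. Since $E(b)\setminus E(a) \subset \mathbb{C}^n$ has vanishing relative first Chern class, the $\mathbb{Z}/2$-grading refines to the integer grading $|x_\gamma|=-\on{CZ}(\gamma)-n+3$, under which every generator has strictly negative value while the unit $1$ has grading zero.

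To establish the isomorphism, I would produce chain-level inverses for the natural cobordism map $\Phi_{E(b)\setminus E(a)}:A(\partial E(b))\to A(\partial E(a))$ using Theorem~\ref{thm:contact_homology_well_defined}(c)--(d). Choose scalars $\nu<1<\mu$ with $\nu E(b)\subset E(a)\subset E(b)\subset \mu E(a)$. Since any two star-shaped domains in $\mathbb{C}^n$ are Liouville deformation equivalent, the cobordisms $\mu E(a)\setminus E(a)$ and $E(b)\setminus \nu E(b)$ are each deformation equivalent to a cylindrical cobordism over $S^{2n-1}$, and hence their cobordism maps equal the identity after the standard rescaling identification, by part~(d). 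Applying the composition axiom~(c) to the two decompositions
\[
\mu E(a)\setminus E(a) = (\mu E(a)\setminus E(b))\cup (E(b)\setminus E(a)), \qquad E(b)\setminus \nu E(b) = (E(b)\setminus E(a))\cup (E(a)\setminus \nu E(b)),
\]
yields a right inverse and a left inverse of $\Phi_{E(b)\setminus E(a)}$ at the level of contact homology. Since the differentials vanish, chain-homotopic maps are equal on the nose, so these are genuine chain-level inverses. Therefore $\Phi_{E(b)\setminus E(a)}$ is a chain-level algebra isomorphism, and so is its inverse $\Phi$.

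The word-length non-decreasing property of $\Phi^{-1}=\Phi_{E(b)\setminus E(a)}$ then follows immediately from grading preservation: for any generator $x_\gamma$, $\Phi^{-1}(x_\gamma)$ has strictly negative integer grading, so it cannot contain the unit $1$ (which has grading zero) as a summand and must be a polynomial of word-length at least one in the target generators. Extending this multiplicatively gives $\Phi^{-1}(F^k)\subset F^k$ for every $k\geq 0$, where $F^k$ denotes the subspace of words of length at least $k$. The main technical obstacle in this plan is the isomorphism step: the cobordism decompositions rely on deformation invariance of cobordism maps under Liouville homotopies --- a standard but non-trivial ingredient --- in order to reduce a rescaled cylindrical cobordism to the trivial cobordism to which part~(d) of Theorem~\ref{thm:contact_homology_well_defined} directly applies. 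The grading argument for word-length, by contrast, is essentially immediate once the setup is in place.
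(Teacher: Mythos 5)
Your proof is correct and follows essentially the same route as the paper's: vanishing differentials from the even-grading computation for irrational ellipsoid boundaries, the cylindrical-cobordism deformation together with Theorem~\ref{thm:contact_homology_well_defined}(c)--(d) to obtain a chain-level isomorphism, and the degree argument (unit in degree zero, generators strictly away from zero) for the word-length claim. The only stylistic difference is that the paper's proof is slightly shorter: it observes directly that a quasi-isomorphism between complexes with vanishing differential is automatically an isomorphism, which makes your explicit construction of separate left and right inverses via the two decompositions unnecessary.
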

\begin{proof}
The exact cobordism $E(b)\setminus E(a)$ is isomorphic to a cylindrical cobordism, and so the map
\[\Phi:A(\partial E(a))\rightarrow A(\partial E(b))\]
is a quasi-isomorphism inducing the natural isomorphism on homology. Since the differentials of $A(\partial E(a))$ and $A(\partial E(b))$ are trivial, $\Phi$ is in fact an isomorphism of dg-algebras. Moreover, by Theorem~\ref{thm:contact_homology_well_defined}(c)-(d), the inverse $\Phi^{-1}$ is the cobordism map induced by $E(b) \setminus E(c \cdot a)$ for any $c > 0$ sufficiently small. 

\vspace{3pt}

It remains to show that cobordism maps between ellipsoids are word-length non-decreasing. Since cobordism maps are algebra maps, this is equivalent to having a non-constant generator being mapped to the constants. In the case of the ellipsoids this is impossible, since the constants have grading zero, the non-constant generators lie in positive degrees and the cylindrical cobordism maps preserve the $\Z$-grading.
\end{proof}

\section{Spectral Gaps} \label{sec:spectral_gaps} In this section, we discuss contact homology spectral gaps and related structures, including abstract constraints, constrained cobordism maps and spectral invariants.

\subsection{Abstract Constraints} \label{subsec:abstract_constraints} An abstract constraint provides a purely homological tool for tracking the ways in which a holomorphic curve can be tangent to (or asymptotic to) a set of points in a symplectic cobordism. Rigorously, we have the following definition.

\begin{definition} An \emph{abstract constraint} $P$ \emph{in contact homology} with $m$ points, dimension $n$ and codimension $\on{codim}(P) = k$ is a degree $k$ map
\[P:\bigotimes_{i=1}^m CH(S^{2n-1},\xi_{\on{std}}) \to \Q[k],\]
or equivalently, a cohomology class $P \in CH(\cup_1^m S^{2n-1})^\vee$ of grading $\on{codim}(P) = k$.
\end{definition} 

\begin{example}[Empty Constraint] The \emph{empty constraint} $P_\emptyset$ is the codimension $0$ map given by
\[
P_\emptyset(1) = 1 \qquad\text{and}\qquad P_\emptyset(x) = 0 \quad\text{if}\quad |x| > 0.
\]
Alternatively, $P_\emptyset$ is the unique $\Z$-graded algebra map $CH(S^{2n-1},\xi_{\on{std}}) \to \Q$.
\end{example}

\begin{example}[Tangency Constraints]\label{exa:tangency_constraints} It follows from Lemma \ref{lem:CH_of_sphere} that
\[
\on{ker}(P_\emptyset)/\on{ker}(P_\emptyset)^2 \simeq \bigoplus_{k=0}^\infty \Q[2n -2 + 2k].
\]
In particular, there is (up to multiplication by a non-zero constant) a unique surjective map
\[
\Pi_k:\on{ker}(P_\emptyset)/\on{ker}(P_\emptyset)^2 \to \Q[2n - 2 + 2k] \qquad\text{for each }k \ge 0.
\]
Therefore, we have an abstract constraint (well-defined up to multiplication by a constant)
\[
P_k:CH(S^{2n-1},\xi_{\on{std}}) = \Q \oplus \on{ker}(P_\emptyset) \to \on{ker}(P_\emptyset) \to \on{ker}(P_\emptyset)/\on{ker}(P_\emptyset)^2 \xrightarrow{\Pi_k} \Q[2n - 2 + 2k].
\]
 \end{example}

 \begin{remark} As observed by Siegel \cite[\S 5.5]{sie_hig_19}, the constraint $P_k$ coincides with the map acquired by counting genus $0$ holomorphic curves  in a star-shaped domain, with one positive puncture, passing through a point $p$ and tangent to a local divisor $D$ through $p$ to order $k$. 
 
 \vspace{3pt}
 In \S \ref{sec:proof_of_main_thm}, we will use a version of this fact in the proof of our main results. We also give an alternate argument for a weak version of this correspondence (for certain curves counted in constrained cobordism maps and counts of cylinders passing through a point) in \S \ref{subsec:Umap_moduli_space_vs_point_moduli_space}.\end{remark}

 \begin{example}[Dual Constraints] \label{ex:dual_constraint} Let $E$ be an irrational ellipsoid and let $\Xi$ be an orbit tuple on $\partial E$. There is a dual abstract constraint
\[P_\Xi:CH(\partial E) \simeq CH(S^{2n-1},\xi_{\on{std}}) \to \Q[|\Xi|]\]
determined by $\Xi$. This is defined in the usual way, with
\[P_\Xi(x_\Gamma) = 1 \text{ if }\Gamma = \Xi \qquad\text{and}\qquad P_\Xi(x_\Gamma) = 0 \qquad \text{otherwise}\]
As a special case, the abstract constraint $P_k$ in Example \ref{exa:tangency_constraints} coincides with $P_{\gamma}$ where $\gamma$ is the unique closed orbit of $E$ with $|\gamma| = 2n - 2 + 2k$. 
 \end{example}

\subsection{Constrained Cobordism Maps} \label{subsec:constrained_cobordism_maps} By using abstract constraints, we can formulate a generalization of the cobordism maps in contact homology, which morally counts curves satisfying a number of tangency constraints.

\begin{definition} \label{def:constrained_cobordism_map} Let $X:Y_+ \to Y_-$ be a connected exact cobordism and let $P$ be an abstract constraint. The \emph{$P$-constrained cobordism map} 
\[\Phi_{X,P}:A(Y_+) \to A(Y_-)[\on{codim}(P)]\]
is the filtered chain map of degree $\on{codim}(P)$ constructed by the following procedure. Choose a star-shaped domain $W_i$ for $i = 1,\dots,m$ and an embedding
\[
\iota:W = W_1 \cup \dots \cup W_m \to \on{int}(X).
\]
Since $W_i$ are star-shaped domains, $\iota$ is automatically weakly exact (see \S \ref{subsec:symplectic_cobordisms}). Thus we may assume (after deformation) that $\iota$ is exact and $X \setminus W$ is an exact cobordism. Also, choose a cochain $\pi_{\partial W,P}$ representing $P$, i.e., a chain map
\[
\pi_{\partial W,P}:A(\partial W) \simeq A(\cup_1^m S^{2n-1}) \to \Q[\on{codim}(P)]
\]
which is in the cohomology class $P$. Then, we define $\Phi_{X,P}$ to be the composition
\[
A(Y_+) \xrightarrow{\Phi_{X \setminus W}} A(Y_- \cup \partial W) = A(Y_-) \otimes A(\partial W) \xrightarrow{\id \otimes \pi_{\partial W,P}} A(Y_-) \otimes \Q[\on{codim}(P)].
\]
\end{definition}

\begin{definition} \label{def:U_map} Let $P$ be an abstract tangency constraint and $Y$ be a connected, closed contact manifold. The \emph{U-map}
\[U_P:A(Y) \to A(Y)[\on{codim}(P)].\]
is the $P$-constrained cobordism map $\Phi_{X,P}$ where $X = [0,1] \times Y$ is the trivial cobordism.\end{definition}

\begin{figure}[h]
\centering
\includegraphics[width=.9\textwidth]{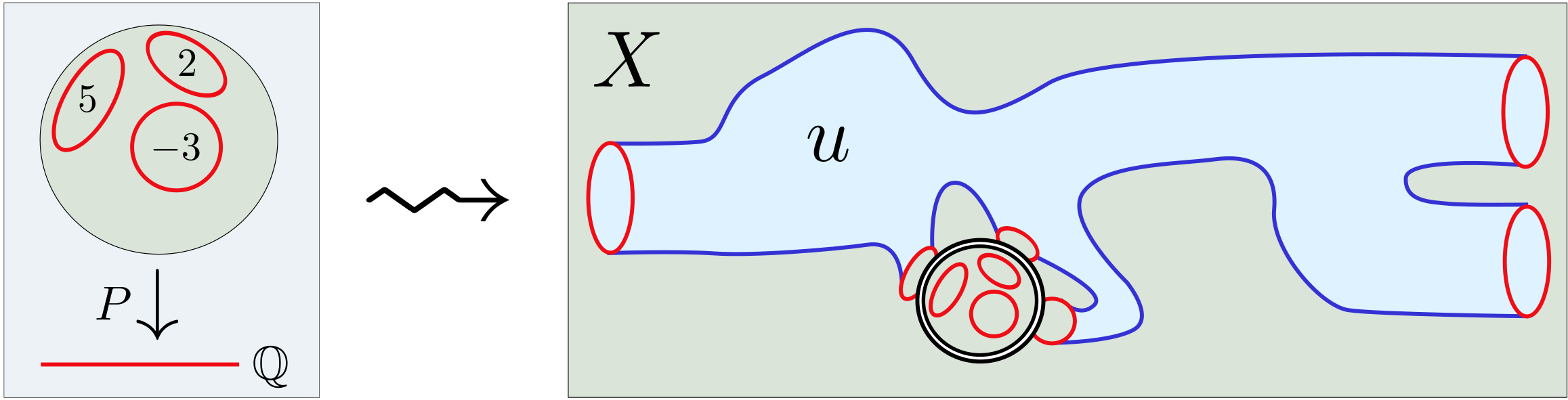}
\label{fig:abstract_constraint}
\caption{A cartoon of (a) an abstract constraint $P$, visualized as a weighting of the orbits on the sphere and (b) the curves counted in constrained cobordism maps, asymptotic to the orbits with non-zero weights under $P$.}
\end{figure}

\begin{remark} The constrained cobordism maps $\Phi_{X,P}$ are \emph{not} dg-algebra maps in general. However, some constrained cobordism maps are compatible with the algebra structure in other ways (see Lemma \ref{lem:Leibniz_rule}). \end{remark}

\begin{lemma}\label{lem:constrained_cob_maps_well_def} The cobordism maps $\Phi_{X,P}$ are well-defined up to filtered chain homotopy.
\end{lemma}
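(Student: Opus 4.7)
The map $\Phi_{X,P}$ involves four types of choices: (i) the star-shaped domains $W_i$; (ii) the embedding $\iota: W \to \on{int}(X)$, together with the deformation used to make $\iota$ exact; (iii) the cochain representative $\pi_{\partial W, P}$ of the class $P$; and (iv) the auxiliary data $(J, \theta)$ defining $\Phi_{X \setminus W}$. The plan is to verify independence of $\Phi_{X,P}$ from each choice in turn, combining standard homological algebra with the invariance properties of ordinary cobordism maps recorded in Theorem~\ref{thm:contact_homology_well_defined}.

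The easy cases are (iv) and (iii). Independence from $J$ and $\theta$ is immediate from Theorem~\ref{thm:contact_homology_well_defined}(b): tensoring a filtered chain homotopy of $\Phi_{X \setminus W}$ with the fixed map $\pi_{\partial W,P}$ yields a filtered chain homotopy of $\Phi_{X,P}$. For (iii), suppose $\pi$ and $\pi'$ both represent $P$, and write $\pi - \pi' = h \circ d$ for some degree $(\on{codim}(P) - 1)$ map $h: A(\partial W) \to \Q$. Then
\[
H := (\on{id}_{A(Y_-)} \otimes h) \circ \Phi_{X \setminus W}
\]
is a filtered chain homotopy between $\Phi_{X, P}$ and $\Phi_{X, P'}$. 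The chain homotopy identity follows from the chain map property of $\Phi_{X \setminus W}$ together with the tensor decomposition of the differential on $A(Y_- \cup \partial W) = A(Y_-) \otimes A(\partial W)$; the filtration is preserved automatically because the codomain of $h$ is $\Q$, equipped with the trivial filtration.

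The heart of the argument is independence from (i) and (ii). Any two star-shaped domains are canonically deformation equivalent via radial rescaling of the boundary, so we may reduce to comparing two exact embeddings $\iota_0, \iota_1: W \to \on{int}(X)$ of a fixed model $W$. Choose a larger star-shaped domain $W'$ and an exact embedding $\iota': W' \to \on{int}(X)$ so that both $\iota_0(W)$ and $\iota_1(W)$ lie in $\iota'(\on{int}(W'))$; this is possible, after possibly shrinking the components of $W$, because $X$ is connected and the $W_i$ are contractible. Applying the composition rule Theorem~\ref{thm:contact_homology_well_defined}(c) to the decomposition $X = (X \setminus \iota'(W')) \circ (\iota'(W') \setminus \iota_k(W))$ for $k = 0, 1$ reduces the comparison to the cobordism maps of the annular pieces $\iota'(W') \setminus \iota_k(W)$. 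Since $\iota_0$ and $\iota_1$ can be connected by an exact ambient isotopy supported inside $\iota'(W')$, these two annular cobordisms are deformation equivalent. Theorem~\ref{thm:contact_homology_well_defined}(b) then yields a filtered chain homotopy between them, and composing with the fixed cochain $\pi_{\partial W, P}$ delivers the desired filtered chain homotopy between the constrained cobordism maps associated to $\iota_0$ and $\iota_1$.

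The main obstacle is producing the common refinement $\iota'$ and verifying the deformation equivalence of the two annular cobordisms — essentially, constructing an exact isotopy between $\iota_0$ and $\iota_1$ inside $\iota'(W')$. This is a Moser-type argument exploiting the contractibility of the $W_i$, together with care that the isotopy remains in the class of weakly exact embeddings so that the deformation framework of Section~\ref{subsec:symplectic_cobordisms} applies.
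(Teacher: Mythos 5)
Your treatment of choices (iii) and (iv) is correct and matches the paper's reasoning: independence from the auxiliary data $(J,\theta)$ follows from Theorem~\ref{thm:contact_homology_well_defined}(b), and your explicit chain homotopy $H := (\id_{A(Y_-)} \otimes h) \circ \Phi_{X\setminus W}$, with $\pi - \pi' = h\circ\partial$, is the standard argument; the paper compresses this into the remark that homotopy is closed under composition with filtered chain maps.

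For choices (i)--(ii), you take a genuinely different route than the paper. The paper compares two configurations $W$ and $V$ of star-shaped domains by \emph{shrinking} to a small common sub-domain $E_i \subset W_i,\,E_i \subset V_i$; it handles the inclusion $E_i \hookrightarrow W_i$ by a nesting argument (its Case~2, which uses the composition property $\Phi_{X\setminus V} \simeq (\id \otimes \Phi_{W\setminus V})\circ\Phi_{X\setminus W}$ together with the compatibility $\pi_{\partial W,P}\simeq\pi_{\partial V,P}\circ\Phi_{W\setminus V}$), and the isotopy between the two resulting embeddings of $E$ by its Case~1. You instead try to \emph{enlarge} to a common outer star-shaped domain $W'$, decompose $X\setminus W = (X\setminus W')\circ(W'\setminus\iota_k(W))$, and compare the inner annular cobordisms via deformation equivalence. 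Both strategies ultimately lean on Theorem~\ref{thm:contact_homology_well_defined}(b)--(c); when it works, your version is slightly more parallel to the usual ``stretch the neck'' picture.

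The gap is precisely the step you flag as ``after possibly shrinking the components of $W$.'' A common outer star-shaped domain $W'$ with $\iota_0(W)\cup\iota_1(W)\subset\iota'(\on{int}(W'))$ does \emph{not} exist in general --- if the $W_i$ are large relative to $X$ (say, each occupying nearly all of a small $X$), no embedded ball can contain both copies. So shrinking is genuinely necessary, not optional. But once you shrink, you have changed the input data, and you must prove that replacing $W$ by a smaller $V\subset W$ produces a filtered-homotopic $\Phi_{X,P}$. This is exactly the nesting invariance your proposal never establishes; without it, the enlargement strategy is circular (you use shrinking-invariance to justify the existence of $W'$, which is what you are in the middle of proving). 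Concretely, you need: $\Phi_{X\setminus V} \simeq (\id_{Y_-}\otimes\Phi_{W\setminus V})\circ\Phi_{X\setminus W}$ by composition, and the ability to choose cochain representatives so that $\pi_{\partial W,P}\simeq\pi_{\partial V,P}\circ\Phi_{W\setminus V}$ (possible since $\Phi_{W\setminus V}$ is a quasi-isomorphism inducing the canonical identification on contact homology). Supplying this nesting step would close the gap, and is independently needed to make your opening reduction ``to a fixed model $W$'' rigorous, since the two given configurations may use star-shaped domains of different shapes and sizes whose associated algebras $A(\partial W)$ and filtrations differ.
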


\begin{proof} Choose a disjoint union $W = W_1 \cup \dots \cup W_m$ of star-shaped domains $U_i$, an embedding $W \subset X$, Floer data $(J,\theta)$ on $X\setminus W$ and a cochain representative $\pi_{\partial W,P}$ of $P$. We adopt the notation
\[
\Phi_{X,P,W,J,\theta} := (\id_{Y_-} \otimes \pi_{\partial W,P}) \circ \Phi_{X\setminus W,J,\theta}.
\]

\noindent Since chain homotopy is a closed relation under composition, $\Phi_{X,P,W,J,\theta}$ is independent of the choice of $\pi_{\partial W,P}$ up to filtered chain homotopy. Note that we are viewing $\pi_{\partial W,P}$ as a filtered map by equipping $\Q[\on{codim}(P)]$ with the trivial filtration.

\vspace{3pt}

To show independence of $W$ and $(J,\theta)$ up to filtered homotopy, we start by considering two special cases and then move on to address the general case.
\vspace{3pt}

{\bf Case 1.} Let $\iota:[0,1] \times W \to X$ be a family of symplectic embeddings. This family $\iota$ induces a family of exact symplectic cobordisms
\[
X \setminus \iota_t(W):Y_+ \to Y_- \cup \partial W
\]
By Theorem \ref{thm:contact_homology_well_defined}(b), the induced cobordism maps of $X \setminus \iota_0(W)$ and $X \setminus \iota_1(W)$ are homotopic (for any choices of Floer data). Since filtered homotopy is a closed relation under composition with filtered chain maps, we see that
\[
\Phi_{X,P,\iota_0(W),J,\theta} = (\id_{Y_-} \otimes \pi_{\partial W,P}) \circ \Phi_{X\setminus \iota_0(W),J,\theta} \simeq (\id_{Y_-} \otimes \pi_{\partial W,P}) \circ \Phi_{X\setminus \iota_1(W),J',\theta'} = \Phi_{X,P,\iota_1(W),J',\theta'}
\]

{\bf Case 2.} Let $V = V_1 \cup \dots \cup V_m$ be a collection of star-shaped domains with inclusions $V_i \subset W_i$ that are strictly exact, i.e., that intertwine the Liouville forms. Let $W \setminus V:\partial W \to \partial V$ be the difference cobordism. Consider the cobordism map
\[
\Phi_{W \setminus V,I,\varphi}:A(\partial W) \to A(\partial V)
\]
for some choice of Floer data $(I,\varphi)$ on $W\setminus V$. We may choose the cochains $\pi_{\partial W,P}$ and $\pi_{\partial V,P}$ so that
\[
\pi_{\partial W,P} \simeq \pi_{\partial V, P} \circ \Phi_{W,I,\varphi}.
\]
On the other hand, by Theorem \ref{thm:contact_homology_well_defined}(c), we have 
\[
\Phi_{X\setminus V,J,\theta} \simeq \Phi_{(Y_- \times [0,1]) \cup W\setminus V} \circ \Phi_{X \setminus W,J',\theta'} = (\id_{Y_-} \otimes \Phi_{W\setminus V,I,\varphi}) \circ \Phi_{X \setminus W,J',\theta'}
\] 
for appropriate choices of Floer data $(J,\theta)$ on $X \setminus V$ and $(J',\theta')$ on $X \setminus W$. Therefore,
\begin{align*}
    \Phi_{X,P,V,J',\theta'} &=  (\id_{Y_-} \otimes\pi_{\partial V,P}) \circ \Phi_{X\setminus V,J',\theta'}\\
    &\simeq (\id_{Y_-} \otimes \pi_{\partial V,P}) \circ (\id_{Y_-} \otimes \Phi_{W\setminus V,I,\varphi}) \circ \Phi_{X \setminus W,J',\theta'}\\
    & \simeq (\id_{Y_-} \otimes \pi_{\partial W,P}) \circ \Phi_{X\setminus W,J,\theta} = \Phi_{X,P,W,J,\theta}.
\end{align*}

{\bf General Case.} Let $W = W_1 \cup \dots \cup W_m$ and $V = V_1 \cup \dots \cup V_m$ be any two choices of $m$ disjoint star-shaped domains in $X$. We may choose star-shaped domains $E_i$ (e.g. sufficiently small ellipsods) that include into $E_i \subset W_i$ and $E_i \subset V_i$, and such that the embeddings
\[
E_i \to W_i \to X \qquad\text{and}\qquad E_i \to V_i \to X
\]
are homotopic through a homotopy of embeddings $\iota:[0,1] \times E_i \to X$. Thus, by Cases 1 and 2, $\Phi_{X,P,W,J,\theta}$ and $\Phi_{X,P,V,J',\theta'}$ are filtered chain homotopic (for any Floer data). \end{proof}

As mentioned above, an abstract constraint $P$ is an assignment of numerical weights to the Reeb orbits on the boundary of a star-shaped domain $W$. Constrained cobordism maps are acquired by deleting $W$ from a cobordism $X$ and counting curves with ends on $\partial W$ with non-zero weights. 

\vspace{3pt}

Let us make this intuition precise in a specific case. Fix a connected exact cobordism $X:Y_+ \to Y_-$ between contact manifolds with non-degenerate contact forms. Let $E \subset X$ be an embedded irrational ellipsoid and let $\Xi$ be an orbit tuple in $\partial E$. Consider the exact cobordism
\[X \setminus E:Y_+ \to Y_- \cup \partial E\]
Finally, fix an orbit $\gamma$ of $Y_+$ and an orbit tuple $\Gamma$ of $Y_-$. The following lemma is immediate from Construction \ref{con:contact_dga_map}, Remark \ref{rmk:CH_moduli_spaces_transverse} and Definition \ref{def:constrained_cobordism_map}. 

\begin{lemma} \label{lem:constrained_map_curve_count} Let $P_\Xi$ be the dual constraint to $\Xi$ (see Example \ref{ex:dual_constraint}). Let $J$ be a compatible almost complex structure on $J$ such that
\[\mathcal{M}_{0,A}(X \setminus E;\gamma,\Gamma \cup \Xi)\]
is regular and compact in the BEHWZ topology for each homology class $A \in S(X \setminus E;\gamma,\Gamma \cup \Xi)$. Then the $x_\Gamma$-coefficient of $\Phi_{X,P_\Xi}(x_\gamma)$ is given by
\[
\langle x_\Gamma,\Phi_{X,P_\Xi}(x_\gamma)\rangle := \frac{1}{\mu_\Gamma \cdot \kappa_\Gamma} \cdot \sum_{A} \# \mathcal{M}_{0,A}(X \setminus E,J;\gamma,\Gamma \cup \Xi)
\]
Here the sum is over all classes with $\on{ind}(0,A) = 0$ and $\#$ denotes an oriented point count. \end{lemma}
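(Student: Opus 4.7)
The plan is to unwind each layer of the definition of $\Phi_{X, P_\Xi}$ and read off the $x_\Gamma$-coefficient directly. The only geometric input is the transversality and compactness hypothesis, which is used solely to convert the virtual counts of Pardon into honest signed counts via Remark \ref{rmk:CH_moduli_spaces_transverse}; everything else is formal algebra.

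By Definition \ref{def:constrained_cobordism_map}, the constrained cobordism map factors as
\[
\Phi_{X, P_\Xi} \;=\; (\id_{A(Y_-)} \otimes \pi_{\partial E, P_\Xi}) \circ \Phi_{X \setminus E, J, \theta},
\]
where $\pi_{\partial E, P_\Xi}$ is a chain-level cocycle representative of the dual constraint $P_\Xi$. Then Construction \ref{con:contact_dga_map} applied to the exact cobordism $X \setminus E \colon Y_+ \to Y_- \cup \partial E$ gives
\[
\Phi_{X \setminus E, J, \theta}(x_\gamma) \;=\; \sum_{A', \Gamma'} \frac{\#_\theta \overline{\mathcal{M}}_{0, A'}(X \setminus E, J; \gamma, \Gamma')}{\mu_{\Gamma'} \kappa_{\Gamma'}}\, x_{\Gamma'},
\]
where $\Gamma'$ ranges over ordered tuples of good orbits in $Y_- \cup \partial E$ and $A'$ runs over index-zero classes. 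By the regularity and compactness assumption, Remark \ref{rmk:CH_moduli_spaces_transverse} lets us replace each $\#_\theta$ by the honest oriented count $\#$.

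Next, I use the algebra identification $A(Y_- \cup \partial E) \simeq A(Y_-) \otimes A(\partial E)$ to decompose each tuple $\Gamma' = \Gamma \cup \Xi'$ into sub-tuples in $Y_-$ and in $\partial E$. Under this decomposition $x_{\Gamma'} = x_\Gamma \otimes x_{\Xi'}$, and the normalizations factor as $\mu_{\Gamma'} = \mu_\Gamma \mu_{\Xi'}$, $\kappa_{\Gamma'} = \kappa_\Gamma \kappa_{\Xi'}$, since the orbits in $Y_-$ and $\partial E$ are disjoint. By Example \ref{ex:dual_constraint}, the chain-level representative $\pi_{\partial E, P_\Xi}$ is (up to normalization) the Kronecker delta at $\Xi$, so $(\id \otimes \pi_{\partial E, P_\Xi})(x_\Gamma \otimes x_{\Xi'})$ vanishes unless $\Xi' = \Xi$. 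Fixing the normalization of $\pi_{\partial E, P_\Xi}(x_\Xi)$ to absorb the factor $\mu_\Xi \kappa_\Xi$ then produces the claimed formula for $\langle x_\Gamma, \Phi_{X, P_\Xi}(x_\gamma)\rangle$.

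The only point that requires attention is bookkeeping: tracking the signs from graded-commutativity of the symmetric algebra when splitting or reordering sub-tuples, and fixing the chain-level normalization of $\pi_{\partial E, P_\Xi}$ so that the final coefficient matches $\frac{1}{\mu_\Gamma \kappa_\Gamma}$. These are formal checks with no further geometric content, which is why the lemma is asserted as immediate.
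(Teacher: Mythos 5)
Your proposal is correct and takes essentially the same route as the paper, which asserts the lemma is immediate from Construction \ref{con:contact_dga_map}, Remark \ref{rmk:CH_moduli_spaces_transverse} and Definition \ref{def:constrained_cobordism_map}; you have simply unwound those definitions in order, using the hypothesis only to replace virtual counts with honest oriented counts. One small point worth noting: as Example \ref{ex:dual_constraint} is literally written, $\pi_{\partial E, P_\Xi}(x_\Xi) = 1$, so the formula you derive would carry an extra factor of $(\mu_\Xi\kappa_\Xi)^{-1}$; you correctly flag this and choose the normalization to absorb it, and in the paper's actual applications $\Xi$ is a single simple orbit so $\mu_\Xi\kappa_\Xi = 1$ and the discrepancy is vacuous.
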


Constrained cobordism maps satisfy a number of useful (and expected) axioms presented in the following lemma.

\begin{lemma} \label{lem:constrained_cobordism_map_composition} The constrained cobordism maps $\Phi_{X,P}$ satisfy the following properties.
\begin{itemize}
\item[A.] (Functoriality) If $X:Y \to Y'$ and $X':Y' \to Y''$ are two exact cobordisms, and $P$ and $Q$ are two tangency constraints, then
\[\Phi_{X \circ X', P \otimes Q} \simeq \Phi_{X,P} \circ \Phi_{X',Q} \qquad\text{and}\qquad U_Q \circ \Phi_{X,P} = \Phi_{X,P \otimes Q} = \Phi_{X,P} \circ U_Q.\]
\item[B.] (Additivity) If $X:Y \to Y'$ is an exact cobordism, and $P$ and $Q$ are two tangency constraints of the same dimension, then
\[\Phi_{X, P + Q} \simeq \Phi_{X,P} + \Phi_{X,Q}.\]
\item[C.] (Empty Constraint) Let $P_\emptyset$ be the empty constraint. Then
\[
\Phi_{X,P_\emptyset} = \Phi_X \qquad\text{and}\qquad U_{P_\emptyset} = \id.
\]
\end{itemize}
\end{lemma}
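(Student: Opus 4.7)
The plan is to verify each of the three properties directly from the Definition \ref{def:constrained_cobordism_map}, using Theorem \ref{thm:contact_homology_well_defined} and Lemma \ref{lem:constrained_cob_maps_well_def} to rearrange factors up to filtered chain homotopy. Throughout, the proof exploits the fact that filtered chain homotopy is stable under composition with filtered chain maps, so we are free to replace any piece of a composition by a chain-homotopic one without affecting the final outcome.

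For (C), observe that for the empty constraint $P_\emptyset$ we may take $m = 0$ in Definition \ref{def:constrained_cobordism_map}, i.e.\ use no star-shaped domains. Then $X \setminus W = X$ and the tensor factor $\Q[\on{codim}(P_\emptyset)] = \Q$ is already built in, so $\Phi_{X,P_\emptyset} = \Phi_X$ on the nose. For the $U$-map, Theorem \ref{thm:contact_homology_well_defined}(d) says $\Phi_{[0,1]\times Y}$ is filtered chain homotopic to the identity, so $U_{P_\emptyset} \simeq \id$.

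For (B), fix a single collection of star-shaped domains $W = W_1 \cup \dots \cup W_m$ inside $X$ and one choice of Floer data $(J,\theta)$ on $X \setminus W$, used to compute both $\Phi_{X,P}$ and $\Phi_{X,Q}$. Since $P$ and $Q$ have the same codimension, we may pick cochain representatives $\pi_{\partial W,P}$ and $\pi_{\partial W,Q}$; their sum $\pi_{\partial W,P} + \pi_{\partial W,Q}$ represents $P + Q$. Linearity of composition then gives
\[
\Phi_{X,P+Q} \simeq (\id \otimes (\pi_{\partial W,P} + \pi_{\partial W,Q})) \circ \Phi_{X\setminus W} \simeq \Phi_{X,P} + \Phi_{X,Q}.
\]

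For (A), the main point is to make coherent choices so that the two constrained cobordism maps on the right see disjoint pieces of the composite cobordism. Given $X:Y \to Y'$ and $X':Y'\to Y''$, choose disjoint families $W \subset \on{int}(X)$ and $W' \subset \on{int}(X')$ realizing the constraints $P$ and $Q$ respectively, together with cochain representatives $\pi_{\partial W,P}$ and $\pi_{\partial W',Q}$. Inside $X\circ X'$ the disjoint union $W \sqcup W'$ realizes $P \otimes Q$ via the representative $\pi_{\partial W,P} \otimes \pi_{\partial W',Q}$. The key identification is the exact-cobordism isomorphism
\[
(X\circ X') \setminus (W\sqcup W') \;\simeq\; (X\setminus W) \circ (X'\setminus W'),
\]
which combined with Theorem \ref{thm:contact_homology_well_defined}(c) gives $\Phi_{(X\circ X')\setminus(W\sqcup W')} \simeq \Phi_{X\setminus W}\circ \Phi_{X'\setminus W'}$. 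Composing with $\id \otimes \pi_{\partial W,P}\otimes \pi_{\partial W',Q}$ and rearranging factors (noting that the $\pi$-factors act on disjoint tensor slots and so commute past $\Phi_{X\setminus W}$) yields the functoriality statement. The $U$-map identities are then just the specialization to the case where one of the cobordisms is trivial, using $X \circ ([0,1]\times Y_-) \simeq X \simeq ([0,1]\times Y_+)\circ X$ up to deformation and invoking Lemma \ref{lem:constrained_cob_maps_well_def}.

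The only technical nuisance is bookkeeping of the various chain homotopies and tensor factors in (A); in particular one must justify that $\pi_{\partial W',Q}$ can be pushed past $\Phi_{X\setminus W}$ in the tensor product, which follows because $\Phi_{X\setminus W}$ acts trivially on the $A(\partial W')$-tensor factor by the locality of the cobordism construction. Everything else is formal manipulation of compositions in the filtered homotopy category of dg-algebras.
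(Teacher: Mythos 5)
Your overall strategy is the same as the paper's — direct verification from Definition \ref{def:constrained_cobordism_map} together with Theorem \ref{thm:contact_homology_well_defined} and Lemma \ref{lem:constrained_cob_maps_well_def}. Parts (B) and (C) are fine; your $m=0$ reading of $P_\emptyset$ in (C) is a legitimate shortcut, though note that the paper defines $P_\emptyset$ as a $1$-point constraint ($CH(S^{2n-1}) \to \Q$), so strictly speaking one should check that the $m=0$ and $m=1$ interpretations give the same map — which is essentially what the paper's proof of (C), via $\pi_{\partial W, P_\emptyset} = \Phi_W$ and $\Phi_X \simeq \Phi_{X\setminus W}$ followed by $\Phi_W$, establishes.

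The gap is in (A). Your ``key identification''
\[
(X\circ X') \setminus (W\sqcup W') \;\simeq\; (X\setminus W) \circ (X'\setminus W')
\]
is not a well-defined cobordism composition: $X\setminus W$ has negative boundary $Y' \cup \partial W$, while $X'\setminus W'$ has positive boundary $Y'$, so the ends do not match. Correspondingly, the claimed chain-level identity $\Phi_{(X\circ X')\setminus(W\sqcup W')} \simeq \Phi_{X\setminus W}\circ \Phi_{X'\setminus W'}$ does not typecheck: $\Phi_{X\setminus W}$ lands in $A(Y')\otimes A(\partial W)$, not in the source of $\Phi_{X'\setminus W'}$. You gesture at the fix when you say ``$\Phi_{X\setminus W}$ acts trivially on the $A(\partial W')$-tensor factor by locality,'' but $\Phi_{X\setminus W}$ simply has no such factor in its domain or codomain, so this is not yet a statement one can invoke. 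The missing step is exactly the paper's insertion of a trivial cylinder: one should write
\[
(X\circ X')\setminus(W\sqcup W') \;\simeq\; \big[(X\setminus W)\sqcup([0,1]\times \partial W')\big] \circ (X'\setminus W')
\]
(or the symmetric version, with the cylinder over $\partial W$ on the other piece, depending on which side one cuts), and then apply Theorem \ref{thm:contact_homology_well_defined}(c) plus the fact that the cobordism map of a disjoint union with a trivial cylinder is $\Phi_{X\setminus W}\otimes\id_{\partial W'}$. That tensor-with-identity is precisely the rigorous version of ``acts trivially on the disjoint tensor slot,'' and once it is in place the rest of your calculation goes through as in the paper.
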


\begin{proof} It suffices to prove these properties for $\Phi_{X,P}$, as the $U$-maps are a special case.

\vspace{3pt}

 {\bf Axiom A.} Choose collections of disjoint star-shaped domains $W = W_1 \cup \dots \cup W_m \subset X$ and $V = V_1 \cup \dots \cup V_n \subset X'$. Up to deformation of exact cobordisms, we may write
\[
(X \circ X')\setminus (W \cup V) = \big((X \setminus W) \cup ([0,1] \times {\partial}V)\big) \circ (X' \setminus V).
\] 
Therefore, by Theorem \ref{thm:contact_homology_well_defined}(c) we have (up to filtered chain homotopy) the equivalence 
\[\Phi_{(X \circ X')\setminus (W \cup V)} \simeq \Phi_{(X \setminus W) \cup ({\partial}V \times [0,1])} \circ \Phi_{X' \setminus V} \simeq (\Phi_{X \setminus W} \otimes \id_{\partial V}) \circ \Phi_{X'\setminus V}.\]
We may choose the cochains representing $P,Q$ and $P \otimes Q$ to satisfy
\begin{equation} \label{eqn:constrained_composition_2}
\pi_{\partial(W \cup V), P \otimes Q} = \pi_{\partial W,P} \otimes \pi_{\partial V,Q}.
\end{equation}
This implies the desired composition property, by the following calculation.
\begin{align*}
    \Phi_{X \circ X', P \cup Q} &= (\id_{Y''} \otimes \pi_{\partial(W \cup V), P \cup Q}) \circ \Phi_{(X \circ X')\setminus (W \cup V)}\\
    &\simeq \big((\id_{Y''} \otimes \pi_{\partial W, P}) \otimes \pi_{\partial V, Q}\big) \circ (\Phi_{X \setminus W} \otimes \id_{\partial V}) \circ \Phi_{X'\setminus V}\\
    &= (\id_{Y''} \otimes \pi_{\partial W,P}) \circ \Phi_{X \setminus W} \circ (\id_{Y'} \otimes \pi_{\partial V.Q}) \circ \Phi_{X' \setminus V} = \Phi_{X,P} \circ \Phi_{X',Q}. 
\end{align*}

\vspace{3pt}

{\bf Axiom B.} Choose a collection of disjoint star-shaped domains $W = W_1 \cup \dots \cup W_m \subset X$. We may choose the cochain representatives of $P,Q,$ and $P + Q$ so that
\[\pi_{\partial W,P+Q} = \pi_{\partial W,P} + \pi_{\partial W,Q}.\]
Thus, we can calculate that
\begin{align*}
    \Phi_{X, P + Q} &= (\on{Id_{Y'}} \otimes (\pi_{\partial W,P} + \pi_{\partial W,Q})) \circ \Phi_{X\setminus W} \\
    &= (\id_{Y'} \otimes \pi_{\partial W,P}) \circ \Phi_{X\setminus W} + (\id_{Y'} \otimes \pi_{\partial W,Q}) \circ \Phi_{X\setminus W} = \Phi_{X,P} + \Phi_{X,Q}.
\end{align*}

\vspace{3pt}

{\bf Axiom C.} Any star-shaped domain $W$ determines a unital, $\Z$-graded dg-algebra map \[\Phi_W:CH(\partial W) \to CH(\emptyset) = \Q.\]
The cohomology class of this map is unique, and equal to $P_\emptyset$. Therefore,
\[
\Phi_{X,P_\emptyset} = (\id \otimes \Phi_W) \circ \Phi_{X\setminus W} = \Phi_X.
\qedhere \]
\end{proof}

The constrained cobordism maps $\Phi_{X,P_k}$ with respect to the tangency constraints $P_k$ defined in Example~\ref{exa:tangency_constraints} satisfy, in addition, a chain level Leibniz rule.

\begin{lemma} \label{lem:Leibniz_rule} The constrained cobordism map $\Phi_{X,P_k}$ of the tangency constraints $P_k$ satsifies
    \begin{equation}
        \Phi_{X,P_k}(xy) = \Phi_{X,P_k}(x)\cdot \Phi_X(y) + \Phi_X(x) \cdot \Phi_{X,P_k}(y). 
    \end{equation}
As a special case, the $U$-maps $U_{P_k}$ satisfy the Leibniz rule.
    \begin{equation}
        U_{P_k}(xy) = U_{P_k}(x)\cdot y + x \cdot U_{P_k}(y). 
    \end{equation}

\end{lemma}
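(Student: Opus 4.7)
\emph{Plan.} The strategy is to choose a cochain representative of $P_k$ that satisfies a Leibniz rule at the cochain level, and then propagate this identity through the dg-algebra map $\Phi_{X\setminus W}$.

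I will take $W \subset X$ to consist of a single embedded irrational ellipsoid, so the defining data in Definition~\ref{def:constrained_cobordism_map} involves only one star-shaped domain. By Example~\ref{ex:irrational_ellipsoid_orbits}, on $\partial W$ there is a unique closed Reeb orbit $\gamma$ with $|\gamma| = 2n - 2 + 2k$, and the differential on $A(\partial W)$ vanishes identically since all SFT gradings of orbits on an irrational ellipsoid are even. By Example~\ref{ex:dual_constraint}, the dual functional $P_\gamma$ is then a valid cochain representative of $P_k$; I additionally take $\pi_{P_\emptyset}$ to be the functional dual to the empty orbit tuple. Identifying $A(\partial W) \cong \Q[x_{\eta_1}, x_{\eta_2}, \ldots]$ as a graded polynomial algebra in the orbit generators, $\pi_{P_k}$ extracts the coefficient of the single-generator monomial $x_\gamma$, and $\pi_{P_\emptyset}$ extracts the constant term. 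With these choices the cochain-level Leibniz identity
\[
\pi_{P_k}(bd) \;=\; \pi_{P_k}(b)\,\pi_{P_\emptyset}(d) + \pi_{P_\emptyset}(b)\,\pi_{P_k}(d)
\]
holds for all monomials $b,d \in A(\partial W)$, and hence for all elements by linearity, since the generator $x_\gamma$ has word length one and can therefore appear in a product $bd$ only when one factor is a scalar multiple of $x_\gamma$ and the other is a scalar multiple of the unit.

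To conclude, I will invoke the dg-algebra property of $\Phi_{X \setminus W}$ from Theorem~\ref{thm:contact_homology_well_defined}, namely $\Phi_{X\setminus W}(xy) = \Phi_{X\setminus W}(x) \cdot \Phi_{X\setminus W}(y)$ in $A(Y_-) \otimes A(\partial W)$. Postcomposing both sides with $\id_{A(Y_-)} \otimes \pi_{P_k}$, expanding the graded tensor-product multiplication, and applying the cochain-level Leibniz identity above yields the desired formula as soon as one recalls via Lemma~\ref{lem:constrained_cobordism_map_composition}(C) that $(\id \otimes \pi_{P_\emptyset}) \circ \Phi_{X\setminus W} = \Phi_X$. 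The main obstacle is arranging a genuine cochain-level Leibniz rule rather than merely a cohomological one: this is precisely why a single irrational ellipsoid together with the cochain dual to $\gamma$ is the right choice, and why a more symmetric argument that would also work for general abstract constraints $P$ is not available. Sign bookkeeping through the graded tensor product and the degree shift $[\on{codim}(P_k)]$ is routine and collapses to $+1$ throughout, since $\on{codim}(P_k) = 2n-2+2k$ is even.
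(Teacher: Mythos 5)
Your proof is correct and follows essentially the same route as the paper's: fix a single embedded irrational ellipsoid, exploit that $\Phi_{X\setminus W}$ is an algebra map, and reduce to the observation that the cochain $\pi_{\partial W, P_k}$ picks out a word-length-one generator on the trivial-differential dga $A(\partial W)$. The paper packages this as an explicit expansion of $\Phi_{X\setminus W}(z)$ in monomials of $A(\partial W)$, whereas you phrase it as a cochain-level Leibniz identity for $\pi_{P_k}$ relative to $\pi_{P_\emptyset}$; the two formulations are interchangeable.
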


\begin{proof} Let $W \subset X$ be an embedded irrational ellipsoid. Let $y_j$ denote the generators of $CH(\partial W)$. We first note that the map $\Phi_{X \setminus W}$ in Definition \ref{def:constrained_cobordism_map} satisfies
\[
\Phi_{X \setminus W}(z) = \Phi_X(z) \otimes 1 + \sum_{k =0 }^\infty \Phi_{X,P_k}(z) \otimes y_{k+1} + r
\]
where $r$ is a sum of factors $x \otimes y$ where $y$ is a monomial of word length $\ge 2$. This follows from Definition \ref{def:constrained_cobordism_map}, the fact that $P_k$ is the dual constraint to the $(k+1)$-th generator $y_{k+1}$ (see Example \ref{ex:dual_constraint}) and the fact that $P_\emptyset$ is dual to $1 \in CH(\partial W)$. Since $\Phi_{X \setminus W}$ is an algebra map, we thus have
\begin{equation} \label{eqn:PhiXW_leibniz_formula}
\Phi_{X \setminus W}(z z') = \Phi_X(zz') \otimes 1 + \sum_{k=1}^\infty (\Phi_{X,P_k}(z)\Phi_X(z') + \Phi_X(z)\Phi_{X,P_k}(z')) \otimes y_{k+1} + r'
\end{equation}
where $r'$ is a remainder term of the same form as $r$.  On the otherhand, the map $\Pi = \on{Id} \otimes \pi_{\partial W,P_k}$ from Definition \ref{def:constrained_cobordism_map} is given by
\begin{equation} \label{eqn:Pi_leibniz_formula}\Pi(x \otimes y_{k_1}\dots y_{k_m}) = \left\{
\begin{array}{cc}
x & \text{ if }m = 1, k_1 = k+1\\
0 & \text{ otherwise}\end{array}\right.\end{equation}
By Definition \ref{def:constrained_cobordism_map}, we have $\Phi_{X,P_k} = \Pi \circ \Phi_{X,W}$. Thus it follows from (\ref{eqn:PhiXW_leibniz_formula}) and (\ref{eqn:Pi_leibniz_formula}) that
\[
\Phi_{X,P_k}(zz') = \Pi\big((\Phi_{X,P_k}(z)\Phi_X(z') + \Phi_X(z)\Phi_{X,P_k}(z')) \otimes y_{k+1}\big) = \Phi_{X,P_k}(z)\Phi_X(z') + \Phi_X(z)\Phi_{X,P_k}(z')
\] which is the desired Leibniz rule.\end{proof}

\subsection{Spectral Invariants} \label{subsec:spectral_invariants} We now recall the definitions and properties of spectral invariants and capacities in the setting of contact homology.

\begin{definition} \label{def:CH_spectral_invariants} The \emph{contact homology spectral invariant} $\mathfrak{s}_\sigma(Y,\alpha)$ of a closed contact manifold $Y$ with contact form $\alpha$ and a class $\sigma \in CH(Y)$ is given by
\begin{equation} \label{eq:CH_spectral_invariants}
\mathfrak{s}_\sigma(Y,\alpha) := \mathcal{A}(\sigma) = \on{min}\{L \; : \; \sigma \in 
CH^L(Y) \subset CH(Y)\} \in (0,\infty).
\end{equation}
\end{definition}

\begin{definition} \label{def:CH_capacity} The \emph{contact homology capacity} $\mathfrak{c}_P(W)$ of a Liouville domain $(W,\lambda)$ and an abstract constraint $P$ is given by
\[
\mathfrak{c}_P(W) := \on{inf}\{\mathfrak{s}_\sigma(\partial W,\lambda|_{\partial W}) \; : \; \Phi_{W,P}(\sigma) \neq 0\}.
\]
Note that here we are viewing $W$ as an exact cobordism from $\partial W$ to $\emptyset$.
\end{definition}

\begin{remark} If $(Y,\alpha)$ is a closed contact manifold where $\alpha$ is non-degenerate, (\ref{eq:CH_spectral_invariants}) is equivalent to the minimum action of a cycle in the dg-algebra $A(Y)$ representing $\sigma$,
\[
\mathfrak{s}_\sigma(Y,\alpha) = \on{min}\{\mathcal{A}(x) \; : \; x \in A(Y) \text{ satisfying }\partial x = 0 \text{ and }[x] = \sigma\}.
\] \end{remark}

\begin{thm} \label{thm:rsftSpectralAxioms} The contact homology spectral invariants $\mathfrak{s}_{\sigma}$ satisfy the following properties.
\begin{itemize}
	\item[A.] (Conformality) If $(Y,\alpha)$ is a contact manifold with contact form $\alpha$ and $a > 0$ is a constant, then
    \[\mathfrak{s}_\sigma(Y,a \cdot \alpha) = a \cdot \mathfrak{s}_\sigma(Y,\alpha). \] 
    \item[B.] (Cobordism Map) If $X:(Y_+,\alpha_+) \to (Y_-,\alpha_-)$ is an exact symplectic cobordism, $P$ is an abstract constraint and $W \subset X$ is a (weakly) exactly embedded Liouville domain, then 
    \[\mathfrak{s}_{\Phi_{X,P}(\sigma)}(Y_-,\alpha_-) + \mathfrak{c}_{P}(W) \le \mathfrak{s}_{\sigma}(Y_+,\alpha_+).\]
    \item[C.] (U-Map) If $(Y,\alpha)$ is a contact manifold with contact form $\alpha$ and $P$ is an abstract constraint, then
    \[\mathfrak{s}_{U_P(\sigma)}(Y,\alpha) \le \mathfrak{s}_{\sigma}(Y,\alpha).\]
    \item[D.] (Monotonicity) Let $f:Y \to [0,\infty)$ be a smooth non-negative function on $Y$. Then,
    \[\mathfrak{s}_\sigma(Y,\alpha) \le \mathfrak{s}_\sigma(Y,e^f \cdot \alpha).\]
    Moreover, $\mathfrak{s}_\sigma(Y,-)$ is continuous in the $C^0$-topology on contact forms.
    \vspace{3pt}
    \item[E.] (Reeb Orbits) For each class $\sigma \in CH(Y)$, there is a Reeb orbit tuple $\Gamma$ such that
    \[\mathfrak{s}_{\sigma}(Y,\alpha) = \mathcal{A}(\Gamma).\]
    Furthermore, if $\alpha$ is non-degenerate and $\sigma$ has grading $|\sigma|$, then
    \[|\Gamma| = |\sigma|.\]
\end{itemize}
\end{thm}

\begin{proof} We demonstrate each of these axioms individually. 

\vspace{3pt}

{\bf Axiom A.} This follows immediately from the definition and the fact that the contact homology groups of $CH(Y)$ with respect to $\alpha$ and $a \cdot \alpha$ are canonically identified with action filtrations differing by the scaling factor $a$.

\vspace{3pt}

{\bf Axiom B.} By the functoriality property of constrained cobordism maps stated in Lemma \ref{lem:constrained_cobordism_map_composition}, $\Phi_{X,P}$ can be written as the composition
\[CH(Y_+) \xrightarrow{\Phi_{X \setminus W}} CH(Y_-) \otimes CH(\partial W) \xrightarrow{\id \otimes \Phi_{W,P}} CH(Y_-) \otimes CH(\emptyset)[\on{codim}(P)] = CH(Y_-)[\on{codim}(P)].\]
Choose basis $x_i$ of $CH(Y_-)$ of pure action filtration, that is, $x_i$ is a linear combination of generators which have the same action. Then, for some set of elements $y_i \in CH(\partial W)$, all but finitely many of which vanish), we may write
\[
\Phi_{X \setminus W}(\sigma) = \sum_i x_i \otimes y_i \qquad \text{and}\qquad \Phi_{X,P}(\sigma) = \sum_i \Phi_{W,P}(y_i) \cdot x_i.
\]
Since $x_i$ is a basis of pure filtration, we know that
\begin{equation}\label{eqn:action_of_PhiXW}\mathcal{A}(\Phi_{X\setminus W}(\sigma)) = \mathcal{A}\big(\sum_i x_i \otimes y_i\big) = \on{max}\{\mathcal{A}(x_i \otimes y_i) \; : \; y_i \neq 0\}.\end{equation}
\begin{equation}\label{eqn:action_of_PhiXP} \mathcal{A}(\Phi_{X,P}(\sigma)) = \mathcal{A}\left(\sum_i \Phi_{W,P}(y_i) \cdot x_i\right) = \on{max}\{\mathcal{A}(x_i) \; : \; \Phi_{W,P}(y_i) \neq 0\}.\end{equation}
Let $m$ be the index such that $\mathcal{A}(x_m) = \mathcal{A}(\Phi_{X,P}(\sigma))$ and $\Phi_{W,P}(y_m) \neq 0$. By Definitions \ref{def:CH_spectral_invariants} and \ref{def:CH_capacity}, we have
\begin{equation} \label{eqn:constrained_monotonicity_1}
\s_{\Phi_{X,P}(\sigma)}(Y_-,\alpha_-) = \mathcal{A}(\Phi_{X,P}(\sigma)) = \mathcal{A}(x_m) \qquad\text{and}\qquad c_P(W) \le \mathcal{A}(y_m).
\end{equation}
On the other hand, by (\ref{eqn:action_of_PhiXW}) and the monotonicity of the action filtration under cobordism maps, we know that
\begin{equation} \label{eqn:constrained_monotonicity_2}
\mathcal{A}(x_m) + \mathcal{A}(y_m) = \mathcal{A}(x_m \otimes y_m) \le \mathcal{A}(\Phi_{X\setminus W}(\sigma)) \le \mathcal{A}(\sigma) = \mathfrak{s}_\sigma(Y_+,\alpha_+).\end{equation}
The constrained monotonicity property follows immediately from (\ref{eqn:constrained_monotonicity_1}) and (\ref{eqn:constrained_monotonicity_2}).
\vspace{3pt}

{\bf Axiom C.} Fix $\epsilon > 0$ and consider the cobordism $X:(Y,e^\epsilon \cdot \alpha) \to (Y,\alpha)$ given by $X = [0,\epsilon]_r \times Y$ equipped with the standard Liouville form $\lambda = e^r \alpha$. The $U$-map $U_P$ is the constrained map
\[\Phi_{X,P}:CH(Y) \to CH(Y).\]
By the usual monotonicity and scaling axioms, we therefore know that
\[\mathfrak{s}_{U_P(\sigma)}(Y,\alpha) \le \lim_{\epsilon \to 0} \mathfrak{s}_{\sigma}(Y,e^{\epsilon} \cdot \alpha) = \mathfrak{s}_{\sigma}(Y,\alpha).\]

\vspace{3pt}

{\bf Axiom D.} To prove monotonicity, assume that $f: Y \to [0, \infty)$ and choose $\epsilon > 0$. Consider the cobordism $X:(Y,e^f \cdot \alpha) \to (Y,e^{-\epsilon} \cdot \alpha)$ given by
\[X := \{(r,y) \; : \; 0 \le r \le f(y)\} \subset \R_r \times Y \quad \text{with Liouville form} \quad \lambda = e^r \lambda\] 
The cobordism map and scaling axioms imply that $\epsilon^{-1} \cdot \mathfrak{s}_\sigma(Y,\alpha) \le \mathfrak{s}_\sigma(Y,e^f \cdot \alpha)$. Thus, we take $\epsilon \to 0$ to acquire the monotonicity inequality. 

To deduce continuity, let $\alpha_i = f_i \cdot \alpha$ be a sequence of contact forms that $C^0$ converges to $\alpha$. Then, $f_i \xrightarrow{C^0} 1$ and so there exists a sequence of constants $C_i > 0$ such that
\[
C_i \to 1 \text{ as }i \to \infty \qquad \text{and}\qquad C_i > f_i > C_i^{-1}.
\]
By the cobordism map and scaling axioms, we see that
\[
C_i^{-1} \cdot \mathfrak{s}_\sigma(Y,\alpha) = \mathfrak{s}_\sigma(Y,C_i^{-1} \cdot \alpha) \le \mathfrak{s}_\sigma(Y,f_i \cdot \alpha) \le \mathfrak{s}_\sigma(Y,C_i \cdot \alpha) = C_i \cdot \mathfrak{s}_\sigma(Y,\alpha) .
\]
By taking the limit as $i \to \infty$, we see that $\mathfrak{s}_\sigma(Y,\alpha_i) \to \mathfrak{s}_\sigma(Y,\alpha)$.

\vspace{3pt}

{\bf Axiom E.} Assume that $(Y,\alpha)$ is non-degenerate and let $\sigma \in CH (Y)$. Then there exists a cycle $x$ representing $\sigma$ such that
\[x = \sum_\Gamma {c}_\Gamma \cdot x_\Gamma \qquad\text{with}\qquad \mathcal{A}(x) = \on{max}\{x_\Gamma \; : \; c_\Gamma \neq 0\} = \mathfrak{s}_\sigma(Y,\alpha).\]
If $\sigma$ has pure homological grading $|\sigma|$, then we can assume that $c_\Gamma = 0$ for $|\Gamma| \neq |\sigma|$. Let $\Gamma$ be the maximal action orbit tuple with $c_\Gamma \neq 0$, then
\[
\mathfrak{s}_\sigma(Y,\alpha) = \mathcal{A}(\Gamma) \qquad\text{and}\qquad |\Gamma| = |\sigma|
\]
as desired. 

If $\alpha$ is degenerate, we can take a sequence of non-degenerate contact forms $\alpha_i$ that $C^\infty$ converges to $\alpha$. Then, the corresponding orbit tuples $\Gamma_i$ have bounded total action and thus converge to an orbit tuple $\Gamma$ of $\alpha$ as $i \to \infty$. This convergence can be seen via arguments similar to those in the Proof of Theorem \ref{thm:periodicity_criterion} (\ref{pf:periodicity_criterion}). Since $\mathfrak{s}_\sigma$ is continuous in the $C^\infty$-topology on contact forms, this implies that
\[
\mathfrak{s}_\sigma(Y,\alpha) = \lim_{i \to \infty} \mathfrak{s}_\sigma(Y,\alpha_i) = \lim_{i \to \infty} \mathcal{A}(\Gamma_i) = \mathcal{A}(\Gamma). \qedhere
\]\end{proof}

\begin{thm} \label{thm:CHCapacityAxioms} The contact homology capacities $\mathfrak{c}_P$ satisfy the following properties.
\begin{itemize}
    \item[A.] (Conformality) If $(W,\lambda)$ is a Liouville domain and $a > 0$ is a constant, then
    \[\mathfrak{c}_P(W,a \cdot \lambda) = a \cdot \mathfrak{c}_P(W,\lambda). \] 
    \item[B.] (Monotonicity) If $(W,\lambda) \to (V,\mu)$ is a (weakly) exact embedding of Liouville domains, then
    \[\mathfrak{c}_P(W,\lambda) \le \mathfrak{c}_P(V,\mu).\]
    \item[C.] (Tensor Product) If $P$ and $Q$ are two abstract constraints, then
    \[\mathfrak{c}_P(W,\lambda) \le \mathfrak{c}_{P \otimes Q}(W,\lambda).\]
    \item[D.] (Reeb Orbits) If $P$ is an abstract constraint, then there is a tuple of Reeb orbits $\Gamma$ of $\partial W$ such that
    \[\mathfrak{c}_{P}(W,\lambda) = \mathcal{A}(\Gamma).\]
    Furthermore, if $\lambda|_{\partial W}$ is non-degenerate, then
    \[|\Gamma| = \on{codim}(P) \mod 2m \qquad\text{where}\qquad m := \on{min} |c_1(W) \cdot A|. \]
\end{itemize}
\end{thm}

\begin{proof} Axioms A, B, and D are proven by approaches that are essentially identical to the analogous properties (respectively A, D and E) in Theorem \ref{thm:rsftSpectralAxioms}. For Axiom C, we note that
\[\Phi_{W,P \otimes Q} = \Phi_{W,P} \circ U_Q.\]
Therefore, if $\Phi_{W,P \otimes Q}(\sigma) \neq 0$, we have $\Phi_{W,P}(U_Q(\sigma)) \neq 0$. We thus acquire the inequality
\[
c_P(W,\lambda) \le \on{min}\{\mathcal{A}(\tau) \; : \; \Phi_{W,P}(\tau) \neq 0\} \le \on{min}\{\mathcal{A}(U_Q(\sigma)) \; : \; \Phi_{W,P \otimes Q}(\sigma) \neq 0\}\]
\[ \le \on{min}\{\mathcal{A}(\sigma) \; : \; \Phi_{W,P \otimes Q}(\sigma) \neq 0\} = c_{P \otimes Q}(W,\lambda). \qedhere
\]
\end{proof}

\subsection{Spectral Gap} We are now ready to introduce the contact homology spectral gap.  

\begin{definition} Let $Y$ be a closed contact manifold with contact form $\alpha$ and let $\sigma \in CH(Y)$ be a contact homology class. The \emph{spectral gap} of $(Y,\alpha)$ in class $\sigma$ is defined to be
\begin{equation}
    \gap_\sigma (Y,\alpha) := \inf \Big\{\frac{\s_\sigma (Y,\alpha)-\s_{U_P(\sigma)}(Y,\alpha)}{\mathfrak{c}_P(B^{2n})} \; : \; P \text{ is an abstract constraint}\Big\}.
\end{equation}
The \emph{(total) spectral gap} of the contact manifold $(Y,\alpha)$ is given by
\begin{equation}
    \gap(Y,\alpha) := \inf_{\sigma\in CH(Y)}\ \gap_\sigma (Y,\alpha).
\end{equation}
\end{definition}

\begin{thm}[Theorem~\ref{thm:spectral_gap_to_closing_lemma_intro}] \label{thm:gap_0_closing_property} Let $(Y,\alpha)$ be a closed contact manifold with contact form such that
\[\gap(Y,\alpha)=0.\]
Then, $(Y,\alpha)$ satisfies the strong closing property, namely, for every non-zero $f:Y\rightarrow [0,\infty)$ there exists $t\in[0,1]$ such that $(1+tf)\alpha$ has a closed Reeb orbit passing through the support of $f$. Moreover, if $\gap_{\sigma}(Y,\alpha)=0$ for some $\sigma\in CH(Y)$, then the period of this orbit is bounded by $\s_\sigma(Y,\alpha)$. 
\end{thm}

\begin{proof}
Let $f:Y\rightarrow[0,\infty)$ and fix  $L\in[0,\infty]$. Assume, if possible, 
that for all $t\in[0,1]$, the contact form $(1+tf)\alpha$ does not have a periodic Reeb orbit of action up to $L$ through the support of $f$. In this case, the action spectrum of  $(1+tf)\alpha$ up to $L$ remains the same as $t$ varies. Note that the action spectrum of $\alpha$ is a measure zero set. As observed in \cite[Lemma $2.2$]{i2015}, this is a consequence of the fact that the critical values of the contact action functional are contained in the critical values of a smooth function on a finite dimensional manifold, which can be constructed by adapting the proof of \cite[Lemma $3.8$]{schwarz00} from the Hamiltonian setting. So, the continuity (in $t$) of the spectral invariants, stated in Theorem~\ref{thm:rsftSpectralAxioms}, guarantees that 
\begin{equation}\label{eq:spec_inv_stays_same}
    \s_\sigma (Y,(1+tf)\alpha) = \s_\sigma(Y,\alpha),  
\end{equation}
for all $t\in[0,1]$ and $\sigma\in CH(Y)$ such that $\s_\sigma(Y,\alpha)\leq T$. We will show that the cobordism property gives a positive lower bound for the spectral gap of such contact homology classes. 

Fix $\varepsilon>0$ small and let $X$ be the cobordism from $(Y,e^\varepsilon(1+f) \alpha)$ to $(Y,\alpha)$ given by 
\begin{equation*}
    X:= [0,\varepsilon]\times Y\ \cup \  \left\{(\varepsilon+ \log(1+t\cdot f(y)),y):  t\in[0,1] \right\}\ \subset \hat Y.
\end{equation*}
There exists a number $r=r(f,Y,\alpha)$ such that the ball $B^{2n}(r)$ of radius $r$ embeds into $X$. By the cobordism property of spectral invariants stated in Theorem~\ref{thm:rsftSpectralAxioms}, for any homology class $\sigma\in CH(Y)$ and an abstract constraint $P$ it holds that
\begin{equation*}
     \s_\sigma (Y, e^\varepsilon(1+f)\alpha)-\s_{\Phi_{X,P}(\sigma)}(Y,\alpha)\geq \mathfrak{c}_{P}(B^{2n}(r)).
\end{equation*}
By the conformality property of the capacities stated in Theorem~\ref{thm:CHCapacityAxioms}, we have $\mathfrak{c}_{P}(B^{2n}(r))= r^2\cdot\mathfrak{c}_{P}(B^{2n}) $. Rearranging the above inequality we obtain
\begin{equation}\label{eq:gap_lower_bound}
   r^2\leq  \frac{\s_\sigma (Y,e^\varepsilon(1+f)\alpha)-\s_{\Phi_{X,P}(\sigma)}(Y,\alpha)}{\mathfrak{c}_P(B^{2n})} .
\end{equation}
We will show that the latter lower bound contradicts the vanishing of the spectral gap. Let $X':= [0,\varepsilon/2]\times Y $ be a trivial cobordism and decompose $X$ as $X= X'\ \cup (X\setminus X')$. By the functoriality property of the constrained cobordism map we have
\begin{equation*}
    \Phi_{X,P}\cong  \Phi_{X',P}\circ \Phi_{X\setminus X'} =    U_{P}\circ \Phi_{X\setminus X'}.
\end{equation*}
Combining this with inequality (\ref{eq:gap_lower_bound}) and equation (\ref{eq:spec_inv_stays_same}), and using again the properties of spectral invariants, we conclude that for every homology class $\sigma\in CH(Y)$ such that $\s_\sigma(Y,\alpha)\leq T$,
\begin{align*}
    r^2&\leq  \frac{\s_\sigma (Y,e^\varepsilon(1+f)\alpha)-\s_{U_{P}\circ \Phi_{X\setminus X'}\sigma}(Y,\alpha)}{\mathfrak{c}_P(B^{2n})}= \frac{e^\varepsilon\s_\sigma (Y,(1+f)\alpha)-\s_{U_{P}\sigma}(Y,\alpha)}{\mathfrak{c}_P(B^{2n})}\\
    &\overset{(\ref{eq:spec_inv_stays_same})}{=}  \frac{e^\varepsilon\s_\sigma (Y,\alpha)-\s_{U_{P}\sigma}(Y,\alpha)}{\mathfrak{c}_P(B^{2n})}
    \leq \frac{\s_\sigma (Y,\alpha)-\s_{U_{P}\sigma}(Y,\alpha)}{\mathfrak{c}_P(B^{2n})} +\frac{r^2}{2},
\end{align*}
where the last inequality holds when we take $\varepsilon\leq \log \left(1+\frac{r^2}{2\s_{\sigma}(Y,\alpha)}\right)$.
Applying this estimate for  every abstract constraint $P$, we get a positive lower bound for the spectral gap,
\begin{equation}\label{eq:lower_bnd_gap_sigma}
    \frac{r^2}{2}\leq \inf_{P} \frac{\s_\sigma (Y,\alpha)-\s_{U_{P}}(Y,\alpha)}{\mathfrak{c}_P(B^{2n})}=\gap_\sigma(Y,\alpha).
\end{equation}
To prove the first assertion of the theorem, take $T=\infty$. Then the above lower bound for $\gap_\sigma(Y,\alpha)$ for all $\sigma$ implies that $\gap(Y,\alpha)$ is positive, in contradiction with the hypothesis. To prove the second part of the theorem, suppose $\gap_\sigma(Y,\alpha)=0$ for some $\sigma\in CH(Y)$ and take $T=\s_{\sigma}(Y,\alpha)$. Then (\ref{eq:lower_bnd_gap_sigma}) yields a contradiction.  
\end{proof}

\begin{remark}[Semi-continuity of the spectral gap]\label{rem:semicontinuity_of_gap}
    The spectral gap is upper semi-continuous, that is, for every sequence of contact forms $\alpha_i$ converging to $\alpha$ and for every class $\sigma\in CH(Y)$ we have
    \begin{align*}
        \limsup_{i\rightarrow\infty}\gap_\sigma(Y,\alpha_i) &\le \gap_\sigma(Y,\alpha),\\
        \limsup_{i\rightarrow\infty}\gap(Y,\alpha_i) &\le \gap(Y,\alpha).
    \end{align*}
    This is due to the fact that the spectral gaps are defined as an infimum over continuous functions.
\end{remark}

In fact, for a fixed class $\sigma$, the $\sigma$-spectral gap is continuous. More generally, under some conditions on the rate of convergence of $\alpha_i$ to $\alpha$, the spectral gap of $\alpha$ can be bounded from above by spectral gaps of $\alpha_i$.  In order to give a more precise statement of this fact we adapt the following notation.
\begin{notation}\label{not:order_on_ctct_forms}
    Let $\alpha$ and $\alpha'$ be two contact forms on $Y$. We write $\alpha\leq \alpha'$ if there exists an exact cobordism from $(Y,\alpha')$ to $(Y,\alpha)$.
\end{notation}
\begin{prop}\label{prop:limit_of_gap0}
    Let $(Y,\alpha)$ be a contact manifold and suppose there exist sequences $\{\alpha_i\}$ of contact forms, $\{\sigma_i\}$ of contact homology classes, and $\{\epsilon_i\}$ of positive numbers, such that:
    \begin{enumerate}
        \item $\alpha_i\leq \alpha\leq (1+\epsilon_i)\alpha_i$, and
        \item $\epsilon_i\cdot \s_{\sigma_i}(Y,\alpha_i) \xrightarrow[i\rightarrow \infty]{} 0$.
    \end{enumerate}
    Then, $\gap(Y,\alpha)\leq \liminf_{i\rightarrow\infty} \gap_{\sigma_i}(Y,\alpha_i)$.
    In particular, if $\gap_{\sigma_i}(Y,\alpha_i)\rightarrow 0$, then $\gap(Y,\alpha)=0$. 
\end{prop}
\begin{proof}
Fix $\delta>0$. For each $i$, let $P_i$ be an abstract constraint such that 
\begin{equation}
    \frac{\s_{\sigma_i}(Y,\alpha_i)-\s_{U_{P_i}(\sigma_i)}(Y,\alpha_i)}{\mathfrak{c}_{P_i}(B^{2n})}<\gap_{\sigma_i}(Y,\alpha_i)+\delta.
\end{equation}
By our assumption that $\alpha_i\leq \alpha\leq (1+\epsilon_i)\alpha_i$ we have the following commutative diagram of filtered homologies
\begin{equation}
    \begin{array}{ccc}
        CH(Y,(1+\epsilon_i)\alpha_i) & \xrightarrow{\ U_{P_i}^{(1+\epsilon_i)\alpha_i}\ } &  CH(Y,(1+\epsilon_i)\alpha_i)\\
        & & \\
         \Phi_1\Big\downarrow\hspace{0.4cm} & & \Big\downarrow \Phi_1\\
         & & \\
         CH(Y,\alpha) & \xrightarrow{\quad U_{P_i}^\alpha\quad} &  CH(Y,\alpha)\\
          & & \\
        \Phi_2\Big\downarrow\hspace{0.4cm} & & \Big\downarrow \Phi_2\\
         & & \\
        CH(Y,\alpha_i) & \xrightarrow{\quad U_{P_i}^{\alpha_i}\quad} &  CH(Y,\alpha_i) . 
    \end{array}
\end{equation}
Here, in order to distinguish between the $U$-maps on contact homologies filtered by the contact forms $\alpha$ and $\alpha_i$, we adapt the notations $U_{P_i}^{\alpha}$ and $U_{P_1}^{\alpha_i}$ respectively.
By definition, for $\sigma_i\in CH(Y,(1+\epsilon_i)\alpha_i)$ we have
\begin{align*}
    \gap(Y,\alpha) &\leq \frac{\s_{\Phi_1(\sigma_i)}(Y,\alpha)-\s_{U_{P_i}^\alpha \Phi_1(\sigma_i)}(Y,\alpha)}{\mathfrak{c}_{P_i}(B^{2n})}
    =  \frac{\s_{\Phi_1(\sigma_i)}(Y,\alpha)-\s_{ \Phi_1 U_{P_i}^{(1+\epsilon_i)\alpha_i}(\sigma_i)}(Y,\alpha)}{\mathfrak{c}_{P_i}(B^{2n})}\\
    &\leq \frac{\s_{\sigma_i}(Y,(1+\epsilon_i)\alpha_i)-\s_{ \Phi_2\Phi_1 U_{P_i}^{(1+\epsilon_i)\alpha_i}\sigma_i}(Y,\alpha_i)}{\mathfrak{c}_{P_i}(B^{2n})},
\end{align*}
where the last inequality follows from the cobordism property of spectral invariants, stated in Theorem~\ref{thm:rsftSpectralAxioms}. By Theorem~\ref{thm:contact_homology_well_defined}, the composition $\Phi_2\Phi_1:CH(Y,(1+\epsilon_i)\alpha_i)\rightarrow CH(Y,\alpha_i)$ of maps between the contact homologies is the indentity map with filtration rescaled by $1/(1+\epsilon_i)$. Using the conformality property of spectral invariants (Theorem~\ref{thm:rsftSpectralAxioms}) we obtain
\begin{align*}
    \gap(Y,\alpha) &\leq \frac{\s_{\sigma_i}(Y,(1+\epsilon_i)\alpha_i)-\s_{U_{P_i}^{\alpha_i}\sigma_i}(Y,\alpha_i)}{\mathfrak{c}_{P_i}(B^{2n})}
    = \frac{(1+\epsilon_i)\s_{\sigma_i}(Y,\alpha_i)-\s_{ U_{P_i}^{\alpha_i}\sigma_i}(Y,\alpha_i)}{\mathfrak{c}_{P_i}(B^{2n})}\\
    &=\left(\frac{\s_{\sigma_i}(Y,\alpha_i)-\s_{ U_{P_i}^{\alpha_i}\sigma_i}(Y,\alpha_i)}{\mathfrak{c}_{P_i}(B^{2n})}\right) + \frac{\epsilon_i\cdot \s_{\sigma_i}(Y,\alpha_i)}{\mathfrak{c}_{P_i}(B^{2n})}\\
    &\leq \gap_{\sigma_i}(Y,\alpha_i) + \delta + c \epsilon_i\cdot\s_{\sigma_i}(Y,\alpha_i),
\end{align*}
where the last inequality follows from our choice of the abstract constraint $P_i$, and the fact that the capacities $\mathfrak{c}_{P_i}(B^{2n})$ have a positive lower bound which is uniform in $i$. Since our $\delta$ can be taken to be arbitrarily small, we conclude that if the product $\epsilon_i\cdot\s_{\sigma_i}(Y,\alpha_i)$ converges to zero, then $\gap(Y,\alpha)\leq \liminf_{i\rightarrow\infty}\gap_{\sigma_i}(Y,\alpha_i)$.
\end{proof}

\subsubsection{Detecting periodicity via spectral gaps.}
Theorem~\ref{thm:periodicity_criterion} from the introduction states that if there exists a homology class $\sigma\in CH(Y,\xi)$ such that $\gap_\sigma(Y,\alpha)=0$, then the Reeb flow of $\alpha$ is periodic. {The following proof is similar to an argument from ECH that was pointed out to us by Oliver Edtmair.}

\begin{proof}[Proof of Theorem~\ref{thm:periodicity_criterion}]\label{pf:periodicity_criterion}
A classical theorem of Wadsley \cite{wadsley1975geodesic} implies that if all Reeb orbits are closed then the flow is periodic. Therefore, it is sufficient to show that there is a periodic orbit passing through every point of $Y$.

Fix a point $y\in Y$, let $\{V_n\}_{n=1}^\infty$ be open sets in $Y$ such that $\cap_{n} V_n=\{y\}$. Let $f_n:Y\rightarrow [0,\infty)$ be a smooth non-zero function supported in $V_n$, such that $\|f_n\|_{C^3}\leq \frac{1}{n}$. Recall our assumption that $\gap_\sigma(Y,\alpha)=0$ and denote $T:=\s_\sigma(Y,\alpha)$. 
By Theorem~\ref{thm:gap_0_closing_property}, there exists $t_n\in [0,1]$ such that the contact form $\alpha_n:=(1+t_nf_n)\alpha$ has a periodic orbit $\gamma_n$ of period $T_n\leq T$ passing through the support of $f_n$. By extracting a subsequence we may assume that $T_n$ converge to some $T_\star\leq T$.
We think of $\gamma_n$ as a map $\R\rightarrow Y$, and denote $\underline{\gamma}_n(-):=\gamma_n(T_n\cdot - ):S^1\cong\R/\Z\rightarrow Y$. Denote by $R_{\alpha_n}$ and $R_\alpha$ the Reeb vector fields of $\alpha_n$ and $\alpha$, respectively. 
The sequence $\underline{\gamma}_n$ is equicontinuous since the derivatives $\dot{\underline{\gamma}}_n = T_n\cdot R_{\alpha_n}$ are uniformly bounded. Therefore, 
we can apply  Arzel\`a-Ascoli theorem and conclude that there is a subsequence $\{\underline{\gamma}_{n_k}\}_k$ that converges to a limit $\underline{\gamma}:S^1\rightarrow Y$. Clearly, $\underline\gamma$ passes through $y$ since $\underline{\gamma}_n$ passes through $V_n$. Let us show that $\underline{\gamma}$ is differentiable and that its derivative is $T_\star\cdot R_\alpha$. Fix $t\in S^1$ and consider a chart in $Y$ around $\underline{\gamma}(t)$. For large enough $n$, $\underline{\gamma}_n(t)$ lies in this chart and satisfies
\[
\underline{\gamma}_n(t+\varepsilon)-\underline{\gamma}_n(t) = \int_0^\varepsilon T_n\cdot R_{\alpha_n} \circ \underline{\gamma}_n(t+\tau) d\tau,
\]
for small enough $\varepsilon$.
Taking the limit when $n\rightarrow \infty$ over this equation, we obtain
\[
\underline{\gamma}(t+\varepsilon)-\underline{\gamma}(t) = \int_0^\varepsilon T_\star\cdot R_{\alpha} \circ \underline{\gamma}(t+\tau) d\tau
\]
for every $\varepsilon>0$ small enough. Therefore, $\underline\gamma$ is indeed differentiable, its derivative is $T_\star\cdot R_\alpha$, and its reparamterization $\gamma(t):=\underline{\gamma}(t/T_\star)$ is a periodic orbit of $R_\alpha$ that passes through $y$, as required.
\end{proof}

\section{Analysis of the constrained moduli spaces}\label{sec:moduli_space}
In this section, we analyze the compactified moduli spaces of holomorphic cylinders between select orbits of a Morse-Bott contact form. We prove that, under some conditions, these moduli spaces are cut-out transversely and consist of a single point.
Before formally stating the main result for this section, let us fix the setting and present the relevant definitions and notations.
\begin{setup}\label{set:intersection_theory}
Throughout this section we fix the following structural assumptions. 
\begin{enumerate}
    \item A contact manifold $(Y,\xi)$ of dimension $2n-1$, for $n\geq 3$.
    \item A contact form $\alpha$ on $Y$ such that the flow $\varphi_t$ of the corresponding Reeb vector field $R$ is periodic. 
    \item  An almost complex structure $J$ on $\xi$ and a Riemannian metric $ g_J(-, -) = \alpha \otimes \alpha + d\alpha(-, J-)$  on $Y$.
    \item A Morse-Bott function $f:Y\rightarrow \R$  that is $\varphi_t$-invariant and whose critical manifolds are 1-dimensional. Note that the $\varphi_t$-invariance of $f$ implies that the critical manifolds are disjoint unions of periodic Reeb orbits.
    For such $f$, we let:
    \begin{itemize}
        \item $X_f$ be the vector field defined by $\alpha(X_f) \equiv 0$ and $d\alpha( -,X_f) = df(-)$; and
        \item $\nabla_J f$ be the vector field defined by $g_J(-,\nabla_J f)= df(-)$. Note that $J\nabla_J f= -X_f$.
    \end{itemize}
    \item \label{itm:f_perturbation} Given $f$ as above and $\varepsilon>0$ we define perturbations of $\alpha$, $R$ and $J$ as follows.
    \begin{itemize}
    \item The perturbation of $\alpha$ is $\alpha_{f} := e^{\varepsilon f}\alpha$.
    \item The Reeb vector field of $\alpha_{f}$ is $R_{f} = e^{-\varepsilon f}(R - \varepsilon X_f).$
    \item Let $J_f$ be the $\R$-invariant almost complex structure on  $\hat{Y} = \mathbb{R}_r \times Y$ satisfying:
    \begin{itemize}
        \item $J_f(\partial_r) = R_f$,
        \item $J_f$ preserves the bundle $\xi$, and
        \item the restriction of $J_f$ to $\xi$ is equal to $J$.
    \end{itemize}
    \end{itemize}

    \item Let $\gamma_+$ and $\gamma_-$ be circles of global maxima and minima of $f$, respectively. Considering $\gamma_\pm$ as (not necessarily simple) Reeb orbits of  $\alpha$, assume that their periods are  the same and coincide\footnote{For the arguments in Sections~\ref{subsec:lifting_flow_lines}-\ref{subsec:transversality}, it is enough to assume that the periods of $\gamma_\pm$ are the same and \emph{divide} the minimal period of the flow $\varphi_t$. The assumption that the periods $\gamma_\pm$ coincide with the minimal period of the flow is used only in Section~\ref{subsec:Umap_moduli_space_vs_point_moduli_space} for regularity purposes. See Remark~\ref{rem:sequences_of_U_rational_ellipsoids}} with the minimal period of the flow $\varphi_t$.
    \item Let $z\in Y$ be a point in the intersection of the unstable manifold of $\gamma_+$ and the stable manifold of $\gamma_-$. 
    \item \label{itm:contact_flag} Consider a sequence of nested manifolds 
    $
    Y_3\subset \cdots \subset Y_{2n-1} = Y,
    $
    such that for each $j$:
    \begin{enumerate}[label=(\alph*)]
        \item \label{itm:zero_H2_assumption} $\dim{ Y_{2j-1} }=2j-1$ and $H_2(Y_{2j-1})=0$;
        \item $Y_{2j-1}$ is invariant under the Reeb flow $\varphi_t$;
        \item $(Y_{2j-1}, \xi\cap{TY_{2j-1}})$ is a contact manifold and $\alpha|_{TY_{2j-1}}$ is a contact form for it. Moreover, $\xi\cap {TY_{2j-1}}$ is $J$-invariant;
        \item $\nabla_J f$ is tangent to $Y_{2j-1}$ and $f|_{Y_{2j-1}}$ is Morse-Bott. In particular, $Y_{2j-1}$ is invariant under the gradient flow of $f$;
        \item \label{itm:Hessian_positive} Along any critical circle $\gamma$ of $f$, other than $\gamma_+$ which lies in $Y_{2n-3}$, the restriction of the Hessian of $f$ to the symplectic orthogonal of  $\xi\cap {TY_{2j-1}}$ in  $\xi\cap {TY_{2j+1}}$ is positive definite\footnote{In particular, this means that $\gamma_+$ is the only maximum of $f$ in $Y_{2n-3}$. This assumption will be used for computing certain Conley-Zehnder indices of an $f$-perturbation of the Reeb flow.};
        \item $z\in Y_3\subset \cdots \subset Y_{2n-1}$.
    \end{enumerate}
    We call such a sequence $\{Y_{2j-1}\}_{j=2}^n$ a \emph{contact flag}. This is illustrated in Figure~\ref{fig:contact_flag}.
\end{enumerate}
    
\end{setup}

\begin{figure}[h]
\centering
\includegraphics[width=.6\textwidth]{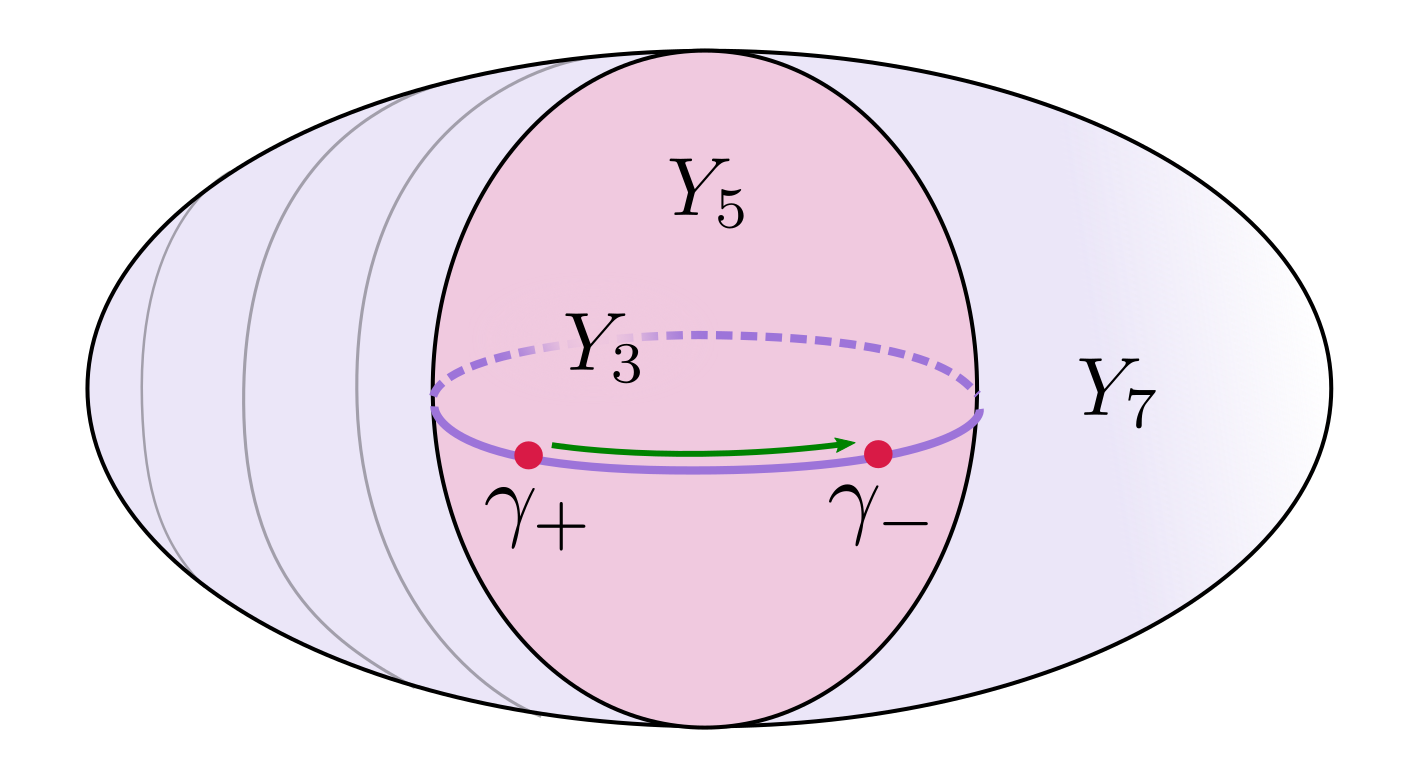}\\
\label{fig:contact_flag}
\caption{An illustration of a contact flag, as defined in Setup~\ref{set:intersection_theory}. The lavender sphere $Y_7$, the pink disk $Y_5$ and the purple circle $Y_3$ represent consecutive nested sub-manifolds in the flag. The orbits $\gamma_+$ and $\gamma_-$ lie in $Y_3$, and are connected by a Morse flow line in green.}
\end{figure}

\begin{example}
The example of interest of this setting is when $Y=\partial E$ is the boundary of an ellipsoid with rationally dependent entries. The contact flag is given by intersecting $\partial E$ with complex linear subspaces: $Y_{2j-1} = \partial E\cap \left(\C^{j-1}\times \{0\}^{n-j}\times \C\right)$. In Section~\ref{subsec:rational_ellipsoids} we study this example in detail and explain how it fits into Setup~\ref{set:intersection_theory}.
\end{example}
\begin{remark}\label{rem:non-toric}
    Assumption (\ref{itm:contact_flag}).\ref{itm:zero_H2_assumption} that $H_2(Y_{2j-1})=0$ has two purposes:
    \begin{itemize}
        \item To ensure that the relative CZ index is well defined over $\Z$,
        \item To guarantee that the homological intersection of any 2-torus in $Y_{2j+1}$ with the hypersurface $Y_{2j-1}$ is zero (see Lemma~\ref{lem:vanishing_relative_intersection} and Figure~\ref{fig:intersection_zero_proof}).
    \end{itemize}
    As a consequence, assumption (\ref{itm:contact_flag}).\ref{itm:zero_H2_assumption} can be replaced with any weaker conditions that ensure the two above properties. This is done in Section~\ref{sec:non_toric_example}.
\end{remark}

The main purpose of this section is to prove the following claim.
\begin{prop}\label{prop:moduli_space_count} Consider Setup~\ref{set:intersection_theory}. The compactified moduli space\footnote{See section~\ref{subsubsec:buildings_in_contact_homology} for the definition of this moduli space.}
\[
\moduli
\]
of $J_f$-holomorphic cylinders in $\hat Y$ between $\gamma_-$ and $\gamma_+$ containing $(0,z)$ consists of a single point which is transversely cut-out. 
\end{prop}
\begin{remark}
The single point in the moduli space $\moduli$ is a certain lift of the gradient flow line of $f$ from $\gamma_-$ to $\gamma_+$ that passes through $z$. This lift is described in section~\ref{subsec:lifting_flow_lines} below.
\end{remark}

\begin{remark}\label{rem:non-deg_n_CZ}
\begin{itemize}
    \item For any periodic flow, the fixed point set at any time $T$ is a submanifold satisfying the Morse--Bott condition. This is due to the fact that $N$ any periodic diffeomorphism $\psi$ is an isometry for the metric $\hat g:= \frac{1}{N}\sum_{j=1}^N \psi^*g$, where $g$ is any Riemannian metric. 
    \item 
The contact form $\alpha_f:=e^{\varepsilon f}\alpha$ is non-degenerate in a finite action window when $\varepsilon$ is small enough. That is, fixing $T_\star>0$, there exists $\varepsilon>0$ such that every periodic orbit of $\alpha_f$ with action less than $T_\star$ is non-degenerate. These periodic orbits lie over the critical circles of $f$, and thus correspond to pairs $(S, p)$ of a Morse--Bott family $S$ of orbits of $\alpha$ and a circle $p$ of $f$-critical points. The Conley-Zehnder index of such an orbit can be calculated from the Robbin-Salamon index of the family $S$, its dimension, and a Morse-type index of $p$, as shown in the following lemma.
\end{itemize}
\end{remark}

\begin{lemma}\label{lem:CZ_via_RS_n_Morse}
    Fix a trivialization $\tau$ of the contact structure on a neighborhood of the perturbed orbit $\gamma = (S,p)$ corresponding to a pair of a Morse--Bott family $S$ of orbits of $\alpha$ and a circle $p$ of $f$-critical points. The Conley-Zehnder index of $\gamma = (S,p)$ with respect to $\tau$ is given by 
\begin{equation}\label{eq:CZ_ind}
    \indCZ_\tau(S,p) = \indRS_\tau(S) + \frac{1}{2}\dim{S} -\ind_{Morse}^S(f;p),
\end{equation}
where $\ind_{Morse}^S(f;p)$ is the number of negative eigenvalues of the restriction of the Hessian of $f$ to the tangent space of the image of $S$ in $Y$. 
\end{lemma}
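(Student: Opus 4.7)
The strategy is to compute $\indCZ_\tau(S,p)$ directly by applying the crossing formula (\ref{eq:RS_def}) to the linearized flow of $R_f$ along the perturbed orbit, after symplectically splitting $\xi|_\gamma$ into the tangent and transverse directions to the Morse-Bott critical submanifold $N_T$ and separately analyzing the two pieces.

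First, I would establish a splitting and a convenient trivialization. The Morse-Bott hypothesis identifies $V := TN_T \cap \xi|_\gamma = \ker(\Phi^T - \id)$ as a symplectic subbundle of $\xi|_\gamma$ of dimension $\dim S$; symplecticness is a standard consequence of the Morse-Bott condition, equivalently of the fact that $d\alpha|_{TN_T}$ descends to the symplectic form on $S = N_T/S^1$. Let $W$ be the symplectic complement, so that $\xi|_\gamma = V \oplus W$. I would choose $\tau$ respecting this splitting and, by transporting $V$ along $\gamma$ via the flow $\Phi$ itself, arrange that $\Phi^t|_V \equiv \id$ for all $t \in [0,T]$. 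In this trivialization, the Vanishing Axiom of Proposition~\ref{prop:CZ} gives $\indRS(\Phi|_V)=0$, hence $\indRS_\tau(S) = \indRS(\Phi|_W)$.

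Next, I would compute the linearization of $R_f = e^{-\varepsilon f}(R - \varepsilon X_f)$ at the critical circle $p$. Since $df(p)=0$ and hence $X_f(p)=0$, the orbit of $R_f$ through $p$ coincides pointwise with $\gamma$ (with period rescaled by $e^{\varepsilon f(p)}$), and the linearization of $R_f$ is $L - \varepsilon J_V H$ to leading order in $\varepsilon$, where $L$ is the linearization of $R$, $H := \mathrm{Hess}(f)|_V$, and the identity $DX_f|_V = J_V H$ follows by differentiating $d\alpha(\cdot, X_f) = df(\cdot)$ at $p$. Since $J_V H$ preserves $V$, the perturbed linearized flow $\tilde\Phi^t$ preserves the splitting $V \oplus W$ to leading order, so by additivity of $\indRS$,
\[
\indCZ_\tau(S,p) = \indRS(\tilde\Phi|_V) + \indRS(\tilde\Phi|_W).
\]

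Finally, I would evaluate each summand. On $W$, where $\Phi^T|_W$ has no eigenvalue $1$, the Conley-Zehnder index is stable under $C^1$-small perturbations of the flow, giving $\indRS(\tilde\Phi|_W) = \indRS(\Phi|_W) = \indRS_\tau(S)$. On $V$, the path $\tilde\Phi^t|_V = \exp(-t\varepsilon J_V H)$ is a small Hamiltonian perturbation of the constant identity path; for $\varepsilon$ small enough its only crossing on $[0,T]$ is at $t=0$, with crossing form
\[
\Gamma_0(v,w) \;=\; \omega_0(-\varepsilon J_V H v, w) \;=\; \varepsilon \langle Hv, w\rangle,
\]
which has signature $\dim S - 2\,\ind_{Morse}^S(f;p)$. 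Applying (\ref{eq:RS_def}) yields $\indRS(\tilde\Phi|_V) = \tfrac{1}{2}\dim S - \ind_{Morse}^S(f;p)$, and summing the two contributions gives the claim. The main technical point is the crossing-form computation: one must track the sign conventions relating $X_f$ to $H$ via $d\alpha$, and verify that no additional crossings appear on $(0,T]$, which holds once $\varepsilon$ is chosen small relative to $T\|H\|$.
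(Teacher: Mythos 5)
Your proof takes a genuinely different route from the paper's. The paper does \emph{not} split $\xi|_\gamma$: it writes $d\varphi^f_t$ as a composition $(d\varphi^f_t\circ d\varphi_t^{-1})\circ d\varphi_t$, converts composition to concatenation via homotopy invariance, and then applies the crossing formula directly to the correction path; the crossing form at $t=0$ is automatically restricted to $\ker(d\varphi_T-\id)=V$, so no block-diagonality of $\mathrm{Hess}(f)$ is ever invoked. Your route instead pre-splits $\xi|_\gamma=V\oplus W$ and treats the pieces separately, which is cleaner conceptually (it isolates where the $\tfrac{1}{2}\dim S$ comes from) but buys you an extra burden: the additivity axiom $\indRS(\tilde\Phi)=\indRS(\tilde\Phi|_V)+\indRS(\tilde\Phi|_W)$ requires the perturbed linearized flow to actually be block-diagonal with respect to this splitting.

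That step is where there is a real gap. You assert that $D X_f|_V=J_V H$ with $H=\mathrm{Hess}(f)|_V$ and that ``the perturbed linearized flow preserves the splitting to leading order,'' but this presupposes that $\mathrm{Hess}(f)$ has no cross-terms between $V$ and $W$. That is not a tautology: $D X_f$ is the $d\alpha$-dual of the full Hessian $\mathrm{Hess}(f)|_\xi$, and if $\mathrm{Hess}(f)(V,W)\ne 0$ the perturbation mixes $V$ into $W$ and vice versa, in which case there is no well-defined operator ``$\tilde\Phi|_W$'' and additivity does not apply (and ``to leading order'' is not a substitute for exact block-diagonality when invoking an axiom). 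The claim \emph{is} true, but for a non-obvious reason that you must supply: since $f$ is $\varphi_t$-invariant and $\gamma(0)$ is fixed by $\varphi_T$, the Hessian is $d\varphi_T$-invariant, so for $v\in V$ (where $d\varphi_T v=v$) and $w\in W$ one has $\mathrm{Hess}(f)(v,w)=\mathrm{Hess}(f)(v,d\varphi_T w)$, hence $\mathrm{Hess}(f)(v,(d\varphi_T-\id)w)=0$; because $d\varphi_T-\id$ is invertible on $W$ by the Morse--Bott condition, this forces $\mathrm{Hess}(f)(V,W)=0$. Dualizing, $DX_f$ is block-diagonal, and the splitting is preserved exactly, not just approximately. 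The same block-diagonality is also what you need (and did not address) to rule out a crossing at $t=T$: $\tilde\Phi^T|_V=\exp(-T\varepsilon J_V H)$ is nondegenerate for small $\varepsilon$ if and only if $H$ has trivial kernel, and $H=\mathrm{Hess}(f)|_V$ is nondegenerate precisely because $\mathrm{Hess}(f)|_\xi$ is nondegenerate \emph{and} block-diagonal. Supplying these two points closes the gap, and the rest of your argument (choice of $\tau$ making $\Phi|_V\equiv\id$, $C^0$-stability of the CZ index on $W$, the crossing-form signature $\dim S - 2\,\mathrm{ind}_{\mathrm{Morse}}^S(f;p)$) is sound.
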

\begin{proof}
To see this, write the linearized Reeb flow of $\alpha_f$ as a composition
\begin{equation*}
    d\varphi_t^f = (d\varphi_t^f \circ d \varphi_t^{-1})\circ d\varphi_t.
\end{equation*}
As explained in \cite{gutt2014generalized}, the composition is homotopic to the concatenation of paths. By the additivity and homotopy invariance of the Conley-Zehnder index (Proposition~\ref{prop:CZ}), the index of the orbit $(S,p)$ with respect to $\alpha_f$ is equal to the sum of the Robbin-Salomon index of the family $S$ with respect to $\alpha$ and the CZ-index of the path $d\phi_t := (d\varphi_t^{-1}\circ d\varphi_t^f)\circ d\varphi_T $, where $T$ is the period of $S$. Let us compute the latter. First, notice that for every $t\leq T$, the image $\phi_t(\gamma(0))$ lies in a small neighborhood of the point $\varphi_T(\gamma(0))$, since the flows $\varphi_t$ and $\varphi_t^f$ differ by a small time reparametrization on critical circles of $f$. Identifying a neighborhood of $\varphi_T\gamma(0)$ with its Darboux chart, the path $d\phi_t$ solves the ODE 
$d \dot{\phi}_t = J_{0} \on{Hess}(\varepsilon f)_{\gamma(0)}|_\xi d\phi_t$.  When $\varepsilon$ is sufficiently small, the path $\Phi(t)$ crosses the Maslov cycle only at the origin and the crossing form is $\Gamma(\gamma(0)):=-\on{Hess}(\varepsilon f)_{\gamma(0)}|_{\ker(d\phi_0-\id)}$. The kernel of $d\phi_0 -\id= d\varphi_T-\id$ coincides with the tangent space to the image of $S$ in $Y$ by our assumption that $S$ is a Morse--Bott family.  Since the Hessian of $f$ is degenerate only in the Reeb direction, the signature of this crossing form is by definition the number of positive eigenvalues minus the number of negative eigenvalues, and hence coincides $\dim{S} -2\ind_{Morse}^S(f;p)$. This shows that (\ref{eq:CZ_ind}) holds. 
\end{proof}
The next lemma shows that the parity of the CH-grading coincides with the parity of the Morse indices of the perturbing function $f$. We will use it to conclude that the differential in CH vanishes.
\begin{lemma}\label{lem:RS_parity}
    The contact homology grading of a pair $(S, p)$ of a Morse--Bott family $S$ of orbits of $\alpha$ and a circle $p$ of $f$-critical points satisfies:
    \begin{equation}
        |(S,p)|:= n-3 + CZ(S,p) = \ind_{Morse}^S(f;p)\mod 2.
    \end{equation}
\end{lemma}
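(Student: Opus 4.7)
The plan is to substitute the formula from Lemma~\ref{lem:CZ_via_RS_n_Morse} into the definition of the SFT grading, reducing the statement to a $p$-independent parity identity for $\indRS_\tau(S) + \tfrac{1}{2}\dim S$, and then verify that identity using the periodicity hypothesis on the Reeb flow. Setting $d := \dim S$, substituting \eqref{eq:CZ_ind} into $|(S,p)| \equiv (n-3) + \indCZ_\tau(S,p) \pmod 2$ and using $-k \equiv k \pmod 2$, the claim $|(S,p)| \equiv \ind_{Morse}^S(f;p) \pmod 2$ reduces to the $p$-independent identity
\begin{equation*}
(n-3) + \indRS_\tau(S) + d/2 \equiv 0 \pmod 2,
\end{equation*}
or equivalently $\indRS_\tau(S) + d/2 \equiv n + 1 \pmod 2$.

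To verify this identity, fix a Reeb orbit $\gamma \subset S$ and consider the linearized return map $P := d\varphi_T|_{\xi_{\gamma(0)}}$. Since the Reeb flow is periodic by Setup~\ref{set:intersection_theory}(2), we have $P^k = \on{Id}$ for some positive integer $k$, so $P$ is a finite-order symplectic automorphism and is therefore conjugate to a direct sum of planar rotations, with no hyperbolic summands. In particular, the fixed subspace $V := \ker(P - \on{Id}) \subset \xi_{\gamma(0)}$ is symplectic of (necessarily even) dimension $d$, and admits a symplectic complement $W$ of dimension $2n-2-d$ on which $P|_W$ is a product of non-trivial rotations $R_{\theta_j}$. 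The linearized Reeb flow respects the splitting $\xi_{\gamma(0)} = V \oplus W$, so by additivity of the Robbin--Salamon index,
\begin{equation*}
\indRS_\tau(S) = \indRS(\Phi|_V) + \indCZ(\Phi|_W).
\end{equation*}

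The first summand is the Robbin--Salamon index of a loop in $\on{Sp}(V)$ based at the identity, and hence is an even integer (since $\pi_1(\on{Sp}(2j)) \cong \Z$ is generated by a loop of RS index $2$, e.g.\ the Hopf rotation $t \mapsto e^{2\pi i t}$). For the second summand, the standard parity formula $(-1)^{\indCZ(\Phi|_W) - (n - 1 - d/2)} = \sign\det(\on{Id} - P|_W)$, combined with the direct computation $\det(\on{Id} - P|_W) = \prod_j 2(1 - \cos\theta_j) > 0$, yields $\indCZ(\Phi|_W) \equiv n - 1 - d/2 \pmod 2$. Adding the two contributions gives $\indRS_\tau(S) + d/2 \equiv n - 1 \equiv n + 1 \pmod 2$, completing the verification.

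The main conceptual step above is the use of periodicity to rule out hyperbolic blocks in $P$: this is what fixes the sign of $\det(\on{Id} - P|_W)$ and thereby pins down the parity of $\indCZ(\Phi|_W)$. Without the periodicity hypothesis the parity identity to which we reduced would not be expected to hold in general, so this is the place where the assumption from Setup~\ref{set:intersection_theory}(2) is genuinely used.
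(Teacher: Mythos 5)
Your argument is correct and arrives at the same key parity identity $\indRS_\tau(S) + \tfrac{1}{2}\dim S \equiv n+1 \pmod 2$, but by a route that is organized differently from the paper's. Both proofs exploit periodicity of the Reeb flow to rule out hyperbolic summands in $P = d\varphi_T|_\xi$ and to conclude that $P$ is conjugate to a direct sum of rotations. After that, the paper argues that the parity of $\indRS_\tau(S)$ is determined by the endpoints of the linearized flow path (justified by observing that, for a periodic Reeb flow, the intermediate crossing forms live on even-dimensional kernels and thus have even signature), and then computes the RS index of a model path $t \mapsto \id_V \oplus \big(\oplus_j e^{i\theta_j t}\big)$ with the same endpoint. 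You instead decompose the actual linearized flow path $\Phi(t)$ itself as $\Phi|_V \oplus \Phi|_W$, note that $\Phi|_V$ is a loop based at the identity and hence has even RS index, and read off the parity of $\indCZ(\Phi|_W)$ from the standard formula $(-1)^{\indCZ - m} = \sign\det(\id - \Phi(1))$ together with $\det(\id - P|_W) > 0$. Your route is arguably cleaner: it avoids the somewhat ad hoc ``even-dimensional crossings'' step and instead invokes two well-established facts (the loop property of the RS index and the CZ parity formula).

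One small point you should make explicit: decomposing the \emph{path} $\Phi(t)$ (and not merely the endpoint $P$) requires a trivialization $\tau$ along $\gamma$ that is invariant under the Reeb flow, so that $\Phi_\tau(t)$ commutes with $P = \Phi_\tau(T)$ for all $t$ and hence preserves $V$ and $W$. Such a trivialization always exists, and since the parity of $\indRS_\tau$ is independent of $\tau$ this is harmless; but it is an implicit hypothesis behind the splitting $\indRS_\tau(S) = \indRS(\Phi|_V) + \indCZ(\Phi|_W)$ and is worth stating.
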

\begin{proof}
We start by showing that the dimension of the family $S$ is even.  Indeed, let $T$ be the period of the family $S$ and denote by $N_T\subset Y$ the fixed-point set of $\varphi_T$, or equivalently, the submanifold composed of the orbits in $S$. Its tangent space is
$$
T N_T = \left<R\right>\oplus \ker(d\varphi_T-\id).
$$
Therefore, $\dim{S} =\dim{N_T}-1 = \dim{ \ker(d\phi_T-\id)}$, which is even, since it is the 1-eigenspace of a linear symplectic map. 

Set $d:=\frac{1}{2}\dim S\in \N$. We will show that 
\begin{equation}\label{eq:RS_mod_2}
    RS(S) = n-d-1 \mod 2.
\end{equation} 
Together with the formula for  $ CZ(S,p) $ given in Lemma~\ref{lem:CZ_via_RS_n_Morse}, this yields the required result. 
Our  first step towards (\ref{eq:RS_mod_2}) is to notice that the parity of the Robbin-Salamon index of a path of matrices depends only on the ends and not on the path it self. This can be seen, for example, from the definition of the Robbin-Salamon index given at (\ref{eq:RS_def}). Given a path with non-degenerate crossings, the index is a sum of the signatures of the crossing forms. These forms are non-degenerate, and are defined over even dimensional spaces, $\ker(d\varphi_t-\id)$. Therefore, the signatures are all even. Since the signatures of the ends of the path are are multiplied by $1/2$, they determine the parity of the index. 

So now we focus on $d\varphi_T$ and identify it with a symplectic matrix. We can write it as a $\id_{\xi\cap TN_T} \oplus \Psi$. By the additivity of the RS index, The Morse--Bott condition implies that $\Psi$ does not have $1$ as an eigenvalue. Moreover, there exists $N$ such that $\Psi^N = \id$, since the Reeb flow on $Y$ is periodic. Let us show that, up to conjugation, the matrix $\Psi$ decomposes to a direct sum of elements of $U(1)$. Fix a compatible metric $g$, then $\hat g:=\frac{1}{N}\sum_{k=1}^N (\Psi^k)^*g$ is a $\Psi$-invariant compatible metric. We conclude that up to a change of basis, $\Psi$ is orthogonal, and hence unitary. In particular, $\Psi$ is diagonalizable, and after another change of basis can be written as a direct sum of two-dimensional rotations 
$$\Psi = \oplus_{j=1}^{n-1-d} e^{i\theta_j}$$,
with $\theta_j\neq 0 \mod 2\pi$ due to the Morse--Bott condition.
The RS index of the path $\Psi(t) = \oplus_{j=1}^{n-1-d} e^{i\theta_jt}$ is $\sum_{j=1}^{n-1-d} 1 = n-1-d$. Therefore,
\begin{align*}
RS(S) &= RS(\Psi(t))\mod 2\\
&= n-1-d \mod 2.     
\end{align*}
This shows that (\ref{eq:RS_mod_2}) holds and concludes the proof.
\end{proof}

The rest of this section is dedicated to proving the above proposition. In subsection~\ref{subsec:lifting_flow_lines} we describe an element of $\moduli$. In subsection~\ref{subsec:flow_lines_unique} we prove that there are no other elements in this moduli space. In subsection~\ref{subsec:transversality} we show that this moduli space is cut-out transversely. 

\subsection{Lifting flow lines to holomorphic cylinders.}\label{subsec:lifting_flow_lines}
In this section we lift a gradient flow line of the Morse-Bott function $f$ in $Y$ to a $J_f$-holomorphic curve in the symplectization $\hat Y$. 
{Let $\eta: \R \to Y$ be a gradient flow line of $f$, that is, $\dot{\eta}(s) = \nabla_J f(\eta(s))$, such that $\lim_{s \rightarrow \pm \infty} \eta(s)$ is contained in $\gamma_\pm$, respectively. Further, assume that $\eta(0) = z$.  Such a gradient flow line exists by our choice of the point $z$ stated in Setup~\ref{set:intersection_theory} and is clearly unique.}
Denoting by $T>0$ the common period of  $\gamma_\pm$ with respect to the contact form $\alpha$, we can define a $J_f$ holomorphic map $u_\eta:\R\times S^1\rightarrow \hat Y$ by 
\begin{equation}\label{eq:lifted_flow_line}
    u_\eta(s,t):=\left(a(s), \varphi_{T\cdot t}(\eta(\varepsilon T s))\right),
\end{equation}
where $a(s)$ is defined by the ODE
\begin{equation*}
    \dot a(s) = T\cdot e^{\varepsilon f(\eta(\varepsilon T s))}, \qquad a(0)=0.
\end{equation*} 
Note that $u_\eta$ is indeed a $J_f$-holomorphic curve that limits to $\gamma_\pm$ at the ends, and contains $(0,z)$ in its image, since $u_\eta(0,0)=(0, \eta(0))$.
Therefore, $u_\eta$ lies in the moduli space $\moduli$. 

\subsection{Uniqueness}\label{subsec:flow_lines_unique}
In this section we show that the moduli space $\moduli$ has no elements other than the lift $u_\eta$ constructed above. 

\begin{prop}\label{prop:uniqueness}
Consider Setup~\ref{set:intersection_theory}. The only point in the moduli space  $\moduli$  is the curve $u_\eta$ constructed in Section~\ref{subsec:lifting_flow_lines}.
\end{prop}
The proof of Proposition~\ref{prop:uniqueness} uses the intersection theories in \cite{siefring3d, moreno2019holomorphic}. We will show inductively that any element of $\moduli$ is contained in the symplectizations of all of the submanifolds $Y_3\subset \cdots\subset Y_{2n-1}=Y$. Then it will follow from a result in \cite{siefring3d} that this element is in fact contained in the image of $u_\eta$. 
The structure of this section is as follows. First, we show that the buildings in $\moduli$ consist only of cylinders, then we give an overview of the holomorphic intersection theory of Moreno-Siefring, and finally we explain how to use it to show that $\moduli$ consists of only one element.

\subsubsection{Ruling out non-cylindrical buildings}
Our first step towards proving that $u_\eta$ is the unique element in the moduli space $\moduli$ is to show that all buildings in $\moduli$ consist only of cylinders.   
\begin{lemma}\label{lem:no_buildings}
For $\varepsilon>0$ sufficiently small, each building in $\moduli$ consists solely of $J_f$-holomorphic cylinders.
\end{lemma}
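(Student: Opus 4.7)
The plan is to rule out branching of the levels by a direct action estimate that exploits the fact that $T$, the common $\alpha$-period of $\gamma_\pm$, is the \emph{minimal} period of the unperturbed flow $\varphi_t$. Combined with the structural lemma on contact-homology buildings established earlier in the paper, which says that every level of a building in $\moduli$ is a disjoint union of connected, genus-zero components each carrying a single positive puncture, it suffices to show that, for $\varepsilon>0$ small enough, no such component can have two or more negative punctures.

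The key input is a two-sided bound on the $\alpha_f$-action spectrum near $T$. On the one hand, $\mathcal{A}_{\alpha_f}(\gamma_+) \le T\, e^{\varepsilon \max f} = T(1 + O(\varepsilon))$. On the other hand, by standard Morse-Bott perturbation theory (compare Remark~\ref{rem:non-deg_n_CZ} and Lemma~\ref{lem:CZ_via_RS_n_Morse}), every closed Reeb orbit of $\alpha_f$ of bounded action corresponds, for $\varepsilon$ small, to a pair $(S,p)$ of a Morse-Bott family $S$ of $\varphi_t$-orbits and a critical circle $p$ of $f|_S$, with $\alpha_f$-action differing from the $\alpha$-period of $S$ by $O(\varepsilon)$. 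Since $T$ is the minimal $\varphi_t$-period, this yields a constant $C = C(f)>0$ such that every closed $R_f$-orbit has $\alpha_f$-action at least $T(1 - C\varepsilon)$.

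Given these bounds, Stokes' theorem applied to a hypothetical top-level component with negative punctures at $\gamma'_1,\ldots,\gamma'_k$ produces
\[
k\, T(1 - C\varepsilon) \;\le\; \sum_{i=1}^k \mathcal{A}_{\alpha_f}(\gamma'_i) \;\le\; \mathcal{A}_{\alpha_f}(\gamma_+) \;\le\; T(1 + C\varepsilon),
\]
so that $k = 1$ for all $\varepsilon$ sufficiently small (the case $k = 0$ being precluded by exactness of the symplectization, which admits no closed non-constant $J_f$-holomorphic curves). The top level is therefore a cylinder whose negative asymptote still satisfies $\mathcal{A}_{\alpha_f} \in [T(1-C\varepsilon), T(1+C\varepsilon)]$, and iterating the estimate level by level forces every subsequent level to be a cylinder as well.

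The only step requiring genuine care is the identification of low-action $R_f$-orbits with critical circles of $f$ on Morse-Bott families of $R$, and the attendant $O(\varepsilon)$ action estimate. I expect this to follow from an implicit function theorem argument at each Morse-Bott family, together with the compactness of the period spectrum of $\varphi_t$ below any fixed threshold; it is the main (but essentially routine) technical input of the argument.
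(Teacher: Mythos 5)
Your argument has a genuine gap at the lower-bound step. You claim that because $T$ is the minimal period of the flow $\varphi_t$ (i.e.\ the smallest time with $\varphi_T = \mathrm{id}$), every closed $R_f$-orbit has $\alpha_f$-action at least $T(1 - C\varepsilon)$. This conflates the minimal period of the \emph{flow} with the minimal period of an \emph{orbit}, and they are different. Every orbit period divides $T$, but the minimal orbit period can be much smaller than $T$: for $\partial E(m_1,\dots,m_n)$ one has $T = \mathrm{lcm}(m_i)$ while the shortest simple orbit has period $\min_i m_i$. After the Morse--Bott perturbation these short orbits survive as closed $R_f$-orbits of action $\approx \min_i m_i$, so the inequality $\mathcal{A}_{\alpha_f}(\gamma'_i) \geq T(1 - C\varepsilon)$ is false, and the Stokes estimate $k\,T(1 - C\varepsilon) \leq T(1 + C\varepsilon)$ does not follow. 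Any extra negative puncture could limit onto one of these short orbits, and your bound gives no contradiction.

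The paper's argument is careful on exactly this point. It does not bound each negative end from below by $\approx T$; instead, it observes that every \emph{additional} negative end beyond one costs at least $T_{\min}$ of $\alpha_f$-action, where $T_{\min}$ is the minimal action of any closed $R_f$-orbit (this is bounded below by a constant depending only on $\alpha$, uniformly in small $\varepsilon$, but it is nowhere near $T$ in general). Combined with the sharp observation that the total action lost from $\gamma_+$ to $\gamma_-$ is $(e^{\varepsilon f(\gamma_+)} - e^{\varepsilon f(\gamma_-)})\,T = O(\varepsilon)$, one gets $(N_j - 1)\,T_{\min} = O(\varepsilon)$ for each component, forcing $N_j = 1$ once $\varepsilon$ is small enough. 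In other words, the correct lower bound lives on the \emph{gap} $\mathcal{A}(\gamma_+) - \mathcal{A}(\gamma_-)$, not on the action of each negative asymptote individually. As a side remark, the paper's version of the lemma requires only that $\gamma_\pm$ have the same period (dividing $T$), not that this period be the minimal flow period; the latter assumption is only used later, in \S\ref{subsec:Umap_moduli_space_vs_point_moduli_space}, so building your argument on it also makes the lemma needlessly less general.
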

\begin{proof}
    Fix a building $\overline{u}\in \overline{\cM}(\gamma_+,\gamma_-;J_f)$ with $k\geq 1$ levels. Let  $\Gamma^1, \Gamma^2, \dots, \Gamma^{k+1}$ be the associated limits and notice that $\Gamma^1 = (\gamma_-)$ and $\Gamma^{k+1} = (\gamma_+)$.  
        
    We start by finding a sequence of orbits $\{\gamma^j\in \Gamma^j\}_{j=1}^{k+1}$ of non-decreasing actions:
    $$\int_{\gamma^1} \alpha_f \leq \int_{\gamma^2} \alpha_f \leq \ldots \leq \int_{\gamma^{k+1}} \alpha_f.$$
    This sequence is constructed inductively as follows. {Pick $\gamma^1 = \gamma_-$.} Given $\gamma^j$, there exists a unique connected $J_f$-holomorphic curve $u^j$ in $\overline{u}$ such that $\gamma^j$ is one of its negative ends. Recall that every connected component of a level in $\overline{u}$ has a single positive end, as explained in section~\ref{subsubsec:buildings_in_contact_homology}. We take $\gamma^{j+1}$ to be the unique positive end of $u^j$. See Figure~\ref{fig:orbit_subsequence} for an illustration.
    \begin{figure}[h]
    \centering
    \includegraphics[width=.8\textwidth]{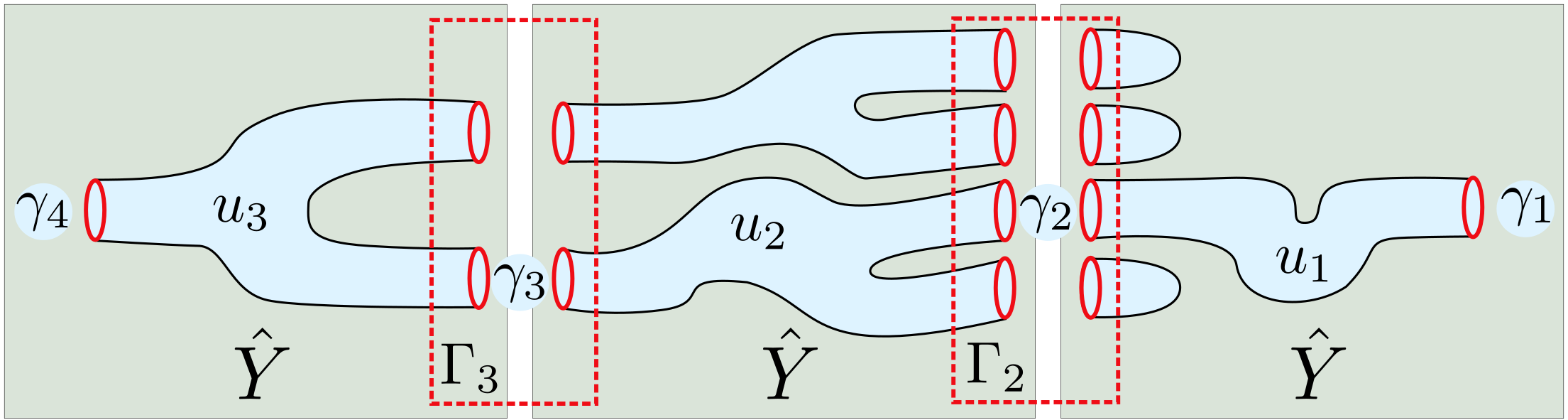}
    \caption{An illustration of a holomorphic building $\overline{u}$, and the sequence of orbits $\{\gamma^j\}$.}
    \label{fig:orbit_subsequence}
    \end{figure}
    
    We will now show that the connected components $u^j$ must be all cylinders. This will conclude the proof, since the positive end of $u^{k+1}$ is $\gamma_+$. 
    Let $\gamma^j_1 := \gamma^j, \gamma^j_2, \dots, \gamma^j_{N_j}$ be the negative ends of $u^j$. We need to show that $N_j=1$ for all $j$. 
    For any given punctured $J_f$-holomorphic curve in $\hat{Y}$, the total integral of $\alpha_f$ over the positive limits is greater than or equal to the total integral of $\alpha_f$ over the negative limits. Applying this to $u^j$, we find that 
    \begin{equation}\label{eq:action_dec_building}
        \int_{\gamma^{j+1}} \alpha_f \geq \sum_{i=1}^{N_j}\int_{\gamma^{j}_i} \alpha_f \geq \int_{\gamma^{j}} \alpha_f + (N_j -1)\cdot T_{min},
    \end{equation}
    where $T_{min}$ is the minimal period of a periodic orbit of $\alpha_f$. In particular, the $\alpha_f$-integrals are non-decreasing along the sequence $\{\gamma^j\}$.
    The $\alpha_f$ integrals of the first and last orbits in this sequence are
    $$\int_{\gamma_{\pm}} \alpha_f = e^{\varepsilon f(\gamma_{\pm})} T$$
    where $T > 0$ is the common period of $\gamma_\pm$ under the Reeb flow of $\alpha$. Combining this with (\ref{eq:action_dec_building}) we find that for all $j$,
    \begin{align*}
        e^{\varepsilon f(\gamma_+)}T &= \int_{\gamma_+}\alpha_f\geq \int_{\gamma^{j+1}} \alpha_f \\
        &\overset{(\ref{eq:action_dec_building})}{\geq} \int_{\gamma^{j}} \alpha_f + (N_j -1)\cdot T_{min}\\
        &{\geq} \int_{\gamma_-} \alpha_f + (N_j -1)\cdot T_{min}\\
        &= e^{\varepsilon f(\gamma_-)} T + (N_j -1)\cdot T_{min}.
    \end{align*}
    Rearranging the above, we obtain 
    \begin{equation}\label{eq:action_bound_building}
        (N_j -1)\cdot T_{min}\leq (e^{\varepsilon f(\gamma_+)} - e^{\varepsilon f(\gamma_-)})T.
    \end{equation} 
    The minimal action $T_{min}$ of a Reeb orbit of $\alpha_f = e^{\varepsilon f}\alpha$ is uniformly bounded {(in $\varepsilon$)} below by a constant depending only on $\alpha$, as long as $\varepsilon$ is taken to be smaller than some constant depending only on $f$. This implies that the inequality (\ref{eq:action_bound_building}) can only hold if $N_j-1=0$ for all $j \in \{1, \ldots, k + 1\}$. We thus conclude that $u^j$ are all cylinders, and therefore $\overline{u}$ is composed solely of cylinders as well. 
\end{proof}

\subsubsection{An overview of holomorphic intersection theory}\label{subsec:intersection_theory_overview}
Our main tool in proving the uniqueness result stated in Proposition~\ref{prop:uniqueness} is the holomorphic intersection theory developed in \cite{siefring3d,moreno2019holomorphic}. We give here an overview of this theory, following \cite{moreno2019holomorphic} and adapted to our case.

Let $(Y, \xi, \alpha)$ be a closed contact manifold with a contact form and denote the Reeb vector field by $R$. Let $J$ be an almost complex structure on $\hat Y$ and consider a  codimension-$2$ submanifold $Z$ of $\hat Y$ such that:
\begin{itemize}
    \item there exist closed codim-2 submanifolds $Z_\pm\subset Y$ such that $Z$ is \emph{asymptotically cylindrical} over $Z_\pm$ (see Definition~\ref{def:asymptotically_cylindrical} below),
    \item the sub-bundles $\xi_{Z_\pm}:=\xi\cap TZ_\pm$ are $J$-invariant,
    \item $(Z_\pm,\xi_{Z_\pm})$ are contact manifolds and $\alpha|_{TZ_\pm}$ are contact forms on them, and
    \item $Z$ is invariant under the flow of $R$.
\end{itemize}
The contact structure $\xi$ splits along $Z_\pm$ into a pair of symplectic vector bundles
$$(\xi|_{Z_\pm}, d\alpha) \simeq ({\xi_{Z_\pm}}, \omega_{Z}) \oplus (\xi_N, \omega_N)$$ 
where ${\xi_{Z\pm}} = TZ_\pm \cap \xi$ as mentioned above,  $\xi_N$ is the symplectic complement of $\xi_Z$ in $\xi$, and $\omega_Z$, $\omega_N$ are the restrictions of $d\alpha$ to $\xi_Z$, $\xi_N$ respectively.
\begin{definition}\label{def:asymptotically_cylindrical}
The hypersurface $Z \subset \hat{Y}$ is \emph{asymptotically cylindrical} over $Z_\pm\subset Y$ if there exists $s$-families of sections of $\xi_N$, i.e. $\zeta_+:[R',\infty)\rightarrow \Gamma(\xi_N|_{Z_+})$ and  $\zeta_-:(-\infty,-R']\rightarrow \Gamma(\xi_N|_{Z_-})$ such that  
\begin{align*}
    Z\cap \left([R', \infty)\times Y\right) &= \bigcup_{(r,p)\in[R,\infty)\times {Z_+}} \widetilde{\exp}_{(r,p)}(\zeta_+(r)(p)),\\
    Z\cap \left((-\infty, R'])\times Y\right) &= \bigcup_{(r,p)\in(-\infty,R']\times {Z_-}} \widetilde{\exp}_{(r,p)}(\zeta_-(r)(p)).
\end{align*}w
\end{definition}
\begin{remark}
We will consider two simple cases of asymptotically cylindrical hypersurfaces. The first is a symplectization of a contact submanifold, namely $Z:= \hat Z_+$. The second case is when $Y$ is 3-dimensional and $Z$ is a pseudoholomorphic cylinder. It follows from \cite[Theorem 2.2]{moreno2019holomorphic} that any pseudoholomorphic cylinder is asymptotically cylindrical over its ends (see also Definition~\ref{def:deformation_for_intersection} below).
\end{remark}
Given a $J$-holomorphic cylinder $u$ in $\hat{Y}$ with  non-degenerate ends $\gamma_{\pm}^u$, the works \cite{moreno2019holomorphic, siefringHigherDim} define a ``holomorphic intersection number'' $u * {Z}$ of $u$ with the manifold ${Z}$ and prove a ``positivity of intersection'' property (see Theorem \ref{thm:generalized_intersection}). The holomorphic intersection number is defined as a sum of a ``relative intersection number" and a contribution from the Conley-Zehnder index of the ends in the normal contact direction $\xi_N$. 

\begin{definition}[Holomorphic intersection number,  \cite{moreno2019holomorphic, siefringHigherDim}]
Let $Y$, $Z$, $u$ and $\gamma_\pm^u$ be as above. Fix a trivialization   of $\xi_N$ along the orbits of $R$ that lie in $Z$ (if there are any) and collectively denote it by $\tau$.
\begin{itemize}
    \item (Relative intersection number). Let $u^\tau$ be a deformation of $u$ as described in Definition~\ref{def:deformation_for_intersection} below. Roughly speaking, this deformation uses a $\tau$-constant section of $\xi_N$ to push any ends of $u$ that lie in $Z$ off of it. In particular, if the ends of $u$ do not lie in $Z$ then $u^\tau=u$. Define the \emph{relative intersection number} of $u$ with $Z$ to be
    \begin{equation}\label{eq:relative_intersection_number}
        \iota^\tau(u,Z):= u^\tau \cdot  Z\in \Z,
    \end{equation}
    namely, the standard transverse intersection number of the deformation $u^\tau$ with the hypersurface $Z$.\\
    
    \item (Normal Conley--Zehnder index). Let $\gamma$ be a non-degenerate Reeb orbit which lies in $Z_\pm$. Its  \emph{normal Conley--Zehnder index} $\indCZ_\tau^N(\gamma)$ is the Conley--Zehnder index of the path of $2 \times 2$ symplectic matrices defined by applying the trivialization $\tau$ to the projection of the linearized Reeb flow to $\xi_N$.
    For an orbit $\gamma$ that does not lie in $Z$, we define $\indCZ_\tau^N(\gamma):=0$.\\
    
    \item The \emph{holomorphic intersection number} of $u$ and $Z$ is defined to be 
    \begin{equation}\label{eq:holomorphic_intersection}
        u*Z  := \iota^\tau(u, Z) + \lfloor\indCZ_\tau^N(\gamma^u_+)/2\rfloor -\lceil \indCZ_\tau^N(\gamma^u_-)/2\rceil.
    \end{equation}
\end{itemize}
\end{definition}

The deformation $u^\tau$ required for the definition of the relative intersection number (\ref{eq:relative_intersection_number}) is constructed as follows.
\begin{definition}[Deformation $u^\tau$ of $u$]\label{def:deformation_for_intersection}
We write down the deformation at the positive end, the deformation at the negative end is analogous with the appropriate changes in sign. If $\gamma_+^u$ is not contained in $Z_+$, then we do not deform $u$ near the positive end. Suppose $\gamma_+^u$ is contained in $Z_+$ and extend the trivialization $\tau$ to a trivialization $\tau: U \times \R^2 \to \xi_N|_{U}$ on some open neighborhood $U\subset Z_+$ of $\gamma_+^u$. Then, \cite[Theorem $2.2$]{moreno2019holomorphic} states that the map $u$ may be written as
    \begin{align*}
        (s,t) \mapsto \widetilde{\exp}_{u_T(s,t)}(u_N(s,t)),
    \end{align*}
    where:\begin{itemize}
        \item $(s,t) \in [R, +\infty) \times S^1$ for  $R \gg 1$ large enough, 
        \item $u_T(s,t)$ lies in $\R\times U\subset \R\times Z_+$, 
        \item $u_N$ is a smooth section of $u_T^*\pi^*\xi_N$, where $\pi$ is the projection $\R\times Z_+\rightarrow Z_+$.
    \end{itemize} 
    We then perturb the map $u$ by replacing the above parametrization of $u$ near $+\infty$ by the map
    \begin{align}\label{eq:deformation_for_intersection}
        (s,t) \mapsto \widetilde{\exp}_{u_T(s,t)} \left(u_N(s,t) + \beta(|s|)\tau(u_T(s,t), \epsilon)\right)
    \end{align}
    where $\beta: [0,\infty) \to [0,1]$ is a smooth cut-off function equal to $0$ for $s < R + 1$ and equal to $1$ for $s > R+2$, and $0\neq \epsilon\in\R^2$. 
\end{definition}

The following theorem is a restriction of Theorem 2.5 from \cite{moreno2019holomorphic} adapted to our notations.
\begin{thm}[{\cite[Theorem 2.5]{moreno2019holomorphic}}] \label{thm:generalized_intersection}
Let $Y$, $Z$, $u$ be as above, and assume further that the image of $u$ is not contained in ${Z}$. Then, $u * Z \geq 0$ and it is equal to $0$ if and only if the image of $u$ does not intersect $Z$. 
\end{thm}

\subsubsection{Proof of uniqueness} In this section we use the intersection theory reviewed above to prove Proposition~\ref{prop:uniqueness}. 
We continue with Setup~\ref{set:intersection_theory}. Our first step is to show that the relative intersection number $\iota^\tau$ of any $J_f$-holomorphic cylinder $u$ in $Y_{2i+1}$ with a Reeb invariant codimension-2 hypersurface $Z$ is zero. We will later apply this lemma to $Z=\hat Y_{2i-1}$ when $i>1$ and to $Z=im(u_\eta)$ when $i=1$. 
\begin{lemma}\label{lem:vanishing_relative_intersection}
    Let $u$ be a $J_f$-holomorphic cylinder in $\hat Y_{2i+1}$, and let $Z\subset \hat Y_{2i+1}$ be a codimension-2 asymptotically cylindrical hypersurface that is invariant under $\varphi_t$. Then, $\iota^\tau(u, Z)=0$.
\end{lemma}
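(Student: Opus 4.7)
The plan is to reduce the relative intersection number, which is defined on the non-compact cylinder $u^\tau$ in the symplectization, to a topological intersection count in $Y_{2i+1}$ itself, and then exploit the periodicity of the Reeb flow together with the assumption $H_2(Y_{2i+1})=0$ to conclude.

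First I would transfer the computation from $\hat Y_{2i+1}$ to $Y_{2i+1}$. Because $Z$ is $\varphi_t$-invariant and asymptotically cylindrical, after truncating at a sufficiently large $R$ the piece of $Z$ outside $[-R,R]\times Y_{2i+1}$ is literally a product $\R\times Z_\pm$, with $Z_\pm\subset Y_{2i+1}$ of codimension two. The ends $\tilde\gamma_\pm$ of $u^\tau$ produced by the deformation in Definition~\ref{def:deformation_for_intersection} lie near $\gamma_\pm^u$ but are pushed off of $Z$ by the nonzero section $\epsilon\tau$, so for $|r|$ large enough $u^\tau$ stays in $\hat Y_{2i+1}\setminus Z$. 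Using the deformation retraction $\hat Y_{2i+1}\simeq Y_{2i+1}$ (which preserves $Z$ asymptotically), I would isotope $u^\tau$ into a single slice $\{0\}\times Y_{2i+1}$ without changing the intersection count, obtaining a 2-chain $C\subset Y_{2i+1}$ with boundary the two circles $\tilde\gamma_\pm$, together with a codimension-two closed submanifold $Z_0\subset Y_{2i+1}$ (essentially $Z_+=Z_-$ after retraction), such that $\iota^\tau(u,Z)=C\cdot Z_0$.

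Next I would use the periodic Reeb flow of $\alpha$ to build a 3-chain $W=\bigcup_{s\in[0,T]}\varphi_s(C)\subset Y_{2i+1}$. Since $\varphi_T=\mathrm{Id}$, we have $\partial W=T^2_+-T^2_-$, where $T^2_\pm:=\bigcup_{s\in[0,T]}\varphi_s(\tilde\gamma_\pm)$ are 2-tori (or degenerations thereof if $\tilde\gamma_\pm$ is itself $\varphi_t$-invariant; in that case the corresponding ``torus'' is a cycle that is literally zero and the argument is only simplified). Because $Z_0$ is $\varphi_t$-invariant, the 1-chain $W\cap Z_0$ equals $\bigcup_{s\in[0,T]}\varphi_s(C\cap Z_0)$, which is a disjoint union of closed Reeb orbits, hence a 1-cycle. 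In particular $\partial(W\cap Z_0)=0$. Applying $\partial$ on the other side gives $(T^2_+-T^2_-)\cdot Z_0 = \partial W\cdot Z_0 = \partial(W\cap Z_0)=0$.

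The final step is to convert this identity for the swept tori into the vanishing of $C\cdot Z_0$ itself. Here the hypothesis $H_2(Y_{2i+1};\Z)=0$ enters: each $T^2_\pm$ is null-homologous, so bounds some 3-chain $V_\pm\subset Y_{2i+1}$, and homological intersection with $Z_0$ is well defined and vanishes on null-homologous 2-cycles. Combining this with the invariance $\varphi_s(C)\cdot Z_0 = C\cdot Z_0$ for every $s$, averaging over $s\in[0,T]$ yields $T\cdot(C\cdot Z_0)=\int_0^T \varphi_s(C)\cdot Z_0\,ds$, and the right-hand side is forced to vanish by the Stokes-type identity from the previous paragraph together with the vanishing of the boundary contributions coming from $V_\pm$. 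Dividing by $T>0$ gives $C\cdot Z_0 = 0$, as desired.

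The principal obstacle will be orientation and sign bookkeeping in the last step: the identification $\iota^\tau(u,Z)=C\cdot Z_0$ needs the orientation conventions of the deformation $u^\tau$ to be compared with those of the projection and the sweep, and the averaging/Stokes argument above must be stated carefully enough that the sign contributions from $V_+$ and $V_-$ cancel (rather than double). I expect these signs to be fixed unambiguously by the asymptotic data and the $\varphi_t$-invariance of $Z$, but making this part rigorous — as opposed to the underlying homological idea — will be where most of the actual work sits.
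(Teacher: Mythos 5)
Your setup — deforming $u$ to $u^\tau$ so the ends avoid $Z$, reducing to a surface $C$ with boundary $\tilde\gamma_+ - \tilde\gamma_-$ in a slice, and sweeping by the periodic Reeb flow — is plausible, but the closing argument has a genuine gap and the conclusion $C\cdot Z_0 = 0$ does not follow from what you have written.

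The flaw is in the final averaging step. By $\varphi_s$-invariance of $Z_0$ you correctly have $\varphi_s(C)\cdot Z_0 = C\cdot Z_0$ for every $s$, so $\int_0^T\varphi_s(C)\cdot Z_0\,ds = T\cdot(C\cdot Z_0)$ is a tautology, and you still need an independent way to evaluate this quantity. The ``Stokes-type identity'' only tells you that $W\cap Z_0$ is a $1$-cycle (its boundary is $(T^2_+-T^2_-)\cap Z_0$, which vanishes because the swept tori are disjoint from $Z_0$), but a $1$-cycle having zero boundary is a statement about $H_1$, not a numerical constraint on $C\cdot Z_0$. Similarly, $T^2_\pm$ being null-homologous (or even disjoint from $Z_0$) controls the intersection number of a closed $2$-cycle, not of the $2$-chain $C$ with boundary. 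Nothing in your argument ties the interior of $C$ to the tori.

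What is actually needed — and this is where $H_2(Y_{2i+1})=0$ must be used — is a \emph{second} $2$-chain $C'$ with the same boundary $\tilde\gamma_+ - \tilde\gamma_-$ that is \emph{a priori} disjoint from $Z_0$. Then $C-C'$ is a closed $2$-cycle, hence null-homologous by $H_2=0$, so $C\cdot Z_0 = C'\cdot Z_0 = 0$. The paper constructs precisely such a cap: take a path $\ell$ in $Y_{2i+1}$ from $\tilde\gamma_+$ to $\tilde\gamma_-$ avoiding $Z_0$ (possible because $Y_{2i+1}\setminus Z_0$ is connected, $Z_0$ having codimension two), and set $\sigma(s,t) = (s,\varphi_t\ell(s))$, a Reeb-swept cylinder in $\hat Y_{2i+1}$ disjoint from $Z$ by invariance. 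Your observation about the Reeb-swept tori $T^2_\pm$ is the right instinct, but applied to the wrong object: sweeping a connecting \emph{path} rather than the boundary circles is what produces the cap with the correct boundary. Without some such explicit disjoint chain, the periodicity of the flow alone cannot pin down the value of $C\cdot Z_0$.
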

\begin{proof}
 Let $u^\tau$ be the deformation of $u$ as in Definition~\ref{def:deformation_for_intersection}, and denote by $\gamma_\pm^{u^\tau}$ the ends after the deformation.
 Let $\ell:\R\rightarrow Y_{2i+1}$ be a curve that connects $\gamma_\pm^{u^\tau}$, namely $\lim_{s\rightarrow\pm\infty}\ell(s)\in \gamma_\pm^{u^\tau}$. Since $Y_{2i+1}\setminus Z$ is connected, we can choose the path $\ell$ to not intersect $Z$.
 Consider the cylinder $$\sigma:\R\times S^1\rightarrow \hat Y_{2i+1}, \qquad \sigma(s,t):=(s, \varphi_t \ell(s))\in \R\times Y_{2i+1}=\hat Y_{2i+1}.$$ 
 Since $\ell$ does not intersect the $\varphi_t$-invariant submanifold $Z$, its orbit under this action does not intersect it as well. Therefore, the cylinder $\sigma$ is disjoint from $Z$ (see Figure~\ref{fig:intersection_zero_proof}). Moreover, our assumption that $H_2(Y_{2i+1})=0$ guarantees that the union of $u^\tau$ and $\overline{\sigma}$ is null-homologous. Consider the compactification $\overline{Y}$ of $\hat Y$ into a manifold with boundary, diffeomorphic to $[0,1]\times Y$. By the homology invariance of the standard algebraic intersection number (e.g., \cite[Part VI, Section 11]{bredon2013topology}), the intersection number of $u\#\overline{\sigma}$ with the compactification of $Z$ vanishes. Thus,
 \begin{equation*}
     \iota^\tau(u, Z)=u^\tau\cdot Z {=} \sigma\cdot Z = 0. \qedhere
 \end{equation*}
\end{proof}
\begin{figure}[h]
    \centering
    \includegraphics[width=0.9\textwidth]{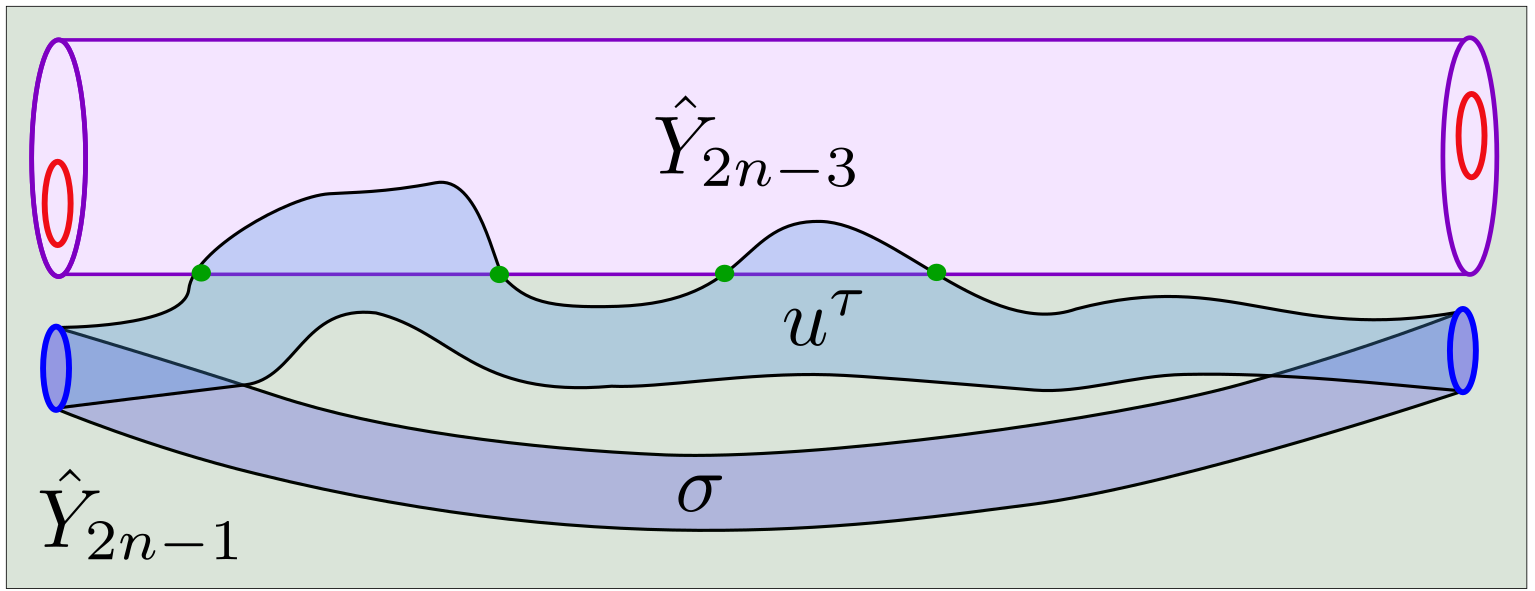}
    \caption{An illustration of the deformation $u^\tau$ of $u$, whose ends are disjoint from the hypersurface, and the cylinder $\sigma$ that has the same ends and does not intersect $\hat Y_{2n-3}$.}
    \label{fig:intersection_zero_proof}
\end{figure}

The next lemma concerns the normal Conley-Zehnder indices of the periodic orbits of the $f$-perturbed Reeb flow $\varphi_t^f$. This will be useful for computing the holomorphic intersection number of a $J_f$-holomorphic cylinder with the submanifolds in the contact flag.
\begin{lemma}\label{lem:compute_CZ_N}
    Take $Y=Y_{2i+1}$ and $Z=\hat Y_{2i-1}$ for $i\in\{2,\dots,n-1\}$. Let $\gamma$ be a periodic orbit of the perturbed contact form $\alpha_f$ that  lies in $Y_{2i-1}$. There exits a trivialization $\tau$ of $\xi_N$ such that the normal CZ index of $\gamma$ is 
    \begin{equation*}
        \indCZ_\tau^N(\gamma)=\begin{cases}
        +1 & \text{if } \gamma=\gamma_+,\\
        -1 & \text{if } \gamma\neq\gamma_+.
        \end{cases}
    \end{equation*}
\end{lemma}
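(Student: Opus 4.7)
The plan is to mimic the proof of Lemma~\ref{lem:CZ_via_RS_n_Morse}, restricting everything to the symplectic rank-$2$ normal bundle $\xi_N$. The first step is to observe that $\xi_N$ is preserved by the perturbed linearized Reeb flow $d\varphi_t^f$. Since $f$ is $\varphi_t$-invariant, $\nabla_J f \in \xi$, and by Setup~\ref{set:intersection_theory}~(4) it is tangent to $Y_{2i-1}$; because $\xi_Z := \xi\cap TY_{2i-1}$ is $J$-invariant, the vector field $X_f = -J\nabla_J f$ also lies in $\xi_Z$. Hence $R_f = e^{-\varepsilon f}(R - \varepsilon X_f)$ is tangent to $Y_{2i-1}$, so $d\varphi_t^f$ preserves $\xi_Z$; since $d\alpha_f|_\xi$ is conformally equivalent to $d\alpha|_\xi$, its symplectic complement $\xi_N$ is preserved as well.

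With $\xi_N$ invariant, I would decompose
\[
d\varphi_t^f|_{\xi_N} = \bigl(d\varphi_t^f\circ d\varphi_t^{-1}\bigr)|_{\xi_N}\circ d\varphi_t|_{\xi_N}
\]
and invoke additivity and homotopy invariance of the Robbin-Salamon index. Since the unperturbed flow is periodic with $d\varphi_T|_{\xi_N} = \id$, the second factor is a loop of symplectic $2\times 2$ matrices based at the identity, whose Maslov content can be absorbed into the trivialization. I would therefore choose $\tau$ along $\gamma$ so that this factor contributes $0$ to the RS index. The normal CZ index of $\gamma$ then reduces to the RS index of the perturbation path $\Phi^N_{\text{pert}}(t) := (d\varphi_t^f\circ d\varphi_t^{-1})|_{\xi_N}$ alone.

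At the critical circle, this path satisfies, to leading order in $\varepsilon$, the linear ODE $\dot\Phi = \varepsilon\, J\,\on{Hess}(f)|_{\xi_N}\,\Phi$ with $\Phi(0) = \id$. For $\varepsilon$ sufficiently small the endpoint is non-degenerate (elliptic), so the only Maslov crossing occurs at $t = 0$, with crossing form a nonzero multiple of $\on{Hess}(f)|_{\xi_N}$. By Setup~\ref{set:intersection_theory}~(8)(e), at every critical circle $\gamma \neq \gamma_+$ in $Y_{2i-1}$ this Hessian is positive definite on $\xi_N$; whereas at $\gamma_+$, being a global maximum of $f$ on $Y$, it is negative definite on $\xi_N$. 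The RS crossing formula~(\ref{eq:RS_def}) then gives $\tfrac12\on{sign}(\Gamma_0) = \pm 1$ with opposite signs in the two cases, yielding $\indCZ_\tau^N(\gamma_+) = +1$ and $\indCZ_\tau^N(\gamma) = -1$ for $\gamma \neq \gamma_+$.

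The main obstacle I anticipate is keeping the sign conventions consistent: the precise sign of the crossing form depends on the conventions for $X_f$, the compatibility of $J$ with $d\alpha$, and the chirality of $\tau$. Once these are fixed, the argument is a $2$-dimensional specialization of Lemma~\ref{lem:CZ_via_RS_n_Morse}, so the substantive point is that $\on{Hess}(f)|_{\xi_N}$ has opposite signs at $\gamma_+$ versus every other critical circle in $Y_{2i-1}$---which is exactly hypothesis~(8)(e) combined with $\gamma_+$ being a global maximum.
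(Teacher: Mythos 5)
Your proposal follows the same route as the paper: choose a trivialization of $\xi_N$ that is invariant under the periodic Reeb flow (so the unperturbed part of the linearized flow contributes nothing), reduce the normal CZ index to the RS index of the perturbation path, and read off the sign from the crossing form $-\on{Hess}(\varepsilon f)|_{\xi_N}$ at $t=0$, using Setup~\ref{set:intersection_theory}(8)\ref{itm:Hessian_positive} together with $\gamma_+$ being a global maximum. The sign issue you flag is real but resolved in the paper's convention, where the crossing form is the \emph{negative} of the Hessian, so a positive-definite Hessian yields $\tfrac{1}{2}\on{sign}(\Gamma_0)=-1$ at $\gamma\neq\gamma_+$ and a negative-definite one yields $+1$ at $\gamma_+$, matching the statement.
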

\begin{proof}
    Let $\tau$ be any trivialization of $\xi_N$ along $\gamma$ that is invariant under the periodic Reeb flow $\varphi_t$, namely, $\tau:\R^2\times \gamma\rightarrow \xi_N$ satisfying 
    \begin{equation}\label{eq:tau_is_R_compatible}
       d \varphi_t\circ \tau(\cdot, x) = \tau(\cdot, \varphi_t(x))
    \end{equation}
    for all $x \in \gamma$. In this trivialization the linearized flow of $R_f$ is given by
\begin{align*}
    \Phi(t) &:= \tau(\cdot, \gamma(t))^{-1}\circ d\varphi^{f}_t\circ \tau(\cdot, \gamma(0))\\
    &=  \tau(\cdot, \varphi^{f}_t\gamma(0))^{-1}\circ d\varphi^{f}_t\circ \tau(\cdot, \gamma(0)).
\end{align*}    
Since $\gamma$ is a periodic orbit of $R_f$, it is a critical circle of $f$. By the definition of $R_f$ stated in (\ref{itm:f_perturbation}), it is proportional to $R$ wherever $X_f$ vanishes. Therefore,  
\begin{align*}
    \Phi(t) &=  \tau(\cdot, \varphi_{e^{\varepsilon f}\cdot t}\gamma(0))^{-1}\circ d\varphi^{f}_t\circ \tau(\cdot, \gamma(0))\\
    &\overset{(\ref{eq:tau_is_R_compatible})}{=} \tau(\cdot, \gamma(0))^{-1}\circ (d\varphi_{e^{\varepsilon f}\cdot t})^{-1} \circ d\varphi^{f}_t\circ \tau(\cdot, \gamma(0)).
\end{align*}
Denoting $\phi_t:= (\varphi_{e^{\varepsilon f}\cdot t})^{-1} \circ \varphi^f_t$, the linearization $d\phi_t$  is conjugate to $\Phi(t)$  by $\tau(\cdot, \gamma(0))$.  Identifying a neighborhood of $\gamma(0)$ with its Darboux chart, the path $d\phi_t$ of matrices solves the ODE $\dot{d\phi_t} = -J_0 \cdot \on{Hess}(\varepsilon f) d\phi_t$.  When $\varepsilon$ is sufficiently small, the path $\Phi(t)$ crosses the Maslov cycle only at the origin and the crossing form is $\Gamma(\gamma(0)):=-\on{Hess}(\varepsilon f)_{\gamma(0)}|_{\xi_N}$. By assumption (\ref{itm:contact_flag}).\ref{itm:Hessian_positive} from Setup~\ref{set:intersection_theory}, the Hessian of $f$ is positive definite on $\xi_N$ unless $\gamma=\gamma_+$, in which case it is negative definite. Therefore, the normal Conley-Zehnder index of $\gamma$ is $\frac{1}{2}\cdot (+2) = +1$ if $\gamma = \gamma_+$ and $\frac{1}{2}\cdot (-2) = -1$ otherwise.
\end{proof}

We are now ready to prove that, under the assumptions of Setup~\ref{set:intersection_theory}, the only point in the compactified moduli space $\moduli$ is $u_\eta$.
\begin{proof}[Proof of Proposition~\ref{prop:uniqueness}]
Let $\overline u = (u_1,\dots, u_k)\in \moduli$ be a building of $J_f$-holomorphic curves. Lemma~\ref{lem:no_buildings} above states that the components $u_j$ of $\overline{u}$  are all cylinders. Therefore, we may apply Lemma~\ref{lem:vanishing_relative_intersection} and conclude that $\iota^\tau(u_j,\hat Y_{2n-3})=0$ for all $j$. Denoting the negative and positive ends of $u_j$ by $\gamma_{j-1}$ and $\gamma_j$ respectively, we notice that $\gamma_{j-1}\neq \gamma_+$ for all $j$. This is due to the fact that $\gamma_+$ is a global maximum for $f$ and that the $\alpha_f$-action is decreasing along $J_f$-holomorphic curves. Therefore, Lemma~\ref{lem:compute_CZ_N} asserts that, for a properly chosen trivialization $\tau$, $\indCZ_\tau^N(\gamma_{j-1}) = -1$ and
$\indCZ_\tau^N(\gamma_{j}) \leq 1$ for all $j$. It follows that the holomorphic intersection numbers of each piece with the hypersurface $Z=\hat Y_{2n-3}$ is non-positive. Indeed,
\begin{align*}
    u_j*\hat Y_{2n-3} &:= \iota^\tau(u_j, \hat Y_{2n-3} ) +\lfloor \indCZ_\tau^N(\gamma_{j})/2\rfloor - \lceil\indCZ_\tau^N(\gamma_{j-1})/2\rceil\\
    &= 0+\lfloor\indCZ_\tau^N(\gamma_{j})/2\rfloor - \lceil-1/2\rceil = \lfloor\indCZ_\tau^N(\gamma_{j})/2\rfloor\leq 0.
\end{align*}
Theorem~\ref{thm:generalized_intersection} (which is a special case of \cite[Theorem 2.5]{moreno2019holomorphic}) states that, in this case, each $u_j$ is either disjoint from $\hat Y_{2n-3}$ or contained in it.  By definition of $\moduli$, the image of $\overline{u}$ contains the point $(0,z)\in \hat Y_3\subset \cdots\subset \hat Y_{2n-3}$. Denoting by $u_{j_0}$ the component of $\overline{u}$ whose image contains $(0,z)$, we find that $u_{j_0}$ is not disjoint from $\hat Y_{2n-3}$, and thus is contained in it. 

\begin{figure}[h]
\centering
\includegraphics[width=.8\textwidth]{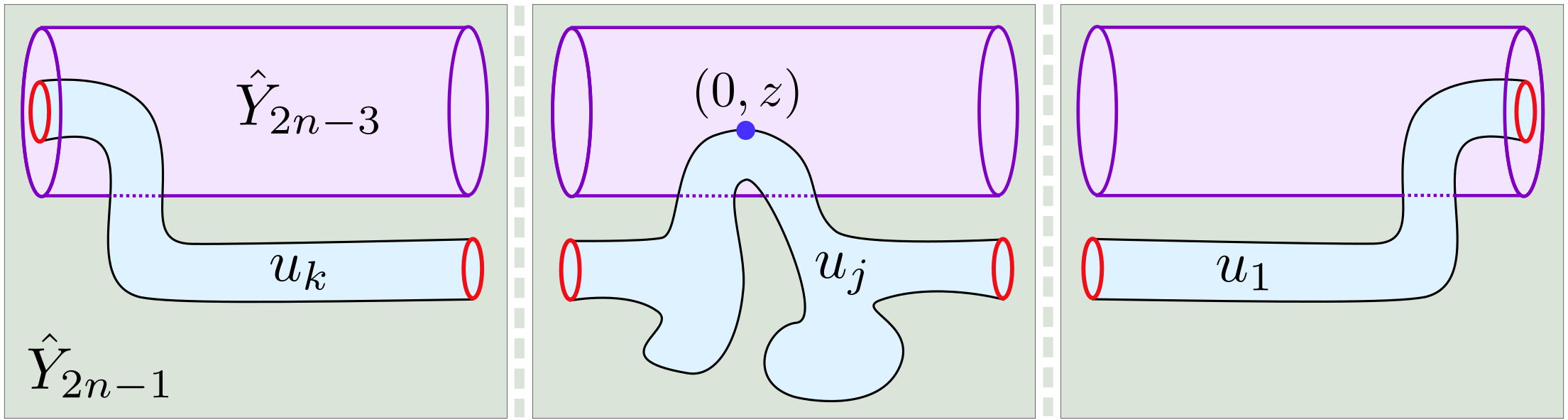}
\caption{The proof of Proposition~\ref{prop:uniqueness} uses positivity of intersection to rule out the illustrated scenario, in which the piece of the building that intersects the hypersurface is not contained in it.}
\label{fig:uniqueness_of_cylinder}
\end{figure}

We now consider the curve $u_{j_0}$ in $\hat Y_{2n-3}$. Arguing as above, we see that its holomorphic intersection number with the hypersurface $\hat Y_{2n-5}$ is again non-positive. Since its image intersects $\hat Y_3\subset \cdots \hat Y_{2n-5}$ we conclude that $u_{j_0}\subset \hat Y_{2n-5}$. Continuing by induction we conclude that the image of $u_{j_0}$ is contained in $\hat Y_3$. 

To finish the proof, notice that the above arguments imply that the holomorphic intersection number of $u_{j_0}$ and $u_\eta$ in $\hat Y_3$ is non-positive. Since these two $J_f$-holomorphic curves intersect at $(0,z)$, it follows from Theorem~\ref{thm:generalized_intersection} that $u_{j_0} = u_\eta$. In particular, their ends agree:  $\gamma_j=\gamma_+$ and $\gamma_{j-1} = \gamma_-$. Recalling that $\gamma_+$ and $\gamma_-$ are global maxima and minima for $f$, it follows from action considerations that the only ends that could coincide with them are $\gamma_k$ and $\gamma_0$ respectively, we conclude that the building $\overline{u}$ consists of a single cylinder, $u_{j_0} = u_\eta$, which concludes the proof. 
\end{proof}

\subsection{Transversality}\label{subsec:transversality}
The aim of this section is to show that the $J_f$-holomorphic curve $u_\eta$, described in Section~\ref{subsec:lifting_flow_lines}, is cut-out transversely. 

\subsubsection{Local transversality of the moduli space without a point constraint.}

We start by showing that the moduli space without the point constraint is locally transversely cut-out at $u_\eta$.  
The notion of a moduli space being locally  transversely cut-out at a curve appears in \cite[Section 2.11]{p2015}; we give a brief overview here for convenience.  
\begin{definition}[{\cite[Section 2.11]{p2015}}] Let $u$ be a $J_f$-holomorphic curve in $\hat{Y}$. 
    \begin{itemize}
        \item For $k\geq 0$ and $\delta<1$, define the associated \emph{weighted Sobolev space}
        \begin{equation}
            W^{k,2,\delta}(u^*T\hat Y) = \left\{ f \ | \ \mu\cdot f\in W^{k,2}(u^*T\hat Y)\right\}, \qquad \|f\|_{k,2,\delta}:=\|\mu\cdot f\|_{k,2},
        \end{equation}
        where $\mu:\R\times S^1\rightarrow \R_{>0}$ equals 1 away from the ends and equals $e^{\delta|s|}$ near the ends.
        \item The linearized operator\footnote{The domain of the linearized operator considered by Pardon in \cite{p2015} is slightly bigger. Since our aim here is to prove surjectivity this does not cause any issue.}
        \begin{equation}
        D_u:W^{k,2,\delta}(u^*T\hat Y)  \rightarrow W^{k-1,2,\delta}\left(u^* T\hat Y \otimes_\C \Omega^{0,1}(\R\times S^1)  \right),
        \end{equation}
        of $u$ is defined as follows. Given any symmetric connection $\nabla$ on $T\hat Y$, the linearized operator takes the form \cite[Section 2.1]{wendlsymplectic} 
        \begin{equation}\label{eq:lin_op_connection}
            \D_u w = \nabla w + J_f(u)\circ \nabla w \circ j +(\nabla_w J_f)\cdot du \circ j
        \end{equation}
        \item A $J_f$-holomorphic curve $u$ is \emph{transversely cut-out} if the linearized operator $D_u$ is surjective. 

    \end{itemize}

\end{definition}
\begin{prop}\label{prop:transversality}
Consider Setup~\ref{set:intersection_theory}. The linearized Cauchy-Riemann operator at $u_\eta$ is surjective.
\end{prop}

Our strategy for proving Proposition~\ref{prop:transversality} is to show that the linearized operator splits into a direct sum of real-linear Cauchy--Riemann operators on complex line bundles. Then, we apply the automatic transversality results from \cite{w2010} to deduce surjectivity. 

Recall the contact flag $Y_3\subset\cdots\subset Y_{2n-1}=Y$. It defines a splitting 
\begin{equation}\label{eq:ctct_struct_splitting}
    \xi=\xi_3\oplus\cdots\oplus \xi_{2n-1}
\end{equation}
of the contact distribution into two-dimensional sub-bundles as follows. The bundle $\xi_3$ is defined to be the contact structure of $Y_3$. For $i>1$, the bundle $\xi_{2i+1}$ is defined to be the symplectic complement of $TY_{2i-1}$ in the contact structure of $Y_{2i+1}$. Denoting by $V$ the span of $\partial_r$ and $R$, we have a splitting 
\begin{equation*}
    T\hat Y_{2i+1} = V\oplus \left(\bigoplus_{k=1}^{i}\xi_{2k+1}\right), \quad i=1,\dots,n-1.
\end{equation*}
Moreover, it follows from the assumptions in Setup~\ref{set:intersection_theory} that $J_f$ preserves this splitting. For any $i = 1, \ldots, n - 1$, we write $\D_i$ for the linearized operator of $u_\eta$, considered as a $J_f$-holomorphic curve in $\hat{Y}_{2i+1}$. We will also denote the full linearized operator $\D_{n-1}$ by $\D$. 

\begin{lemma}[Decomposing the linearized operator] \label{lem:linearized_op_splits}
    For any $2\leq i \leq n+1$ there exists a Fredholm real-linear Cauchy--Riemann operator $L_i$ on the bundle $\xi_{2i+1}$ such that the operator $\D_i$ has the matrix form  $$\begin{pmatrix} \D_{i-1} & 0 \\ 0 & L_i \end{pmatrix}$$
    with respect to the splitting
    $T\hat{Y}_{2i+1} = T\hat{Y}_{2i-1} \oplus \xi_{2i+1}.$
\end{lemma}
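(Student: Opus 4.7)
The key geometric input is that the image of $u_\eta$ lies in $\hat Y_3\subset \hat Y_{2i-1}$, so $du_\eta$ takes values in $u_\eta^*T\hat Y_{2i-1}$, and that by assumption (\ref{itm:contact_flag}) of Setup~\ref{set:intersection_theory} the splitting $T\hat Y_{2i+1}=T\hat Y_{2i-1}\oplus\xi_{2i+1}$ is preserved by both $J_f$ and by the Reeb flow $\varphi_t$. The plan is to build an explicit trivialization of $u_\eta^*T\hat Y_{2i+1}$ that respects this splitting and then read off block-diagonality of the operator directly, rather than going through the connection-based formula (\ref{eq:lin_op_connection}).

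The trivialization is constructed as follows. Since $u_\eta(s,t)=(a(s),\varphi_{Tt}(\eta(\varepsilon Ts)))$ is assembled from the Reeb flow and the gradient flow line $\eta\subset Y_3$, and since $\varphi_t$ preserves each $\xi_{2k+1}$ in the flag, pushing forward a unitary frame of $\xi_{2k+1}$ along $\eta(\varepsilon Ts)$ by $\varphi_{Tt}$ yields a smooth unitary trivialization of $u_\eta^*\xi_{2k+1}$ for each $k=1,\dots,i$; combining with a trivialization of the span of $\partial_r$ and $R$ gives a trivialization of $u_\eta^*T\hat Y_{2i+1}$. Because $T$ is the common period of $\gamma_\pm$ this trivialization is $1$-periodic in $t$ and extends continuously to the ends $s=\pm\infty$. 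In this trivialization $J_f$ acts by a block-diagonal matrix at every point, and the linearized operator $\D_i$ takes the standard cylindrical form $\partial_s+j_0\partial_t+S(s,t)$, where $S(s,t)$ is built from the linearization of the $f$-perturbed Reeb flow and the variation of $J_f$ along the cylinder. All of these ingredients preserve the splitting, so $S(s,t)$ is block diagonal with respect to it. This yields the claimed matrix form
\[
\D_i=\begin{pmatrix}\D_{i-1}&0\\0&L_i\end{pmatrix},
\]
with $L_i$ a real-linear Cauchy-Riemann operator on the complex line bundle $u_\eta^*\xi_{2i+1}$.

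For Fredholmness of $L_i$, the asymptotic operators at $s=\pm\infty$ are given by the linearization of the $\alpha_f$-flow on $\xi_{2i+1}|_{\gamma_\pm}$, which is non-degenerate for $\varepsilon>0$ sufficiently small by Remark~\ref{rem:non-deg_n_CZ}. Their Conley-Zehnder indices are described by Lemma~\ref{lem:compute_CZ_N}, and the standard Riemann-Roch theorem for Cauchy-Riemann operators on cylindrical domains yields the Fredholm property. The main obstacle I anticipate is the bookkeeping in the second paragraph: verifying that every matrix entry in $S(s,t)$ actually preserves the splitting rather than merely leaving each summand invariant from one side. This reduces to the Reeb-invariance of the contact flag and the compatibility of $J_f$ with the flag built into Setup~\ref{set:intersection_theory}.
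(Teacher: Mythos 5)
Your proposal takes a genuinely different route from the paper. The paper works throughout with the connection-based formula~\eqref{eq:lin_op_connection}, constructs an adapted Levi--Civita connection in Lemma~\ref{lem:connection_split} for which $\hat Y_{2i-1}$ is totally geodesic, and then explicitly splits each of the two terms of $\D_i$ (the ``$(i)$'' term $\nabla^i + J_f\nabla^i\circ j$ and the ``$(ii)$'' term $(\nabla^i J_f)\cdot du_\eta\circ j$) by a line-by-line computation. You instead propose a flow-equivariant unitary trivialization of $u_\eta^*T\hat Y_{2i+1}$ built by pushing a frame along $\eta$ forward by $\varphi_{Tt}$, in which the operator should become $\partial_s + j_0\partial_t + S(s,t)$ with $S$ manifestly block-diagonal. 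In principle this is a reasonable alternative; your identification of the needed geometric inputs (flow and $J_f$-invariance of the flag, the image of $u_\eta$ lying in $\hat Y_3$) is correct, and the Fredholm endgame via the nondegeneracy from Lemma~\ref{lem:compute_CZ_N} and Remark~\ref{rem:non-deg_n_CZ} is fine.

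However, there is a genuine gap, and it sits exactly where you flag ``the bookkeeping.'' Writing $\D_i$ in the form $\partial_s + j_0\partial_t + S$ already presupposes the connection-based definition~\eqref{eq:lin_op_connection}: the zeroth-order matrix $S$ acquires contributions from the covariant derivatives of your frame in the $s$- and $t$-directions and from $(\nabla_w J_f)\cdot du_\eta\circ j$, and none of these is automatically block-diagonal. Pushing the frame forward by $\varphi_{Tt}$ makes the Lie derivative of the frame along $\partial_t u_\eta$ vanish, but the covariant derivative $\nabla_{\partial_t u_\eta}(\text{frame})$ differs from the Lie derivative by $\nabla_{(\cdot)}(TR_f)$, and keeping that in $T\hat Y_{2i-1}$ requires the connection to preserve $T\hat Y_{2i-1}$. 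Likewise, the term $(\nabla_w J_f)\cdot du_\eta\circ j$ stays in $T\hat Y_{2i-1}$ for $w\in T\hat Y_{2i-1}$ only if $\hat Y_{2i-1}$ is totally geodesic. So the computation you propose to ``read off'' in fact reduces to the same adapted-connection argument of Lemma~\ref{lem:connection_split} that you explicitly propose to avoid; the assertion ``all of these ingredients preserve the splitting'' is not verified and cannot be, without effectively reproving that lemma. Your acknowledgment of the obstacle is honest, but the obstacle is the proof.
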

The linearized operator can be written in terms of a symmetric connection on $T\hat Y$, as in (\ref{eq:lin_op_connection}). It will be convenient, for the purposes of computation, to use the Levi--Civita connection of a metric which is well-adapted to the contact flag.
\begin{lemma}[Choice of connection] \label{lem:connection_split}
    There exist symmetric connections $\nabla^i$ on $T\hat Y_{2i+1}$, such that for any $v$ tangent to the hypersurface $Y_{2i-1}$, the operator $\nabla^i_v$ can be written as
    \begin{equation}\label{eq:connection_split}
        \nabla^{i}_v = \begin{pmatrix} \nabla^{{i-1}}_v & 0 \\ 0 & \nabla'_v \end{pmatrix}
    \end{equation}    
    with respect to the splitting $T\hat Y_{2i+1}= T\hat Y_{2i-1}\oplus \xi_{2i+1}$.
\end{lemma}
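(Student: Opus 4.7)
The plan is to build $\nabla^i$ inductively as the Levi-Civita connection of a Riemannian metric $g^i$ on $\hat Y_{2i+1}$ that is adapted to the contact flag. The base case $i = 1$ is trivial: any Levi-Civita connection on $\hat Y_3$ works since there is nothing to split. For the inductive step, assume $\nabla^{i-1}$ is the Levi-Civita connection of $g^{i-1}$ on $\hat Y_{2i-1}$. It suffices to construct $g^i$ on $\hat Y_{2i+1}$ such that, along the submanifold $\hat Y_{2i-1}$: (i) $T\hat Y_{2i-1}$ and $\xi_{2i+1}$ are $g^i$-orthogonal; (ii) $g^i|_{T\hat Y_{2i-1}} = g^{i-1}$; and (iii) $\hat Y_{2i-1}$ is a totally geodesic submanifold of $(\hat Y_{2i+1}, g^i)$, with the parallel transport of $g^i$ along $\hat Y_{2i-1}$ preserving $\xi_{2i+1}$. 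Properties (i)--(iii) force the Levi-Civita connection $\nabla^i := \nabla^{g^i}$ to have the block-diagonal form (\ref{eq:connection_split}) when evaluated on vectors tangent to $\hat Y_{2i-1}$: (ii) gives the upper-left block, (iii) (via vanishing second fundamental form) gives the vanishing off-diagonal blocks, and the metric compatibility with the splitting gives the lower-right block.

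To construct $g^i$, fix a $J_f$-compatible Hermitian metric $h$ on the rank-$2$ bundle $\xi_{2i+1}|_{\hat Y_{2i-1}}$ together with a unitary connection $\nabla^h$ on it. Using the exponential map of some background metric, identify a tubular neighborhood $U$ of $\hat Y_{2i-1}$ in $\hat Y_{2i+1}$ with an open disk sub-bundle of $\xi_{2i+1}|_{\hat Y_{2i-1}}$, so that the normal fibers are parametrized by radial straight lines. On $U$, define $g^i$ to be the twisted-product metric: the horizontal distribution induced by $\nabla^h$ is identified with the pullback of $T\hat Y_{2i-1}$ and carries the pullback of $g^{i-1}$; the vertical fibers carry the flat metric determined by $h$; and horizontal and vertical directions are declared orthogonal. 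Extend $g^i$ to all of $\hat Y_{2i+1}$ via a partition of unity, patching with an arbitrary auxiliary metric away from $\hat Y_{2i-1}$; such modifications do not affect $\nabla^i$ along $\hat Y_{2i-1}$ itself.

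A direct Koszul-formula computation on $U$ then verifies properties (i)--(iii): orthogonality and the restriction of the metric are immediate from the construction, while total geodesicity of $\hat Y_{2i-1}$ follows from the fact that each normal fiber is isometrically embedded as a flat Euclidean disk whose radial lines are geodesics meeting $\hat Y_{2i-1}$ orthogonally. Unitarity of $\nabla^h$ ensures that parallel transport in $g^i$ along $\hat Y_{2i-1}$ preserves $\xi_{2i+1}$, so for $v \in T\hat Y_{2i-1}$ one obtains $\nabla^i_v w = \nabla^{i-1}_v w$ whenever $w \in T\hat Y_{2i-1}$ and $\nabla^i_v w' \in \Gamma(\xi_{2i+1})$ whenever $w' \in \Gamma(\xi_{2i+1})$, which is exactly the required matrix form.

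The main technical point is pinning down a metric whose second fundamental form of $\hat Y_{2i-1}$ genuinely vanishes; this is why the construction is required to be of normal-product shape in a full neighborhood of $\hat Y_{2i-1}$, not merely on $\hat Y_{2i-1}$ itself, since the second fundamental form depends on first derivatives of $g^i$ in the transverse direction. Once the twisted-product metric is set up with the unitary connection $\nabla^h$, the Koszul formula collapses along $\hat Y_{2i-1}$ to precisely the block-diagonal expression in (\ref{eq:connection_split}), completing the inductive step.
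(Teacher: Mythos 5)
Your overall strategy is the same as the paper's: build $\nabla^i$ as the Levi--Civita connection of a metric for which $\hat Y_{2i-1}$ is totally geodesic and the splitting $T\hat Y_{2i-1}\oplus\xi_{2i+1}$ is orthogonal. The only real difference is how the metric is built. The paper starts from an arbitrary metric extending $h_{i-1}$ with the orthogonality property, uses the normal exponential to define a tubular neighborhood and the fiberwise involution $\iota: v\mapsto -v$, and then averages to produce an $\iota$-invariant metric; total geodesicity follows because $Y_{2i-1}$ is the fixed locus of the isometry $\iota$. You instead directly write down a twisted-product (Sasaki-type) metric on a disk subbundle of $\xi_{2i+1}$, with horizontal distribution coming from a metric connection $\nabla^h$ and flat fiber metric. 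That construction is fine, and is really the same idea in a prefabricated form: your metric is $\iota$-invariant by construction, so total geodesicity again holds for the fixed-point reason. Your stated justification for total geodesicity (``radial lines are geodesics meeting $\hat Y_{2i-1}$ orthogonally'') is not by itself sufficient --- that only says the normal exponential is radially geodesic, which holds for \emph{any} metric in normal coordinates and does not force vanishing of the second fundamental form --- so you should invoke the $\iota$-isometry argument explicitly rather than the radial geodesic picture.

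Two smaller remarks. First, your condition (iii) (parallel transport preserves $\xi_{2i+1}$) is automatic once you have (i) and total geodesicity: for $v,w$ tangent and $w'$ normal, $g^i(\nabla^i_v w', w) = v\,g^i(w',w) - g^i(w', \nabla^i_v w)=0$, so normal stays normal; the unitarity of $\nabla^h$ is not actually needed, only metric compatibility. Second, the paper deliberately constructs the metric on the compact manifolds $Y_{2i-1}$ and then takes the cylindrical metric $dr^2 + h_i$ on $\hat Y_{2i-1}$; this cylindricity (so that $\partial_r$ is covariantly constant) is used in the proofs of Lemma~\ref{lem:asymptotic_split} and Lemma~\ref{lem:linearized_op_splits}. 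Your construction as written is carried out directly on $\hat Y_{2i+1}$ and does not obviously produce a cylindrical metric; you should perform the twisted-product construction on the compact $Y_{2i+1}$'s and then pass to the cylindrical extension, exactly as the paper does, to keep the rest of Section~\ref{subsec:transversality} intact.
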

\begin{proof}
Let $h_1$ be the metric on $Y_3$ equal to the one induced by $J_f$. Define for each $i \in \{2, \ldots, n-1\}$ a metric $h_i$ on $Y_{2i+1}$ which satisfies the following conditions:
\begin{itemize}
    \item The submanifold $Y_{2i-1}$ is totally geodesic with respect to $h_i$. 
    \item The splitting
    $$TY_{2i+1}|_{Y_{2i-1}} = TY_{2i-1}\oplus \xi_{2i+1}$$
    is $h_i$-orthogonal. 
\end{itemize}
The metrics $h_i$ can be constructed inductively by repeated applications of the following standard trick. Fix any Riemannian metric $h$ on $Y_{2i+1}$ which restricts to $h_{i-1}$ on $Y_{2i-1}$ and makes $\xi_{2i+1}$ orthogonal to $TY_{2i+1}$. Use the restriction of the exponential map to $\xi_{2i+1}$ to define a tubular neighborhood $U$ of $Y_{2i-1}$ in $Y_{2i+1}$. The involution $v \mapsto -v$ on $\xi_{2i+1}$ then defines a smooth involution $\iota: U \to U$ that fixes $TY_{2i-1}$. Set $h_i := \frac{1}{2}(h + \iota^* h)$ on $U$ and extend it outwards.  Then $\iota$ is an isometry for $h_i$. Given any $(p,w)\in T Y_{2i-1}$, the corresponding geodesic is mapped under $\iota$ to a geodesic corresponding to $(\iota(p),\iota(w)) = (p,w)$. In other words this geodesic is preserved by $\iota$ and this is in $Y_{2i-1}$. 


For each $i \in \{2, \ldots, n-1\}$, let $\hat{h}_i$ denote the cylindrical metric over $h_i$ on the cylinder $\hat{Y}_{2i+1}$. Let $\nabla^{i}$ denote the Levi--Civita connection on $\hat{Y}_{2i+1}$ with respect to $\hat{h}_i$. By definition, $\nabla^i$ is symmetric. The splitting (\ref{eq:connection_split}) follows from the fact that $\hat Y_{2i-1}$ is totally geodesic with respect to $\hat h_i$ and hence the $\hat h_i$ parallel transport preserves the orthogonal direct sum decomposition  $TY_{2i+1}|_{Y_{2i-1}} = TY_{2i-1}\oplus \xi_{2i+1}$.
\end{proof}

The next lemma states that the asymptotic operators of $\D$, the linearized operator of $u_\eta$ in $\hat{Y}$, respect the direct sum decomposition induced by the contact flag, and that the pieces in the contact direction are non-degenerate. \begin{lemma}\label{lem:asymptotic_split}
    The asymptotic operators $A_{\pm}$ of $\D$
    respect the splitting (\ref{eq:ctct_struct_splitting}). Moreover, their restrictions to $\gamma_\pm^*\xi_{2j+1}$ are non-degenerate.
\end{lemma}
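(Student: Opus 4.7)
The plan is to reduce both claims to elementary properties of the linearized perturbed Reeb flow along $\gamma_\pm$, reusing computations already carried out in Lemma~\ref{lem:compute_CZ_N}. In a symplectic trivialization of $\gamma_\pm^*\xi$ the asymptotic operator takes the standard form $A_\pm \zeta = -J_f \nabla_t \zeta - S_\pm(t)\zeta$, where $S_\pm$ is built from the time-$T$ linearized flow of $R_f$ on $\xi$. It therefore suffices to show that (i) the linearized flow of $R_f$ along $\gamma_\pm$ preserves each subbundle $\xi_{2j+1}\subset\xi$, and (ii) the restriction of the time-$T$ linearized flow to $\xi_{2j+1}$ has no eigenvector with eigenvalue $1$.

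For (i), I first check that $R_f = e^{-\varepsilon f}(R-\varepsilon X_f)$ is tangent to each $Y_{2j-1}$. Since $R$ preserves $Y_{2j-1}$ by Setup~\ref{set:intersection_theory}, only the tangency of $X_f$ requires argument. Using $g_J = \alpha\otimes\alpha + d\alpha(-,J-)$ and the $\varphi_t$-invariance of $f$, one computes $g_J(R,\nabla_J f) = df(R) = 0$ together with $g_J(R,R)=1$, which forces $\alpha(\nabla_J f) = 0$, so $\nabla_J f \in \xi$. Combining this with the hypothesis that $\nabla_J f$ is tangent to $Y_{2j-1}$ and the $J$-invariance of $\xi \cap TY_{2j-1}$ yields $X_f = -J\nabla_J f \in \xi \cap TY_{2j-1}$. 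Hence the flow $\varphi^f_t$ preserves every $Y_{2j-1}$ and, being contact, also preserves $\xi$ and thus $\xi\cap TY_{2j-1}$. On $\xi$ we have $d\alpha_f|_\xi = e^{\varepsilon f}\,d\alpha|_\xi$, so symplectic complements with respect to the two forms agree and $\varphi^f_t$ is conformally symplectic on $\xi$ for $d\alpha|_\xi$; it therefore preserves the symplectic complement $\xi_{2j+1}$ of $\xi \cap TY_{2j-1}$ inside $\xi \cap TY_{2j+1}$. Together with the fact that $J_f$ respects the splitting, this implies that $A_\pm$ decomposes as claimed.

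For (ii), the restriction of $A_\pm$ to $\gamma_\pm^*\xi_{2j+1}$ is non-degenerate exactly when the restricted time-$T$ linearized flow has no $1$-eigenvector. I reuse the computation from the proof of Lemma~\ref{lem:compute_CZ_N}: after trivializing $\xi_{2j+1}$ along $\gamma_\pm$ in a $\varphi_t$-invariant way and noting that $R_f$ is proportional to $R$ along a critical circle of $f$, the relevant path of $\operatorname{Sp}(2)$ matrices is conjugate to the linearization of $(\varphi_{e^{\varepsilon f}t})^{-1}\circ\varphi^f_t$ restricted to $\xi_{2j+1}$, which satisfies $\dot\Phi = -J_0\,\operatorname{Hess}(\varepsilon f)|_{\xi_{2j+1}}\,\Phi$. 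By assumption~(\ref{itm:contact_flag}).\ref{itm:Hessian_positive}, this Hessian is positive definite on $\xi_{2j+1}$ at every critical circle other than $\gamma_+$, and it is negative definite at $\gamma_+$ since $\gamma_+$ is a global maximum of $f$. In either case, for $\varepsilon>0$ sufficiently small the time-$T$ map on $\xi_{2j+1}$ has no fixed vector, yielding non-degeneracy. The only mildly subtle point is the tangency of $X_f$ to the flag; everything else is an unpacking of hypotheses already present in Setup~\ref{set:intersection_theory}.
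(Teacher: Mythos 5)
Your proof is correct and lands in essentially the same place as the paper's, but the route to the splitting claim is genuinely different and worth noting. The paper takes the $s\to\pm\infty$ limit of the linearized operator $D(\partial_s)$ directly to obtain the explicit formula $A_\pm = -T\,J_f[R_f,-]$, and then simply asserts that since $J_f$ and the flow of $R_f$ preserve the flag splitting, so does $J_f[R_f,-]$. You instead take the standard form of the asymptotic operator as given and prove the flow-invariance of each $\xi_{2j+1}$ from scratch: you check that $\nabla_J f\in\xi$ (using $\varphi_t$-invariance of $f$), deduce $X_f = -J\nabla_J f\in \xi\cap TY_{2j-1}$ from the $J$-invariance hypothesis in Setup~\ref{set:intersection_theory}(\ref{itm:contact_flag}), conclude $R_f$ is tangent to the flag, and then use that $\varphi^f_t$ is conformally symplectic for $d\alpha|_\xi$ to get preservation of the $d\alpha$-complement $\xi_{2j+1}$. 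This fills in a step the paper asserts without proof, and is a nice argument. The trade-off is that you skip the derivation of $A_\pm$ in terms of the chosen connection, which is what the paper's proof actually establishes en route; for your version to be fully self-contained you would need a citation for the standard form $A_\pm\zeta = -J_f\nabla_t\zeta - S_\pm\zeta$ and its identification with the linearized flow. The non-degeneracy argument is the same in both proofs — reduce to the ODE $\dot\Phi = -J_0\operatorname{Hess}(\varepsilon f)|_{\xi_{2j+1}}\Phi$ from Lemma~\ref{lem:compute_CZ_N} and use definiteness of the Hessian — though you correctly invoke definiteness (from Setup~\ref{set:intersection_theory}(\ref{itm:contact_flag}).\ref{itm:Hessian_positive} and global extremality of $\gamma_\pm$), whereas the paper's appeal to the Morse--Bott condition alone would not in general rule out a degenerate restriction to a proper subspace; at the global extrema $\gamma_\pm$ both arguments give the right conclusion.
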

\begin{proof}
Consider the connections from Lemma~\ref{lem:connection_split}, and write $\nabla:=\nabla^{n-1}$ for the connection on the total manifold $\hat Y$. Following \cite{w2010}, the asymptotic operators $A_\pm$ are defined by $\lim_{s\rightarrow \pm\infty} D(\partial_s) = \nabla_s -A_\pm$.
Evaluating (\ref{eq:lin_op_connection}) as $s\rightarrow\pm \infty$ we obtain
\begin{align*}
    \lim_{s\rightarrow \pm\infty} D(\partial_s) &= \lim_{s\rightarrow \pm\infty} \nabla_s +J_f\nabla_t+ (\nabla J_f)\partial_t u\\
    &=\nabla_s+J_f\nabla_{T\cdot R_f}+(\nabla J_f) T\cdot R_f = \nabla_s+TJ_f\nabla_{R_f} + T\nabla (J_f R_f) - TJ_f\nabla R_f \\
    &=\nabla_s +T \cdot J_f(\nabla_{R_f}-\nabla R_f) + T\nabla (J_f R_f).
\end{align*}
Notice that $J_fR_f = \partial_r$ is covariantly constant, since the metric defining $\nabla$ is cylindrical. Therefore the asymptotic operators are \begin{equation}
    A_\pm =- T \cdot J_f(\nabla_{R_f}-\nabla R_f) + T\nabla (J_f R_f) = -T\cdot J_f[R_f,-].
\end{equation}
Since $J_f$ and the flow of $R_f$ preserve the splitting, the operator $J_f[R_f,-]$ does as well. Denoting by $A_\pm^i$ the restrictions of $A_\pm$ to $\gamma_\pm^*\xi_{2i+1}$, it remains to show that they are non-degenerate. Notice that
the operators $A_\pm^i$ coincide with the restriction to $\gamma_\pm^*\xi_{2i+1}$ of the linearization of the flow $\varphi_t^f$ of $R_f$. Let $\tau_\pm$ be trivializations of $\xi_{2i+1}$ along $\gamma_\pm$ that are invariant under the periodic Reeb flow $\varphi_t$, as in \ref{eq:tau_is_R_compatible}. 
As shown in the proof of Lemma~\ref{lem:compute_CZ_N}, the linearization of the flow $\varphi_t^f$ is conjugate to a path of matrices $d\phi_t$ that  solves the ODE $\dot{d\phi_t} = -J_0 \cdot \on{Hess}(\varepsilon f) d\phi_t$. Note that the Hessian of $f$ is degenerate in the direction of the Reeb vector field $R$, since $f$ is $R$-invariant. However, by the Morse-Bott condition, the restriction of the Hessian to  $\xi_{2i+1}$, is non-degenerate. When $\varepsilon$ is sufficiently small, this implies that $\{d\phi_t\}_{t\in[0,T]}$ is non-degenerate as well.
\end{proof}

We are now ready to show that the linearized operator splits into a direct sum. 
\begin{proof}[Proof of Lemma~\ref{lem:linearized_op_splits}]
Let $\nabla^i$ be the connection from Lemma~\ref{lem:connection_split}, and write $\D_i$ as
\begin{equation*}
    \D_i = \underbrace{\nabla^i + J_f\circ \nabla^i\circ j}_{(i)} + \underbrace{(\nabla^i J_f) \cdot du_\eta\circ j}_{(ii)}.
\end{equation*}
The splitting of the first part, $(i)$, follows immediately from the splitting of $\nabla^i$ and the fact that $J_f$ preserves the decomposition. Indeed, since $\partial_su_\eta$ and $\partial_t u_\eta$ lie in the subspace $T\hat Y_{2i-1}$, Lemma~\ref{lem:connection_split} guarantees that $\nabla^i_{\partial_su_\eta} = \nabla^{i-1}_{\partial_s u_\eta}\oplus \nabla'_{\partial_s u_\eta}$, and the same for $\partial_t u_\eta$. Therefore,
$$(i) = \begin{pmatrix} \nabla^{{i-1}} +  J_f(u_\eta) \nabla^{{i-1}}\circ j & 0 \\ 
0 & \nabla' + J_f(u_\eta)\nabla'\circ j.\end{pmatrix}.$$
Let us rewrite the second part of $\D_i$ applied to $\partial_s$ (the computation for $\partial_t$ is completely analogous):
\begin{align}
    (ii)(\partial_s) &= (\nabla^i J_f) \cdot du_\eta \circ j \partial_s = (\nabla^i J_f) \cdot \partial_t u_\eta \nonumber\\
    &=  (\nabla^i J_f) \cdot T\cdot R_f {=} T\cdot \left(\nabla^i(J_fR_f)- J_f\nabla^i R_f \right) \nonumber\\
    &=T\cdot \left(\nabla^i(-\partial_r)- J_f\nabla^i R_f \right)\nonumber\\
    &= T\cdot \left(0- J_f\nabla^i R_f \right) = -TJ_f\nabla^i R_f.
\end{align}
where, in the second and third lines, we used the fact that $\partial_tu_\eta = T\cdot R_f$ by construction, and that $R_f=-J_f\partial_r$ respectively. In the last row we used the fact that the metric $\hat h^i$ is cylindrical in the $r$-direction, which implies that the vector field $\partial_r$ is covariantly constant. We now evaluate $(ii)(\partial_s) = -TJ_f\nabla^i R_f$ on a tangent vector field $v$ and split into cases with respect to the decomposition   $TY_{2i+1}|_{Y_{2i-1}} = TY_{2i-1}\oplus \xi_{2i+1}$:
\begin{itemize}
    \item \underline{$v\in \Gamma(u_\eta^*TY_{2i-1})$}: Using Lemma~\ref{lem:connection_split} and the fact that $R_f$ is tangent to $\hat Y_{2i-1}$ as well, we obtain $\nabla^i_v R_f = \nabla^{i-1}_v R_f$. Therefore $(ii)(\partial_s) v= -TJ_f\nabla^{i-1}_v R_f$.
    \item \underline{$v\in \Gamma(u_\eta^* \xi_{2i+1})$}: Extend $v$ into a vector field $\tilde v$ defined on a neighborhood of the image of $u_\eta$ in $\hat Y_{2i+1}$. Then, along the hypersurface $\hat Y_{2i-1}$ we have 
    \begin{align*}
        (ii)(\partial_s)v &= -T J_f \nabla^i_v R_f = -T J_f \left(\nabla^i_{R_f}\tilde v +[\tilde v, R_f] \right) \\
        &=T J_f \left([R_f,\tilde v]-\nabla^i_{R_f}\tilde v \right) = T J_f \left([R_f, v]-\nabla^i_{R_f} v \right)\\
        &\overset{Lemma~\ref{lem:connection_split}}{=} T J_f \left([R_f, v]-\nabla_{R_f}' v \right) = J_f [\partial_t u_\eta, v]-J_f \nabla_{\partial_t u_\eta}' v\\
        &=  J_f [du_\eta\circ j(\partial_s), v]-J_f \nabla' \circ j (\partial_s) (v).
    \end{align*}
    Above, we used the fact that the connection $\nabla^i$ is symmetric and that $R_f$ is tangent to the hypersurface $\hat Y_{2i-1}$. 
\end{itemize}
Overall we conclude that the second part of $\D_i$ decomposes as 
$$(ii) = \begin{pmatrix} (\nabla^{i-1} J_f) \cdot du_\eta\circ j & 0 \\ 
0 & J_f [du_\eta\circ j, -]-J_f \nabla' \circ j\end{pmatrix}.$$
Summing $(i)$ and $(ii)$ we obtain a decomposition of $\D_i$:
$$\D_i = \begin{pmatrix} \D_{i-1} & 0 \\ 
0 & \nabla' + J_f [du_\eta\circ j, -] \end{pmatrix}.$$
A simple computation shows that $L_i:= \nabla' +J_f [du_\eta\circ j, -]$ is a Cauchy-Riemann operator.
Moreover, since $\partial_t u_\eta = TR_f$, the asymptotic operators of $L_i$ are indeed the restrictions of $A_\pm$ to $\gamma_\pm^* \xi_{2i+1}$. By  Lemma~\ref{lem:asymptotic_split}, they are non-degenerate. This implies that $L_i$ is a Fredholm operator (e.g. \cite[Section 2.1]{w2010}), and thus concludes the proof.  
\end{proof}
Having established the splitting of the linearized operators $\D_i$, we are ready to prove surjectivity.
\begin{proof}[Proof of Proposition~\ref{prop:transversality}] 
    We will prove by induction that the Cauchy-Riemann operators $\D_i$ are surjective for all $i$. This will use the decomposition from Lemma~\ref{lem:linearized_op_splits}, as well as automatic transversality \cite{w2010} for $\D_1$ and $L_i$. Starting with the base case of the induction, $\D_1$ is the linearized operator at the (non-constant) curve $u_\eta$ inside the 4-dimensional manifold $\hat Y_3$. Theorem~1 from \cite{w2010} states that\footnote{We only state a special case of Wendl's theorem, adapted to our notations.}, in a 4-dimensional manifold, an immersed pseudoholomorphic curve is regular if 
    \begin{equation}\label{eq:Wendl_cond_4d}
        \ind(u)> \frac{1}{2}(\ind(u) -2+2g+\#\Gamma_0 +\#\pi_0(\partial \Sigma)),
    \end{equation}
    where: \begin{itemize}
        \item $\Sigma$ is the domain of the curve,
        \item $g$ is the genus of $\Sigma$,
        \item $\Gamma_0$ is the set of ends of $u$ with even CZ index.
    \end{itemize}
    In our case, $\Sigma = \R\times S^1$, and hence $g=0$ and $\#\pi_0(\Sigma)=0$. Moreover, $\#\Gamma_0\leq 2$ since $u_\eta$ has only two ends. Therefore, condition (\ref{eq:Wendl_cond_4d}) holds if $\ind(\D_1)>0$. The latter inequality obviously holds in our case (in fact, $\ind(\D_1)=\dim{\hat Y_3}-2 =2$).
    
    Having established the base case of the induction we move on to the induction step. We assume that $\D_{i-1}$ is surjective for some $i$ and show surjectivity of $\D_i$. By the decomposition of $\D_i$ given in Lemma~\ref{lem:linearized_op_splits}, this is equivalent to showing that the real-linear Cauchy-Riemann operator $L_i$ is surjective. This is an operator on sections of a complex line bundle. Proposition~2.2 from \cite{w2010} states that\footnote{Again, we state a special case adapted to our notations.} given a complex line bundle $E\rightarrow \Sigma$, a Cauchy-Riemann Fredholm operator $L:W^{1,2}(E)\rightarrow L^2(E\otimes_\C \Omega^{0,1}(\R\times S^1)$ is surjective if
    \begin{equation}\label{eq:Wendl_cond_line_bundle}
        \ind(L)\geq 0 
        \quad\text{ and }\quad 
        \ind(L)>  \frac{1}{2}(\ind(L) -2+2g+\#\Gamma_0 +\#\pi_0(\partial \Sigma)),
    \end{equation}
    where:
    \begin{itemize}
        \item $g$ is the genus of $\Sigma$,
        \item $\Gamma_0$ is the set of ends for which the CZ index of the asymptotic operator is even.
    \end{itemize}
    As before, in our case $g=0$ and $\#\pi_0(\partial \Sigma)=0$. Therefore the second condition in (\ref{eq:Wendl_cond_line_bundle}) amounts to $\ind(L)>-2+\#\Gamma_0$. Since the domain $\Sigma$ in our case is a cylinder,  $\#\Gamma_0\leq 2$, and we are left with the requirement $\ind(L)>0$. A simple computation shows that $\ind(L_i) =2 $ for all $i$. This essentially follows from the direct sum decomposition given in Lemma~\ref{lem:linearized_op_splits}, the additivity of the Fredholm index, and the fact that $\ind(D_i) = 2i$. We therefore conclude that $L_i$ is surjective, and this completes the proof.
\end{proof}
\begin{remark}
In the above proof we showed that $L_i$ is surjective as an operator between the spaces $W^{1,p}$ and $L^p$. Note that this implies surjectivity of the corresponding operator between $W^{k,2}$ and $W^{k-1,2}$, by elliptic regularity. Indeed, the cokernel of $L_i$ is the kernel of its formal adjoint operator. Applying elliptic regularity to the formal adjoint, we find that its kernel is independent of $k$ and $p$. 
\end{remark}

\subsubsection{The evaluation map is a submersion}
Let $z$, $\eta$, $u_\eta$ be as given in Section~\ref{subsec:lifting_flow_lines}. We showed in Proposition \ref{prop:transversality} that the linearized operator
$$D: W^{k,2,\delta}(u_\eta^*T\hat{Y}) \to W^{k-1,2,\delta}\big(u^*T\hat{Y} \otimes_{\mathbb{C}} \Omega^{0,1}(\mathbb{R} \times S^1)\big)$$
is surjective. This implies that the moduli space $\mathcal{M}(\hat{Y}, J_f; \gamma_+, \gamma_-)$ of $J_f$-holomorphic cylinders near $u_\eta$ admits the structure of a smooth manifold of dimension $\text{ind}(D)$ near $u_\eta$, with tangent space at $u_\eta$ identified with $\ker(D) \oplus \R \cdot \partial_r$. The extra factor comes from the fact that we are \emph{not} quotienting by the translation action; recall that the operator $D$ controls the deformations of the quotient moduli space $\mathcal{M}(Y, J_f; \gamma_+, \gamma_-) = \mathcal{M}(\hat{Y}, J_f; \gamma_+, \gamma_-)/\R$. 

Our ultimate goal is to show that the point-constrained moduli space $\moduli$ is transversely cut-out at $u_\eta$. The point constrained moduli space is defined as the inverse image $\text{ev}^{-1}(0,z)$, where $\text{ev}$ is the evaluation map
$$
\text{ev}: \mathcal{M}(\hat{Y}, J_f; \gamma_+,\gamma_-) \times\R_s\times S^1_t\rightarrow \hat Y,\qquad (u,s,t) \mapsto u(s,t).
$$

It is transversely cut-out at $u_\eta$ if and only if $\text{ev}$ is a submersion at the point $(u_\eta, 0, 0)$. The linearization of $\text{ev}$ at $(u_\eta, 0, 0)$, which we write as $D_{\text{ev}}$, is the linear map
$$D_{\text{ev}}: \ker(D) \oplus \R \oplus T_{(0,0)}(\R \times S^1) \to T_{(0,z)}\hat{Y}$$
defined by
\begin{equation}\label{eq:D_ev}
    (V, c, v) \mapsto V(0, 0) + c \cdot \partial_r + (\partial_v u_\eta)(0,0).
\end{equation}

In order to show that the moduli space $\moduli$ is cut-out transversely at $u_\eta$, it remains to prove the following proposition. 

\begin{prop}\label{prop:ev_is_submersion} \label{prop:evaluation_map_surjective}
The map $D_{\text{ev}}$ is surjective. 
\end{prop}

The proof of Proposition~\ref{prop:ev_is_submersion} uses the surjectivity of the linearized evaluation map on Morse flow lines, as stated in the following lemma.
\begin{lemma}\label{lem:ev_Morse}
    Let $\text{Morse}(\gamma_+, \gamma_-)$ be the space of Morse flow lines from $\gamma_+$ to $\gamma_-$. Then it is a smooth manifold and, for every $x\in \text{Morse}(\gamma_+, \gamma_-)$ the map
    $$D_{\text{ev}}^{\text{Morse}}: T_x\text{Morse}(\gamma_+, \gamma_-) \to T_{x(0)} Y, \quad \text{defined by } \quad V \mapsto V(0)$$
    is surjective.
\end{lemma}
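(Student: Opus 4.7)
The plan is to show that the evaluation map $x \mapsto x(0)$ identifies $\text{Morse}(\gamma_+,\gamma_-)$ with an open subset of $Y$, which will immediately yield both the smooth manifold structure on $\text{Morse}(\gamma_+,\gamma_-)$ and the surjectivity of $D_{\text{ev}}^{\text{Morse}}$.

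First, I would observe that assigning to each parametrized gradient trajectory its value at $s=0$ gives a bijection between $\text{Morse}(\gamma_+,\gamma_-)$ and the set
\[
W := W^u(\gamma_+) \cap W^s(\gamma_-) \subset Y,
\]
where $W^u(\gamma_+)$ and $W^s(\gamma_-)$ are the unstable and stable manifolds of $\gamma_\pm$ for the gradient flow of $f$. Indeed, an autonomous flow line is uniquely determined by its value at any single time, and by definition a point $p \in Y$ lies on some gradient trajectory with the prescribed asymptotics precisely when $p \in W$.

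Next, I would apply standard Morse--Bott stable/unstable manifold theory to conclude that $W$ is open in $Y$. Since $\gamma_+$ is the global maximum circle of $f$, the normal Hessian of $f$ along $\gamma_+$ is negative definite, so that $W^u(\gamma_+)$ is a smoothly embedded submanifold of dimension $\dim(\gamma_+) + (2n-2) = 2n-1$; that is, $W^u(\gamma_+)$ is open in $Y$. The analogous argument applied to the global minimum $\gamma_-$ shows that $W^s(\gamma_-)$ is open as well. Consequently $W$ is an open subset of $Y$ carrying its canonical smooth manifold structure of dimension $2n-1$.

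Finally, pulling back this smooth structure along the bijection $x \mapsto x(0)$ makes $\text{Morse}(\gamma_+,\gamma_-)$ into a smooth manifold diffeomorphic to $W$. Under this identification the linearization $D_{\text{ev}}^{\text{Morse}}$ becomes the inclusion $T_{x(0)}W \hookrightarrow T_{x(0)}Y$, which is an equality since $W$ is open in $Y$; surjectivity is therefore immediate. I do not foresee any serious obstacle beyond verifying that $\gamma_\pm$ being global extrema of the Morse--Bott function $f$ indeed forces $W^u(\gamma_+)$ and $W^s(\gamma_-)$ to be top-dimensional, which is a routine consequence of the Morse--Bott lemma applied along the critical circles $\gamma_\pm$.
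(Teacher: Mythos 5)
Your proposal is correct and follows essentially the same route as the paper: identify $\text{Morse}(\gamma_+,\gamma_-)$ via $x\mapsto x(0)$ with $W^u(\gamma_+)\cap W^s(\gamma_-)$, note that because $\gamma_+$ is the global maximum and $\gamma_-$ the global minimum of the Morse--Bott function both invariant manifolds are open in $Y$, hence so is their intersection, and conclude that the evaluation map is an open diffeomorphism whose linearization is therefore onto. The paper phrases the openness of the intersection in terms of transverse intersection of top-dimensional submanifolds, but this is the same observation you make directly.
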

\begin{proof}
 The space $\text{Morse}(\gamma_+, \gamma_-)$ is composed of  all smooth maps $x: \R \to Y$ such that $\dot x(s) = \nabla_J f(x(s))$, which limit to points in $\gamma_+$ and $\gamma_-$ respectively as $s \to \pm \infty$.
Consider the natural evaluation map
$$\text{ev}_{\text{Morse}}: \text{Morse}(\gamma_+, \gamma_-) \to Y$$
sending a flow line $x$ to the point $x(0)$. The image of $\text{Morse}(\gamma_+, \gamma_-) $ under this evaluation map is the intersection of the unstable manifold of $\gamma_+$ and the stable manifold of $\gamma_-$. This intersection is a smooth open submanifold of $Y$. This follows from the fact that the unstable manifold of $\gamma_+$ and the stable manifold of $\gamma_-$ are both smooth open submanifolds of maximal dimension in $Y$, so they must intersect transversely. 
By existence and uniqueness of ODEs, this implies that the space  $\text{Morse}(\gamma_+, \gamma_-)$ is a smooth manifold and  that  the evaluation map is a diffeomorphism onto an open subset of $Y$. 

The tangent space $T_x\text{Morse}(\gamma_+, \gamma_-)$ at any flow line $x$ is a $2n-1$-dimensional vector space of smooth vector fields $V$ along $x$ satisfying the linearized Morse flow line equation
$$\nabla_s V - \text{Hess}_Jf(V) = 0,$$
where $\nabla$ denotes the Levi--Civita connection of the metric $g_J$ and $\text{Hess}_J(f)$ is the Hessian of $f$ with respect to this metric. 
The linearization of evaluation map at $x$ is the operator
$$D_{\text{ev}}^{\text{Morse}}: T_x\text{Morse}(\gamma_+, \gamma_-) \to T_{x(0)} Y, \qquad V \mapsto V(0).$$

Since $\text{ev}_{\text{Morse}}$ is a diffeomorphism, its linearization is surjective. More explicitly, by existence of solutions to ODEs, for any point $x_0'$ in a neighborhood of $x(0)$, there is a flow line $x'$ such that $x'(0) = x_0'$. 
\end{proof}

\begin{proof}[Proof of Proposition~\ref{prop:ev_is_submersion}]
There is a natural parameterization of a set of cylinders near $u_\eta$ in $\mathcal{M}(\hat{Y}, J_f; \gamma_+,\gamma_-)$ with a neighborhood of $\eta$ in the space of Morse flow lines from $\gamma_+$ to $\gamma_-$. Lemma~\ref{lem:ev_Morse} states that the corresponding evaluation map, which sends a flow line to its image at the point $0 \in \R$, is a submersion. We will show that the linearization of this evaluation map at $\eta$ factors through $D_{\text{ev}}$. 

In Section~\ref{subsec:lifting_flow_lines} we explained that every Morse flow line $x$ can be lifted to a $J_f$ holomorphic curve given by \begin{equation*}
u_x(s,t):=\left(a(s), \varphi_{T\cdot t}(x(\varepsilon T s))\right),
\qquad \text{where} \qquad
    \dot a(s) = T\cdot e^{\varepsilon f(x(\varepsilon T s))}, \quad a(0)=0.
\end{equation*}  
Consider the lift map $L: \text{Morse}(\gamma_+, \gamma_-) \to \mathcal{M}(\gamma_+, \gamma_-; J_f)$ that sends a flow line $x$ to its lift $u_x$. 
The space $\mathcal{M}(\hat{Y}, J_f; \gamma_+, \gamma_-)$ is cut out transversely near $u_\eta$ by Proposition \ref{prop:transversality}. Therefore we can consider the linearization of $L$ at $\eta$,
$$T_\eta L: T_\eta\text{Morse}(\gamma_+, \gamma_-) \to \ker(D) \oplus \mathbb{R},$$
which sends a vector field $V \in T_\eta\text{Morse}(\gamma_+, \gamma_-)$ to the vector field
$$T_\eta {V} (s, t) :=  {\int_0^s}\varepsilon df(V(\varepsilon T s'))\cdot \dot{a}(s') {ds'}\cdot \partial_r + d\varphi_{T \cdot t} \cdot V(\varepsilon T s)$$
in $\ker(D)$. 
Denoting by $D\Pi$ the projection $T_{(0,z)}\hat{Y} \to T_z Y$, we claim that
\begin{equation} \label{eq:eval_map_factors} D_{\text{ev}}^{\text{Morse}} = D\Pi \circ D_{\text{ev}} \circ T_\eta L.
\end{equation}
Indeed, given  $V \in T_\eta\text{Morse}(\gamma_+, \gamma_-)$, the operator $D_{\text{ev}}^{\text{Morse}}$ sends it to $V(0)$. On the other hand, since the coefficient of $\partial_r$ in $T_\eta V$ vanishes when $s=0$, the projection  $D\Pi \circ D_{\text{ev}}$ sends 
$T_\eta V(0,0)$ to $V(0)$ as well. 

Now we use \eqref{eq:eval_map_factors} to conclude the proposition. By Lemma~\ref{lem:ev_Morse}, the operator $D_{\text{ev}}^{\text{Morse}}$ is surjective. 
This implies that the map $D\Pi \circ D_{\text{ev}}$ must be surjective, so by definition of $D\Pi$ we deduce 
$$\text{Span}(\partial_r) + \text{Im}(D_{\text{ev}}) = \text{Im}(D\Pi\circ D_{\text{ev}}) = T_{(0,z)}\hat{Y}.$$ 
Recalling (\ref{eq:D_ev}), we see that $\text{Span}(\partial_r) \subset \text{Im}(D_{\text{ev}})$. Therefore,  
$\text{Im}(D_{\text{ev}}) = \text{Span}(\partial_r) + \text{Im}(D_{\text{ev}}) = T_{(0,z)}\hat{Y}$, i.e., $D_{\text{ev}}$ is surjective.
\end{proof}

\subsection{Relation to abstract constraints} \label{subsec:Umap_moduli_space_vs_point_moduli_space} Thus far in \S \ref{sec:moduli_space}, we have counted points in moduli spaces of cylinders with a point constraint. To conclude this section, we relate this count to a point count in the moduli spaces involved in the $U$-maps constructed in \S \ref{subsec:constrained_cobordism_maps}. 

\vspace{3pt}

We continue to work with the objects and notation described in Setup \ref{set:intersection_theory}. We can also work with the weaker hypothesis discussed in Remark~\ref{rem:non-toric}.

\vspace{3pt}

Let $X:Y \to Y$ be the trivial cobordism $[-\delta,\delta]_s \times Y$ equipped with the Liouville form
\[
\lambda = e^{s} \alpha_f = e^{s + \epsilon f}\alpha
\]and let $E \subset X$ be an embedded, irrational ellipsoid that is the image of an embedding
\[\iota:E(b_1,\dots,b_n) \to X \qquad \qquad\text{such that}\qquad \iota(0) = 0 \times z \in [-\delta,\delta] \times Y\]
The boundary $\partial E$ has a shortest simple orbit $\kappa$. Let $J'$ be a compatible almost complex structure  on $X \setminus E$ and fix the shorthand notation
\[
\Umoduli := \mathcal{M}_{0,A}(X\setminus E,J';\gamma_{+},\Gamma_-), \qquad\text{where}\qquad \Gamma_- = (\gamma_-,\kappa).
\]
This is the moduli space of genus $0$ $J$-holomorphic curves in the cobordism $X \setminus E:Y \to Y \cup \partial E$ asymptotic to $\gamma_+$ at $+\infty$ and $\gamma_- \cup \kappa$ at $-\infty$.
 Moreover, let $J''$ be an almost complex structure on $E$ and fix the shorthand notation \[
\Emoduli := \on{ev}^{-1}(0) \subset \mathcal{M}_{0,A,1}(E,J'';\kappa,\emptyset).
\]
This is the moduli space of $J''$-holomorphic planes in $E$ that pass through $0 \in E$ and that are asymptotic at $+\infty$ to $\kappa$. The goal for the rest of the section is to prove the following result.

\begin{prop} \label{prop:moduli_correspondence} Assume that the period of $\gamma_{\pm}$ is equal to the minimal period of the Reeb flow of $\alpha$. There exists a choice of complex structures $J'$ on $X \setminus E$ and $J''$ on $E$ such that the moduli spaces
\[\Umoduli \qquad\text{and}\qquad \Emoduli\]
are transversely cut out, $0$-dimensional and equal to their compactifications (i.e. there are no buildings in their compactifications). Furthermore, they satisfy
\begin{equation} \label{eqn:moduli_correspondence} \#\moduli = \#\Umoduli \cdot \#\Emoduli \mod 2\end{equation}
\end{prop}

\begin{remark} In fact, the equality (\ref{eqn:moduli_correspondence}) should be true over $\Q$ for any choice of complex structures, as a virtual count of points in the framework of Pardon \cite{p2015,p2016}. This version of this result is proven by Siegel \cite{sie_hig_19}, using $J$-holomorphic cascades and the Morse-Bott formalism. 

\vspace{3pt}

In the spirit of the other results of this paper and for the sake of completeness, we provide a proof that uses only transversely cut out holomorphic curves and avoids Morse-Bott theory. 
\end{remark}

Our strategy to prove Proposition \ref{prop:moduli_correspondence} is quite standard. First, we choose a complex structure satisfying a number of regularity hypotheses for somewhere injective curves. Second, we show the desired compactness results for $\Umoduli$ and $\Emoduli$ for the chosen complex structures. Last, we use a parametric moduli space to construct a topological cobordism
\[
\wt{\mathcal{M}} \qquad\text{from}\quad \moduli \qquad\text{to}\quad \Umoduli \times \Emoduli.
\]
The existence of such a cobordism proves the desired result.

\vspace{3pt}

To begin the argument, we fix almost complex structures such that the relevant moduli spaces of somewhere injective curves are sufficiently regular. We are permitted to do this by standard generic transversality results (reviewed in \S \ref{subsubsec:generic_transversality}, see Proposition \ref{prop:generic_transversality}).

\begin{setup} \label{set:complex_structures_on_XE_and_E} For the rest of the section, fix compatible almost complex structures
\[J'\text{ on }X \setminus E \qquad \text{and}\qquad J''\text{ on }E\]
satisfying the following properties:
\begin{enumerate}
    \item the moduli spaces of finite energy, somewhere injective curves
    \[\mathcal{M}^{\on{i}}_{g,A,m}(X\setminus E,J';\Gamma_+,\Gamma_-) \qquad\text{and} \qquad \mathcal{M}^{\on{i}}_{h,b,l}(E,J'';\Xi_+,\Xi_-) \qquad\text{are regular},\]
\item the evaluation maps
    \[\on{ev}:\mathcal{M}^{\on{i}}_{h,b,1}(E,J'';\Xi_+,\Xi_-) \to \hat{X} \qquad \text{are transverse to $z$.}\]
\end{enumerate}
\end{setup}

Given $J'$ and $J''$ as in Setup \ref{set:complex_structures_on_XE_and_E}, there is a compactified moduli space that we denote by
\[\overline{\mathcal{M}}_{X \setminus E \sqcup E}\]
consisting of genus $0$ buildings $\bar{u}$ with the following levels.
\begin{itemize}
    \item A sequence of levels $u^+_1,\dots,u^+_k$ in $\hat{Y}$.
    \item A single level $u$ in the cobordism $\widehat{X\setminus E}$ of genus $0$ with one positive end.
    \item A sequence of levels $u^-_1,\dots,u^-_l$ in $\hat{Y}$.
    \item A sequence of levels $v_1,\dots,v_m$ in $\hat{\partial E}$.
    \item A single level $v$ in $\hat{E}$. 
\end{itemize}
where the positive ends and negative ends of adjacent levels match in the usual way. A depiction of a possible building $\bar
u$ is depicted in Figure~\ref{fig:building_in_X_minus_E}.
\begin{figure}
    \centering
    \includegraphics{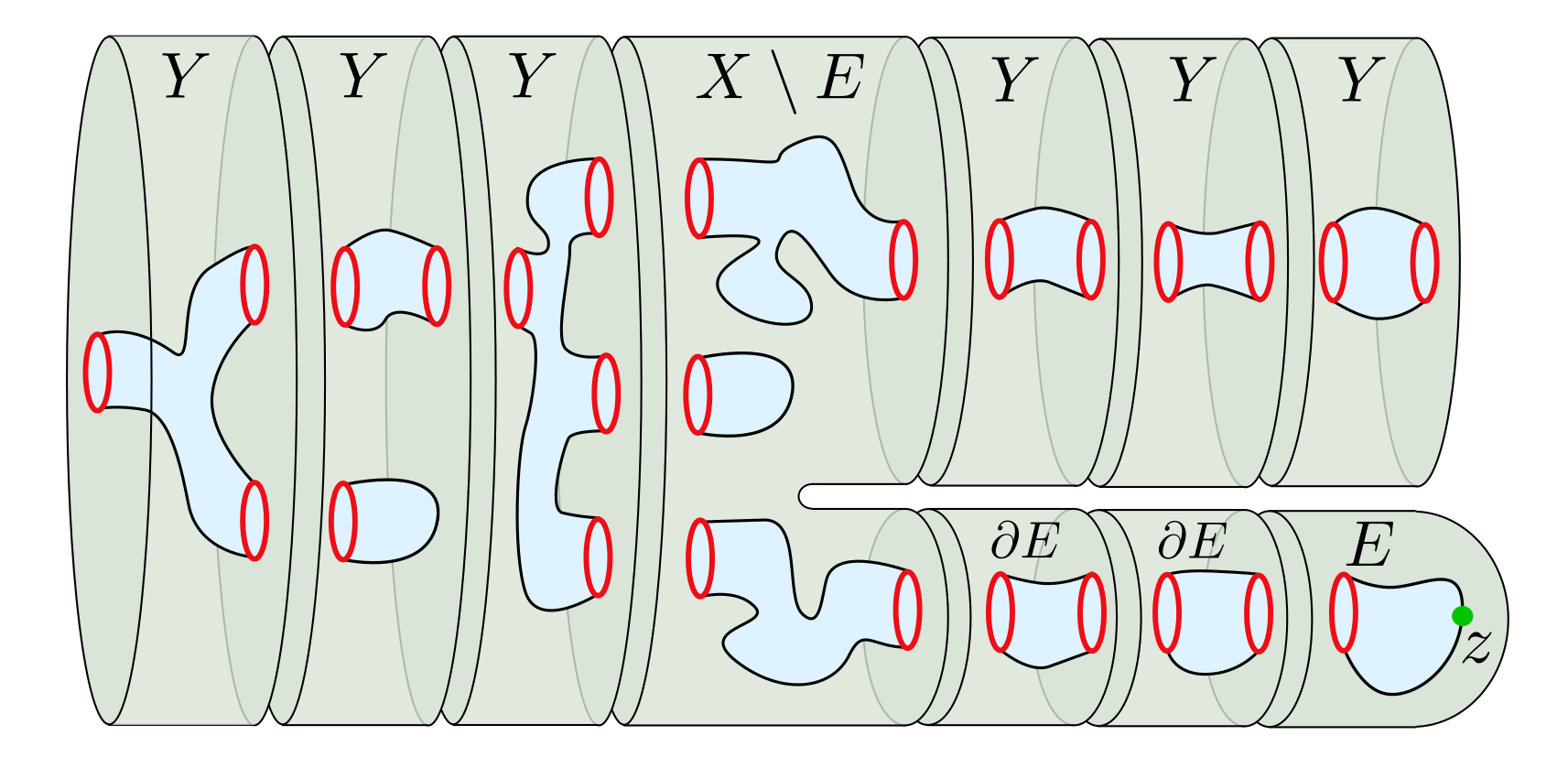}
    \caption{A possible building in the moduli space $\overline{\mathcal{M}}_{X \setminus E \sqcup E}$. We will show in Lemma \ref{lem:XE_curves_are_simple} that (generically) these buildings must be much simpler than this.}
    \label{fig:building_in_X_minus_E}
\end{figure}

\vspace{3pt}

We now show that (under our hypotheses) the only buildings appearing in $\overline{\mathcal{M}}_{X \setminus E \sqcup E}$ are those of the simplest possible form. We will require the following lemma.

\begin{lemma} \label{lem:XE_curves_are_simple} Consider $\varepsilon>0$ from Setup \ref{set:intersection_theory} and let $J'$ as in Setup \ref{set:complex_structures_on_XE_and_E}. Let $\zeta_+$ and $\zeta_-$ be orbits of $Y$ satisfying
\begin{equation} \label{eqn:XE_curve_1} \mathcal{A}(\gamma_+) \ge \mathcal{A}(\zeta_+) \ge \mathcal{A}(\zeta_-) \ge \mathcal{A}(\gamma_-)\end{equation}
Finally, let $u$ be a finite energy, connected $J'$-holomorphic map in $\widehat{X \setminus E}$ of genus 0 such that
\begin{equation}\label{eqn:XE_curve_2} u \to \zeta_+ \text{ at }+\infty \qquad\text{and}\qquad u \to \zeta_- \cup \Xi\text{ at }-\infty\end{equation}
where $\Xi$ is a non-empty orbit sequence in $Y \cup \partial E$. Then for sufficiently small $\varepsilon$, we must have
\[\zeta_+ = \gamma_+ \qquad \zeta_- = \gamma_- \quad\text{and}\quad \Xi = \kappa\]
\end{lemma}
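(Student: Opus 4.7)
\emph{Proof proposal.} My plan is to combine the action--energy inequality for $J'$-holomorphic curves with a careful accounting of the periods of closed Reeb orbits of the perturbed contact form $\alpha_f = e^{\varepsilon f}\alpha$ on $Y$ and of $\lambda|_{\partial E}$ on the deleted ellipsoid boundary. The first step is to apply Stokes' theorem to $u$: non-negativity of $\int u^\ast d\lambda$ gives
\[
\mathcal{A}(\Xi) \;\le\; \mathcal{A}(\zeta_+) - \mathcal{A}(\zeta_-) \;\le\; \mathcal{A}(\gamma_+) - \mathcal{A}(\gamma_-) \;=\; e^{\delta} e^{\varepsilon f(\gamma_+)} T - e^{-\delta} e^{\varepsilon f(\gamma_-)} T,
\]
where I have used the identity $\mathcal{A}(\gamma_\pm) = e^{\pm\delta} e^{\varepsilon f(\gamma_\pm)} T$ (coming from $R_f = e^{-\varepsilon f}R$ on critical circles of $f$ and the scalings $e^{\pm\delta}\alpha_f$ at the $Y$-ends) and the hypothesized chain of actions. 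The right-hand side is of order $\varepsilon + \delta$ and can be made arbitrarily small.

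Next I would use this bound to pin down $\Xi$. Closed Reeb orbits of $\alpha_f$ on $Y$ have a uniform positive lower bound on action (close to the minimum period of the periodic $\alpha$-flow, which is bounded away from zero), so for $\varepsilon$ small enough this lower bound exceeds the right-hand side above and no component of $\Xi$ can lie on $Y$. On $\partial E$ the actions lie in the discrete set generated by the axes $b_1 \le \ldots \le b_n$, with $\mathcal{A}(\kappa) = b_1$. Choosing $E$ so that $b_1$ lies just below the right-hand side above, while $2 b_1$ and $b_2$ exceed it, ensures that the only non-empty $\Xi$ compatible with the action bound is a single copy of $\kappa$, i.e.\ $\Xi = (\kappa)$.

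With $\Xi = (\kappa)$ established, the action inequality simplifies to $\mathcal{A}(\zeta_+) - \mathcal{A}(\zeta_-) \ge b_1 + O(\delta)$. For small $\varepsilon$ the hypothesized action window $[\mathcal{A}(\gamma_-),\mathcal{A}(\gamma_+)]$ forces $\zeta_\pm$ to lie in the single Morse--Bott family $S_T$ of $\alpha$-orbits of (iterated) period $T$, the unique family producing $\alpha_f$-actions in that window. Writing $\zeta_\pm = (S_T, p_\pm)$ for critical circles $p_\pm$ of $f|_{S_T}$, the inequality becomes $T\bigl(e^{\varepsilon f(p_+)} - e^{\varepsilon f(p_-)}\bigr) \ge b_1 + O(\delta)$. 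Since $\gamma_\pm$ are the unique global maximum and minimum of $f|_{S_T}$, choosing $b_1$ close enough to the maximum possible value of the left-hand side---specifically, exceeding $T\varepsilon$ times the gap between $f(\gamma_\pm)$ and the next critical value of $f|_{S_T}$---forces $p_\pm = \gamma_\pm$, and thus $\zeta_\pm = \gamma_\pm$.

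\emph{The main obstacle} is the coordinated choice of parameters: the perturbation strength $\varepsilon$, the cobordism half-width $\delta$, and the ellipsoid's shortest axis $b_1$ must all sit in compatible narrow windows, each an explicit $O(\varepsilon)$ quantity, whose mutual consistency follows from the freedom to take $E$ small and irrational and to choose $f$ with generic spacing of critical values. The delicate bookkeeping of these scales---rather than any hard analytic input---is what makes the argument, so the proof essentially reduces to an action estimate once one believes that $\gamma_\pm \in S_T$ are the unique extrema of $f|_{S_T}$.
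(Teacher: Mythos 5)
Your proof takes a genuinely different route from the paper's. Where the paper factors $u$ through its underlying simple curve $v$, computes Conley–Zehnder indices via Lemma~\ref{lem:CZ_via_RS_n_Morse} to bound $|\beta_+|-|\beta_-| \le 2n-2$, estimates $|\Xi'| \ge 2n-2$ from the ellipsoid CZ formula, and concludes $\operatorname{ind}(v)\le 0$ with equality forced by the regularity hypothesis on somewhere-injective curves, you replace the whole Fredholm/grading machinery with pure action bookkeeping plus a carefully tuned ellipsoid. That is an attractive simplification, but it has two real costs that you should be aware of.

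First, your argument imposes a constraint on the ellipsoid $E$ that is not present in the lemma or in Setup~\ref{set:complex_structures_on_XE_and_E}: the shortest axis $b_1$ must sit in an $O(\varepsilon)$-wide window above the second-largest action difference $T(e^{\varepsilon f(p_+)}-e^{\varepsilon f(p_-)})$ and below the largest, with $2b_1, b_2$ above it, and you additionally need $\delta \ll \varepsilon$ so the $e^{\pm\delta}$ corrections don't wash out the window. The lemma as stated is meant to hold for \emph{any} sufficiently small irrational ellipsoid (since the $U$-map is independent of $E$ by Lemma~\ref{lem:constrained_cob_maps_well_def}), and the paper's index argument delivers that; yours would require rethreading the order of choices in \S\ref{subsec:Umap_moduli_space_vs_point_moduli_space} so that $E$ is selected after $\varepsilon$ and $\delta$ with the window constraint built in. This is probably repairable, but it is an extra moving part the paper avoids.

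Second, and more seriously for the downstream use: the paper's conclusion that $\operatorname{ind}(v)=0$ \emph{with equality} forces $u=v$, i.e.\ the curve is somewhere injective. This is exactly what Lemma~\ref{lem:J_and_Jprime_moduli} needs in order to invoke the generic regularity of $J'$ from Setup~\ref{set:complex_structures_on_XE_and_E} (which only governs somewhere-injective curves). Your action argument establishes the asymptotics $\zeta_\pm=\gamma_\pm$, $\Xi=(\kappa)$ but says nothing about whether $u$ factors nontrivially through a simple curve, so the compactness-and-regularity argument that Lemma~\ref{lem:XE_curves_are_simple} feeds into would be left with a gap. A side benefit of your approach, on the other hand, is that it makes no use of the $H_2(Y;\Q)=0$ hypothesis needed for the relative SFT grading — so it could be more robust in settings where the grading is only defined modulo a torsion group — but as written it does not suffice to replace the paper's proof without also supplying the somewhere-injectivity step.
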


\begin{proof} First, factor $u$ as a branched cover $\psi$ and a somewhere injective map $v$,
\[u:\Sigma' \xrightarrow{\psi} \Sigma \xrightarrow{v} \hat{Y}.\]
Here $v$ has a positive and negative ends asymptotic to orbits
\[\beta_\pm \text{ covered by }\zeta_\pm \qquad\text{and}\qquad \Xi' \text{ covered by }\Xi.\]
It suffices to show that $\beta_\pm = \gamma_\pm$ and $\Xi' = \kappa$. Note that this will imply that $u = v$. 

\vspace{3pt}

First, note that $v$ is  genus $0$. Indeed, we can compactify $\psi$ to a branched cover $S^2 \to \bar{\Sigma}$ where $\bar{\Sigma}$ is a closed surface given by compactifying $\Sigma$ along the punctures. Any closed surface with a non-trivial branched cover from $S^2$ is genus $0$.

\vspace{3pt}

Next, note that $\gamma_+$ and $\gamma_-$ are orbits corresponding to perturbations of Morse-Bott orbits of period equal to the period $T$ of the Reeb flow of $\alpha_f$. Thus, using the action bounds (\ref{eqn:XE_curve_1}), we have
\[
\mathcal{A}(\beta_+) - \mathcal{A}(\beta_-) =\frac{\mathcal{A}(\zeta_+) - \mathcal{A}(\zeta_-)}{\on{deg}(\psi)}  \le \frac{\mathcal{A}(\gamma_+) - \mathcal{A}(\gamma_-)}{\on{deg}(\psi)} = O(\varepsilon).
\]
Thus, the orbit set $\Xi'$ cannot contain any orbits in $Y$ for small $\varepsilon$, otherwise  the action of the orbit set $\beta_- \cup \Xi'$ would be larger than that of $\beta_+$. Moreover, any pair of orbits of $\alpha_f$ satisfying
\[\mathcal{A}(\beta_+) - \mathcal{A}(\beta_-) \le O(\varepsilon) \qquad\text{and}\qquad \mathcal{A}(\beta_-) \le \mathcal{A}(\beta_+) \le \mathcal{A}(\gamma_+) \simeq T + O(\epsilon)\]
must be perturbations of Morse-Bott orbits in the same Morse-Bott family $S_L$ for sufficiently small $\varepsilon$. In particular, $\beta_\pm$ correspond to pairs $(S_L,p_\pm)$, where $L \le T$ (where $T$ is the period of the flow of $E$) and $p_\pm$ are critical circles of $f$. By Lemma~\ref{lem:CZ_via_RS_n_Morse}, the Conley-Zehnder indices $\CZ(\beta_\pm,\tau_\pm)$ in any trivialization $\tau_\pm$ of $\xi$ over $\beta_\pm$ is given by
\[
\CZ(\beta_\pm,\tau_\pm) = \on{RS}(\beta_\pm,\tau_\pm) - \dim{S_L}/2 + \on{ind}_{Morse}^{S_L}(f;p_\pm)\]
where $\on{RS}(\beta_\pm,\tau_\pm)$ is the Robbin-Salamon index of the unperturbed orbit and $\on{ind}_{Morse}^{S_L}(f;p_\pm)$ denotes the Morse-Bott index of the critical circle $p_\pm$ in the tangent directions to $S_L$. In particular, let $\Sigma$ be a cylinder connecting $\beta_+$ and $\beta_-$ formed by a $1$-parameter family of orbits of period $L$ connecting $\beta_+$ and $\beta_-$, and let $\tau$ be a trivialization of $\xi$ over $\Sigma$. Then
\[
\CZ(\beta_+,\tau) - \CZ(\beta_-,\tau) \le 2n - 2,
\]
Since $H_2(Y;\Q) = 0$ by hypothesis, there is a well-defined difference of SFT grading between $\beta_+$ and $\beta_-$, and in those terms the above inequality states that
\[|\beta_+| - |\beta_-| \le 2n - 2\]
Equality can occur if and only if $\on{dim}(S_L) = 2n - 2$, $p_+$ is the maximum circle of $f$ and $p_-$ is the minimum of $f$. Since $L \le T$ and $T$ is the period of the flow of $\alpha_f$, this  only if $S_L=S_T$, and $\beta_\pm = \gamma_\pm$.

\vspace{3pt}

Finally, note that the Fredholm index of the holomorphic curve $v$ is given in terms of relative SFT gradings by
\[
\on{ind}(v) = |\beta_+| - |\beta_-| - |\Xi'| \le 2n - 2 - |\Xi'|.
\]
Here $\tau$ is a trivialization of $\xi$ over $v$ . By the calculation in Example \ref{ex:irrational_ellipsoid_orbits}, we know that
\[|\Xi'| = n - 3 + \sum_{\gamma \in \Gamma} \CZ(\gamma) \ge 2n - 2,\]
since every orbit $\gamma$ on the boundary of an ellipsoid had $CZ(\gamma)\geq n+1$. Moreover, equality holds if and only if $\Xi'$ is the length $1$ sequence of the minimum action orbit $\kappa$. Overall we conclude that
\[\on{ind}(v) \le 0\]
with equality if and only if
\[u = v, \qquad \beta_+ = \gamma_+,\qquad \beta_- = \gamma_- \qquad \text{ and }\qquad \Gamma = \kappa.\]
By hypothesis, every somewhere injective curve is non-negative index. Therefore $\on{ind}(v)=0$ and thus it coincides with $u$, and their ends coincide with $\gamma_+$ and $\gamma_-\cup \kappa$ as required.
\end{proof}

\begin{remark} The reasoning in Lemma \ref{lem:XE_curves_are_simple} relies on Setup \ref{set:intersection_theory}(vi). In particular, we use the fact that $\gamma_+$ and $\gamma_-$ are in the \emph{minimal period} Morse-Bott family of orbits of dimension $2n-2$. 
\end{remark}

\begin{lemma} \label{lem:J_and_Jprime_moduli} Let $\varepsilon > 0$ in Setup \ref{set:intersection_theory} be sufficiently small and choose $J',J''$ as in Setup \ref{set:complex_structures_on_XE_and_E}. Then
\begin{enumerate}
    \item The moduli spaces $\Umoduli$ and $\Emoduli$ are compact, regular and $0$-dimensional.
    \item We have an equality of moduli spaces
    \[\overline{\mathcal{M}}_{X \setminus E \sqcup E} = \Umoduli \times \Emoduli\]
\end{enumerate}
\end{lemma}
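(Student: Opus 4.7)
The plan is to combine Lemma~\ref{lem:XE_curves_are_simple} with the generic regularity properties supplied by Setup~\ref{set:complex_structures_on_XE_and_E} and standard action/index estimates to show that each moduli space is regular and zero-dimensional, and to rule out all nontrivial breakings in the compactifications.

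First I would verify regularity and dimension. Any $u \in \Umoduli$ satisfies the hypotheses of Lemma~\ref{lem:XE_curves_are_simple} with $\zeta_\pm = \gamma_\pm$ and $\Xi = \kappa$; since Setup~\ref{set:complex_structures_on_XE_and_E}(1) forces the underlying somewhere injective curve of $u$ to have non-negative Fredholm index, the equality case of that lemma applies, so $u$ itself is somewhere injective of index zero. Regularity of $\Umoduli$ then follows immediately. For $\Emoduli$, any $J''$-holomorphic plane in $E$ asymptotic to the simple shortest orbit $\kappa$ is somewhere injective: otherwise it would be a branched cover of a plane asymptotic to a strictly shorter simple orbit, which does not exist by minimality of $\kappa$. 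Hence Setup~\ref{set:complex_structures_on_XE_and_E}(1)-(2) yields regularity of the unconstrained marked plane moduli space together with transversality of the evaluation map to $0 \in E$; the virtual dimension of $\mathcal{M}_{0,b,1}(E,J'';\kappa,\emptyset)$ is $(n-3) + \on{CZ}(\kappa) + 2 = 2n$, and the point constraint brings it down to zero.

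Next I would establish compactness of both $\overline{\Umoduli}$ and $\overline{\Emoduli}$. A nontrivial building in $\overline{\Umoduli}$ decomposes into a unique cobordism level $u_X$ in $\widehat{X\setminus E}$ with one positive puncture, together with symplectization levels in $\hat Y$ and $\widehat{\partial E}$. Action monotonicity confines the positive end of the distinguished component of $u_X$ to have action at most $\mathcal{A}(\gamma_+)$, while its negative ends must include $\gamma_-$ and $\kappa$; applying Lemma~\ref{lem:XE_curves_are_simple} to this component forces it to have exactly the ends $\gamma_+$ and $\gamma_- \cup \kappa$. The remaining levels are then symplectization cylinders between $\gamma_\pm$ and themselves or between $\kappa$ and itself. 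Each such nontrivial cylinder is somewhere injective, hence regular by Setup~\ref{set:complex_structures_on_XE_and_E}(1), and its Fredholm index after the translation quotient must be strictly positive by the nondegeneracy of the asymptotic operators (cf.\ Lemma~\ref{lem:asymptotic_split}). Since the total Fredholm index of the building must equal $\ind(\Umoduli) = 0$, no extra level can survive. Compactness of $\overline{\Emoduli}$ is parallel and simpler: any extra symplectization level in $\widehat{\partial E}$ produces a cylinder with ends of action at most $\mathcal{A}(\kappa)$, and since $\kappa$ is the unique such orbit, any such cylinder is trivial and thus excluded from buildings.

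The identification $\overline{\mathcal{M}}_{X\setminus E\sqcup E} = \Umoduli \times \Emoduli$ then follows by running the same argument on a joint building in $\overline{\mathcal{M}}_{X\setminus E\sqcup E}$: the reasoning above forces the unique component in $\widehat{X\setminus E}$ to lie in $\Umoduli$, the unique plane in $\hat E$ to lie in $\Emoduli$, and all other purported symplectization levels to be absent. The main obstacle will be the rigorous exclusion of the cylindrical symplectization levels between $\gamma_\pm$ and themselves: this uses in an essential way that $\gamma_\pm$ live in the \emph{maximal} Morse-Bott family of period equal to the full flow period, so that the Conley-Zehnder gap computed via Lemma~\ref{lem:CZ_via_RS_n_Morse} is saturated only by the single cobordism curve $u_X$ and not by any spurious cylindrical pieces.
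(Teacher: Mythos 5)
Your overall structure tracks the paper's proof closely: regularity/dimension from Setup~\ref{set:complex_structures_on_XE_and_E}, Lemma~\ref{lem:XE_curves_are_simple} to pin down the ends of the cobordism-level curve, and action arguments to exclude extra levels in $\overline{\Emoduli}$ and to identify the joint compactification. Your index computation for $\Emoduli$ and your somewhere-injectivity argument for planes asymptotic to the simple orbit $\kappa$ are both correct and match the paper's reasoning.

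There is, however, a flaw in the step where you exclude the extra symplectization levels in $\overline{\Umoduli}$. Once Lemma~\ref{lem:XE_curves_are_simple} forces the cobordism component to have positive end exactly $\gamma_+$ and negative ends exactly $\gamma_-\cup\kappa$, any putative additional symplectization level sits above or below it with the \emph{same} orbit at both ends ($\gamma_+$ to $\gamma_+$, $\gamma_-$ to $\gamma_-$, or $\kappa$ to $\kappa$). Your argument treats such pieces as nontrivial somewhere-injective cylinders whose "Fredholm index after the translation quotient must be strictly positive." That claim is false in general: a regular somewhere-injective symplectization curve has index $\ge 1$ before quotient, hence index $\ge 0$, not $>0$, after the $\R$-quotient. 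More to the point, the cylinders in question do not exist at all: a $J$-holomorphic cylinder in a symplectization with equal positive and negative asymptotics has zero $d\alpha$-area ($\mathcal{A}(u) = \mathcal{A}(\Gamma_+) - \mathcal{A}(\Gamma_-) = 0$), forcing $\pi_\xi\circ du\equiv 0$, so $u$ is a trivial cylinder—and trivial cylinders are disallowed as levels. So the index argument is both incorrect as stated and unnecessary; replacing it with this elementary action/area observation (which is what the paper uses, and which you already correctly invoke for the $\kappa$-to-$\kappa$ case in $\overline{\Emoduli}$) closes the gap without any appeal to transversality or to Lemma~\ref{lem:asymptotic_split}.
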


\begin{proof} The regularity and dimension of both moduli spaces follows from our choice of $J'$ and $J''$. We now argue for the compactness of $\Umoduli$ and $\Emoduli$, and the equality (ii).

\vspace{3pt}

First, to show that $\Umoduli$ is compact, consider an arbitrary building
\[
\bar{u} \in \overline{\mathcal{M}}(\gamma_+,\gamma_- \cup \kappa;J') =\overline{\mathcal{M}}_{0,A}(X \setminus E,J';\gamma_+,\gamma_-)
\]
Denote by $u$ the level of $\bar u$ in $X \setminus E$. Then, $u$ is genus $0$ and  has one positive end asymptotic to an orbit $\zeta_+$ with $\mathcal{A}(\zeta_+) \le \mathcal{A}(\gamma_+)$, a negative end $\zeta_-$ satisfying $\mathcal{A}(\zeta_-) \ge \mathcal{A}(\gamma_-)$ and some other negative ends $\Xi$. Thus by Lemma \ref{lem:XE_curves_are_simple}, we have
\[\zeta_+ = \gamma_+,\qquad \zeta_- = \gamma_- ,\qquad\text{and}\qquad \Xi = \kappa.\]
This implies that $\bar{u} = u$ and so $\Umoduli$ is compact.

\vspace{3pt}

Next, to show that $\Emoduli$ is compact, let $\bar{v}$ be a building in the compactification $\overline{\mathcal{M}}(\kappa;z,J'')$, then there is a bottom level
\[v:\Sigma \to \hat{E} \qquad\text{with}\qquad z \in \on{Im}(v)\]
Since the whole building is genus $0$, so is $v$, and $v$ has no negative ends since $E$ is a cobordism to the empty set. Moreover, the action of the positive ends $\Xi_+$ must be less than $\mathcal{A}(\kappa)$. Since $\kappa$ has the minimum action over all orbit sets, we must have $\Xi_+ = \kappa$, so
\[v \in \Emoduli \qquad\text{and}\qquad v = \bar{v}\]

\vspace{3pt}

Finally, to show (ii), let $\bar{u}$ be a building in the compactified moduli space $\overline{\mathcal{M}}_{X \setminus E \sqcup E}$. Then the level $u$ in $X \setminus E$ satisfying the hypotheses of Lemma \ref{lem:XE_curves_are_simple}, and so we have
\[u \in \Umoduli\]
The level $v$ of $\bar{u}$ in $E$ must thus have positive ends of action bounded by the actions of the negative end $\kappa$ of $u$. This is only possible if
\[v \in \Emoduli\]
The only remaining levels in $\bar{u}$ must be trivial for action reasons, so $\bar{u}$ consists only of the levels $u$ and $v$. This proves the result. \end{proof}

Given a choice of $J'$ and $J''$, there is a family $J_{[1,\infty)}$ of almost complex structures on $X$,
\[J' \#_R J'' \qquad\text{for} \qquad R \in [1,\infty).\]
This family is acquired by identifying $X$ with the space
\[X \simeq X \setminus E \cup_{\partial E} [0,R] \times \partial E \cup_{\partial E} E\]
and setting $J' \#_R J''$ be equal to $J'$ on $X \setminus E$, $J''$ on $E$ and letting $J' \#_R J''$ be translation invariant on $[0,R] \times \partial E$ in the $[0,R]$-direction. This family has associated parametric moduli space of genus $0$ curves from $\gamma_+$ to $\gamma_-$, i.e. the space
\[
\mathcal{M}_{0,A,1}(X,J_{[1,\infty)};\gamma_+,\gamma_-).
\]
This map has an evaluation map $\on{ev}$ to $\hat{X}$, and we now need to consider the parametric moduli space of cylinders passing through $z$. We adopt the shorthand notation
\[
\widetilde{\mathcal{M}}_{[1,\infty)} := \on{ev}^{-1}(z) \subset \mathcal{M}_{0,A,1}(X,J_{[1,\infty)};\gamma_+,\gamma_-).
\]

See \S \ref{subsubsec:parametric_moduli_spaces} for a discussion of parametric moduli spaces. By the appropriate version of SFT compactness (cf. \cite[Prop. 10.6]{sftCompactness}), given a sequence of elements
\[(R_i,u_i) \in \widetilde{\mathcal{M}}_{[1,\infty)} \qquad\text{with}\qquad R_i \to \infty\]
there is a limiting building
\[
\bar{u} \in \overline{\mathcal{M}}_{X\setminus E \sqcup E} \overset{Lemma~\ref{lem:J_and_Jprime_moduli}}{=} \Umoduli \times \Emoduli
\]
that $u_i$ converges to with respect to the BEHWZ topology in \cite{sftCompactness}. Since $\Umoduli$ and $\Emoduli$ consist entirely of regular curves (by Setup \ref{set:complex_structures_on_XE_and_E}), there is a gluing map
\[
\on{glue}:\Umoduli \times \Emoduli \times (R,\infty) \to \widetilde{\mathcal{M}}_{[1,\infty)}
\]
that is a homeomorphism for sufficiently large $R$. The construction of this gluing map is carried out, for example, in \cite[\S 5]{p2015}\footnote{Note that the gluing result in \cite{p2015} is much more general than the one required here. In particular, we only require the gluing map for a specific stratum of the moduli space denoted $\mathcal{M}_{\on{IV},\on{reg}}$ in  \cite{p2015}.}.

 \begin{lemma} \label{lem:Jparametric_moduli} There exists a $[0,\infty)$-family of compatible almost complex structures on $X$, denoted
 \[J_{[0,\infty)} = \{J_R\}_{R \in [0,\infty)}\]
 that has the following properties.
 \begin{enumerate}
     \item $J_R = J_f$ for $R$ near $0$ and $J_R = J' \#_R J$ for $R$ sufficiently large.
     \item The parametric moduli space
     \[\widetilde{\mathcal{M}}_{[0,\infty)} := \on{ev}^{-1}(z) \subset \mathcal{M}_{0,A,1}(X,J_{[0,\infty)};\gamma_+,\gamma_-)\]
     is a $1$-manifold with boundary $\moduli$.
     \item The natural projection map to the parameter space
     \[
     \pi:\widetilde{\mathcal{M}}_{[0,\infty)} \to [0,\infty)
     \]
     is proper.
 \end{enumerate}
 \end{lemma}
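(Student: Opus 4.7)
The plan is to build the family $J_{[0,\infty)}$ by interpolating between $J_f$ (the almost complex structure used throughout Setup~\ref{set:intersection_theory}) and the neck-stretching family $J'\#_R J''$ produced in Setup~\ref{set:complex_structures_on_XE_and_E}, and then to verify (2) and (3) by combining generic transversality (Proposition~\ref{prop:generic_transversality}) for somewhere injective curves with action and index estimates analogous to those already established in Lemma~\ref{lem:XE_curves_are_simple}.

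First I would fix $R_0$ large enough that Lemma~\ref{lem:J_and_Jprime_moduli} applies to $J'\#_R J''$ for all $R \ge R_0$ and that the gluing map $\on{glue}\colon \Umoduli\times\Emoduli\times(R_0,\infty)\to \widetilde{\mathcal{M}}_{(R_0,\infty)}$ is a homeomorphism onto its image. Set $J_R=J_f$ for $R$ in a neighborhood of $0$, set $J_R=J'\#_R J''$ for $R\ge R_0$, and choose a smooth interpolation on the intermediate interval. A relative version of Proposition~\ref{prop:generic_transversality}, with the family fixed near $0$ and near $\infty$ but generic in between, ensures that every somewhere injective element of $\mathcal{M}_{0,A,1}(X,J_{[0,\infty)};\gamma_+,\gamma_-)$ is parametrically regular, with evaluation map transverse to $\{(0,z)\}\subset \hat X$. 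The parametric virtual dimension at such a point equals $\on{ind}(A,0)+1+2-2n=1$, so $\widetilde{\mathcal{M}}_{[0,\infty)}$ is a smooth $1$-manifold near every somewhere injective point. At $R=0$ the fiber is precisely $\moduli=\{u_\eta\}$ by Propositions~\ref{prop:uniqueness}, \ref{prop:transversality} and~\ref{prop:evaluation_map_surjective}, and a generic choice of interpolation keeps $R=0$ as the unique boundary of the $1$-manifold.

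To handle multiply covered curves, which are not automatically regular, I would adapt the argument of Lemma~\ref{lem:XE_curves_are_simple}: if $u = v\circ\psi$ with $v$ somewhere injective and $\deg\psi>1$, then the underlying orbits $\beta_\pm$ of $v$ cover $\gamma_\pm$ and have $\alpha_f$-action difference of order $O(\epsilon)$, so for $\epsilon$ small they sit in a single Morse--Bott family of orbits of $\alpha$. The relative index formula in Lemma~\ref{lem:CZ_via_RS_n_Morse}, combined with the fact that $\gamma_\pm$ are the unique maximum and minimum of the Morse function on the top-dimensional Morse--Bott family $S_T$, forces $\beta_\pm=\gamma_\pm$ and $\psi=\id$. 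For properness in (3), a sequence $(R_i,u_i)\in\widetilde{\mathcal{M}}_{[0,\infty)}$ with $R_i\to R_\infty<\infty$ subconverges by SFT compactness \cite{sftCompactness} to a holomorphic building $\bar u$ with total asymptotics $\gamma_\pm$ passing through $(0,z)$, possibly broken into a main level at parameter $R_\infty$ together with symplectization levels in $\hat Y$ and, if $R_\infty\ge R_0$, along $\partial E$. The same action and relative index considerations rule out all nontrivial symplectization levels: any such cylinder would connect two orbits in a single Morse--Bott family with an index jump exceeding what Lemma~\ref{lem:CZ_via_RS_n_Morse} allows. Hence $\bar u$ is a single curve at parameter $R_\infty$, yielding the desired limit point.

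The main obstacle will be the transversality of multiply covered curves, since generic perturbation of the family alone cannot regularize them. The essential input is the rigidity supplied by the small-action bound in $\epsilon$ together with the Morse--Bott index calculation of Lemma~\ref{lem:CZ_via_RS_n_Morse}, which collapses the branched-cover possibilities down to the single cylinder $u_\eta$ and reduces all remaining analysis to the somewhere injective case where generic transversality applies.
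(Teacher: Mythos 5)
Your proposal follows the same overall strategy as the paper's proof---interpolate from $J_f$ to the neck-stretching family, invoke generic (parametric) transversality for somewhere injective curves, use the $O(\varepsilon)$-action bound and the Morse--Bott index formula of Lemma~\ref{lem:CZ_via_RS_n_Morse} to constrain multiply covered curves, and use SFT compactness for properness. However, there is a genuine gap in the step that handles multiply covered curves.

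You claim that the relative index formula together with the fact that $\gamma_\pm$ are the unique max and min of $f$ on the top Morse--Bott family ``forces $\beta_\pm=\gamma_\pm$ and $\psi=\on{id}$.'' That is not what the index argument gives. As in Lemma~\ref{lem:XE_curves_are_simple}, one gets $\on{ind}(v)\le 2n-2$ with equality \emph{if and only if} $v=u$ and $\beta_\pm=\gamma_\pm$; nothing yet excludes somewhere injective cylinders $v$ with $\on{ind}(v)<2n-2$, multiply covered by $u$, whose image happens to contain $(0,z)$. The paper closes this hole with an explicit Sard--Smale step: if $\on{ind}(v)<2n-2$, the parametric moduli space $\mathcal{M}^{\on{i}}_{0,A,1}(X,J_{[a,b]};\beta_+,\beta_-)$ has dimension strictly less than $2n=\dim\hat{X}$, hence the image of its evaluation map is of first category, and after pulling back $J_{[a,b]}$ by a small family of symplectomorphisms supported near $z$ one may arrange that $z$ is simply not in that image for any such $\beta_\pm$. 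Only then is $\on{ind}(v)=2n-2$ forced, and with it the equality conditions. Relatedly, the transversality you invoke is stated only for the asymptotic pair $(\gamma_+,\gamma_-)$; to run the exclusion argument you also need the evaluation maps to be transverse/generic for the moduli spaces of somewhere injective curves with the various asymptotics $\beta_\pm$ covered by $\gamma_\pm$, which Proposition~\ref{prop:generic_transversality} provides for a comeager set but you should say so. Once this dimension-count/Sard--Smale step is added, the proposal agrees with the paper's proof.
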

 
 \begin{proof} Let $\{J_R\}_{R \in [0,\infty)}$ be a $1$-parameter family of compatible almost complex structures on $X$ satisfying (i). We use $\mathcal{M}_R$ to fiber of $\pi$ at $R$, i.e.
 \[
 \mathcal{M}_R := \{u\; : \; (R,u) \in \widetilde{\mathcal{M}}_{[0,\infty)}\}
 \]
 We also let $\mathcal{M}_R^{\on{i}}$ denote the subset of somewhere injective curves in $\mathcal{M}_R$ and $\overline{\mathcal{M}}_R$ denote the BEHWZ compactification. All of the holomorphic curves in the moduli spaces
 \[
\moduli \qquad\text{and}\qquad \mathcal{M}_R \text{ for large }R >  0 
 \]
 consist entirely of regular curves (and no buildings) by construction of $J'$ and $J''$, and Proposition \ref{prop:moduli_space_count}. As regularity is an open condition, we automatically know that $\mathcal{M}_R$ is compact and regular (in the usual sense, not parametrically) outside of $[a,b] \subset [0,\infty)$ for some $0<a<b<\infty$. Using Proposition \ref{prop:generic_transversality}, we perturb $\{J_R\}_{R \in [0,\infty)}$ on $(a-\delta,b+\delta)$ for small $\delta$ so that the moduli spaces
 \[\mathcal{M}^i_{g,A,m}(X,J_{[a,b]};\Gamma_+,\Gamma_-)\]
 are parametrically regular and the natural evaluation maps
 \[
 \on{ev}:\mathcal{M}^i_{g,A,1}(X,J_{[a,b]};\Gamma_+,\Gamma_-) \to \hat{X}
 \]
 are transverse to $z$. We now claim that under these hypotheses, we have
 \begin{equation} \label{eqn:parametric_moduli_space_claim}
 \mathcal{M}^{\on{i}}_R = \mathcal{M}_R = \overline{\mathcal{M}}_R \qquad\text{for any }R \in [a,b].
 \end{equation}
This implies that the space $\widetilde{\mathcal{M}}_{[0,\infty)}$ is a compact $1$-manifold, implying (ii) and (iii).

\vspace{3pt}

To prove (\ref{eqn:parametric_moduli_space_claim}), we consider a $J_R$-holomorphic building
\[\bar{u} \in \overline{\mathcal{M}}_R.\]
Let $u$ be the level of $\bar{u}$ that contains $z$ and let $v$ be the underlying simple curve. By identical reasoning to Lemma \ref{lem:XE_curves_are_simple}, $v$ must be a holomorphic cylinder in $\hat{X}$ of area $O(\varepsilon)$. For small $\varepsilon$, $v$ must therefore must be asymptotic to closed orbits $\beta_\pm$ that are Morse perturbations of orbits in the same Morse-Bott family $S_L$. In particular, the Fredholm index of $v$ satisfies
\[
\on{ind}(v) \le |\beta_+| - |\beta_-| \le 2n - 2,
\]
with equality if and only if
\begin{equation} \label{eqn:parametric_moduli_space_claim_2} v = u ,\qquad \beta_+ = \gamma_+ \quad\text{and}\quad \beta_- = \gamma_-.
\end{equation} 
Here $|\beta_+| - |\beta_-|$ again denotes the relative grading of $\beta_+$ and $\beta_-$.

\vspace{3pt}

Thus consider the case where $\on{ind}(v) < 2n - 2$. Then the parametric moduli space containing $v$ with a marked point added,
\[
\mathcal{M}^{\on{i}}_{0,A,1}(X,J_{[a,b]};\beta_+,\beta_-),
\]
is a manifold of dimension less than $2n$. By the Sard-Smale theorem, the image of the (smooth) evaluation map
\[
\on{ev}:\mathcal{M}^{\on{i}}_{0,A,1}(X,J_{[a,b]};\beta_+,\beta_-) \to \hat{X}
\]
is of the first category (i.e. a countable union of nowhere dense sets) and has dense complement. In particular, there are points arbitrarily close to $z$ that are not in the image of $\on{ev}$. Therefore, after pulling back $J_{[a,b]}$ by a $[a,b]$-family of small symplectomorphisms supported near $z$, we can  assume that
\[
z \not\in \on{ev}\big(\mathcal{M}^{\on{i}}_{0,A,1}(X,J_{[a,b]};\beta_+,\beta_-)\big)
\]
for any $\beta_+,\beta_-$ satisfying the hypotheses above. In particular, after modifying $J_{[a,b]}$, we may assume that $v$ does not contain $z$ unless $\on{ind}(v) = 2n - 2$, which implies (\ref{eqn:parametric_moduli_space_claim_2}). This concludes the proof. \end{proof}
 
We can now conclude this section with a proof of Proposition \ref{prop:moduli_correspondence}.

\begin{proof} The compactness, transversality and dimension of the moduli spaces
\[\Umoduli \qquad\text{and}\qquad \Emoduli\]
are demonstrated in Lemma \ref{lem:J_and_Jprime_moduli}. To prove (\ref{eqn:moduli_correspondence}), we simply note that for a choice of $J_{[0,\infty)}$ as in Lemma \ref{lem:Jparametric_moduli}, the parametric moduli space
\[
\widetilde{\mathcal{M}} := \pi^{-1}([0,R]) \subset \widetilde{M}_{[0,\infty)y}
\]
is a cobordism from $\moduli$ to $\Umoduli \times \Emoduli$ as long as $a > 0$ is close to $0$ and $b$ is sufficiently large. 
\end{proof}

\section{Vanishing spectral gap for ellipsoids} \label{sec:proof_of_main_thm}

Our main goal for this section is to prove  Theorem~\ref{thm:spectral_gap_intro}, which states that the strong closing property holds for all ellipsoids. We will show that the spectral gap vanishes for ellipsoids and therefore, the strong closing property will follow from Theorem~\ref{thm:gap_0_closing_property}.

\vspace{3pt}

Following the strategy discussed in \S \ref{subsec:main_results}, we proceed in two steps. First, in Section~\ref{subsec:rational_ellipsoids} we consider ellipsoids 
$
E(m)=E(m_1,\dots, m_n)
$
where $m_j\in \N$. The Reeb flow is periodic on the boundary $\partial E(m)$, so we may apply the results of Section~\ref{sec:moduli_space}. Specifically, we use Proposition~\ref{prop:moduli_space_count} and Proposition~\ref{prop:moduli_correspondence} to show that certain coefficients of the map $U_{P_0}$, defined in Example~\ref{exa:tangency_constraints}, are non-zero and deduce that the spectral gap of a certain class vanishes for such ellipsoids. Second, in Section \ref{subsec:irrational_ellipsoids}, we use the vanishing of spectral gaps for integer  ellipsoids to show that the total spectral gap vanishes for irrational ellipsoids as well.

\begin{notation}
We fix the following notation for the rest of the section. Given an $n$-tuple of positive numbers ${a}:=(a_1,\dots, a_n)$, consider the ellipsoid $E({a})$. 
\begin{itemize}
    \item We denote by $\{M_k\}_{k\in \N}$, or $\{M_k^a\}_{k\in \N}$, the sequence obtained by reordering the union $a_1\cdot \N\cup\cdots\cup a_n\cdot \N$ to be a non-decreasing sequence with repetitions.
    \vspace{3pt}
    \item We let $\gamma_k$, or $\gamma_k^a$ denote the $k$-th periodic Reeb orbit ordered by action. Here we introduce Morse-Bott perturbations if the Reeb flow on $\partial E({a})$ is degenerate. 
    \vspace{3pt}
    \item We let $x_k$ denote the generator of $CH(\partial E(a))$ corresponding to $\gamma_k$ (see Example \ref{ex:irrational_ellipsoid_orbits}). Note that
    \[|x_k| = 2n - 2 + 2k \qquad\text{and}\qquad \mathcal{A}(\gamma_k) = M^a_k\]
    \end{itemize} 
\end{notation}

\subsection{Rational Ellipsoids} \label{subsec:rational_ellipsoids}

Consider an integer ellipsoid  $E(m):=E(m_1,\dots,m_n)$ where $m_j\in\N$ for $j=1,\dots, n$. Denote the least common multiple of $m_1, \dots, m_n$ by
\[T := \on{lcm}(m_1,\dots,m_n)\]
The Reeb flow on $\partial E({m})$ is periodic of period $T$.
Since $m_j$ divides $T$ for all $j=1,\dots, n$, the common multiple $T$ appears in the sequence $\{M_k^m\}_k$ exactly $n$ times. We denote the first index in which $T$ appears in $M^m_k$ by 
\[
k_T:=\min{k\in \N: M_k^m=T}.
\]
The main goal of this subsection is to prove the following result.

\begin{thm}\label{thm:U_integer_E}
Let $E({m})$ be an integer ellipsoid as above, and let $T:=\lcm(m_1,\dots,m_n)$. Then
\[
\left< U_{P_0} x_{k_T + n - 1},x_{k_T}\right>\neq 0.
\]
Here $P_0$ is the tangency abstract constraint from Example \ref{exa:tangency_constraints}.
\end{thm}

\noindent As mentioned above, the least common multiple $T$ of $m_1,\dots,m_n$ occurs with multiplicity $n$ in the sequence $M_k^m$. Therefore, the action  $M_{k_T + n-1}$ of $x_{k_T + n - 1}$ and the action $M_{k_T}$ of $x_{k_T}$ are both equal to $T$. This implies the following corollary of Theorem \ref{thm:U_integer_E}.

\begin{cor} \label{cor:periodic_spectral_gap} (Theorem \ref{thm:periodic_spectral_gap_intro}) Let $E(m)$ be an integer ellipsoid. Then
\[
\mathfrak{s}_{U\sigma}(\partial E,\lambda|_{\partial E}) = \mathfrak{s}_{\sigma}(\partial E,\lambda|_{\partial E}) = T \qquad\text{where}\qquad U = U_{P_0}\text{ and }\sigma = x_{k_T + n - 1}.
\]
\end{cor}

\begin{remark}[Generalizations]\label{rmk:from_integer_E_to_rational} Theorem \ref{thm:U_integer_E} and Corollary \ref{cor:periodic_spectral_gap} can be generalized in two directions.
\begin{enumerate}
    \item If $E(a)$ is a rational ellipsoid, i.e. an ellipsoid where
\[a_i/a_j\in\Q \text{ for any }i,j, \qquad\text{or equivalently}\qquad c \cdot a = m \in \Z^n \text{ for some }c > 0.\]
In this case, the contact form on $\partial E(a)$ is simply a scaling of the one on $\partial E(m)$. Moreover, there is an equivalence of filtered groups
\[CH^L(\partial E(a)) = CH^{c \cdot L}(\partial E(c \cdot a))\]
that commutes with all (constrained) cobordism maps. Thus Theorem \ref{thm:U_integer_E} and Corollary \ref{cor:periodic_spectral_gap} generalize immediately to this case.

\vspace{3pt}

\item Any positive integer multiple $q$ of the period $T$ similarly corresponds to $n$ classes
\[x_{k_q},\dots,x_{k_q + n - 1} \in CH(\partial E(m))\]
of the same action. Our proof of Theorem \ref{thm:U_integer_E} almost generalizes to show that
\[
\left< U_{P_0} x_{k_T + n - 1},x_{k_T}\right>\neq 0.
\]
However, the proof uses Proposition \ref{prop:moduli_correspondence}, which we only show for the case of $q = T$. We believe that the discussion in \cite[\S 5.5]{sie_hig_19}, rigorously carried out using the VFC methods of \cite{p2015}, should suffice to generalize the moduli space correspondence in Proposition \ref{prop:moduli_correspondence}, but we do not carry out this analysis here.
\end{enumerate}
\end{remark}

\begin{proof}[Proof of Theorem~\ref{thm:U_integer_E}]
The result follows from combining Proposition~\ref{prop:moduli_space_count} and Proposition~\ref{prop:moduli_correspondence}, which together assert that the moduli space count corresponding to the coefficient of $x_{k_T}$ in $U_{P_0} x_{k_T+n-1}$ is non-zero. 

\vspace{3pt}

\emph{Step 1 - Setup.} We start by showing that our setting satisfies the assumptions of Propositions~\ref{prop:moduli_space_count} and \ref{prop:moduli_correspondence}, which are stated in Setup~\ref{set:intersection_theory}. Let $(Y,\alpha)$ be the boundary of the integer ellipsoid $E:=E(m)$ with its standard contact form. The Reeb flow associated with this contact form is given by
\[(z_1,\dots, z_n)\mapsto (e^{2\pi i t/m_1}z_1,\dots, e^{2\pi i t/m_n}z_n)\]
and has period $T$. Consider the function $f:Y\rightarrow \R$ given by the restriction to $\partial E$ of the map 
\begin{equation}\label{eq:f_on_Cn}
   \tilde f :\C^n\rightarrow\C^n,\quad  z=(z_1,\dots, z_n)\mapsto \frac{\sum_{j=1}^n (j-1)|z_j|^2} {\sum_{j=1}^n |z_j|^2}.
\end{equation}
The function $f$ is invariant under the $\mathbb{T}^n$-action on $\C^n$ and thus is invariant under the Reeb flow. Its critical circles are precisely the intersections of $\partial E$ with the complex axes
$$
\gamma_i:=\partial E\cap \C_i = \left\{(0,\dots, \sqrt{\frac{m_i}{\pi}}e^{2\pi i t/m_i},0, \dots, 0): t\in [0,m_i]\right\}.
$$
The function $f$ is Morse--Bott. Indeed, the restriction of its Hessian to the normal to $\gamma_i$ is non-degenerate, can be seen as follows. Along $\gamma_i$ the Hessian of the map $\tilde f$ is given by
\begin{equation}\label{eq:Hessian_diag_ellipsoid}
    \on{Hess}(\tilde f)|_{\gamma_i} = \frac{\pi}{m_i}\cdot \on{diag}(1-i, 1-i, 2-i, 2-i,\dots, n-i,n-i),
\end{equation}
which is degenerate only on the $i$-th $\C$ factor. The tangent space to $\partial E$ along $\gamma_i$ is given by 
\[
T\partial E|_{\gamma_i} = \C^{i-1}\oplus\left< R|_{\gamma_i} \right>\oplus \C^{n-i}, 
\]
where $R$ is the Reeb vector field. Therefore, the Hessian of $f$ along $\gamma_i$ is degenerate only in the Reeb direction which lies in the tangent space of $\gamma_i$, so $f$ is Morse-Bott with index
\[\ind(f, \gamma_i)=2(i-1) \qquad\text{at the critical circle }\gamma_i.\]
The orbits $\gamma_+$ and $\gamma_-$ are the period $T$ iterate of $\gamma_1$ and $\gamma_n$, i.e.
\[\gamma_+ := \gamma_1^{T/m_1} \qquad \text{and}\qquad \gamma_- := \gamma_n^{T/m_n}. \]
The contact flag is given by the following sequence  $Y_3\subset \cdots \subset Y_{2n-1}=Y$ of nested ellipsoids:
\[
Y_{2j-1}:=\partial E \cap \left(\C^{j-1}\times \{0\}^{n-j}\times \C \right), \qquad j=2,\dots, n.
\]
Since $Y_{2j-1} \simeq \partial E(a_1,\dots,a_{j-1},a_n)\subset \C^j$, we have $H_2(Y_{2j-1})=0$. Moreover, $Y_{2j-1}$ is invariant under the periodic Reeb flow and the gradient flow of $f$. The intersection $\xi_{2j-1}:=\xi\cap TY_{2j-1}$ is a $J$-invariant contact structure on $Y_{2j-1}$. The formula (\ref{eq:Hessian_diag_ellipsoid}) implies that the restriction of $f$ to $Y_{2j-1}$ is also Morse--Bott. In addition, for $i$ and $j$ such that $\gamma_i\subset Y_{2j-1}$ (that is, either $i=n$ or $i<j$), the restriction of $\on{Hess}(f)|_{\gamma_i}$ to the symplectic orthogonal of $\xi_{2j-1}$ in $\xi_{2j+1}$ is equal to $\on{diag}(j-i, j-i)$, which is positive definite unless $i=n$. 

\vspace{3pt}

Finally, take $z$ be any point in $Y_3\setminus\{\gamma_-\cup\gamma_+\}$. Since $\gamma_+$ and $\gamma_-$ are the only critical circles in $Y_3$, $z$ lies in the intersection of the stable manifold of $\gamma_-$ and the unstable manifold of $\gamma_+$ which concludes the assumptions stated in Setup~\ref{set:intersection_theory}. 

\vspace{3pt}

\emph{Step 2 - Applying \S \ref{sec:moduli_space}.} Now we apply the results of \S \ref{sec:moduli_space} to show that the coefficient $\left<U_{P_0}x_{k_T+n-1}, x_{k_T}\right>$ does not vanish. First, notice that the contact form $\alpha_f$ on $Y$ has generators only in even gradings. Indeed, this follows from Lemma~\ref{lem:RS_parity} together with the fact that the Morse indices of $f$ on any Morse--Bott family are even. This is due to the fact that the Morse--Bott families are products of some of the $\C$ factor, and the Hessian of $f$ is a scalar matrix on each $\C$ factor. We conclude that the differential vanishes and $CH(S^{2n-1},\xi_{std}) \cong A(Y,\alpha_f)$.
Consider the trivial exact cobordism
\[
X = [-\delta,\delta] \times Y:(Y,e^\delta\alpha_f) \to (Y,e^{-\delta}\alpha_f).
\]
We let $W \subset X$ be an embedded irrational ellipsoid with minimal Reeb orbit $\kappa$. By Proposition~\ref{prop:moduli_space_count} and \ref{prop:moduli_correspondence}, for any $\varepsilon > 0$ sufficiently small as in Setup \ref{set:intersection_theory}, we can choose a compatible complex structure $J$ on $X \setminus W$ and $\delta > 0$ (as in \S \ref{subsec:Umap_moduli_space_vs_point_moduli_space})  such that the moduli space
\[\overline{\mathcal{M}}_{0,A}(\gamma_+,\gamma_- \cup \kappa) = \mathcal{M}_{0,A}(\gamma_+,\gamma_- \cup \kappa)\]
is regular, contains no buildings and has point count 
\[\#\overline{\mathcal{M}}_{0,A}(\gamma_+,\gamma_- \cup \kappa) = \#\overline{\mathcal{M}}(Y,J_f;\gamma_+,\gamma_-;z) = 1\mod 2.\]
Here $A \in S(X \setminus W;\gamma_+,\gamma_- \cup \kappa)$ is the unique homology class from $\gamma_+$ to $\gamma_- \cup \kappa$ in $X \setminus W$. On the otherhand, the abstract constraint $P_0$ is the dual constraint (see Example \ref{ex:dual_constraint}) of the generator $\kappa$ of $CH(W)$. Thus by Lemma \ref{lem:constrained_map_curve_count}, we know that
\[
\langle U_{P_0}x_{k_T + n - 1},x_{k_T}\rangle = 1 \mod 2.
\]
In particular, $\langle U_{P_0}x_{k_T + n - 1},x_{k_T}\rangle $ is non-zero. \end{proof}

\subsection{Irrational Ellipsoids}
\label{subsec:irrational_ellipsoids}
We now use the non-vanishing of coefficients of $U_{P_0}$ for integer ellipsoids, established in Theorem~\ref{thm:U_integer_E}, to conclude that the spectral gap vanishes for all ellipsoids. 
\begin{thm}\label{thm:all_ellipsoids}
    For any ellipsoid $E({a}):= E(a_1,\dots,a_n)$, we have $\rsftgap(\partial E(a))=0$.
\end{thm}

We begin by giving a very simple proof using the rational ellipsoid case (Corollary \ref{cor:periodic_spectral_gap} and Remark \ref{rmk:from_integer_E_to_rational}) and Proposition \ref{prop:limit_of_gap0}. We start with the following approximation property for ellipsoids. This uses Dirichlet's approximation theorem.

\begin{lemma}\label{lem:approx_ellipsoid}
    Let $E(a)$ be any ellipsoid and fix $\varepsilon>0$. There exists an ellipsoid $E(r)=E(r_1,\dots,r_n)$ with rationally dependent entries such that
        \begin{equation} \label{eqn:approx_ellipsoid} E(r) \subseteq E(a)\subset \sqrt{1+\frac{\varepsilon}{T}}\cdot E(r)\end{equation}
Here $T$ denotes the period of the Reeb flow on $\partial E(r)$.
\end{lemma}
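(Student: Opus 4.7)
The plan is to translate the two inclusion conditions into coordinate-wise inequalities on the ellipsoid parameters and then reduce to a standard simultaneous Diophantine approximation statement that is handled by Dirichlet's theorem.

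First, I would observe that the inclusions $E(r) \subseteq E(a) \subseteq \sqrt{1+\varepsilon/T}\cdot E(r)$ are equivalent to the component-wise inequalities $r_i \le a_i \le (1+\varepsilon/T)\,r_i$ for every $i = 1, \ldots, n$. This follows by testing each inclusion on vectors supported in a single complex coordinate.

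Next, I would parametrize rational ellipsoids by writing $r_i = T_0/N_i$, where $T_0 > 0$ is a positive real and $N_1,\ldots,N_n$ are positive integers. Since $r_i/r_j = N_j/N_i \in \mathbb{Q}$, the ellipsoid $E(r)$ is automatically rational, and a direct computation gives $\operatorname{lcm}(r_i) = T_0/\gcd(N_1,\ldots,N_n)$, so the Reeb period on $\partial E(r)$ equals $T = T_0/\gcd(N_i) \le T_0$. Under this parametrization, the lower bound $r_i \le a_i$ becomes $T_0 \le N_i a_i$, and a short calculation shows that the upper bound $a_i - r_i \le \varepsilon r_i/T$ is equivalent to $N_i a_i - T_0 \le \varepsilon\cdot\gcd(N_i)$. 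In particular, the stronger condition $N_i a_i - T_0 \le \varepsilon$ is sufficient regardless of the value of $\gcd(N_i)$. Setting $T_0 := \min_i N_i a_i$, the problem reduces to finding positive integers $N_1,\ldots,N_n$ such that
\[
\max_i N_i a_i - \min_i N_i a_i \le \varepsilon.
\]

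Finally, I would invoke Dirichlet's simultaneous Diophantine approximation theorem applied to the reciprocals $1/a_1,\ldots,1/a_n$: for any integer $M \ge 1$ there exist a positive integer $q$ and integers $p_1,\ldots,p_n$ with $|q/a_i - p_i| < 1/M$, and hence $|q - p_i a_i| < a_i/M$. Consequently $|p_i a_i - p_j a_j| < 2 a_{\max}/M$, so choosing $M > \max(2 a_{\max}/\varepsilon,\, a_{\max})$ yields both the required bound and the positivity $p_i > q/a_i - 1/M > 0$. Taking $N_i := p_i$ then completes the construction. The lemma is essentially a repackaging of Dirichlet's theorem combined with the algebraic observation above, and there is no substantial analytic obstacle; the main care is in correctly tracking how the period $T$ of the Reeb flow relates to the parameter $T_0$ to verify that the approximation condition with the Reeb period (rather than $T_0$) is what is actually needed.
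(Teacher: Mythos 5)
Your proof is correct and takes essentially the same route as the paper: both apply Dirichlet's simultaneous approximation theorem to the reciprocals $1/a_i$ to produce integers $N_i$ (the paper's $q_i$) and a common value near $N_i a_i$, then build $r_i$ from these. The only cosmetic differences are that you set $T_0 = \min_i N_i a_i$ (cleanly guaranteeing $r_i \le a_i$) where the paper subtracts $\varepsilon/2$ from $T'$, and you explicitly track the minimal period $T = T_0/\gcd(N_i)$ where the paper simply notes at the end that using a non-minimal period only makes the bound worse.
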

\begin{proof} We apply the simultaneous version of Dirichlet's approximation theorem to the sequence
\[\frac{1}{a_1},\dots ,\frac{1}{a_n}.\]
This result states that, for any natural number $N$, there exist integers $q_1,\dots, q_n$, and $T'$ such that $T'\leq N$ and for all $i=1,\dots ,n$, 
\[
\left|\frac{1}{a_i}-\frac{q_i}{T'}\right|\leq \frac{1}{T'\cdot N^{1/n}}, \quad\text{or equivalently,}\quad |a_iq_i - T'|\leq \frac{a_i}{N^{1/n}}.
\]
For $\varepsilon>0$ fixed as in the lemma statement, choose $N$ so that
\[N>\left(2\frac{\max{a_1,\dots,a_n}
}{\varepsilon}\right)^n\]
Under this hypothesis on $N$, we have the following bound for each $i$,
\begin{equation}\label{eq:rational_approx_bnd}
   |a_iq_i - T'|<{\varepsilon}/2. 
\end{equation}
Denote  $T:=T'-\varepsilon/2$ and $r_i:=\frac{T}{q_i}$, and consider 
\[
E({r}):=E(r_1,\dots,r_n)=E\left(\frac{T}{q_1},\dots,\frac{T}{q_n}\right).
\]
Clearly, the entries of $E({r})$ are rationally dependent. To check that $E({r})\subset E({a})$, we note that
\[a_i\overset{(\ref{eq:rational_approx_bnd})}{>}\frac{T'}{q_i}-\frac{\varepsilon}{2q_i}=\frac{T'-\varepsilon/2}{q_i}= r_i.\]
To check the second inclusion in (\ref{eqn:approx_ellipsoid}), we note that
     \[
     a_i\overset{(\ref{eq:rational_approx_bnd})}{<}\frac{T'}{q_i}+\frac{\varepsilon}{2q_i}=\frac{T'+\varepsilon/2}{q_i}=\frac{T+\varepsilon}{q_i} = (1+\frac{\varepsilon}{T})\cdot \frac{T}{q_i} =  (1+\frac{\varepsilon}{T})\cdot r_i.
     \]
     Therefore, we find that
     \[E({a})\subset \sqrt{1+\frac{\varepsilon}{T}}\cdot E({r}) = E\left((1+\frac{\varepsilon}{T})\cdot r\right)\]
     
     Note that $T$ as above is a positive integer satisfying $T/r_i = q_i$ for all $i=1,\dots,n$, but it is not necessarily the smallest positive integer with this property (i.e. the period of the Reeb flow on $\partial E(r)$). However, for any $T_0<T$ we have
     \[\sqrt{1+\frac{\varepsilon}{T}}\cdot E({r})\subset \sqrt{1+\frac{\varepsilon}{ T_0}}\cdot E({r})\]
     and so the assertion of the lemma still holds.
 \end{proof}

Having established a good enough approximation of a general ellipsoid by a rational one, we are ready to prove that the spectral gap vanishes for all ellipsoids.

\begin{proof}[Proof of Theorem~\ref{thm:all_ellipsoids}]
Let $E(a)$ be any ellipsoid. By Lemma~\ref{lem:approx_ellipsoid}, for each $\varepsilon > 0$ there exists an ellipsoid $E(r(\varepsilon))$ with rationally dependent entries $r_i(\varepsilon)$ such that 
\[E(r(\varepsilon)) \subseteq E(a)\subset \sqrt{1+\frac{\varepsilon}{T(\varepsilon)}}\cdot E(r(\varepsilon))\]
Here $T(\varepsilon)$ is the period of the Reeb flow on $\partial E(r(\varepsilon))$. This implies that the contact forms satisfy
\[ \lambda_{\on{std}}|_{\partial E(r(\epsilon))} \leq \lambda_{\on{std}}|_{\partial E(a)}  \leq (1+\frac{\varepsilon}{T})\cdot \lambda_{\on{std}}|_{\partial E(r)}.
\]
Here the contact forms on $\partial E(r(\epsilon))$ and $\partial E(a)$ are identified with contact forms on the sphere, and the order on contact forms is as in Notation \ref{not:order_on_ctct_forms}. By Corollary \ref{cor:periodic_spectral_gap} and Remark \ref{rmk:from_integer_E_to_rational}, there is a class
\[\sigma(\varepsilon) \in CH(\partial E(r(\varepsilon)) \qquad\text{with}\qquad \mathfrak{s}_{U_{P_0}\sigma(\varepsilon)}(\partial E(r(\varepsilon))) =\mathfrak{s}_{\sigma(\varepsilon)}(\partial E(r(\varepsilon))) = T(\varepsilon)\]
In particular, the spectral gap of $\partial E(r(\varepsilon))$ satisfies
\[\gap_{\sigma(\varepsilon)}(\partial E(r(\varepsilon)))=0\]
The result is now an immediate consequence of Proposition~\ref{prop:limit_of_gap0}, which asserts that 
\[
\gap(E(a))\leq \lim_{\varepsilon\rightarrow 0 }\gap_{\sigma(\varepsilon)}(E(r(\varepsilon))) =0 \qedhere
\]
\end{proof}

\subsection{Structure Of The $U$-Map} In \cite{irie2022strong}, Irie stated a second conjecture, which claims that certain sequence of coefficients of the $U_{P_0}$ map do not vanish for ellipsoids. Our methods suffice to prove a related (but weaker) structure result of this type.

\begin{thm}\label{thm:U_general_ellipsoid}
    Let $E(a)$ be an irrational ellipsoid. There exists a sequence $k(i) \xrightarrow[i\rightarrow\infty]{}\infty$ such that 
    \begin{equation} \label{eqn:U_general_ellipsoid}
        \left<U_{P_0}x_{k(i) +n-1} ,x_{k(i)}\right>\neq 0 \qquad \text{and}\qquad \s_{x_{k(i)+n-1}}(\partial E(a))-\s_{x_{k(i)}}(\partial E(a))\xrightarrow[i\rightarrow\infty]{}0.
    \end{equation}
\end{thm}

\noindent This statement can compared to \cite[Conj. 5.1]{irie2022strong}\footnote{However, we warn the reader that there is a sizeable difference in our notation from \cite{irie2022strong}.}. Theorem \ref{thm:U_general_ellipsoid} implies Theorem \ref{thm:all_ellipsoids} in the irrational case, and also uses the approximation of irrational ellipsoids by rational ones, given in Lemma~\ref{lem:approx_ellipsoid}. 
\begin{proof}
We will show that, for any $\varepsilon>0$, there exists $k\in \N$ such that  
\begin{equation}\label{eq:U_coef_gen_E}
        \left<U_{P_0}x_{k+n-1},x_{k}\right>\neq 0 \qquad \text{and}\qquad \s_{x_{k+n-1}}(E(a))-\s_{x_k}(E(a))\leq\varepsilon.
    \end{equation}
Since $E(a)$ is irrational, the spectral invariants $\s_{x_k}(\partial E(a)) = M_k^a$ are strictly increasing in $k$. Therefore, for any fixed $k$, the difference in (\ref{eq:U_coef_gen_E}) is non-zero. This implies that we can construct a sequence of distinct $k(i)$ diverging to $\infty$ satisfying (\ref{eqn:U_general_ellipsoid}).

\vspace{3pt}
    
Thus, fix $\varepsilon>0$. Let $E(r)$ be the ellipsoid with rationally dependent entries from Lemma~\ref{lem:approx_ellipsoid}, and let $T$ be the period of the Reeb flow on $\partial E(r)$. Our proof consists of three steps. First, we study the cobordism map
\[\Phi: A(\partial E((1+\frac{\varepsilon}{T})\cdot{r})) \rightarrow A(\partial E({a})) \qquad\text{of the cobordism map}\qquad X = E((1+\frac{\varepsilon}{T})\cdot{r}))\setminus E({a})\]
Next, we use the information we derive on $\Phi$ to compare the $U_{P_0}$-maps for the two ellipsoids, apply Theorem~\ref{thm:U_integer_E} to $E((1+\frac{\varepsilon}{T})\cdot{r}))$ and deduce the non-vanishing of a certain coefficient in the $U_{P_0}$-map of $E(a)$. Finally, we compare the actions of the relevant orbits in both ellipsoids and conclude that the difference in (\ref{eq:U_coef_gen_E}) is bounded by $\varepsilon$.

\vspace{3pt}

We let $\{y_k\}_k$ denote the generators of $A(\partial E(r))$. Note that $A(\partial E((1+\frac{\varepsilon}{T})r)) = A(\partial E(r))$ where the action is scaled by $1+\frac{\varepsilon}{T}$. We will only use $\mathcal{A}(y_k)$ to denote the action with respect to $\partial E(r)$. We also let $k_T$ be the first index in which $T$ appears in the sequence $\{M_k^r\}_k$. 

\vspace{3pt}

\emph{Step 1 - Cobordism Map.} The goal of this step is to show that the image of $y_{k_T + n - 1}$ under $\Phi$ does not contain $x_1\cdot x_{k_T}$ for sufficiently small $\varepsilon$. That is
\begin{equation}\label{eq:no_gamma_1_factor}
    \left<\Phi(y_{k_T+n-1}), x_1 \cdot x_{k_T} \right> =0 \qquad\text{for sufficiently small }\varepsilon > 0.
\end{equation} 
Suppose otherwise. Then since the map $\Phi$ decreases action and $\mathcal{A}(x_1) = M^a_1 = a_1$, we have
\begin{equation} \label{eq:no_gamma_1_factor_2} (1 + \frac{\varepsilon}{T}) \cdot M^r_{k_T + n - 1} = (1 + \frac{\varepsilon}{T}) \cdot \mathcal{A}(y_{k_T + n - 1}) \ge \mathcal{A}(x_1) + \mathcal{A}(x_{k_T}) = a_1 + M^a_{k_T}.\end{equation}
On the other hand, by the definition of $k_T$ and Lemma~\ref{lem:approx_ellipsoid}, we have
\[M_{k_T+n-1}^r =M_{k_T}^r =T \qquad\text{and}\qquad M_k^r\leq M_k^a\leq (1+\frac{\varepsilon}{T})\cdot M_k^r \quad\text{for all }k.\]
We can then calculate that
\[(1+\frac{\varepsilon}{T}) \cdot M_{k_T+n-1}^r -M_{k_T}^a = (1+\frac{\varepsilon}{T})\cdot T - M_{k_T}^{a} \leq (1+\frac{\varepsilon}{T})\cdot T - M_{k_T}^{r} = (1+\frac{\varepsilon}{T})\cdot T -T = \varepsilon.\]
When $\varepsilon$ is smaller than $a_1$, this contradicts (\ref{eq:no_gamma_1_factor_2}) and thus proves (\ref{eq:no_gamma_1_factor}).

\vspace{3pt}

\emph{Step 2 - $U$-map.} The goal of this step is to prove that
\begin{equation}\label{eq:U_a_contains_gamma_k}
    \left< U_{P_0}(x_{k_T+n-1}),x_{k_T}\right>\neq 0. 
\end{equation}
To prove this, we consider the commutative diagram
\[\begin{tikzcd}
A(\partial E({(1+\frac{\varepsilon}{T})\cdot r})) \arrow[r, "U_{P_0}"] \arrow[d, "\Phi"]
&  A(\partial E({(1+\frac{\varepsilon}{T})\cdot r})) \arrow[d, "\Phi"] \\
A(\partial E({a})) \arrow[r, "U_{P_0}"]
& A(\partial E({a}))
\end{tikzcd}\]
We need three observations about the terms in this diagram. First, by Theorem~\ref{thm:U_integer_E} and Remark \ref{rmk:from_integer_E_to_rational}, we know that that 
    \begin{equation} \label{eq:U_non-zero}
       c_1 := \left<U_{P_0}(y_{k_T+n-1}), x_{k_T}\right> \neq 0.
    \end{equation}
Moreover, since $\Phi$ is a $\Z$-graded isomorphism of algebras that decreases word length (by Lemma~\ref{lem:Phi_inv_wl_decreasing}), we must have
    \begin{equation} \label{eqn:Phi_non_zero}
       c_2 := \left<\Phi(y_k), x_k\right> \neq 0,
    \end{equation}
Finally, by (\ref{eq:no_gamma_1_factor}) we know that $\Phi(y_{k_T + n - 1})$ has a zero $x_1\cdot x_{k_T}$-coefficient. In other words
    \begin{equation} \label{eqn:Phi_no_gamma_1_ideal}
       \left<\Phi(y_{k_T+n-1}), x_1 \cdot x_{k_T}\right> = 0.
    \end{equation}
Combining the equations (\ref{eq:U_non-zero}-\ref{eqn:Phi_non_zero}), we can thus conclude that
\[
\left<U_{P_0} \circ \Phi (y_{k_T+n-1}),x_{k_T}\right> 
= \left<\Phi \circ U_{P_0} (y_{k_T+n-1}),x_{k_T}\right>
= c_1\cdot \left<\Phi(y_{k_T}),x_{k_T}\right> = c_1\cdot c_2 \neq 0.
\]

We now claim that (\ref{eqn:Phi_no_gamma_1_ideal}) implies the desired formula (\ref{eq:U_a_contains_gamma_k}).  Indeed, consider the element
\[h := \left<\Phi (y_{k_T+n-1}),x_{k_T+n-1}\right> \cdot x_{k_T+n-1} - \Phi(y_{k_T+n-1})\]
Since $\Phi$ preserves grading and respects word length, $h$ is a linear combination of monomials $x_{i_1}\dots x_{i_k}$ of length $k \ge 2$. Furthermore, $\langle h,x_1 \cdot x_{k_T}\rangle = 0$ by (\ref{eqn:Phi_no_gamma_1_ideal}). Since $U_{P_0}$ satisfies the Leibniz rule (Lemma \ref{lem:Leibniz_rule}), the only terms of word-length greater than 1 that can be mapped to $x_{k_T}$ are of the form $x_j \cdot x_{k_T}$, for $j$ such that $U_{P_0}(x_j)$ is a multiple of the unit. Since the $U_{P_0}$ decreases degree by $2n-2$, the only elements that could be mapped to a constant have to be of degree $2n-2$ and hence a multiple of $x_1$. Over all, $x_1\cdot x_{k_T}$ is the only term of word-length greater than 1 that can be mapped to $x_{k_T}$. As a consequence, we conclude that $\left<U_{P_0} (h), x_{k_T}\right>=0$. Therefore, 
\begin{align*}
    \langle \Phi(y_{k_T + n - 1}),x_{k_T + n - 1}\rangle \cdot \left<U_{P_0} (x_{k_T+n-1}),x_{k_T}\right> &=  \left(\left<U_{P_0} \circ\Phi (y_{k_T+n-1}),x_{k_T}\right> +\left<U_{P_0} (h),x_{k_T}\right> \right) \\
    &= \left<U_{P_0} \circ\Phi (y_{k_T+n-1}),x_{k_T}\right> \neq 0.
\end{align*}
Since $\langle \Phi(y_{k_T + n - 1}),x_{k_T + n - 1}\rangle \neq 0$, this implies (\ref{eq:U_a_contains_gamma_k}).

\vspace{3pt}

\emph{Step 3 - action comparison.} Now choose $k = k_T$. We proved the $U$-map formula in (\ref{eq:U_coef_gen_E}) in Step 2, and we conclude by showing that the difference in (\ref{eq:U_coef_gen_E}) is bounded by $\varepsilon$. First, note that by definition
\begin{equation}\label{eq:gap_diff_E}
    \s_{x_{k+n-1}}(\partial E({a}))-   \s_{x_k}(\partial E({a})) =M_{k+n-1}^a -  M_{k}^a.
\end{equation}
Recalling that $M_k^r\leq M_k^a\leq (1+\frac{\varepsilon}{T})\cdot M_k^r$ for all $k$, we have
\begin{equation*}
  \s_{x_{k + n - 1}}(\partial E({a}))-   \s_{x_k}(\partial E({a})) \leq (1+\frac{\varepsilon}{T})M_{k+n-1}^r - M_{k}^r =  (1+\frac{\varepsilon}{T})\cdot T-T = \varepsilon.
\end{equation*}
This proves the desired bound, and concludes the proof. \end{proof}

\begin{remark}[Generalizations] \label{rem:sequences_of_U_rational_ellipsoids}
Theorem~\ref{thm:U_general_ellipsoid} can be extended to rational ellipsoids given a more general correspondence result than Proposition~\ref{prop:moduli_correspondence}, such as in \cite{sie_hig_19}. This is due to the fact that our assumption that the period of the orbits $\gamma_\pm$ is \emph{equal} to the minimal period of the flow, rather than is \emph{divisible} by this minimal period, was used only in Section~\ref{subsec:Umap_moduli_space_vs_point_moduli_space}. Given a more general correspondence result, one would be able to prove Theorem~\ref{thm:U_integer_E} for any $T$ that is divisible by $\lcm(m_1,\dots, m_n)$, and hence conclude the assertion of  Theorem~\ref{thm:U_general_ellipsoid} for rational ellipsoids. 
\end{remark}

\section{Closing lemma for a Reeb flow that is not nearly integrable.} \label{sec:non_toric_example} In this section, we apply the results of \S \ref{sec:spectral_gaps} and \S \ref{sec:moduli_space} to prove the strong closing property for the family of non-toric contact forms $\alpha_a$ discussed in \S \ref{subsec:strong_closing_for_non_toric}.

\subsection{Contact Manifolds With Torus Actions} \label{subsec:contact_manifolds_with_toric} In order to study our family of examples, we will need a few results about contact manifolds with torus actions.

\subsubsection{Lifting Torus Actions} Recall that a contact manifold $Y$ with contact form $\alpha$ is \emph{periodic} if the Reeb flow is periodic. In this case, the quotient $X = Y/S^1$ is a symplectic orbifold. 

\vspace{3pt}

Here we investigate the lifting of toric structures from the symplectic orbifold $X$ to $Y$ via the projection $\pi:Y \to X$. We require the following lemma.

\begin{lemma} \label{lem:lifting_Hamiltonians} Let $(Y,\alpha)$ be a periodic contact manifold with symplectic orbifold quotient $X$. Then the bijection between Reeb-invariant smooth functions on $Y$ and smooth functions on $X$ 
\[C^\infty(X;\R) \simeq C^\infty_R(Y;\R) \qquad\text{given by}\qquad H \mapsto \tilde{H} = H \circ \pi\]
has the following properties.
\begin{itemize}
    \item[(a)] (Vector-fields) The Hamiltonian vector-field $X_H$ of $H$ on $X$ and the contact Hamiltonian vector-field $V_{\tilde{H}}$ of $\tilde{H}$ are related by
    \[V_{\tilde{H}} = \tilde{H} \cdot R + (X_H)_\xi,\]
    where $(X_H)_\xi$ denotes the lift of $X_H$ to a vector-field tangent to $\xi = \on{ker}(\alpha)$. In particular
    \[\mathcal{L}_{V_{\tilde{H}}} \alpha = 0.\]
    \item[(b)] (Commutator) If $F$ and $H$ are two Hamiltonians on $X$, then
    \[[V_{\tilde{F}},V_{\tilde{H}}] = \{F,H\} \cdot R + [X_F,X_H]_\xi.\]
    \item[(c)] (Periodic) If $X_H$ is periodic with period $1$ and $H$ has a critical point $p$ with $H(p) \in\Z$, then $V_H$ is periodic of period $1$.
\end{itemize}
\end{lemma}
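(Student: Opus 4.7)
The plan is to treat the three parts in order, each building on the previous.

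For part (a), I will verify directly that $V := \tilde H \cdot R + (X_H)_\xi$ satisfies the two defining conditions of the contact Hamiltonian vector field of $\tilde H$, namely $\alpha(V) = \tilde H$ and $\iota_V d\alpha = (\mathcal{L}_R \tilde H)\alpha - d\tilde H$. The first is immediate since $\alpha(R) = 1$ and $(X_H)_\xi \subset \ker\alpha$. For the second, note that $d\tilde H(R) = 0$ because $\tilde H = H\circ \pi$ is Reeb-invariant, so the right-hand side is just $-d\tilde H$. On the left, $\iota_R d\alpha = 0$ kills the $\tilde H \cdot R$ term, while the identification $d\alpha|_\xi = \pi^*\omega_X$ together with $d\pi((X_H)_\xi) = X_H$ gives $d\alpha((X_H)_\xi, v) = \omega_X(X_H, d\pi(v)) = -d\tilde H(v)$. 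The $\alpha$-invariance then follows from the Cartan formula $\mathcal{L}_V\alpha = d(\iota_V \alpha) + \iota_V d\alpha = d\tilde H - d\tilde H = 0$.

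For part (b), the key observation is that the commutator of two $\alpha$-preserving vector fields is again $\alpha$-preserving. Hence $[V_{\tilde F}, V_{\tilde H}]$ is itself a contact Hamiltonian vector field, and by part (a) it suffices to identify its $\alpha$-value. Using $\mathcal{L}_{V_{\tilde F}}\alpha = 0$, one computes
\[
\alpha([V_{\tilde F}, V_{\tilde H}]) = V_{\tilde F}(\alpha(V_{\tilde H})) = V_{\tilde F}(\tilde H) = d\tilde H\bigl(\tilde F \cdot R + (X_F)_\xi\bigr) = dH(X_F) = \widetilde{\{F,H\}},
\]
using Reeb-invariance of $\tilde H$ to kill the first term. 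Applying (a) to the function $\widetilde{\{F,H\}}$ yields the asserted formula (upon invoking $[X_F, X_H] = X_{\{F,H\}}$ on $X$ to identify the horizontal pieces).

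Part (c) is the main step and the one requiring the most care. Let $\phi_t$ be the flow of $X_H$ on $X$ and $\psi_t$ the flow of $V_{\tilde H}$ on $Y$. Since $V_{\tilde H}$ projects to $X_H$, we have $\pi\circ\psi_t = \phi_t\circ\pi$, so $\psi_1$ covers the identity on $X$ and therefore takes each Reeb orbit into itself: $\psi_1(y) = \varphi_{s(y)}(y)$ for some function $s: Y \to \R/T_R\Z$ where $T_R$ is the minimal Reeb period. Now, since $\psi_t$ preserves $\alpha$, it commutes with the Reeb flow, hence $s$ is Reeb-invariant; moreover, pulling back $\alpha$ via $\psi_1$ and differentiating $\psi_1(y) = \varphi_{s(y)}(y)$ in a direction $v \in T_yY$ gives
\[
\alpha(v) = \alpha(d\psi_1(v)) = ds(v) + \alpha(v),
\]
so $ds \equiv 0$ and $s$ is a constant $s_0$. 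The plan is then to evaluate $s_0$ at any point $y_p$ lying over a critical point $p$ of $H$: there $(X_H)_\xi$ vanishes so $V_{\tilde H}|_{y_p} = \tilde H(y_p) R = H(p) R$, and the flow starting at $y_p$ is pure Reeb translation at speed $H(p)$, giving $\psi_1(y_p) = \varphi_{H(p)}(y_p)$ and hence $s_0 = H(p) \bmod T_R$. Under the normalization that the Reeb period equals $1$ (as is the case in the prequantization setup where this lemma is applied) the hypothesis $H(p)\in \Z$ gives $s_0 \in \Z$, so $\psi_1 = \id$ and $V_{\tilde H}$ has period $1$. The only subtle point is the constancy of $s$: it is tempting to argue by connectedness alone, but one really needs the $\alpha$-preservation of $\psi_1$ to conclude $ds=0$ rather than mere local constancy modulo $T_R$.
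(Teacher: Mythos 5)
Parts (a) and (b) are correct. Your (a) is essentially the paper's argument. Your (b) uses a mildly different route: you invoke that the Lie bracket of two $\alpha$-preserving vector fields is again $\alpha$-preserving, hence a contact Hamiltonian field, compute only its $\alpha$-contraction, and then reapply (a); the paper instead computes both the $R$- and $\xi$-components of $[V_{\tilde F}, V_{\tilde H}]$ directly. Both are fine.

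Part (c), however, has a genuine gap at the final step. You correctly establish that $\psi_1 = \varphi_{s_0}$ for some constant $s_0 \in \R/T_R\Z$, where $T_R$ is the minimal Reeb period. But evaluating at a point $y_p$ over a critical point $p$ of $H$ only determines $s_0$ modulo the period $T_p$ of the Reeb orbit through $y_p$, not modulo $T_R$: the identity $\varphi_{s_0}(y_p)=\varphi_{H(p)}(y_p)$ gives $s_0 \equiv H(p) \pmod{T_p}$, and $T_p$ is a proper divisor of $T_R$ precisely when $p$ is an orbifold point of $X = Y/S^1$. With $H(p)\in\Z$ and $T_R = 1$ one then only gets $s_0 \in T_p\Z/\Z$, which need not be $0$. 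This is not a pedantic worry: in the paper's application (\S\ref{sec:non_toric_example}), $X = (\C P^1\times\C P^1)/\Z_4$ is an orbifold and \emph{every} critical point of $H$ projects to an orbifold point, with $T_p = 1/4$ or $1/2$, while $T_R = 1$. The paper avoids this issue by instead determining the Reeb displacement at a generic \emph{non-critical} point $q$: it flows a path $\eta$ from $p$ to $q$ by $X_{H'}$ (with $H' = H - H(p)$) for time $1$, applies Stokes to get $\int_D \omega = -H'(q)$ for the swept disk $D$, reads off this area as the Reeb holonomy of the horizontal lift $(X_{H'})_\xi$, and checks it cancels the $+H'(q)$ Reeb translation coming from the $\tilde{H}'\cdot R$ summand. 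Some such area/holonomy computation at a free orbit is needed to pin down $s_0$ modulo $T_R$; the evaluation at a critical point alone does not suffice. Incidentally, the subtlety you flag at the end (needing $\alpha$-invariance rather than connectedness to get $ds \equiv 0$) is correct, but it is not where the real difficulty lies.
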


\begin{proof} For (a), we note that the contact Hamiltonian vector-field $V_{\tilde{H}}$ of $\tilde{H}$ satisfies
\[
\alpha(V_{\tilde{H}}) = \tilde{H} \qquad\text{and}\qquad d\alpha(V_{\tilde{H}},-) + d\tilde{H} = d\tilde{H}(R) \cdot \alpha = 0
\]
The first identity says that the $R$-component of $V_{\tilde{H}}$ is $\tilde{H} \cdot R$ and the second identity says that the $\xi$-component of $V_{\tilde{H}}$ is $X_H$. For (b), note that
\[\alpha([V_{\tilde{F}},V_{\tilde{H}}]) = V_{\tilde{F}}(\alpha(V_{\tilde{H}})) - ( \mathcal{L}_{V_{\tilde{F}}}\alpha)(V_{\tilde{H}}) = d\tilde{H}(V_{\tilde{F}}) = dH(X_F) = \{F,H\}\]
Moreover, we have
\[
d\alpha([X_F,X_H]_\xi,-) = d\alpha([V_{\tilde{F}},V_{\tilde{H}}],-) = -d\{F,H\}
\]
These two identities imply that the $R$-components and $\xi$-components of the two vector-fields in (b) agree. Finally, to show (c), let $H$ be a Hamiltonian on $X$ with $H(p) \in \Z$ at some critical point $p$, and suppose that $X_H$ is periodic of period $1$. The contact Hamiltonian vector field of any constant Hamiltonian $C$ is $C\cdot R$. Therefore, the flow generated by the constant Hamiltonian $C=H(p)\in \Z$ is 1-periodic. As a consequence, it is enough to show that the flow of $H' = H-H(p)$ is 1-periodic. Notice that $H'(p)=0$ and let $q$ be a non-critical point of $X_{H'}$. Choose an arc $\eta$ from $p$ to $q$ and let $f:D \to X$ be the disk acquired by flowing $\eta$ by $X_{H'}$ for time $1$. The area of this disk is
\[
\int_D \omega = -\int_\eta dH' = -(H'(q) - H'(p)) = -H'(q) \]
Thus the flow of the horizontal lift $(X_{H'})_\xi$ of $X_{H'}$ over $\gamma$ carries a point $y$ with $\pi(y) = q$ to the time $-H'(q)$ Reeb flow of $y$. On the other hand
\[V_{\tilde{H'}} = \tilde{H'} \cdot R + (X_{H'})_\xi\]
The flow of $\tilde{H'} \cdot R$ flows $y$ to the time $H'(q)$ Reeb flow of $y$. Therefore, the flow by $V_{\tilde{H'}}$ carries $y$ to itself after time $1$. \end{proof}

\begin{remark} \label{rmk:values_of_H} Note that if $H:X \to \R$ generates a circle action on a symplectic orbifold $(X,\omega)$ and $\omega$ is rational (i.e. $[\omega] \in H^2(X;\Q)$) then by the same calculation as in the proof of Lemma~\ref{lem:lifting_Hamiltonians}(c) above, we have
\[
H(q) \in \Q \qquad\text{for any critical point $q$ of $H$}
\]
This hypothesis holds, for instance, when $X = Y/S^1$ is the orbifold quotient of a contact manifold with periodic Reeb flow. \end{remark}

An immediate consequence of Lemma \ref{lem:lifting_Hamiltonians} is the following lifting property for torus actions.

\begin{lemma} \label{lem:torus_action_lift} Let $(Y,\alpha)$ be a closed periodic contact manifold with symplectic orbifold quotient $X$.
Then any Hamiltonian $\T^k$-action on $X$ lifts to a Hamiltonian $\T^k$-action on $Y$ preserving $\alpha$. Conversely, any Hamiltonian $\T^k$-action on $Y$ preserving $\alpha$ descends to a unique Hamiltonian action on $X$.
\end{lemma}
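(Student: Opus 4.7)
The plan is to prove the two directions independently using the dictionary between Reeb-invariant functions on $Y$ and smooth functions on $X$ provided by Lemma~\ref{lem:lifting_Hamiltonians}. The only real subtlety will be arranging the periodicity of the lifted generators in the forward direction.

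For the forward direction, encode the Hamiltonian $\T^k$-action on $X$ by moment maps $H_1, \dots, H_k : X \to \R$, so that $X_{H_1}, \dots, X_{H_k}$ generate commuting circle actions of period $1$ and $\{H_i, H_j\} \equiv 0$. By Remark~\ref{rmk:values_of_H} each $H_i$ takes rational values at its critical points, so for each $i$ one may choose a critical point $p_i$ and the constant $c_i := H_i(p_i) \in \Q$, making $(H_i - c_i)(p_i) = 0 \in \Z$. Lemma~\ref{lem:lifting_Hamiltonians}(c) applied to $H_i - c_i$ then shows that
\[
V_i := V_{\widetilde{H_i - c_i}} = V_{\tilde H_i} - c_i R
\]
generates a flow on $Y$ of period $1$. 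Applying Lemma~\ref{lem:lifting_Hamiltonians}(b) with $F = 1$ yields $[R, V_{\tilde H_j}] = 0$ for every $j$, and then part (b) again gives
\[
[V_i, V_j] = [V_{\tilde H_i}, V_{\tilde H_j}] = \{H_i, H_j\} R + [X_{H_i}, X_{H_j}]_\xi = 0.
\]
Consequently $V_1, \dots, V_k$ integrate to a $\T^k$-action on $Y$, and this action preserves $\alpha$ by part (a).

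For the converse direction, let $V_1, \dots, V_k$ be the infinitesimal generators of a Hamiltonian $\T^k$-action on $(Y,\alpha)$ preserving $\alpha$. Each $V_i$ preserves $\alpha$ and hence the uniquely determined Reeb vector field $R$, giving $[V_i, R] = 0$. Thus $H_i^Y := \alpha(V_i)$ is $R$-invariant and descends to a function $H_i$ on $X$. The standard characterization of contact vector fields by their contact Hamiltonian gives $V_i = V_{H_i^Y}$, so by part (a) the projection of $V_i$ to $X$ is $X_{H_i}$, which in turn generates a closed circle action. Applying part (b) to the identity $[V_i, V_j] = 0$ and comparing components along $R$ and $\xi$ forces $\{H_i, H_j\} = 0$ and $[X_{H_i}, X_{H_j}] = 0$. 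Hence the $H_i$ define a Hamiltonian $\T^k$-action on $X$, and uniqueness is immediate from the formula $H_i \circ \pi = \alpha(V_i)$.

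The main technical point is the periodicity step in the forward direction: the naive lift $V_{\tilde H_i}$ is in general not periodic, and one must exploit the freedom to subtract a constant multiple of $R$ (which commutes with every lifted vector field) together with the rationality of critical values from Remark~\ref{rmk:values_of_H} in order to invoke Lemma~\ref{lem:lifting_Hamiltonians}(c). Everything else is an immediate consequence of the formulas in parts (a) and (b).
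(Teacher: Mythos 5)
Your proof is correct and follows essentially the same route as the paper's: normalize each moment map component so that it has an integer critical value, invoke Lemma~\ref{lem:lifting_Hamiltonians}(c) to get $1$-periodicity of the lifted contact vector fields, and use Lemma~\ref{lem:lifting_Hamiltonians}(a),(b) for $\alpha$-preservation and commutativity; the converse, which the paper dismisses as ``clear,'' you spell out in the natural way via $H_i \circ \pi = \alpha(V_i)$. One minor remark: the appeal to Remark~\ref{rmk:values_of_H} is superfluous, since after translating by $c_i := H_i(p_i)$ one has $(H_i - c_i)(p_i) = 0 \in \Z$ regardless of whether $c_i$ is rational.
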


\begin{proof} Let $\mu$ be the moment map for a Hamiltonian $\T^k$-action on $X$ with components $\mu_1,\dots,\mu_k$. After translating the moment map, we can assume that $\mu = 0$ at some critical point $p$ of $\mu$.  Consider the map
\[\tilde{\mu} = \mu \circ \pi:Y \to \R^k\]
By Lemma \ref{lem:lifting_Hamiltonians}(c), the components $\tilde{\mu}_i$ each generate an $S^1$-action that commute by Lemma \ref{lem:lifting_Hamiltonians}(b). Thus $\tilde{\mu}$ is a moment map for a $\T^k$-action generated by contact Hamiltonians. The other direction of the lemma is clear. \end{proof}

We say that a contact $(2n+1)$-manifold $Y$ (respectively, a symplectic $2n$-manifold $X$) is \emph{integrable} if there is a Hamiltonian $\T^{2n+1}$-action on $Y$ (respectively, $\T^{2n}$-action on $X$) that is free on a dense open set. Lemma \ref{lem:torus_action_lift} implies the following.

\begin{cor} \label{cor:toric_Y_vs_YmodS1} The circle action generated by the Reeb flow on a periodic contact $(2n+1)$-manifold $Y$ extends to an integrable system on $Y$ if and only if the symplectic orbifold quotient $X = Y/S^1$ is integrable.
\end{cor}

\subsubsection{Reeb Flows On Spaces With $\T^2$-Actions} Let $(Y,\xi)$ be a contact manifold with contact form $\alpha = \alpha_1$ and Reeb vector-field $R = R_1$ generating an $S^1$-action. Let $X = Y/S^1$ be the symplectic orbifold quotient, and consider a Morse function
\[H:X \to \R\]
generating an $S^1$-action on $X$ and with minimum value $1$. Then the lift
\[R_2 = V_{\widetilde{H}} = \widetilde{H} \cdot R + (X_H)_\xi\]
is a Reeb vector-field with contact form $\alpha_2 = \alpha/\widetilde{H}$. In light of Lemma~\ref{lem:lifting_Hamiltonians}, $R_1$ and $R_2$ generate a $\T^2$-action by Reeb vector-fields on $(Y,\xi)$. For each $a = (a_1,a_2)$ with $a_i > 0$, we have a Reeb vector-field
\[R_a = a_1 R_1 + a_2 R_2 \quad\text{with contact form}\quad \alpha_a = \frac{\alpha}{a_1 + a_2\widetilde{H}}.\]
\noindent Note that our assumption that $H$ is Morse implies that the above $\T^2$ action is effective. 

There are essentially two dynamical cases for the Reeb vector-field $R_a$: the periodic case and the non-periodic case. In order to describe the periodic case, we require the following generalization of the least common multiple.

\begin{definition} \label{def:lcm_generalization}
    The \emph{(generalized) least common multiple} of two real numbers $s_1,s_2 > 0$ is given by
    \begin{equation*}
        \lcm(s_1,s_2):=\inf\left\{c\cdot q_1 \cdot q_2 \; : \;  s_1=c\cdot q_1 \text{ and } s_2=c\cdot q_2 \; \text{for }\; c\in\R \text{ and } q_1,q_2 \in \Z\right\},
    \end{equation*}
    Here the infimum of an empty set is defined to be $+\infty$. 
\end{definition}

Note that $\lcm(s_1,s_2)$ is finite if and only if $s_1$ and $s_2$ are rationally dependent. Moreover, if $a_1,a_2\in \N$ then this definition coincides with the standard least common multiple. 

\begin{lemma}\label{lem:rational_a_period_and_orbits} If $a_1$ and $a_2$ are rationally dependent (i.e. $a_1/a_2 \in \Q$), then $R_a$ is periodic of period
\[T = \on{lcm}(1/a_1,1/a_2)\]
Moreover, the Reeb flow is Morse-Bott and the simple closed orbits come in two types.
\begin{itemize}
        \item[(a)] Orbits of period $T$, coming in Morse-Bott families $S$ of dimension $\on{dim}(X) = \dim{Y}-1$.
        \item[(b)] Isolated orbits whose projection to $X$ is a critical point $x$ of $H$, with period $(a_1 + a_2 H(x))^{-1}$.
\end{itemize}
\end{lemma}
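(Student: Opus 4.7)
The plan is to exploit the $\T^2$-action on $Y$ generated by $R_1, R_2$ and reduce every claim to elementary facts about the linear flow $t\mapsto (ta_1,ta_2)$ on $\T^2=\R^2/\Z^2$. By Lemma~\ref{lem:lifting_Hamiltonians}, $R_1$ and $R_2$ commute, each generates a $1$-periodic flow preserving $\alpha$, and hence together they define an effective Hamiltonian $\T^2$-action on $Y$; under this action, $R_a = a_1R_1+a_2R_2$ corresponds to the constant vector $(a_1,a_2)$ in the Lie algebra $\R^2$.

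From this picture the period formula is immediate: the $R_a$-flow on $Y$ is periodic iff its projection to $\T^2$ is, i.e.~iff $a_1/a_2\in\Q$, with minimal period $T=\min\{t>0:(ta_1,ta_2)\in\Z^2\}$; unwinding Definition~\ref{def:lcm_generalization} by setting $c=1/T$ and $q_i=Ta_i$ identifies this with $\on{lcm}(1/a_1,1/a_2)$. Since $T(a_1,a_2)\in\Z^2$ represents the identity of $\T^2$, it acts trivially on $Y$, so $\varphi_T=\id$ on all of $Y$. For the orbit classification I would split by whether $\pi(y)$ is a critical point of $H$. Over a regular point, Lemma~\ref{lem:lifting_Hamiltonians}(a) gives that $R_1(y)$ and $R_2(y)$ are linearly independent (since $(X_H)_\xi\ne 0$), so the $\T^2$-orbit is a $2$-torus and the $R_a$-orbit inside it is a Kronecker circle of minimal period exactly $T$. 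Over a critical point $x$, $(X_H)_\xi$ vanishes along $\pi^{-1}(x)$ and $R_2|_{\pi^{-1}(x)}=\widetilde{H}\cdot R_1=H(x)\,R_1$; hence $R_a=(a_1+a_2H(x))R_1$ along the fiber, which (since $R_1$ has period $1$) gives an isolated closed orbit of period $(a_1+a_2H(x))^{-1}$.

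The Morse-Bott condition is then automatic: because $\varphi_T=\id$ globally, $N_T=Y$ and $TN_T=\ker(d\varphi_T-\id)=TY$. The generic connected stratum of $S_T=Y/\langle R_a\rangle$ is parameterized by the regular locus of $H$ in $X$ and has dimension $\dim Y-1=\dim X$, while the orbits over critical points of $H$ appear as the claimed isolated short orbits. The main (if minor) bookkeeping point, and the one place requiring care, is to verify that these short orbits actually sit inside $N_T$, i.e.~that $T(a_1+a_2H(x))\in\Z$; this is automatic from the $\T^2$-structure, since the time-$1$ flow of $R_2$ at a critical fiber equals the time-$H(x)$ flow of $R_1$, so the assumption that $R_2$ is $1$-periodic forces $H(x)\in\Z$ at every critical point, and then $T(a_1+a_2H(x))=Ta_1+Ta_2\cdot H(x)\in\Z$ as needed.
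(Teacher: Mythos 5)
Your proof follows essentially the same route as the paper's: express $R_a=a_1R_1+a_2R_2$ as a constant vector in the Lie algebra of the $\T^2$-action generated by $R_1,R_2$, deduce the period statement from effectivity plus the linear flow on $\T^2$, and classify orbits by whether the base point of $X$ is critical for $H$. The paper phrases the ``only if'' direction of the period claim via the existence of a free $\T^2$-orbit, which is the same effectivity fact you invoke. Two small remarks. First, a bookkeeping slip: with $c=1/T$ and $q_i=Ta_i$ you get $cq_i=a_i$, not the required $1/a_i$; the witnesses unwinding Definition~\ref{def:lcm_generalization} with $s_i=1/a_i$ are $c=1/(Ta_1a_2)$, $q_1=Ta_2$, $q_2=Ta_1$ (note the index swap), giving $cq_1q_2=T$, which is what the paper's computation implicitly uses. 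Second, the final paragraph arguing that $1$-periodicity of $R_2$ forces $H(x)\in\Z$ at every critical point is superfluous: since $\varphi_T=\id$ on all of $Y$, every orbit's period divides $T$ automatically, so the divisibility $T(a_1+a_2H(x))\in\Z$ needs no separate check. That paragraph is also a slight overreach: it assumes the $R_1$-fiber over $x$ has minimal period exactly $1$, which fails if $x$ is an orbifold point of $X$ with isotropy $\Z_k$ (one would then only get $kH(x)\in\Z$) — though this same caveat is not addressed in the paper's proof either and does not affect the lemma's use downstream.
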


\begin{proof} We start by showing that $R_a$ is periodic of the claimed period. Let $\varphi^t$ denote the flow of $R_a$, which we can write as
\[\varphi^t = \varphi_1^{a_1t} \circ \varphi_2^{a_2t}\]
Here $\varphi_i^t$ is the flow of the Reeb vector-field $R_i$, for $i = 1,2$. We claim that $\varphi^L = \on{Id}$ if and only if $L = cq_1q_2$ for a real number $c > 0$ that satisfies
\[\frac{1}{a_1} = cq_1 \quad\text{and}\quad \frac{1}{a_2}=cq_2\]
Indeed, if $L = cq_1q_2$ is of the stated form, then
\[a_1L = cq_1q_2a_1 = q_2 \qquad a_2L = cq_1q_2a_2 = q_1\]
Therefore $\varphi_1^{a_1L} = \varphi_2^{a_2L} = \on{Id}$ since $\varphi_1$ and $\varphi_2$ are $1$-periodic. Conversely, recall that the $\T^2$ action generated by $R_1$ and $R_2$ is effective. Therefore, there exists a free orbit of the torus action on $Y$, and thus a point $p$ where
\[
\varphi_i^L(p) = p \quad\text{if and only if}\quad L \in \Z
\]
It follows that $\varphi^L(p) = p$ if and only if $a_1L$ and $a_2L$ are integers, i.e. if is of the form $L = cq_1q_2$ stated above. This confirms the claim, and it now follows from Definition \ref{def:lcm_generalization} that $\varphi^t$ is periodic with period $T = \on{lcm}(1/a_1,1/a_2)$. 

\vspace{3pt}

Next, we show that the orbits of $R_a$ come in the types discussed above. First, we write
\[
R_a = a_1R_1 + a_2R_2 = (a_1 + a_2 \cdot \widetilde{H})R_1 + a_2 (X_H)_\xi
\]
where $(X_H)_\xi$ is the  horizontal lift of the Hamiltonian vector-field of $H$ on $X$. Now consider a point $p \in Y$ and the orbit $\gamma$ through it. If $\pi(p) \in X$ is a critical point of $H$, then $X_H = 0$ and $\gamma$ is simply an orbit of $\left( a_1 + a_2\widetilde{H}(\pi(p))\right) R_1$, falling into type (b). If $\pi(p)$ is not a critical point of $H$, then the orbit of $p$ under the $\T^2$-action generated by $R_1$ and $R_2$ is free, denote this orbit by $\Sigma$. Under the identification $\Sigma \simeq \T^2$, $\gamma$ is simply a curve of rational slope. In particular, its  period is $\on{lcm}(1/a_1,1/a_2)$,
which is the period of $R_a$.\end{proof}

\begin{lemma} \label{lem:irrational_a_period_and_orbits} If $a_1$ and $a_2$ are rationally independent (i.e $a_1/a_2$ is irrational), then
\begin{itemize}
    \item[(a)] The simple orbits $\gamma$ of $\alpha_a$ each project to a critical point of $H$ on $X$.
    \item[(b)] Every Reeb orbit $\gamma$ is non-degenerate, elliptic and has even SFT grading i.e.
    \[|\gamma| = 0 \mod 2.\]
\end{itemize}
\end{lemma}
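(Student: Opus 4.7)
The plan for (a) rests on the observation that the lifted Hamiltonian $\widetilde H$ is invariant under the $\T^2$-action generated by $R_1, R_2$, and hence under every $R_a$: $d\widetilde H(R_1) = 0$ since $\widetilde H$ is constant on $R_1$-fibers, and by Lemma~\ref{lem:lifting_Hamiltonians}(a) together with the fact that $d\widetilde H((X_H)_\xi) = dH(X_H) = 0$, one also gets $d\widetilde H(R_2) = 0$. Thus every closed $R_a$-orbit sits in a level set $\pi^{-1}(H^{-1}(c))$. If $c$ is a regular value of $H$, the $\T^2$-stabilizers are discrete there (the $R_1$-stabilizer is always $\Z$, and the $R_2$-stabilizer is $\Z$ because $x \in H^{-1}(c)$ is not fixed by the Hamiltonian $S^1$-action of $H$), so the $\T^2$-orbit through any point $p$ above a regular value is a smooth 2-torus $\Sigma$. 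On $\Sigma$ the vector field $R_a = a_1 R_1 + a_2 R_2$ is a constant-slope linear flow, and irrationality of $a_1/a_2$ makes this flow minimal and thus aperiodic. Hence any simple closed orbit of $R_a$ must project to a critical point of $H$, proving (a).

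For (b), fix a critical point $x$ of $H$ and $p \in \pi^{-1}(x)$. On the fiber $\pi^{-1}(x)$ one has $(X_H)_\xi = 0$, so $R_2 = H(x) R_1$ and therefore $R_a|_{\pi^{-1}(x)} = (a_1 + a_2 H(x)) R_1$. The simple $R_a$-orbit $\gamma$ through $p$ is thus a reparametrization of the $R_1$-orbit through $p$, with period $T = T_1/(a_1 + a_2 H(x))$ where $T_1$ is the minimal $R_1$-period through $p$ (a rational number, since $T_1$ divides $1$). The next step is to compute the linearized Poincar\'e return map of $\gamma$ on $\xi_p$ using the trivialization of $\gamma^*\xi$ induced by $R_1$-translation, together with the symplectic isomorphism $d\pi:\xi_p \xrightarrow{\sim} T_x X$. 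In this frame the linearized $R_1$-flow is tautologically the identity, while $d\pi$ intertwines the linearized $R_2$-flow with the linearized Hamiltonian $X_H$-flow on $T_x X$. Morseness of $H$ together with the fact that $X_H$ generates an $S^1$-action produces complex symplectic coordinates in which $H = H(x) + \pi \sum_{j=1}^{n-1} w_j |z_j|^2 + O(|z|^3)$ with weights $w_j \in \Z \setminus \{0\}$. The linearized return map therefore takes the form
\[ \Phi = \on{diag}\bigl(e^{2\pi i w_j a_2 T}\bigr)_{j=1}^{n-1}. \]

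The remaining assertions follow algebraically and extend to iterates of $\gamma$ upon replacing $T$ by $kT$. Non-degeneracy: $\Phi^k$ has $1$ as an eigenvalue iff $k w_j a_2 T \in \Z$ for some $j$, equivalently $m(a_1 + a_2 H(x)) = k w_j a_2 T_1$ for some $m \in \Z$. Rewriting this as $m a_1 = (k w_j T_1 - m H(x)) a_2$ and noting that $T_1, H(x) \in \Q$ (the latter by Remark~\ref{rmk:values_of_H}) while $a_1/a_2 \notin \Q$ forces $m = 0$ and then $w_j = 0$, contradicting Morseness. Ellipticity is immediate since all eigenvalues lie on $S^1$. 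For the grading parity, each one-dimensional rotation factor of $\Phi^k$ contributes an odd integer to the Conley--Zehnder index, so $\on{CZ}(\gamma) \equiv n-1 \pmod 2$ and hence $|\gamma| = (n-3) + \on{CZ}(\gamma) \equiv 2n - 4 \equiv 0 \pmod 2$. The main technical step to justify carefully is the linearization computation in the $R_1$-trivialization, in particular verifying that $d\pi$ intertwines the linearized $R_a$-flow on $\xi_p$ with the linearized $a_2 X_H$-flow on $T_x X$ rather than some twisted version involving nontrivial $R_1$-holonomy.
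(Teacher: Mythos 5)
Your proof is correct and follows essentially the same strategy as the paper: for (a), project onto free $\T^2$-orbits above regular values and use irrationality of the slope; for (b), identify $\xi_p \simeq T_xX$ via $d\pi$, reduce the linearized return map to a time-reparametrized iterate of the 1-periodic linearized Hamiltonian $S^1$-action at $x$, diagonalize it to $\mathrm{diag}(e^{2\pi i w_j s})$ with $w_j \in \Z\setminus\{0\}$, and then use $H(x) \in \Q$ together with $a_1/a_2 \notin \Q$ to rule out unit eigenvalues and to compute the parity $\CZ(\gamma) \equiv n-1 \pmod 2$. The holonomy concern you raise at the end is in fact not an issue: since $\pi: Y \to X$ is precisely the quotient by the $R_1$-action, one has $d\pi \circ d\varphi^s_{R_1} = d\pi$ on $\xi$, so the $R_1$-trivialization of $\gamma^*\xi$ is literally the same as the constant identification with $T_xX$ provided by $d\pi$; there is no twisting, and $\pi_* R_a = a_2 X_H$ immediately gives $d\varphi^t_{R_a}|_{\xi} \cong d\varphi^{a_2 t}_{H}$ in that frame. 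One small caveat: when you say ellipticity is immediate because the eigenvalues lie on $S^1$, you should also note that $-1$ is excluded by the same rationality argument that excludes $+1$. Finally, your introduction of the minimal $R_1$-period $T_1$ through $p$ (with $T_1 \in \Q$ dividing $1$) is a welcome refinement: the $S^1$-action on $Y = E/\Z_4$ is not free over the $\Z_4$- and $\Z_2$-orbifold points of $X$, so the fiber there has period a proper divisor of $1$, and your bookkeeping absorbs this orbifold case cleanly, whereas the paper's proof only says ``assume first that $X$ is a manifold at $x$'' without returning to the general case.
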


\begin{proof} (a) follows from an identical analysis to the proof of Lemma \ref{lem:rational_a_period_and_orbits}. To see (b), choose a critical point $x$ of $H$ and let $\gamma$ be a closed Reeb orbit with $\pi(\gamma) = x$. The period of $\gamma$ is given by
\[
\frac{k}{a_1 + a_2 \cdot H(x)} \quad\text{for}\quad k \in \Z_+.
\]
Assume, first, that $X$ is a manifold at $x$ (i.e. an orbifold with no isotropy). Given a point $p \in \gamma$, the contact structure $\xi_p$ is identified with $T_xX$ via the differential $d\pi:\xi \to TX$. Moreover, the restriction $d\varphi^t_{R_1}|_\xi$ of the linearized Reeb flow of $R_a$ to $\xi$ can be identified with the linearized Hamiltonian flow with a time reparametrization, $d\varphi_H^{a_2\cdot t}$. Therefore, the linearized Reeb flow at time $\frac{k}{a_1 + a_2 \cdot H(x)}$ can be identified with the map
\[
d\varphi_H^{L}(x):T_xX \to T_xX \quad\text{where}\quad L = \frac{ka_2}{a_1 + a_2 \cdot H(x)} = \frac{k}{a_1/a_2 + H(x)}.
\]
Since $H$ is autonomous and its flow $\varphi^t_H$ is $1$-periodic, the matrices $A(t) = d\varphi_H^t(x)$ for $t \in \R$ form a $1$-periodic subgroup of the symplectic automorphism group $\on{Sp}(T_pX)$. Arguing similarly to the proof of Lemma~\ref{lem:RS_parity} (i.e., pick a compatible Riemannian metric, average with respect to the path $A(t)$ and obtain an invariant compatible metric) we see that after a change of basis, $A$ is identified with 
\[A(t) = \left[\begin{array}{cccc}
e^{2\pi im_1 t} & 0 & \dots & 0\\
0 & e^{2\pi im_2 t} & \dots & 0\\
0 & 0 & \dots & 0\\
0 & 0 & \dots & e^{2\pi i m_{n-1} t}\\
\end{array}\right] \qquad\text{for rationally independent integers }m_i \in \Z\]
It is an easy consequence of Remark \ref{rmk:values_of_H} and the rational independence of $a_1$ and $a_2$ that $L \in \R \setminus \Q$. This implies that $A(L)$ (and thus the linearized Reeb flow along $\gamma$ and $\gamma$ itself) is non-degenerate. The Conley-Zehnder index $\mod 2$ is then
\[
\CZ(\gamma) = \CZ(A(L)) = \sum_i \CZ(e^{2\pi i m_i L}) = n - 1 \mod 2
\]
Thus we find that the SFT grading is
\[(n-3) + \CZ(\gamma) = 2n - 4 = 0 \mod 2 \qedhere\]\end{proof}

The above lemma implies that, when $a_1/a_2$ is irrational, the contact dg-algebra of $Y$ with respect to the contact form $\alpha_a$ has no differential. Thus we can conclude the following corollary.

\begin{cor} The contact homology $CH(Y,\xi)$ satisfies
\[
CH(Y,\xi) \simeq \Lambda(\bigoplus_{i=1}^\infty H(X;\Q))
\]
\end{cor}

\subsubsection{Non-Degenerate Perturbation} We can alternately view the non-degenerate contact forms $\alpha_a$ as a perturbation of the Morse-Bott contact forms of the same form. More precisely, choose $r = (r_1,r_2)$ with $r_1/r_2$ rational. Choose $\epsilon > 0$ and let
\begin{equation}\label{eq:Morse-Bott_function_toric}
    f:Y \to \R \qquad\text{be given by}\qquad f = \log\Big(\frac{r_1 + r_2\widetilde{H}}{r_1 + (r_2 + \epsilon)\widetilde{H}}\Big)
\end{equation}

\begin{lemma} \label{lem:function_f} The function $f:Y \to \R$ has the following properties.
\begin{itemize}
    \item[(a)] $f$ converges to $0$ as $\epsilon \to 0$. 
    \item[(b)] For every $b=(b_1,b_2)$, $f$ is invariant under the Reeb flow $R_b$.
    \item[(c)] $f$ is Morse-Bott with $1$-dimensional families of critical points.
    \item[(d)] $e^f\alpha_r = \alpha_a$ where $a = (r_1,r_2 + \epsilon)$. 
\end{itemize}
\end{lemma}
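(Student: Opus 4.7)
The plan is to verify each of the four assertions essentially by direct computation, using only the definition of $f$ and the structure of the $\T^2$-action on $Y$.

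For (a), I would simply observe that as $\epsilon \to 0$, the argument of the logarithm in the definition of $f$ tends pointwise (in fact uniformly on $Y$, since $\widetilde H$ is bounded on the compact manifold $Y$) to $1$, so $f \to 0$.

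For (b), the key point is that $f$ is a function of $\widetilde H = H \circ \pi$ alone, so it suffices to show that $\widetilde H$ is invariant under both $R_1$ and $R_2$ (which then gives invariance under any $R_b = b_1 R_1 + b_2 R_2$). Invariance under $R_1$ is immediate from the fact that $\widetilde H$ is a pullback via $\pi : Y \to X = Y/S^1$, where the $S^1$-action is generated by $R_1$. For $R_2$, I would use Lemma~\ref{lem:lifting_Hamiltonians}(a), which gives $R_2 = V_{\widetilde H} = \widetilde H \cdot R_1 + (X_H)_\xi$, so $d\widetilde H(R_2) = \widetilde H \cdot d\widetilde H(R_1) + dH(X_H) = 0$ because $H$ is automatically invariant under its own Hamiltonian flow.

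For (c), I would factor $f$ as $g \circ \widetilde H$, where $g(t) = \log\bigl(\tfrac{r_1 + r_2 t}{r_1 + (r_2 + \epsilon) t}\bigr)$. A direct computation gives
\[
g'(t) = \frac{r_2}{r_1 + r_2 t} - \frac{r_2 + \epsilon}{r_1 + (r_2 + \epsilon) t} = \frac{-r_1 \epsilon}{(r_1 + r_2 t)(r_1 + (r_2+\epsilon)t)},
\]
which is nowhere zero on the range of $\widetilde H \ge 1 > 0$. Hence the critical set of $f$ coincides with the critical set of $\widetilde H$, and the Hessian of $f$ at a critical point equals $g'(\widetilde H) \cdot \mathrm{Hess}(\widetilde H)$ up to a nonzero scalar. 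The Morse-Bott condition for $f$ therefore reduces to the Morse-Bott condition for $\widetilde H$, which holds because $H$ is Morse on $X$ and the critical set of $\widetilde H$ consists precisely of the $R_1$-orbits lying over the critical points of $H$ — these are $1$-dimensional (circles).

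For (d), I would just compute: $e^f \alpha_r = \bigl(\tfrac{r_1 + r_2 \widetilde H}{r_1 + (r_2+\epsilon)\widetilde H}\bigr) \cdot \tfrac{\alpha}{r_1 + r_2 \widetilde H} = \tfrac{\alpha}{r_1 + (r_2 + \epsilon)\widetilde H} = \alpha_{(r_1, r_2+\epsilon)} = \alpha_a$. There is no serious obstacle anywhere — the only mild subtlety is making sure in (c) that the critical manifolds of $\widetilde H$ are genuinely $1$-dimensional rather than higher-dimensional, which follows from $H$ being Morse (not merely Morse-Bott) on $X$ combined with the fact that the $S^1$-fibers of $\pi$ are circles.
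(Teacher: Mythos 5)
Your proof is correct and matches the paper's argument essentially step for step: treat (a) and (d) as direct computations, factor $f = g \circ \widetilde H$ to deduce (b) from the $R_b$-invariance of $\widetilde H$, and deduce (c) from $g' \neq 0$ on the relevant range together with the Morse--Bott property of $\widetilde H$ (which holds because $H$ is Morse and $\widetilde H$ is its pullback along the $S^1$-bundle projection). You supply slightly more detail than the paper — the explicit computation of $g'$, the uniformity in (a), and the identification of the critical circles of $\widetilde H$ with $R_1$-orbits over $\mathrm{Crit}(H)$ — but the structure and the key observations are the same.
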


\begin{proof} Properties (a) and (d) are evident from the formula for $f$. To show that properties (b) and (c) hold, we start by noticing that $f = g \circ \widetilde{H}$ where
\[g(s) = \frac{r_1 + r_2\cdot s}{r_1 + (r_2 + \epsilon)s}.\]
Since $\widetilde{H}$ is invariant under the flow of $R_b$ for all $b=(b_1,b_2)$ (see Section~\ref{subsec:contact_manifolds_with_toric}), $f=g\circ \widetilde{H}$ is invariant as well, which proves (b). To prove (c), first notice that $\widetilde{H}$ is Morse-Bott with $1$-dimensional families of critical points, as a lift of a Morse function to an $S^1$ bundle. Now, the identity $f=g\circ \widetilde{H}$, together with the fact that  $g'(s) < 0$ when $s > 0$, implies that $f$ is Morse-Bott with $1$-dimensional families of critical points as well, which proves (c).
\end{proof}

\subsubsection{Periodic Approximation} Let the contact manifold $(Y,\xi)$ and the contact forms $\alpha_a$ be as in the previous sub-section. We now prove the following approximation property.

\begin{prop}\label{prop:approx_torus_action_forms}
For any $a_1, a_2>0$ and any $\delta >0$ there exist rationally dependent $r_1,r_2>0$ such that
\[\alpha_r\leq \alpha_a \leq (1+\delta/T)\alpha_r\]
where $T = \on{lcm}(r_1^{-1},r_2^{-1})$ is the period of the Reeb flow of $\alpha_r$. 
\end{prop}

This approximation property will play the roll of Lemma \ref{lem:approx_ellipsoid} in our proof of Theorem \ref{thm:non_toric_ex_intro}. We require the following arithmetic fact, which follows from an argument similar to Lemma~\ref{lem:approx_ellipsoid}.

\begin{lemma}\label{lem:Dirichlet_torus_action_forms_new}
For any pair of positive numbers $a_1,a_2>0$ and any $\delta >0$ small, there exist $r_1,r_2>0$ with $T:=\lcm(r_1^{-1}, r_2^{-1})<\infty$ such that
\begin{equation}
     \frac{r_i}{1 + \delta/T} \leq a_i\leq r_i.
\end{equation}
    \end{lemma}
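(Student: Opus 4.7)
The plan is to mirror the argument of Lemma~\ref{lem:approx_ellipsoid}, but with the direction of approximation reversed so that the $r_i$ over-approximate the $a_i$ rather than under-approximating them. I would split into two cases depending on whether $a_1/a_2$ is rational.

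The easy case is $a_1/a_2 \in \mathbb{Q}$: set $r_i := a_i$. Then $r_1^{-1}$ and $r_2^{-1}$ are rationally dependent in the sense of Definition~\ref{def:lcm_generalization}, so $T := \lcm(r_1^{-1}, r_2^{-1})$ is finite, and both inequalities of the lemma hold trivially with equality.

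For the main case $a_1/a_2 \notin \mathbb{Q}$, I would apply Dirichlet's simultaneous approximation theorem to $(1/a_1, 1/a_2)$: for every $N \in \mathbb{N}$ there exist positive integers $T', q_1, q_2$ with $T' \leq N$ and $|T' - a_i q_i| \leq a_i/\sqrt{N}$ for $i = 1, 2$. I would choose $N$ large enough so that the error $\epsilon := \max_i |T' - a_i q_i|$ satisfies $\epsilon \leq a_1 a_2 \delta/2$, and then set
\[
T_* := T' + \epsilon \qquad\text{and}\qquad r_i := T_*/q_i.
\]
The inequality $a_i \leq r_i$ then follows immediately from $a_i q_i \leq T' + \epsilon = T_*$. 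Since $r_i^{-1} = q_i/T_*$ have rational ratio $q_1/q_2$, they are rationally dependent, and unwinding Definition~\ref{def:lcm_generalization} yields $T := \lcm(r_1^{-1}, r_2^{-1}) = \lcm(q_1, q_2)/T_*$, where the right-hand $\lcm$ is the ordinary integer lcm.

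The remaining inequality $r_i \leq (1 + \delta/T) a_i$, equivalently $(r_i - a_i) T \leq a_i \delta$, would then be verified by combining $r_i - a_i = (T_* - a_i q_i)/q_i \leq 2\epsilon/q_i$ with $\lcm(q_1, q_2) \leq q_1 q_2$ and the bound $q_j \leq T_*/a_j$ (which follows from $a_j q_j \leq T_*$ for $j \neq i$); these estimates reduce the required inequality to $\epsilon \leq a_1 a_2 \delta/2$, which holds by construction. The main obstacle I anticipate is that the ``trial period'' $T_*$ one naturally produces from Dirichlet's theorem is not the actual Reeb period $T = \lcm(q_1, q_2)/T_*$, and the latter can be much larger when $\gcd(q_1, q_2) = 1$; however, the worst-case bound $\lcm(q_1, q_2) \leq q_1 q_2 \approx T_*^2/(a_1 a_2)$ contributes exactly the factor of $a_1 a_2$ needed to absorb into the requirement on $\epsilon$, so no finer arithmetic control on $\gcd(q_1, q_2)$ is necessary.
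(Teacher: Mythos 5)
Your proof is correct, and the approach is slightly different from the paper's. You apply Dirichlet's simultaneous approximation theorem to $(1/a_1, 1/a_2)$, in direct parallel with Lemma~\ref{lem:approx_ellipsoid} but with the sign of the offset flipped so that the $r_i$ over-approximate. The paper instead applies Dirichlet to $(a_1,a_2)$, producing approximations $a_i\approx q_i/p$ with a \emph{common denominator} $p<N$, and sets $r_i = q_i/(p-\delta/2)$. The payoff of the paper's parametrization is that $T$ is immediately controlled by the single Dirichlet parameter: since $1/r_1 = \frac{p-\delta/2}{q_1q_2}\cdot q_2$ and $1/r_2 = \frac{p-\delta/2}{q_1q_2}\cdot q_1$, Definition~\ref{def:lcm_generalization} gives $T \le p-\delta/2$ at once, and the final inequality $r_i\le(1+\delta/T)a_i$ follows by a short algebraic rearrangement of $\frac{p+\delta/2}{p-\delta/2}$. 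In your version the common denominator $T'\le N$ sits on the other side of the fraction, the $q_i\approx T'/a_i$ are unbounded in $N$, and $T=\lcm(q_1,q_2)/T_*$ admits no direct small bound. You correctly diagnose this as the potential obstruction and correctly resolve it with the crude $\lcm(q_1,q_2)\le q_1q_2$ plus $q_j\le T_*/a_j$, absorbing the resulting factor of $a_1a_2$ into the choice of $\epsilon$; the arithmetic all checks out. Both arguments are valid. The paper's is marginally tidier because $T$ is controlled without estimating a product of unbounded integers; yours has the advantage of running strictly parallel to the ellipsoid lemma and of making explicit that no control over $\gcd(q_1,q_2)$ is needed. (Your separate treatment of $a_1/a_2\in\mathbb{Q}$ is harmless but unnecessary: the Dirichlet argument already covers it.)
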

\begin{proof}
    By the simultaneous version of Dirichlet approximation theorem, for each $N>0$ there exist non-zero integers $q_1, q_2$ and $p<N$ such that 
    $$
    \Big|a_i - \frac{q_i}{p}\Big|<\frac{1}{p\cdot \sqrt{N}}, \quad \text{or} \quad |p-\frac{q_i}{a_i}|<\frac{1}{\sqrt{N}a_i}.
    $$
    Choose $N$ and $r_i$ for $i = 1,2$ so that
    \[\delta /2 > \max{\frac{1}{\sqrt{N} a_1},\frac{1}{\sqrt{N}a_2}} \quad\text{and}\quad r_i = \frac{q_i}{p - \delta/2}\]
    Given these choices, $\delta$ and $r_i$ satisfy the following inequalities.
    \[|\frac{p}{q_i} - \frac{1}{a_i}| < \frac{1}{\sqrt{N} q_i a_i} \le  \frac{\delta}{2q_i} \quad\text{and thus}\quad \frac{1}{r_i} = \frac{p - \delta/2}{q_i} \le \frac{1}{a_i}\]
    The second inequality yields $a_i \le r_i$. Moreover, note that $r_1$ and $r_2$ satisfy
    \[\frac{1}{r_1} = (\frac{p-\delta/2}{q_1q_2}) \cdot q_1 \quad\text{and}\quad \frac{1}{r_2} = (\frac{p-\delta/2}{q_1q_2}) \cdot q_2\]
    By Definition \ref{def:lcm_generalization}, this implies that $T:=\lcm(r_1^{-1}, r_2^{-1})\leq p-\delta/2$. Finally, we note that
    $$
    r_i = \frac{q_i}{p - \delta/2} = \frac{q_i}{p + \delta/2} \cdot \frac{p+\delta/2}{p - \delta/2} \le a_i \cdot (1 + \frac{\delta}{p - \delta/2}) \le a_i \cdot (1 + \delta/T) \qedhere $$
\end{proof}

\begin{proof}[Proof of Proposition~\ref{prop:approx_torus_action_forms}]
Given $a_1, a_2$ and $\delta>0$ let $r_1, r_2$ be the numbers from Lemma~\ref{lem:Dirichlet_torus_action_forms_new}. By Lemma~\ref{lem:rational_a_period_and_orbits}, the Reeb flow induced by $\alpha_r$ is periodic of minimal period $T:=\lcm(r_1^{-1}, r_2^{-1})$. By the assertion of Lemma~\ref{lem:Dirichlet_torus_action_forms_new}
\[\frac{r_i}{1 + \delta/T} \leq a_i\leq r_i.\]
This implies the required inequality between the contact forms:
\begin{align*}
    \alpha_r &= \frac{\alpha}{r_1 + r_2\widetilde{H}} \leq \frac{\alpha}{a_1 + a_2\widetilde{H}} = \alpha_a \le (1 + \delta/T) \cdot \frac{\alpha}{r_1 + r_2\widetilde{H}} = (1 + \delta/T) \cdot \alpha_r.\qedhere
\end{align*}
\end{proof}

\subsection{A family of non-integrable contact forms}\label{subsec:non-toric_ctct_forms}
In this section, we review the construction of the family of contact manifolds discussed in \S \ref{subsec:strong_closing_for_non_toric}, filling in some details that were omitted in the introduction.

\subsubsection{Preliminaries} Let us first, briefly, recall the discussion in \S \ref{subsec:strong_closing_for_non_toric}. Consider the prequantization space
\[E \to  \C P^1 \times \C P^1 \quad\text{with}\quad c_1(E) = A_1 + A_2\]
Here $A_1,A_2$ denote the cohomology classes Poincare dual to $[\C P^1 \times \on{pt}]$ and $[\on{pt} \times \C P^1]$. We let $\xi$ denote the contact structure, $\alpha_1$ denote the standard prequantization contact form and $R_1$ to denote its Reeb vector-field, which generates a $1$-periodic circle action on $E$. We next let $H$ denote the Hamiltonian
\[
H:X \to \R \quad\text{given by}\quad H(x,y) = \frac{\pi}{2}\Big(\frac{|x|^2}{(1 + |x|^2)} + \frac{|y|^2}{(1 + |y|^2)}\Big) + 1 \quad\text{on}\quad \C \times \C \subset \C P^1 \times \C P^1.
\]
By Lemma \ref{lem:lifting_Hamiltonians}, this action lifts to a free circle action on $E$ generated the Reeb vector-field 
\[R_2 = \widetilde{H} \cdot R_1 + (X_H)_\xi \quad \text{for the contact form}\quad \alpha_2 = \frac{\alpha}{\widetilde{H}}\]
Moreover, $R_2$ commutes with the Reeb vector-field $R_1$ of $\alpha$. This yields a contact $\T^2$-action
\[
\T^2 \curvearrowright (E,\xi) \quad\text{generated by}\quad R_1,R_2.
\]

Next, we consider the $\Z_4$-action on $X$ generated by the following $4$-periodic map.
\[
\C P^1 \times \C P^1 \to \C P^1 \times \C P^1 \quad\text{with}\quad \psi(x,y) = (\jmath(y),x)
\]
Here $\jmath:\C P^1 \to \C P^1$ denotes the antipode map $[u:v] \mapsto [-u:v]$. This $\Z_4$-action lifts to a free action of $\Z_4$ on $E$ as follows. We may view $E$ as the unit circle bundle of the tensor product
\[L_1 \otimes L_2 \to  \C P^1 \times \C P^1\]
where $L_i$ is the pullback (under projection to the $i$th factor) of the $O(1)$ line bundle over $\C P^1$. The antipode lifts to a unitary map $\jmath^*:L \to L$ since it descends from a unitary map of $\C^2$. We now define a lift of $f$ to $L_1 \otimes L_2$ by
\[
\Psi:L_1 \otimes L_2 \to L_1 \otimes L_2 \qquad \text{with}\qquad (x,y;u \otimes v) \mapsto (j(y),x;i \cdot j^*v \otimes u)
\]
Here $i$ denotes complex multiplication by $i$. $\Psi$ is unitary, and thus restricts to $E$. 
\begin{remark}\label{rem:Z4_action} $\Psi$ generates a free $\Z_4$-action on $E$ that preserves $\alpha$ and $\alpha_2$, and descends to the $\Z_4$-action generated by $\psi$ on $ \C P^1 \times \C P^1$.
\end{remark} 

\subsubsection{Construction Of Family} We can now define the contact manifold
\[Y := E/\Z_4\]
We may view $Y$ as a prequantization space over the symplectic orbifold $X =  \C P^1 \times \C P^1/\Z_4$ with a Hamiltonian $S^1$-action induced by $H$. By Remark \ref{rem:Z4_action}, the contact structure $\xi$, contact forms $\alpha_1,\alpha_2$ and Reeb vector-fields $R_1,R_2$ all descend to $Y$. We continue to denote them by $\xi,\alpha_1,\alpha_2,R_1$ and $R_2$, and we let
\[R_a = a_1 \cdot R_1 + a_2 \cdot R_2 \quad\text{with contact forms}\quad \alpha_a\]

\begin{lemma} The Reeb vector-fields $R_1$ and $R_2$ commute and generate flows of period $1$. In particular, they generate a contact $\T^2$-action on $Y$.
\end{lemma}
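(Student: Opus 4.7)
The plan is to prove the three assertions first upstairs on the prequantization space $E$, where the geometry is explicit, and then descend the conclusions through the $4$-fold covering $E \to Y = E/\Z_4$. Because this covering is a local diffeomorphism, both commutativity of flows and the identity $\varphi_1 = \id$ automatically pass to the quotient provided that the vector fields themselves descend.

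For the upstairs verification, $R_1$ is the Reeb vector field of the prequantization contact form $\alpha_1$, so its flow is the standard $S^1$-fiber rotation of period $1$. The vector field $R_2 = V_{\widetilde{H}}$ is the contact Hamiltonian lift of $H$; by construction $X_H$ generates a period-$1$ Hamiltonian $S^1$-action on $\C P^1 \times \C P^1$, and $H$ has a critical point at $(x,y) = (0,0)$ with integer value $H(0,0) = 1 \in \Z$. Lemma~\ref{lem:lifting_Hamiltonians}(c) then yields that $R_2$ has $1$-periodic flow on $E$. Commutation follows from Lemma~\ref{lem:lifting_Hamiltonians}(b) applied with $F \equiv 1$ and $H$: since $\{1,H\} = 0$ and $X_1 = 0$, we obtain $[R_1, R_2] = [V_{\widetilde{1}}, V_{\widetilde{H}}] = 0$.

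To descend, we invoke Remark~\ref{rem:Z4_action}: the generator $\Psi$ of the $\Z_4$-action on $E$ preserves both contact forms $\alpha_1$ and $\alpha_2$, and therefore preserves the Reeb vector fields $R_1$ and $R_2$. Consequently $R_1$ and $R_2$ descend to well-defined vector fields $\bar R_1, \bar R_2$ on $Y$, and the flows on $Y$ are the $\Z_4$-quotients of the flows on $E$. The pointwise identity $[R_1, R_2] = 0$ upstairs descends to $[\bar R_1, \bar R_2] = 0$. Likewise, if $\varphi_t^i$ denotes the time-$t$ flow of $R_i$ on $E$, then $\varphi_1^i = \id$ implies that the descended flow $\bar\varphi_t^i$ on $Y$ satisfies $\bar\varphi_1^i = \id$, so $\bar R_1, \bar R_2$ are $1$-periodic. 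Taken together, these commuting $1$-periodic flows on $Y$ define a $\T^2$-action, and the action is a contact action because each $\bar R_i$ is the Reeb vector field of the descended contact form and hence preserves the contact structure.

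There is no substantive obstacle here; the one point requiring genuine care is verifying the hypothesis of Lemma~\ref{lem:lifting_Hamiltonians}(c), namely the existence of a critical point of $H$ with integer value, which is precisely why the constant $+1$ has been added in the definition of $H$.
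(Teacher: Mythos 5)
Your approach (argue everything on $E$ and then descend along the $\Z_4$-covering) is different in emphasis from the paper's and is nearly complete, but it leaves one genuine gap concerning the minimal period. From $\varphi^i_1 = \id$ on $E$ and the fact that the deck group preserves $R_i$, the only conclusion you can draw is that $1$ is \emph{some} period of the descended flow on $Y$; the \emph{minimal} period of the descended flow could, a priori, be a proper divisor of $1$. This would happen if some nontrivial power of the deck transformation $\Psi$ coincided with $\varphi^i_t$ for some $0<t<1$, in which case $\bar\varphi^i_t = \id$ on $Y = E/\Z_4$ already. The statement of the lemma and the paper's subsequent use of it (e.g.\ in Lemma~\ref{lem:rational_a_period_and_orbits}, where the period of $R_r$ is computed as $\lcm(r_1^{-1},r_2^{-1})$ on the assumption that $R_1$ and $R_2$ each have minimal period exactly $1$) need the honest minimal period, not merely that $1$ is a period. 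The paper closes this gap for $R_1$ by noting that there is a point of $\C P^1\times\C P^1$ on which $\Z_4$ acts freely, and the simple $R_1$-orbit over any lift of such a point maps to a simple orbit of period exactly $1$ in $Y$; you would need a sentence to this effect (which also covers $R_2$ by the same reasoning, or by combining the two flows).

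Apart from that, your argument is correct and in places cleaner than the paper's. You verify that $H(0,0)=1\in\Z$ at the critical point, which is exactly the hypothesis needed for Lemma~\ref{lem:lifting_Hamiltonians}(c), and you obtain commutativity from Lemma~\ref{lem:lifting_Hamiltonians}(b) with $F\equiv 1$. The paper treats $R_2$ by working directly on the orbifold quotient $X=(\C P^1\times\C P^1)/\Z_4$, whereas you treat $R_1$ and $R_2$ uniformly on $E$ before descending; both are fine once the minimal-period point is addressed.
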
 

\begin{proof} The fact that $[R_1,R_2] = 0$ is local and so follows from the same fact on $E$.

\vspace{3pt}

To see that $R_1$ is periodic of period $1$, note first that it is periodic of period at most $1$ since it lifts to a periodic vector-field on $E$ of period $1$. To see that the period is equal to $1$, note that a simple $R_a$-orbit $\gamma$ in $E$ that projects to a point of $\C P^1 \times \C P^1$ on which $\Z_4$ acts freely will project to a period $1$ simple orbit $\gamma'$ of $R_a$ on $Y$, via the quotient map $E \to Y$. 

\vspace{3pt}

The vector field $R_2$ is the lift of the period $1$ Hamiltonian flow generated by the Hamiltonian $H$ on the orbifold $X = \C P^1 \times \C P^1/\Z_4$, described in Lemma \ref{lem:topological_prop_non-toric}(c). In particular, it is period $1$.
\end{proof}

We will also consider the following specific hypersurface in $Y$.
\begin{definition}[diagonal hypersurface]\label{def:diagonal_hypersurface}
     Let $Y_3\subset Y$ be the codim-2 hypersurface defined as follows. Let $\Delta\simeq \C P^1 $ be the diagonal in $ \C P^1 \times \C P^1$, and let  $E_3$ be its inverse image under the projection $E \to \C P^1 \times \C P^1$. Then $Y_3$ is the quotient of $E_3$ by $\Z_4$.
\end{definition}

\vspace{3pt}

We will require a number of topological properties of $Y$ for our proof. These properties will be used to guarantee that the CZ index is defined over $\Z$, as well as the vanishing of certain homological intersections, see Remark~\ref{rem:non-toric}. We record these properties below.

\begin{lemma}\label{lem:topological_prop_non-toric}
The contact manifold $(Y,\xi)$ has the following topological properties. 
\begin{itemize}
    \item[(a)]\label{itm:chern_is_torsion} The Chern class $c_1(\xi)$ is torsion in $H^2(Y;\Z)$.
    \item[(b)]\label{itm:Y3_null_homologous} $[Y_3] = 0 \in H_3(Y;\Q)$. In particular, the intersection pairing of any $A \in H_2(Y;\Q)$ with $[Y_3]$ is $0$. 
    \item[(c)]\label{itm:H2_of_Y3_zero} $H_2(Y_3;\Q) = 0$.
\end{itemize}
\end{lemma}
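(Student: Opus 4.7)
The plan is to reduce all three assertions to computations on the $4$-fold cover $q: E \to Y$, using the Gysin sequence of the prequantization $S^1$-bundle $\pi: E \to X = \C P^1 \times \C P^1$ with Euler class $e = c_1(E \to X) = A_1 + A_2$. The key input is that the cup-product map $H^0(X) \xrightarrow{\cup e} H^2(X)$ in the Gysin sequence kills $A_1 + A_2$ under $\pi^*$, i.e., $\pi^*(A_1 + A_2) = 0$ in $H^2(E;\Z)$. Combined with the fact that $q$ is a finite covering, so the transfer forces $q^*: H^*(Y;\Q) \hookrightarrow H^*(E;\Q)$ to be injective and $q_*: H_*(E;\Q) \twoheadrightarrow H_*(Y;\Q)$ to be surjective (by the relation $\mathrm{tr}\circ q^* = 4\cdot \id$ and its homological analog), everything should fall out.

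For (a), I would first observe that the horizontal lift identifies $\xi|_E$ with $\pi^*TX$ as a complex vector bundle, so $c_1(\xi|_E) = \pi^* c_1(TX) = \pi^*(2A_1 + 2A_2) = 2\pi^*(A_1+A_2) = 0$ in $H^2(E;\Z)$. Since $q^*c_1(\xi_Y) = c_1(\xi|_E) = 0$ and $q^*$ is injective rationally, $c_1(\xi_Y)$ vanishes in $H^2(Y;\Q)$, i.e., is torsion in $H^2(Y;\Z)$.

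For (b), I note that transversality of $\pi$ makes the Poincar\'e dual of $[E_3]$ in $H^2(E;\Q)$ equal to $\pi^*\mathrm{PD}[\Delta]$. A direct intersection computation shows $\mathrm{PD}[\Delta] = A_1 + A_2$ in $H^2(X;\Q)$, so $\mathrm{PD}[E_3] = \pi^*(A_1+A_2) = 0$, hence $[E_3] = 0$ in $H_3(E;\Q)$. The restriction $q|_{E_3} : E_3 \to Y_3$ is a $2$-fold covering: indeed, the stabilizer of the subset $E_3$ inside $\Z_4$ is exactly $\langle \psi^2\rangle$, since $\psi$ sends $\Delta$ to the anti-diagonal, which is distinct from $\Delta$. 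Therefore $q_*[E_3] = 2[Y_3]$ in $H_3(Y;\Q)$, and $[E_3] = 0$ forces $[Y_3] = 0$. The intersection-pairing statement is immediate from $[Y_3] = 0$ via Poincar\'e duality on the closed oriented $5$-manifold $Y$.

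For (c), the circle bundle $E_3 \to \Delta \simeq \C P^1$ has Euler number $\langle e,[\Delta]\rangle = \langle A_1+A_2,[\Delta]\rangle = 2$, so $E_3$ is diffeomorphic to the lens space $L(2,1) \simeq \R P^3$, which has $H_2(\R P^3;\Q) = 0$. Since $\langle\psi^2\rangle \simeq \Z_2$ acts freely on $E_3$ (as $\Z_4$ acts freely on $E$) with quotient $Y_3$, the transfer identifies $H_2(Y_3;\Q)$ with the $\Z_2$-invariants of $H_2(E_3;\Q)$, which is $0$. The main obstacle I foresee is bookkeeping the interaction of the $\Z_4$-action with $E_3$: one has to check carefully that $\psi^2$ preserves $E_3$ while $\psi$ does not, so that the correct cover $E_3 \to Y_3$ has degree $2$ rather than $4$, and this is what makes both the transfer computation in (c) and the pushforward computation in (b) come out correctly.
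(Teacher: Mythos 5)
Your proof is correct and follows essentially the same route as the paper: the Gysin sequence for $E \to \C P^1 \times \C P^1$ shows $\pi^*(A_1+A_2)=0$, which simultaneously kills $c_1(\xi_E)$, $[E_3]$, and $H^1(E_3;\Q)$, and the transfer for the free finite cover $E\to Y$ descends these conclusions to $Y$. You are in fact slightly more careful than the paper's writeup on one point: you correctly observe that $\psi$ swaps the diagonal with the anti-diagonal, so the cover $E_3\to Y_3$ has degree $2$ rather than $4$ — the paper's formulae $[Y_3]=\rho_*[E_3]$ and $(H^1(E_3;\Q))^{\Z_4}$ elide this, though it does not affect the rational conclusions since $[E_3]=0$ and $H^1(E_3;\Q)=0$.
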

\begin{proof} 
To prove (a), consider the contact structure $\xi_E$ on $E$. We know that
\[c_1(\xi_E) = \pi^*c_1(\C P^1 \times \C P^1) = c \cdot c_1(E)\]
for a constant $c$. Now consider the Gysin sequence of the bundle $E \to \C P^1 \times \C P^1$. 
\begin{equation*}
       \cdots\rightarrow H^0(\C P^1\times \C P^1)\xrightarrow{c_1(E) \cup} H^2(\C P^1\times \C P^1) \xrightarrow{\pi^*} H^2(E)\xrightarrow{\pi_*} H^1(\C P^1 \times \C P^1) \rightarrow \cdots
\end{equation*}
Since $c_1(E)$ and $c_1(\C P^1 \times \C P^1)$ are proportional, $c_1(\C P^1 \times \C P^1)$ is in the image of the map $c_1(E) \cup$ and kernel of $\pi^*$. Thus $c_1(\xi_E) = \pi^*c_1(\C P^1 \times \C P^1) = 0$. Now since $\Z_4$ acts freely on $E$, we know that pullback by the quotient map $\rho:E \to Y$ yields an isomorphism
\[
\rho^*:H^2(Y;\Q) \simeq (H^2(E;\Q))^{\Z_4}
\]
where the latter is the sub-space of $\Z_4$-invariant classes. Since $\rho^*c_1(\xi) = c_1(\xi_E) = 0$, this implies that $c_1(\xi)$ is torsion.

\vspace{3pt}

To prove (b), recall the construction of  $Y_3$ from Definition~\ref{def:diagonal_hypersurface}. 
Consider the Gysin sequence
\[
H_4(\C P^1 \times \C P^1;\Q) \xrightarrow{c_1(E) \cap} H_2(\C P^1 \times \C P^1;\Q) \xrightarrow{\pi^*} H_3(E;\Q) \xrightarrow{\pi_*} H_3(\C P^1 \times \C P^1;\Q)
\]
Now observe that $[\Delta] = \on{PD}(c_1(E)) = c_1(E) \cap [\C P^1 \times \C P^1]$ and $[E_3] = \pi^*[\Delta]$. Thus by exactness of the Gysin sequence, we have
\[
[E_3] = \pi^*[\Delta] = \pi^*(c_1(E) \cap [\C P^1 \times \C P^1]) = 0
\]
This prove that $[E_3] = 0$. Since $[Y_3] = \rho_*[E_3]$ where $\rho:E \to Y$ is the quotient by $\Z_4$, this proves (b).

\vspace{3pt}

Finally, we prove (c). Note that by Poincare duality and the isomorphism of quotient cohomology with co-invariants, we have
\[H_2(Y_3;\Q) \simeq H^1(Y_3;\Q) \simeq (H^1(E_3;\Q))^{\Z_4} \subset H^1(E_3;\Q)\]
It suffices to show that $H^1(E_3;\Q) = 0$. If we consider the Gysin sequence applies to $E_3 \to \Delta \simeq \C P^1$, we have
\[
H^1(\C P^1;\Q) \xrightarrow{\pi^*} H^1(E_3;\Q) \xrightarrow{\pi_*} H^0(\C P^1;\Q) \xrightarrow{c_1(E_3) \cup} H^2(\C P^1;\Q)
\]
Since $H^1(\C P^1;\Q) = 0$ and $c_1(E_3) \cup$ is injective, this implies that $H^1(E_3;\Q) = 0$. \end{proof}

We let (by some abuse of notation) $\alpha_1$ and $\alpha_2$ denote the contact forms descending from $\alpha$ and $\alpha_2$ on $E$, and we denote the corresonding (commuting) Reeb vectors by $R_1$ and $R_2$, respectively. Both $R_1$ and $R_2$ are periodic.

\begin{lemma} $Y$ does not admit a contact integrable structure extending the circle action generated by the Reeb vector-field $R_1$ of $\alpha$.
\end{lemma}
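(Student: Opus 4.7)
The plan is to reduce the claim to a non-existence statement about the symplectic orbifold $X = Y/S^1 = (\C P^1 \times \C P^1)/\Z_4$ and then contradict it via the Delzant--Lerman--Tolman classification of symplectic toric $4$-orbifolds. By Corollary~\ref{cor:toric_Y_vs_YmodS1}, a Hamiltonian $\T^3$-action on $Y$ extending the Reeb circle of $R_1$ exists if and only if $X$ admits a Hamiltonian $\T^2$-action, so it suffices to show that $X$ is not a symplectic toric $4$-orbifold.

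First I would pin down the orbifold structure of $X$. Since $\jmath$ fixes both $0 = [0:1]$ and $\infty = [1:0]$ on $\C P^1$, the generator $\psi(x,y) = (\jmath(y),x)$ of $\Z_4$ fixes exactly $\{(0,0),(\infty,\infty)\}$, while $\psi^2(x,y) = (\jmath(x),\jmath(y))$ additionally fixes $\{(0,\infty),(\infty,0)\}$. Descending to $X$ yields exactly three orbifold singular points, all isolated: $p_1 = [(0,0)]$ and $p_2 = [(\infty,\infty)]$ with isotropy $\Z_4$, and $p_3 = [(0,\infty)] = [(\infty,0)]$ with isotropy $\Z_2$. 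The orbit-counting formula for topological Euler characteristics then gives $\chi(X) = (4+2+4+2)/4 = 3$.

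Suppose for contradiction that $X$ is toric with moment polytope $P \subset \R^2$. The $\T^2$-fixed points of $X$ correspond bijectively to the vertices of $P$, and their number equals $\chi(X) = 3$, so $P$ must be a triangle. In the orbifold toric classification, each edge $F$ of $P$ carries a positive integer label $m_F$ whose relative interior lifts to a $2$-sphere of orbifold points of isotropy $\Z_{m_F}$. Since the singular locus of $X$ consists only of isolated points, every edge label must equal $1$. The isotropy at a vertex where edges with primitive inward normals $n_i, n_j \in \Z^2$ meet is then $\Z^2/(\Z n_i + \Z n_j)$, a cyclic group of order $|\det(n_i, n_j)|$. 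So it suffices to show that no triple of primitive integer vectors $n_1, n_2, n_3 \in \Z^2$ satisfies $\{|\det(n_i, n_j)|\}_{1 \le i < j \le 3} = \{4, 4, 2\}$ as a multiset.

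The heart of the argument is a parity obstruction on this determinant configuration. After a lattice change of basis we may assume $n_1 = (0,1)$, so $|\det(n_1, n_j)| = |p_j|$ where $n_j = (p_j, q_j)$ for $j = 2,3$. Either $|p_2| = |p_3| = 4$ with $|\det(n_2, n_3)| = 2$, in which case $|p_2 q_3 - p_3 q_2|$ is divisible by $4$ and cannot equal $2$; or $\{|p_2|, |p_3|\} = \{4, 2\}$ with $|\det(n_2, n_3)| = 4$, in which case primitivity of $n_2, n_3$ forces $q_2$ and $q_3$ to be odd, so that $|p_2 q_3 - p_3 q_2| = 2 \cdot |\text{odd integer}| \equiv 2 \pmod{4}$ and cannot equal $4$. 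Both cases contradict the requirement, so $X$ is not toric. The principal potential obstacle is making the standard toric-orbifold classification facts precise---that fixed points biject with polytope vertices (hence match $\chi(X)$), that the orbifold singular locus is the moment-map preimage of $\partial P$, and the vertex isotropy formula---but these are available in the literature, and the concluding parity step is entirely elementary.
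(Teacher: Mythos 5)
Your proposal is correct and uses the same reduction as the paper's proof---apply Corollary~\ref{cor:toric_Y_vs_YmodS1} to reduce the claim to the statement that the quotient orbifold $X = (\C P^1 \times \C P^1)/\Z_4$ admits no Hamiltonian $\T^2$-action. The paper then simply cites \cite{singer1999nontoric} for this fact (with ``$B$'' there being a typo for $X$), whereas you supply a self-contained Lerman--Tolman argument, and it is sound. The singular locus is correctly identified (three isolated points with isotropies $\Z_4,\Z_4,\Z_2$); the computation $\chi(X) = 3$ is right, either via the formula $\chi(Z/G) = \tfrac{1}{|G|}\sum_{g\in G}\chi(Z^g)$ or directly from $H^*(X;\Q) = H^*(\C P^1 \times \C P^1;\Q)^{\Z_4}$; the moment polytope is then forced to be a triangle with trivial facet labels and cyclic vertex isotropies of order $|\det(n_i,n_j)|$; and the mod-$4$ parity argument cleanly rules out the multiset $\{4,4,2\}$ for any primitive triple $n_1,n_2,n_3 \in \Z^2$. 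The ingredients you rightly flag as needing a precise reference---that a $4$-dimensional toric orbifold has exactly $\chi(X)$ fixed points (one per polytope vertex), that isolated orbifold singularities force trivial facet labels, and the vertex isotropy formula---are standard consequences of the Lerman--Tolman classification together with the perfect Morse theory of a generic component of the moment map. The trade-off is that your argument is longer but entirely self-contained, whereas the paper's is a one-line citation.
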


\begin{proof} It is proven in \cite{singer1999nontoric} that $B$ is not toric, so this is immediate from Corollary \ref{cor:toric_Y_vs_YmodS1}.
\end{proof}

\subsection{Proof of Theorem~\ref{thm:non_toric_ex_intro}.}
After studying the structure and dynamics of the contact manifold $Y$ and the Reeb flows $R_a$, we are finally ready to prove that it has the strong closing property. Our first step towards a proof of Theorem~\ref{thm:non_toric_ex_intro} is to show that the assumptions of Proposition~\ref{prop:moduli_space_count} hold for $(Y,\alpha_r)$ discussed above, when $r=(r_1,r_2)$ are rationally dependent. These assumptions were stated in Setup~\ref{set:intersection_theory}. 
\begin{lemma}\label{lem:setup_for_nontoric}
The contact manifold $(Y,\alpha_r)$ for rationally dependent $r=(r_1,r_2)$ satisfies the assumptions of Setup~\ref{set:intersection_theory}.
\end{lemma}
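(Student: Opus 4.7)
The plan is to exhibit the data required by Setup~\ref{set:intersection_theory} and verify each axiom using the preparation in \S\ref{subsec:contact_manifolds_with_toric} and \S\ref{subsec:non-toric_ctct_forms}. Take $n=3$, $\alpha = \alpha_r$, and for small $\varepsilon > 0$ the Morse--Bott function
\[
f = \log\!\left(\frac{r_1 + r_2 \widetilde{H}}{r_1 + (r_2+\varepsilon)\widetilde{H}}\right)
\]
of Lemma~\ref{lem:function_f}. Axiom~(2) (periodicity, minimal period $T = \on{lcm}(1/r_1,1/r_2)$) is Lemma~\ref{lem:rational_a_period_and_orbits}, and axiom~(4) (Reeb-invariance plus the Morse--Bott condition with $1$-dimensional critical families) is Lemma~\ref{lem:function_f}. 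The critical circles of $f$ sit over the three critical points of $H$ on $X$, namely the orbits of $(0,0)$, $(\infty,\infty)$, and the pair $\{(0,\infty),(\infty,0)\}$; since $g(s) := \log((r_1 + r_2 s)/(r_1 + (r_2+\varepsilon) s))$ is strictly decreasing, the minimum circle $\gamma_-$ of $f$ sits over $(\infty,\infty)$ and the maximum circle $\gamma_+$ sits over $(0,0)$. I would take $\gamma_\pm$ to be the iterates of these two simple orbits whose periods equal $T$, which exist by Lemma~\ref{lem:rational_a_period_and_orbits}, giving axiom~(6).

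For the contact flag (axiom~8), use the diagonal hypersurface $Y_3$ of Definition~\ref{def:diagonal_hypersurface}. Axiom (8)(a) holds in the weakened form of Remark~\ref{rem:non-toric}: Lemma~\ref{lem:topological_prop_non-toric}(a) shows $c_1(\xi)$ is torsion so that relative Conley--Zehnder indices take values in $\Z$; Lemma~\ref{lem:topological_prop_non-toric}(b) shows $[Y_3] = 0 \in H_3(Y;\Q)$ so that every $2$-cycle in $Y$ has zero homological intersection with $Y_3$; and Lemma~\ref{lem:topological_prop_non-toric}(c) gives $H_2(Y_3;\Q) = 0$, which (via Poincar\'e duality on $Y_3$) also forces $c_1(\xi\cap TY_3)$ torsion. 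Invariance of $Y_3$ under the Reeb flow of $R_r = r_1 R_1 + r_2 R_2$ reduces to invariance under $R_1$ (which rotates the fibers of the prequantization, so trivially preserves $Y_3$) and under $R_2$: this follows since $H(x,y) = h(x) + h(y) + 1$ splits as a sum, so its Hamiltonian vector field $X_H = (X_h, X_h)$ preserves the diagonal.

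Next I would pick an $\omega$-compatible $J_X$ on $X$ that preserves the symplectic splitting $TX|_{\pi(Y_3)} = T\pi(Y_3) \oplus T\pi(Y_3)^{\perp_\omega}$, and lift it through the contact distribution to a $J$ on $\xi \cong \pi^*TX$. This makes $\xi_3 := \xi \cap TY_3$ a $J$-invariant symplectic sub-bundle, giving axiom~(8)(c). Since $\nabla H|_{(z,z)} = (\nabla h(z),\nabla h(z))$ is always tangent to the diagonal, $\nabla_J \widetilde{H}$ is tangent to $Y_3$, and hence so is $\nabla_J f = g'(\widetilde{H})\,\nabla_J \widetilde{H}$; the restriction $f|_{Y_3}$ factors as $g \circ (2h + 1)$ and is Morse--Bott, which is axiom~(8)(d). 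For axiom~(8)(e), at each critical circle of $f$ we have $\on{Hess}(f) = g'(\widetilde H)\,\on{Hess}(\widetilde H)$ with $g'(\widetilde H) < 0$; the saddle critical circle lies outside $Y_3$ so the condition is vacuous there, and at $\gamma_-$ the Hessian of $H$ on the $\omega$-normal directions to $\Delta$ at $(\infty,\infty)$ is negative definite, so $\on{Hess}(f)$ is positive definite on the symplectic orthogonal of $\xi_3$ in $\xi$. Axioms~(7) and (8)(f) are then arranged simultaneously by choosing $z$ to be any point of $Y_3 \setminus (\gamma_+ \cup \gamma_-)$ on the (unique up to reparameterization) non-trivial gradient trajectory of $f|_{Y_3}$ from $\gamma_+$ to $\gamma_-$.

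The main delicate point will be the coupling between the choice of $J$ and the geometric conditions on the flag: one needs $\xi_3$ to be $J$-invariant and simultaneously $\nabla_J f$ to be tangent to $Y_3$. Both can be arranged by first picking $J_X$ on $X$ that preserves the orthogonal decomposition along $\pi(Y_3)$ and then lifting, but this is the only place where $J$ is constrained beyond the standard compatibility with $\xi$. Every other item of Setup~\ref{set:intersection_theory} reduces to bookkeeping with the structural results already established in \S\ref{subsec:contact_manifolds_with_toric}--\S\ref{subsec:non-toric_ctct_forms}.
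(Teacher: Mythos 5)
Your proposal is correct and follows essentially the same route as the paper's proof: verify the axioms of Setup~\ref{set:intersection_theory} one at a time, using $f = g\circ\widetilde{H}$ with $g(s) = \log((r_1 + r_2 s)/(r_1 + (r_2+\varepsilon)s))$, the diagonal hypersurface $Y_3$ from Definition~\ref{def:diagonal_hypersurface}, $\gamma_\pm$ the period-$T$ iterates over $[0,0]$ and $[\infty,\infty]$, and Lemmas~\ref{lem:rational_a_period_and_orbits}, \ref{lem:function_f}, and \ref{lem:topological_prop_non-toric} for the periodicity, Morse--Bott, and topological hypotheses respectively. The one place your write-up diverges in emphasis is the choice of $J$: you phrase it as choosing a $J_X$ on $X$ compatible with the symplectic splitting along $\pi(Y_3)$ and lifting, whereas the paper simply takes the canonical complex structure on $T(\C P^1 \times \C P^1)$ (which already preserves the diagonal since $i$ acts diagonally) and descends it — the two are the same thing, with the paper's choice being more concrete. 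One small but worth-noting difference: the paper uses Lemma~\ref{lem:topological_prop_non-toric}(c) specifically to control intersections of tori in $Y_3$ with cylinders, whereas you additionally invoke it (via Poincar\'e duality) to make $c_1(\xi\cap TY_3)$ torsion; both consequences of $H_2(Y_3;\Q)=0$ are legitimately needed for Remark~\ref{rem:non-toric} to apply in $Y_3$.
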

\begin{proof}
In what follows we go over each of the assumptions, in the same order as stated in Setup~\ref{set:intersection_theory}, and explain why they hold for $(Y,\alpha_r)$. We use the notations from the previous subsections.
\begin{enumerate}
    \item $Y$ is a contact manifold of dimension 5: See Section~\ref{subsec:non-toric_ctct_forms}.
    \item The flow of $R_r$ is periodic: This is proved in Lemma~\ref{lem:rational_a_period_and_orbits}.
    \item An almost complex structure on $\xi$: Let us describe the complex structure on the contact structure of the prequantization bundle $E\rightarrow \C P^1\times \C P^1$. It will be invariant under the $Z_4$ action any thus descend to the contact structure $\xi$ on $Y$.\\
    Since $E$ is a prequantization bundle, its contact structure is canonically isomorphic to the tangent bundle of the base, $T(\C P^1\times \C P^1)$. Since the latter is a complex vector bundle, this defines an almost complex structure on the contact structure of $E$ which descends to $\xi$. We denote this almost complex structure by $J$.
\item An invariant Morse--Bott function: Recall the function from Section~\ref{subsec:non-toric_ctct_forms}:
$$H:\C P^1\times \C P^1 \rightarrow \R, \qquad H([x, y]):= \frac{\pi}{2}\left(\frac{|x|^2}{(1 + |x|^2)} + \frac{|y|^2}{(1 + |y|^2)}\right)+1.$$
The critical points of the latter function on $\C P^1\times \C P^1 $ are $[0, 0]$, $[\infty, \infty]$, $[\infty, 0]$
and $[0, \infty]$, which are the non-free points of the $\Z_4$ action. After the quotient by the $\Z_4$ action, these descend to three classes: $[0, 0]$, $[\infty, \infty]$ and $[\infty, 0]\sim [0, \infty]$. Denote by $\widetilde{H}:Y\rightarrow \R$ the function obtained by lifting $H$ to $E$ and projecting to the quotient by the $\Z_4$ action.
The perturbing function for us is the function $f:Y\rightarrow \R$ defined using $\widetilde{H}$ by (\ref{eq:Morse-Bott_function_toric}).
By Lemma~\ref{lem:function_f}, the function $f$ is Morse--Bott, invariant under the flow and its critical manifolds are circles.  
Note that  Lemma~\ref{lem:function_f} also guarantees that $e^f\alpha_r = \alpha_a$ for $a=(r_1, r_2+\epsilon)$.
\item No assumption to check.
\item No assumption to check. We take $\gamma_\pm$ to be orbits of $R_r$ of period $T$, corresponding to the minimum and maximum of $f$. Here $T=\lcm\{r_1^{-1}, r_2^{-1}\}$ is the period of $R_r$. A simple calculation shows that the maximum of $f$ (which is the minimum of $\tilde H$) lies over $[0,0]$, and the minimum of $f$ (which is the maximum of $\widetilde{H}$) lies over $[\infty,\infty]$. 
\item Point constraint: We need to choose a point $z$ in the intersection of the ascending manifold of $\gamma_-$ and the descending manifold of $\gamma_+$. The flow lines of $f$ coincide with those of $\tilde H$ up to (orientation reversing) reparametrization. Consider the $\tilde H$ flow line given by $t \mapsto [e^t x_0, e^t x_0, 1]$, where $x_0 \neq 0$ is a fixed element of $\C P^1$ and by $1$ in the last coordinate we mean $1$ in a trivialisation of the bundle $E_{\{[e^t x, e^t x]\}}$. The image of this flow line connects $\gamma_-$ and $\gamma_+$. Let us pick $z = ([x_0,x_0], 1)$.
\item Contact flag: Since $Y$ is of dimension 5, we only need to define the codimension $2$ submanifold of $Y$. Recall Definition~\ref{def:diagonal_hypersurface} of the hypersurface $Y_3 \subset Y$.
\begin{enumerate}[label=(\alph*)]
    \item As explained in Remark~\ref{rem:non-toric}, the assumption that $H_2(Y_{2j-1})=0$ can be replaced by the weaker assumptions that the CZ index is defined over $\Z$ and that the homological intersection number of any 2-torus in $Y_{2j+1}$ with $Y_{2j-1}$ is zero. These are asserted by Lemma~\ref{lem:topological_prop_non-toric}. More explicitly, item (a) of the lemma guarantee that the relative CZ index of any  orbit is well defined over $\Z$, while (b) and (c) guarantee that the intersection number of any 2-torus in $Y$ with $Y_3$ (resp., in $Y_3$ with any cylinder) is zero.
    
    \item $Y_3$ is invariant under the Reeb flow: Since $R_r=r_1R_1+r_2R_2$, it sufficient to show that $Y_3$ is invariant under both $R_1$ and $R_2$. Since $Y_3$ is a quotient of a restriction of the prequantization bundle, it is invariant under the flow $R_1$. $R_2$ is a linear combination of $R_1$ and $(X_H)_\xi$, which is the horizontal lift of the symplectic gradient of the function $H$ defined above. Since this function is symmetric under replacing the two factors of $\C P^1\times \C P^1$, its Hamiltonian flow preserves the diagonal $\Delta$. As a conserquence, the quotient of the horizontal lift preserves $Y_3$.
    \item $(Y_3, \xi|_{Y_3})$ is a contact manifold as is coincides with the $\Z_4$ quotient of the unit circle bundle $E|_{\Delta} \to \Delta$. For the same reason, $\alpha_{TY_3}$ is a contact form on $Y_3$.
    
    The diagonal $\Delta$ is invariant under the action of $i$ as $i$ acts simultaneously on each factor. This makes the contact planes $\xi|_{Y_3}$ also $J$-invariant.
    \item $\nabla_Jf$ is tangent to $Y_3$: Since $f$ is a composition of a 1-variable function on $\tilde H$, it is enough to check that the gradient of $\tilde H$ is tangent to $Y_3$. Note that the metric defined by $J$ the $d\alpha$ is the standard metric on $\C P^1 \times \C P^1$, lifted to $E$. With respect to this metric,
    $\nabla \tilde H = (x,y,0)$ at any $([x,y], \theta) \in Y$. So, for any point $([x,x], \theta) \in Y_3$, $\nabla f = (x,x,0)$ and so is tangent to $Y_3$.
    
    The function $f|_{Y_3}$ is Morse-Bott as $f$ is Morse on the diagonal. 
    \item Conditions on Hessian of $f$: $Y_3$ contains two of the critical points, namely, $[0,0]$, and $[\infty, \infty]$ corresponding to $\gamma_+$ and $\gamma_-$. So apart from $\gamma_+$ the only critical point in $Y_3$ is $\gamma_-$, which is a minimum and hence the Hessian is positive definite (and so is any restriction of it).
    \item $z = ([x_0,x_0], 1) \in Y_3$.\qedhere
    \end{enumerate}
\end{enumerate}
\end{proof}

Our next goal is to prove the vanishing of the spectral gap for the periodic flows corresponding to rationally dependent $(a_1,a_2)$.
\begin{thm}\label{thm:gap0_rational_non-toric}
Suppose  $r=(r_1,r_2)$ are rationally dependent and denote by $T=\lcm(r_1^{-1}, r_2^{-1})$ the period of $R_r$. There exists a class $\sigma\in CH(Y)$ with $\s_\sigma(Y,\alpha_r) = T$ and $\gap_\sigma(Y,\alpha_r)=0$.
\end{thm}
\begin{proof}
We start by describing the contact homology of $Y$. Consider the perturbation $\alpha_f:= e^{f}\alpha_r$ for $f$ defined using $\widetilde{H}$, and given a positive parameter $\epsilon>0$, by (\ref{eq:Morse-Bott_function_toric}). As explained in Lemma~\ref{lem:function_f}, $\alpha_f:= e^f\alpha_r = \alpha_a$ for $a=(a_1,a_2):=(r_1, r_2+\epsilon)$. Therefore, for a generic choice of  $\epsilon>0$, $\alpha_f$ is non-degenerate. As explained in \cite[\S 1.8]{p2015},  contact homology has a relative grading $\Z/2c_1(\xi)\cdot H_2(Y)$, which descends to an absolute $\Z_2$ grading.
Lemma~\ref{lem:irrational_a_period_and_orbits} guarantees that the $\Z_2$ grading of all  of the orbits of $\alpha_f = \alpha_a$ is zero, and hence the differential of the contact dga with respect to $\alpha_f$ vanishes. Therefore, we identify the classes in $CH(Y)$ with the periodic orbits on $\alpha_a$. 
Lemma~\ref{lem:setup_for_nontoric} states that the assumptions of Proposition~\ref{prop:moduli_space_count}, as stated in Setup~\ref{set:intersection_theory}, hold for $(Y,R_r)$ and the perturbing function $f$. Arguing as in the proof of Theorem~\ref{thm:U_integer_E}, Proposition~\ref{prop:moduli_space_count} together with Proposition~\ref{prop:moduli_correspondence}, imply that  $\left<U_{P_0} \gamma_+, \gamma_-\right>=1\mod 2$, where $P_0$ is the tangency constraint from Example~\ref{exa:tangency_constraints}. We remind that here $\gamma_\pm$ are periodic orbits of $R_r$ of period $T=\lcm(r_1^{-1}, r_2^{-1})$, lying over the minimum and the maximum of $f$.  Therefore, we may consider the class  $\sigma := [\gamma_+]\in CH(Y)$ and compute its spectral gap:
\begin{align*}
    \gap_\sigma(Y,R_r)&:= \s_{\sigma}(Y, \alpha_r) - \s_{U_{P_0}\sigma}(Y, \alpha_r) \leq \s_{[\gamma_+]}(Y, \alpha_r) - \s_{[\gamma_-]}(Y, \alpha_r) \\
    &= \lim_{\epsilon\rightarrow 0}\int_{\gamma_+} e^f\alpha_r - \int_{\gamma_-}e^f\alpha_r = \int_{\gamma_+}\alpha_r -\int_{\gamma_+}\alpha_r= T-T = 0,
\end{align*}
where we used the fact that $f\xrightarrow{\epsilon\rightarrow 0}0$ and $T:=\lcm(r_1^{-1},r_2^{-1})$ is the period of $R_r$, which coincides with the periods of $\gamma_\pm$ by our choice of these orbits.
\end{proof}

Using Theorem~\ref{thm:gap0_rational_non-toric} and Proposition~\ref{prop:approx_torus_action_forms} about approximations of irrational forms by rational ones, we can conclude the vanishing of the spectral gap for $R_a$ for all $a=(a_1,a_2)$.
\begin{cor}
For any $a=(a_1,a_2)$, $\gap(Y,\alpha_a)=0$. In particular, it follows from Theorem~\ref{thm:spectral_gap_to_closing_lemma_intro} that $(Y,\alpha_a)$ satisfies the strong closing property.
\end{cor}
\begin{proof}
By Proposition~\ref{prop:limit_of_gap0}, to conclude that $\gap(Y,\alpha_a)=0$ it is sufficient to show that there exists a sequence $\alpha_i$ and classes $\sigma_i\in CH(Y)$ such that $\alpha_i\leq \alpha\leq (1+\epsilon_i)\alpha_i$, $\gap_{\sigma_i}(Y,\alpha_i)=0$ and $\epsilon_i\cdot\s_{\sigma_i}(Y,\alpha_i)\rightarrow 0$. Applying Proposition~\ref{prop:approx_torus_action_forms} for $\delta = \frac{1}{i}$ we conclude that there exist $r^{(i)}= (r^{(i)}_1,r^{(i)}_2)$ rationally dependent such that $\alpha_i:=\alpha_{r^{(i)}}$ satisfies $\alpha_i\leq \alpha_a\leq\left(1+\frac{1}{i\cdot T_i}\right)\alpha_i$, where $T_i:= \lcm(r^{(i)}_1,r^{(i)}_2)$. Applying Theorem~\ref{thm:gap0_rational_non-toric} to $\alpha_i$ we obtain classes $\sigma_i\in CH(Y)$ with $\s_{\sigma_i}(Y,\alpha_i)=T_i$ and $\gap_{\sigma_i}(Y,\alpha_i)=0$. Since  $\frac{1}{i\cdot T_i}\cdot T_i = \frac{1}{i}\rightarrow 0$, we conclude that the assumptions of  Proposition~\ref{prop:limit_of_gap0} hold and $\gap(Y,\alpha_a)=0$.
\end{proof}
\vspace{3pt}

\bibliographystyle{alpha}
\bibliography{rsft.bib}
\end{document}